\documentclass[11pt]{amsart}
\usepackage{mathtools,amsmath,amssymb}
\usepackage{amsthm}
\usepackage{graphicx}
\usepackage[dvipsnames]{xcolor}
\usepackage{tikz-cd}
\usepackage[OT2,T1]{fontenc} 
\usepackage{bbm}
\usepackage{stmaryrd}
\usepackage[hidelinks]{hyperref}
\usepackage{musicography}
\usepackage{tikz}
\usepackage{geometry}
\usepackage{bbm}
\usepackage[backend=biber,style=alphabetic,sorting=nyt,maxnames=99]{biblatex}
\renewbibmacro{in:}{}
\usepackage{accents}
\usepackage{listings}

\allowdisplaybreaks

\mathtoolsset{showonlyrefs,showmanualtags}

\DeclareFieldFormat{postnote}{#1}
\DeclareFieldFormat{multipostnote}{#1}

\DeclareFieldFormat[article]{citetitle}{\mkbibemph{#1}}
\DeclareFieldFormat[article]{title}{\mkbibemph{#1}}
\DeclareFieldFormat[article]{volume}{\mkbibbold{#1}}
\DeclareFieldFormat[article]{journaltitle}{#1}

\DeclareFieldFormat[inbook]{citetitle}{\mkbibemph{#1}}
\DeclareFieldFormat[inbook]{title}{\mkbibemph{#1}}

\DeclareFieldFormat[inproceedings]{citetitle}{\mkbibemph{#1}}
\DeclareFieldFormat[inproceedings]{title}{\mkbibemph{#1}}
\DeclareFieldFormat[inproceedings]{booktitle}{#1}

\DeclareFieldFormat[incollection]{citetitle}{\mkbibemph{#1}}
\DeclareFieldFormat[incollection]{title}{\mkbibemph{#1}}
\DeclareFieldFormat[incollection]{booktitle}{#1}

\defbibheading{bibliography}{}

\theoremstyle{plain}
\newtheorem{theorem}{Theorem}
\numberwithin{theorem}{section}

\newtheorem{lemma}[theorem]{Lemma}
\newtheorem{prop}[theorem]{Proposition}

\newtheorem{conjecture}[theorem]{Conjecture}

\theoremstyle{definition}

\newtheorem{remark}[theorem]{Remark}
\newtheorem*{remark*}{Remark}

\newtheorem*{claim*}{Claim}

\newtheoremstyle{named}{}{}{\itshape}{}{\bfseries}{.}{.5em}{\thmnote{#3's }#1}
\theoremstyle{named}

\DeclarePairedDelimiter\abs{\lvert}{\rvert}%
\DeclarePairedDelimiter\ceil{\lceil}{\rceil}

\newcommand{\pdiv}{\mid\!\mid}

\DeclareMathOperator{\fin}{fin}

\DeclareMathOperator{\im}{Im}

\DeclareMathOperator{\mo}{mod}
\DeclareMathOperator{\mt}{MT}

\DeclareMathOperator{\rat}{rat}
\DeclareMathOperator{\re}{Re}

\def\C{\mathbb{C}}
\def\E{\mathbb{E}}

\def\Q{\mathbb{Q}}
\def\R{\mathbb{R}}

\def\Z{\mathbb{Z}}

\title{An asymptotic formula for a cubic moment of $GL_2$ $L$-functions}
\author{Catinca Mujdei}
\thanks{\textit{Affiliation}. University College London, London, UK}
\thanks{\textit{E-mail address}. \url{catinca.mujdei.23@ucl.ac.uk}}
\thanks{The author was supported by the Engineering and Physical Sciences Research Council [EP/S021590/1], via the EPSRC Centre for Doctoral Training in Geometry and Number Theory (The London School of Geometry and Number Theory), University College London.}

\begin{document}

\maketitle

\begin{abstract}
    We obtain an asymptotic formula for a cubic moment of self-dual $GL_2$ $L$-functions studied by Petrow and Young, with a small extra averaging over characters. Our asymptotic consists of the main term conjectured by Conrey--Farmer--Keating--Rubinstein--Snaith and a power-saving error term whose strength depends on the amount of extra averaging.
\end{abstract}

\section{Introduction}
In their seminal paper \cite{CFKRS}, Conrey--Farmer--Keating--Rubinstein--Snaith (CFKRS) formulated a heuristic ``recipe'' for computing the main terms of integral moments of families of $L$-functions. In the recipe, the $L$-functions are evaluated at small shifts away from the central point $s=1/2$, which reveals interesting symmetries of the family in the conjectured main terms.

The main result of the present work confirms the CFKRS conjecture for some cubic moments of families of automorphic $L$-functions previously studied by Conrey--Iwaniec and Petrow--Young, with an additional small average over Dirichlet characters. These cubic moments play a key role in the analytic theory of $L$-functions because of their role in Weyl-subconvexity estimates \cite[Section 6]{Mi}. The main difficulty overcome in the present paper is the identification and cancellation of all the degenerate off-diagonal main terms that the CFKRS recipe implicates in these cubic moments, provably, with a power-saving error term. These degenerate terms have often been a sticking point in the literature surrounding this cubic moment (see below for more discussion).

\subsection{Background}
Given $\kappa\in2\Z_{\geq1}$ and $q\in\Z_{\geq1}$, denote by $\mathcal{F}_\kappa(q)$ an orthonormal basis of Hecke eigenforms of weight $\kappa$ and level $q$. Let $\chi$ be a primitive Dirichlet character modulo $q$. Conrey and Iwaniec \cite{CI} proved for quadratic $\chi$, $\kappa\geq12$ and $q$ odd, square-free the Lindel\"of-on-average estimate
\begin{align*}
    \sum_{f\in\mathcal{F}_\kappa(q)}L(1/2,f\otimes\chi)^3\ll_{\kappa,\varepsilon}q^{1+\varepsilon}.
\end{align*}
Their result was later extended to all $\kappa\geq2$ by Petrow \cite{Pe}, who obtained in particular a formula roughly of the form
\begin{align}
    &\sum_{f\in \mathcal{F}_\kappa(q)}w_fL(1/2,f\otimes\chi)^3 = \text{MT}_h(\chi)+\text{DM}_h(\chi) + O_{\kappa,\varepsilon}(q^{-1/3+\varepsilon}), \label{eq:motohashi_summarized} 
\end{align}
where the $w_f=q^{-1+o(1)}$ are the natural harmonic weights facilitating the Petersson trace formula. The main term $\text{MT}_h(\chi)$ matches the CFKRS conjecture, and $\text{DM}_h(\chi)$ is a fourth moment of Dirichlet $L$-functions. The identity \eqref{eq:motohashi_summarized} is reminiscent of some formulas first derived by Motohashi \cite[Chapter 4]{M}; see also \cite[Sections 4.5.4--4.5.5]{MV}. The only bound currently available for the dual moment is $\text{DM}_h(\chi)\ll(\log q)^4$ by a large-sieve estimate, while $\text{MT}_h(\chi)\sim\frac{8}{3}(\log q)^3$. So unfortunately \eqref{eq:motohashi_summarized} does not prove the CFKRS conjecture.

The non-holomorphic analog of \eqref{eq:motohashi_summarized} was studied in \cite{PeY,PeYfourth} by Petrow and Young for general (not necessarily quadratic) primitive characters $\chi$ modulo $q$. Let $T\in\R_{\geq1}$. Write $\mathcal{H}_{it}(m,\overline{\chi}^2)$ for the set of Hecke-normalized Hecke--Maass newforms of level $m\mid q$, central character $\overline{\chi}^2$ and spectral parameter $t$. Petrow and Young showed that, crudely,
\begin{align}
    &\sum_{\abs{t}\leq T}\sum_{m\mid q}\sum_{f\in\mathcal{H}_{it}(m,\overline{\chi}^2)}w_{f}L(1/2,f\otimes\chi)^3 + \text{(Eisenstein series contribution)} \label{eq:cubefree_summarized} \\
    &= \text{MT}_{nh}(\chi) + \text{DM}_{nh}(\chi) + O_\varepsilon(T^Bq^\varepsilon) 
\end{align}
for some constant $B>0$. The harmonic weights $w_{f}$ are analogous to those in \eqref{eq:motohashi_summarized}, and now facilitate the Kuznetsov trace formula.

Morally, the dual moment on the right-hand side of \eqref{eq:cubefree_summarized} is of the form
\begin{align}
    \text{DM}_{nh}(\chi) \approx \frac{1}{q}\sum_{\psi\,(\mo q)}\abs{L(1/2,\psi)}^4g(\chi,\psi), \label{eq:dual_moment_nh}
\end{align}
featuring double character sums $g(\chi,\psi)$ defined by
\begin{align*}
    \sum_{t,u\,(\mo q)}\chi(t)\overline{\chi}(t+1)\overline{\chi}(u)\chi(u+1)\psi(tu-1)
\end{align*} 
when $c(\psi)=q$. The latter can be bounded using machinery from algebraic geometry, so after taking absolute values in the dual moment $\text{DM}_{nh}(\chi)$ one can again bound it using a large-sieve estimate. However, the authors did not show that $\text{MT}_{nh}(\chi)$ matches the CFKRS main term, as they only needed an upper bound  for it to prove their main theorems. To obtain an asymptotic formula in \eqref{eq:cubefree_summarized}, one would like to show sufficient cancellation among the terms $\lvert L(1/2,\psi)\rvert^4g(\chi,\psi)$ as $\psi$ ranges among Dirichlet characters modulo $q$, using equidistribution properties of $g(\chi,\psi)$. Unfortunately however, this seems out of reach of current methods.

In the present work, we evaluate an average of the cubic moment in \eqref{eq:cubefree_summarized} over characters, and isolate the main term conjectured by CFKRS from the diagonal of the trace formula, up to a power-saving error term. The resulting asymptotic formula is stated in Theorem \ref{thm:main} below. As far as we are aware, this is the first instance where the conjectured main term for the cubic moment of Petrow--Young (albeit taken over a slightly enlarged family) is fully recovered and shown to be larger than the dual sum.

According to very general principles of harmonic analysis, one expects that the dual moment should shrink when the initial family grows. On the other hand, the main difficulty in this paper is to prove cancellation of the remaining part of the diagonal and certain off-diagonal terms. Indeed, in several works generalizing Petrow--Young, the collection of the main terms poses a significant obstacle, see e.g. \cite{BFW,F,GHLN,Kw2,Kw3,Nel}. In \cite{BFW} and \cite{Nel}, Balkanova--Frolenkov--Wu and Nelson used the period integral approach to generalize to number fields, and were able to identify the degenerate main terms and compute upper bounds for them, but not to extract the conjectured main term (see e.g. \cite[Remark 15.2]{Nel}). Kwan \cite{Kw2,Kw3} obtained an exact reciprocity formula relating the cubic moment of untwisted/twisted $GL_2$ $L$-functions to the dual fourth moment of $\zeta(\cdot)$/Dirichlet $L$-functions, and was able to match the degenerate terms to the conjectured main terms of both moments. However, Kwan's work only concerns cusp forms of level $1$, whereas we focus on the ramified case. Moreover, Kwan does not prove an asymptotic formula for either the third or fourth moments. Our present result is thus an instance in which the classical approach might have some advantage over the period integral approach.

We also mention the work of Frolenkov \cite{F}, who established an asymptotic formula for a cubic moment of $L$-functions attached to holomorphic cusp forms in the weight aspect. His formula features the CFKRS main term along with a dual moment, and relies on classical inputs such as a twisted second moment formula of the same $L$-functions due to Kuznetsov. Rather than deploying the approximate functional equation, Frolenkov shifts the evaluation point from $s=1/2$ into the region of absolute convergence, and at a later stage applies analytic continuation. A similar strategy is pursued by Ganguly--Humphries--Lin--Nunes in \cite{GHLN}, who combine this analytic continuation method with the Kuznetsov and Petersson formulas to obtain a spectral reciprocity formula relating a $GL_2$ moment of $GL_3\times GL_2$ Rankin--Selberg $L$-functions to a sum of two main terms and a dual moment. The latter is a $GL_1$ moment of $GL_4\times GL_1$ Rankin--Selberg $L$-functions, and interestingly contains the double character sum $g(\chi,\psi)$.

\subsection{Statement of the result}\label{sec:statement}
Let $p$ be an odd prime and $a\in\Z_{\geq1}$. Let $\chi$ be a primitive Dirichlet character modulo $p^a$. For $m\mid p^a$, write $\mathcal{H}_{it,\text{Eis}}(m,\overline{\chi}^2)$ for the set of newform Eisenstein series as defined in \cite[Section 2.2]{PeY}. Define also the test function 
\begin{align*}
    h_0(t) \coloneqq \exp(-(t/T)^2)\frac{t^2+\frac{1}{4}}{T^2}.
\end{align*}
Lastly, denote by $w_{f,\ell},w_{E,\ell}$ the harmonic weights that facilitate the Kuznetsov trace formula later on; in fact, we have 
\begin{align*}
    w_{f,\ell} = q^{-1}(q(1+\abs{t}))^{o(1)} \quad \text{ and } \quad w_{E,\ell} \gg q^{-1}(q(1+\abs{t}))^{-\varepsilon}
\end{align*}
according to \cite[Section 2.4]{PeY}. We consider the cubic moment
\begin{align}
    \mathcal{M}_{nh}(\chi,\pmb{\alpha}) &\coloneqq \sum_{t}h_0(t)\sum_{\ell m=p^a}\sum_{f\in\mathcal{H}_{it}(m,\overline{\chi}^2)}w_{f,\ell}\prod_{1\leq j\leq 3}L(1/2+\alpha_j,f\otimes\chi) \label{eq:moment} \\
    &\quad\,\, + \frac{1}{4\pi}\int_{\R}h_0(t)\sum_{\ell m=p^a}\sum_{E\in\mathcal{H}_{it,\text{Eis}}(m,\overline{\chi}^2)}w_{E,\ell}\prod_{1\leq j\leq 3}L(1/2+\alpha_j,E\otimes\chi) \,dt \\
\end{align}
for shifts $\pmb{\alpha}=(\alpha_1,\alpha_2,\alpha_3)\in\C^3$.

Now take $b\in\Z_{\geq1}$ with $b<a$. Given a Dirichlet character $\phi$ modulo $p^b$, note that $\chi\phi$ has conductor $p^a$. We will study $\mathcal{M}_{nh}(\chi,\pmb{\alpha})$ through its average
\begin{align}
    \E_{\phi\,(\mo p^b)}[\mathcal{M}_{nh}(\chi\phi,\pmb{\alpha})] \coloneqq \frac{1}{\varphi(p^b)}\sum_{\phi\,(\mo p^b)}\mathcal{M}_{nh}(\chi\phi,\pmb{\alpha}). \label{eq:informal_explanation}
\end{align} 
The aim of this paper is to obtain an asymptotic formula for $\E_{\phi\,(\mo p^b)}[\mathcal{M}_{nh}(\chi\phi,\pmb{\alpha})]$. In view of the $\log(\mathrm{conductor})/\log(\abs{\mathrm{family}})$ heuristic for moments of $L$-functions, the most analytically interesting case is the one with the narrowest possible family of automorphic forms, i.e. with a value of $b$ that is as small as possible. We are therefore content to assume $b\leq a/3$, a technical choice that simplifies some of our arguments (as explained in Remark \ref{rmk:remark_intro}(iii)).

Our main result is the following.

\begin{theorem}\label{thm:main}
    Let $p$ be an odd prime, $a,b\in\Z_{\geq1}$ with $b\leq a/3$, $T=(p^a)^{o(1)}$, and $\re\alpha_j\ll \frac{1}{\log p^a}$ and $\im\alpha_j=O(1)$ for all $1\leq j\leq 3$. We have
    \begin{align}
        \E_{\phi\,(\mo p^b)}[\mathcal{M}_{nh}(\chi\phi,\pmb{\alpha})] = \sum_{\pmb{\sigma}\in\{\pm1\}^3}\frac{1+\sigma}{2}\text{MT}_{nh}(\pmb{\alpha},\pmb{\sigma}) + O_\varepsilon(p^{-b+a\varepsilon}), \label{eq:claim_asymptotic}
    \end{align}
    where
    \begin{align*}
        &\mt_{nh}(\pmb{\alpha},\pmb{\sigma}) \\
        &\coloneqq \frac{p^{a\sigma\alpha}}{2p^{a\alpha}}\zeta^{(p)}(1+\sigma_1\alpha_1+\sigma_2\alpha_2)\zeta^{(p)}(1+\sigma_1\alpha_1+\sigma_3\alpha_3)\zeta^{(p)}(1+\sigma_2\alpha_2+\sigma_3\alpha_3) \\
        &\quad\,\, \cdot \frac{1}{4\pi}\int_{\R}\tanh(\pi t)th_0(t)\sum_{\delta\in\{0,1\}}\prod_{1\leq j\leq 3}\frac{\Gamma_\R(1/2+\delta+\sigma_j\alpha_j+it)\Gamma_\R(1/2+\delta+\sigma_j\alpha_j-it)}{\Gamma_\R(1/2+\delta+\alpha_j+it)\Gamma_\R(1/2+\delta+\alpha_j-it)}\,dt
    \end{align*}
    and
    \begin{align*}
        \sigma &\coloneqq \sigma_1\sigma_2\sigma_3, \\
        \alpha &\coloneqq \alpha_1+\alpha_2+\alpha_3, \\
        \sigma\alpha &\coloneqq \sigma_1\alpha_1+\sigma_2\alpha_2+\sigma_3\alpha_3.
    \end{align*}
    In particular, as $p^a\rightarrow\infty$ one has 
    \begin{align}
        \sum_{\pmb{\sigma}\in\{\pm1\}^3}\frac{1+\sigma}{2}\mt_{nh}(\pmb{0},\pmb{\sigma}) \sim \frac{8}{6}(1-p^{-1})^3(\log p^a)^3\cdot \frac{1}{4\pi}\int_{\R}\tanh(\pi t)th_0(t)\,dt. \label{eq:explicit_asymptotic}
    \end{align}
\end{theorem}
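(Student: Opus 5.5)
We follow the Petrow--Young approach, but carry out the character average over $\phi$ before any dual analysis. First we write each product $\prod_j L(1/2+\alpha_j,f\otimes\chi\phi)$ via an approximate functional equation. Using the root numbers from \cite{PeY}, this expresses it as a sum over $\pmb{\sigma}\in\{\pm1\}^3$ of terms. Each term carries $\lambda_f(n_1)\lambda_f(n_2)\lambda_f(n_3)\chi\phi(n_1n_2n_3)$ times Gamma ratios. The sign-flipped terms pick up the factor $p^{a(\sigma\alpha-\alpha)}$ and the conjugate twist $\overline{\chi\phi}$. The root number of $f\otimes\chi\phi$ is controlled through the central character $\overline{\chi\phi}^2$; this is how the factor $\frac{1+\sigma}{2}$ should emerge. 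Only the sign patterns with $\sigma=+1$ survive after averaging the root-number-dependent parts, presumably together with the $\delta\in\{0,1\}$ parity sum in the Kuznetsov formula.

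Next we combine $\lambda_f(n_1)\lambda_f(n_2)$ by Hecke relations into a single coefficient. We then apply the Kuznetsov formula of level $p^a$ with central character $\overline{\chi\phi}^2$, in the newform-weighted form of \cite[Section 2.4]{PeY}. This form accounts for the sum over $\ell m=p^a$ and the weights $w_{f,\ell},w_{E,\ell}$. The spectral test function is $h_0$, and the Plancherel density $\frac{1}{4\pi}\tanh(\pi t)t$ appears.

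The diagonal term gives a triple Dirichlet series in $(n_1,n_2,n_3)$ with $n_3=n_1n_2$ (up to Hecke divisors). Its polar part is $\zeta^{(p)}(1+\sigma_1\alpha_1+\sigma_2\alpha_2)\zeta^{(p)}(1+\sigma_1\alpha_1+\sigma_3\alpha_3)\zeta^{(p)}(1+\sigma_2\alpha_2+\sigma_3\alpha_3)$, with the $p$-Euler factor removed since $\chi\phi$ vanishes at $p$. Shifting contours past these poles yields $\mt_{nh}(\pmb{\alpha},\pmb{\sigma})$. The remaining contour integrals, and the residues at poles that do not cancel, are what need care.

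The off-diagonal Kloosterman terms have moduli $c\equiv0\pmod{p^a}$. After the $\phi$-average, the character $\phi(n_1n_2n_3)$ or $\overline\phi(\cdot)$ is detected modulo $p^b$. Orthogonality forces a congruence such as $n_1n_2\equiv \pm n_3\pmod{p^b}$ (or its analogue for flipped signs), saving a factor $p^{b}$ in the generic range. We then apply Poisson/Voronoi summation in the $n_i$, as in \cite{PeY}, to reach the dual side. The dual moment becomes a sum over characters $\psi$ modulo $p^a$ of $|L(1/2,\psi)|^4$ times sums like $g(\chi\phi,\psi)$. Averaging over $\phi$ should make the double character sum essentially vanish unless $\psi$ is nearly trivial. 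For $p$-adic characters of high conductor with $b\le a/3$, this can be seen by evaluating the sums by $p$-adic stationary phase (Postnikov-type formulas), since then $\phi$ is constant on the relevant neighbourhoods. The generic dual contribution is then $O(p^{-b+a\varepsilon})$.

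The main obstacle is the degenerate terms. These are the zero-frequency terms after Poisson, the Eisenstein contributions, and the $\psi$ of small conductor in the dual sum. Each of these yields secondary polar terms of size $(\log p^a)^{k}$, and the theorem claims they cancel exactly against each other, including against the non-polar parts of the diagonal. I would first compute all these degenerate terms as explicit contour integrals with the same Gamma ratios. I would then match them pairwise under $\alpha_j\mapsto-\alpha_j$ symmetries, using the functional equations of $\zeta$ and of the Gamma factors, and expect the total to collapse to the $\sigma=1$ CFKRS sum. Finally, \eqref{eq:explicit_asymptotic} follows by letting $\pmb{\alpha}\to\pmb 0$ in the holomorphic combination. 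The four surviving terms each have a triple pole, and the leading coefficient gives $\frac{8}{6}(1-p^{-1})^3(\log p^a)^3$ times the spectral integral, by the standard CFKRS contour-integral evaluation.
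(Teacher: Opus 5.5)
Your outline has the right skeleton (AFE, newform Kuznetsov with central character $\overline{\chi\phi}^2$, average over $\phi$, Poisson, large sieve, cancellation of degenerate terms), but there is a genuine gap: the step everything hinges on is misdescribed. In the off-diagonal you let the $\phi$-average act only on $\phi(n_1)\overline{\phi}(n_2n_3)$ and conclude a congruence among the $n_i$ modulo $p^b$. But the Kloosterman sums $S_{\overline{\chi\phi}^2}(\pm n_1,n_2n_3;c)$ also depend on $\phi$, through $(\chi\phi)^2(y)$. So orthogonality actually imposes $\pm n_1y^2\equiv n_2n_3\pmod{p^b}$ on the Kloosterman variable $y$, not on the $n_i$. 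Evaluating this needs $p$-adic stationary phase of the twisted sum. Via Postnikov, the $y$-sum localizes to $2c'\ell_\chi y\pm n_1y^2-n_2n_3\equiv0$ modulo a suitable power of $p$, and $\ell_{\chi\phi}\equiv\ell_\chi\pmod{p^{\nu_p(c)-b}}$. The outcome is not a congruence saving at all:
for $p^{a+b}\mid c$ the average changes nothing beyond parity bookkeeping;
for $a<\nu_p(c)<a+b$ it vanishes identically;
for $\nu_p(c)=a$ it survives as $\frac{1}{1-p^{-1}}\sum_{\psi\,(\mo p^{a-b})}\overline{\psi}(\mp n_1n_2n_3\overline{c'}^2)\tau(\chi\psi)/\tau(\chi\overline{\psi})$ times a Kloosterman sum modulo $c'$.
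The last case is obtained by Fourier inversion; a direct Postnikov evaluation there only works for $b\geq a/3$. The saving $p^{b}$ comes from the dual characters living modulo $p^{a-b}$ in the large sieve, in both ranges of $c$. Those characters are not ``nearly trivial''. Relatedly, the root number is $\lambda_f(-1)$. The factor $\frac{1+\sigma}{2}$ arises because, after the $\phi$-average, each parity of $f$ sees both values of $\delta$, so the $\sigma=-1$ terms cancel between even and odd forms.

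The larger gap is that cancelling the false main terms, which is the heart of the proof, is only asserted as an expectation, and your list of degenerate terms is wrong. Eisenstein series already sit on the spectral side of Kuznetsov and produce nothing separate. The $\psi=\chi_0$ part of the nonzero-frequency dual sum is merely bounded. What must be shown is this. After contour shifts the diagonal equals $f_{\text{MT}}$ plus three terms $N(\kappa,\lambda,\mu,\dots)$ and $\tfrac12L(0,0,0,\dots)/\prod_j(-\sigma_j\alpha_j)$, up to $O(p^{a(-1/3+\varepsilon)})$. The middle residue of each $N$ cancels against the same term for $\pmb\sigma$ with two signs flipped. Everything else cancels against the zero-frequency Poisson terms ($m_1m_2m_3=0$), which come \emph{only} from the surviving moduli with $\nu_p(c)=a$. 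For $p^{a+b}\mid c$ they vanish, since the Gauss-type sum is $0$ when $p\mid m_1m_2m_3$. The matching is:
the origin matches the residues $L(\mu,-\mu,\mu,\dots)$ and the $L(0,0,0,\dots)$ term;
the coordinate axes match the integrals $M(\cdot)$;
the coordinate planes are $O(p^{a(-1/3+\varepsilon)})$.
Proving this uses the Mellin--Barnes representations of $J_{2it}$ and $K_{2it}$, replacing the $J$-kernel by its odd part in $t$, and the reflection and duplication formulas. These turn the two kernels into $(2\pi)^{-2s}\frac{-i\sin(\pi s)\sinh(\pi t)}{(-1)^\delta\sin(\pi s)+\cosh(\pi t)}$ and $(2\pi)^{-2s}\frac{\pi}{(-1)^\delta\sin(\pi s)+\cosh(\pi t)}$, so the same-sign and opposite-sign Kuznetsov terms combine into the density $\tanh(\pi t)t$. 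The sums of Ramanujan sums $R_{p^{a-b}}(m)$ over $m\neq0$ are evaluated with the functional equation of $\zeta$. None of this is a pairing under $\alpha_j\mapsto-\alpha_j$. Without the correct $\nu_p(c)=a$ evaluation, nothing on the off-diagonal side is left for the false main terms to cancel against. Finally, $b\le a/3$ is used only to absorb the $O(p^{a(-1/3+\varepsilon)})$ remainders into $O(p^{-b+a\varepsilon})$, not in a Postnikov argument. Those remainders arise because contours stop at $\re s=-1/3$ to avoid the poles of $\gamma_{\rat}$ on $\re s=-1/2$.
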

$\ $

\begin{remark}\label{rmk:remark_intro}
    \leavevmode
    \vspace{-13pt}
    \begin{enumerate}
        \item[(i)] Note that \eqref{eq:claim_asymptotic} is not a depth-aspect result, although it does contain one. It is truly an asymptotic formula in terms of $p^a$ so long as $b$ is chosen such that $p^{-b+a\varepsilon}=o((\log p^a)^3)$ for sufficiently small $\varepsilon>0$. If we fix $a,b$ and let $p\rightarrow\infty$, then $\varepsilon\leq b/a$ suffices. If we fix $p$ and let $a\rightarrow\infty$, then we require some dependence of $b$ on $a$, e.g. $b\geq a/D$ for some fixed $D\geq3$, in which case $\varepsilon\leq 1/D$ suffices.
        
        \item[(ii)] As already remarked, one key idea in proving Theorem \ref{thm:main} is that the sum over $\phi\,(\mo p^{b})$ shortens the dual sum in \eqref{eq:cubefree_summarized}, namely from modulus $p^a$ to modulus $p^{a-b}$. We roughly obtain
        \begin{align}
            &\E_{\phi\,(\mo p^b)}[\mathcal{M}_{nh}(\chi\phi,\pmb{\alpha})] \\
            &\approx \sum_{\pmb{\sigma}\in\{\pm1\}^3}\frac{1+\sigma}{2}\text{MT}_{nh}(\pmb{\alpha},\pmb{\sigma}) + \frac{1}{p^{a-b}}\sum_{\psi\,(\mo p^{a-b})}\abs{L(1/2,\psi)}^4g(\chi,\psi) + O_\varepsilon(p^{a(-\frac{1}{3}+\varepsilon)}). \label{eq:averaged_moment_summarized}
        \end{align}
        As a consequence, we save a factor $p^{b}$ in large-sieve estimates on the dual moment, thus resulting in an error term $O_\varepsilon(p^{-b+a\varepsilon})$ in \eqref{eq:claim_asymptotic}.
        The shortening of the dual sum turns out to be particularly convenient for us, as algebro-geometric estimates on $g(\chi,\psi)$ are required only when $b=0$. We also remark that \eqref{eq:averaged_moment_summarized} is an oversimplified expression for the dual moment for the sake of exposition. The precise formula involves additional parameters and multiple Mellin inversion integrals, and is obtained by combining the results in Sections \ref{sec:evaluation_of_gauss_sums}--\ref{sec:evaluation_of_generating_functions} and \ref{sec:completing_the_proof}.

        The critical part of this work, however, is the matching of certain diagonal and off-diagonal terms in Section \ref{sec:cancellation}. Such a cancellation result was previously obtained in \cite[Section 3]{Pe} only for the case of holomorphic cusp forms, but without showing that the main term asymptotically dominates the dual moment. In any case, our present (generalized) proof for the non-holomorphic case is new, and moreover the cancellation arises from different terms in the calculation than in the holomorphic case. The argument involves Mellin--Barnes integral representations for the Fourier transforms of the Bessel transforms in the Kuznetsov formula, functional equations for $\Gamma(s)$, followed by some intricate combinatorics. One interesting aspect of the main terms that can be inferred from our proofs is that, as $\pmb{\alpha}\rightarrow\pmb{0}$, the diagonal terms are matched against those off-diagonal terms that come from the opposite-sign case of the Kuznetsov formula, whereas the off-diagonal terms of the same-sign case are absorbed by the error term $O_\varepsilon(p^{a(-\frac{1}{3}+\varepsilon)})$.
        
        \item[(iii)] We explain the source of the technical hypothesis $b\leq a/3$. The computations in Section \ref{sec:cancellation} (and those in Section \ref{sec:diag}) involve bounding contour integrals of the form
        \begin{align}
            \int_{(c)}p^{as}f(s)\frac{\Gamma_\R(1/2+\delta+s+it)\Gamma_\R(1/2+\delta+s-it)}{\Gamma_\R(1/2+\delta+\alpha+it)\Gamma_\R(1/2+\delta+\alpha-it)}\,ds \label{eq:abstract_contour_integral}
        \end{align}
        for some small shift $\alpha\in\C$, $-1/2<c<0$, $\delta\in\{0,1\}$, $t\in\R$, and $f(s)$ some function holomorphic and exponentially decaying in a vertical strip containing $s=-1/2,c$. If $\delta=0$, the integrand has poles on the $(-1/2)$-vertical line, so in order to avoid dealing with complicated residual terms, we refrain from shifting the contour to the left past $s=-1/2$. Choosing $c=-1/3$, we can bound \eqref{eq:abstract_contour_integral} as $\ll_\varepsilon p^{a(-1/3+\varepsilon)}$. The hypothesis $b\leq a/3$ permits us to dominate this term by $O_\varepsilon(p^{-b+a\varepsilon})$ in \eqref{eq:claim_asymptotic}, since the latter error term is the best we can obtain when applying large-sieve estimates to the dual moment in Section \ref{sec:completing_the_proof}. To relax the hypothesis $b\leq a/3$, it would suffice to compute the contribution to \eqref{eq:abstract_contour_integral} from residues on the $(-1/2)$-line, and show that they cancel out, as we would expect based on the CFKRS conjecture.
        
        \item[(iv)] We assume $T=(p^a)^{o(1)}$ so that we can suppress $T$-dependence in most of our computations. Our strategy likely also works with some modifications if this condition on $T$ is relaxed. Likewise, one could also study the small spectral window $t\in(T,T+\Delta]$ as in \cite[Theorem 1.2]{PeY}, but we chose not to pursue this.
         
        \item[(v)] The CFKRS conjectured asymptotic formula for $\E_{\phi\,(\mo p^b)}[\mathcal{M}_{nh}(\chi\phi,\pmb{\alpha})]$ has the same main term, and an error term $O_\varepsilon(p^{a(-1+\varepsilon)})$; see Conjecture \ref{conj:cfkrs}. We could have also used the framework of multiple Dirichlet series \cite{DGH} to produce a conjectural asymptotic formula for the moment of $L$-functions against which to compare our main theorem.

        \item[(vi)] The strategy of averaging over $\phi\,(\mo p^b)$ can also be applied to the cubic moment of holomorphic forms in \cite{Pe}. We give the statement and sketch the proof of such a result in Appendix \ref{sec:proof_hol}.

        \item[(vii)] The assumption that $\chi$ has prime-power conductor $p^a$ is for convenience, and our method of proof likely also works for composite conductors.
        
        \item[(viii)] From the outset of this paper, we assumed that proving
        \begin{align*}
            \sum_{\psi\,(\mo q)}\abs{L(1/2,\psi)}^4g(\chi,\psi) = o(q)
        \end{align*}
        on the right-hand side of \eqref{eq:cubefree_summarized} is hard. If however such a result were proved in the future, a non-averaged version of Theorem \ref{thm:main} (i.e. with $b=0$) could be obtained with minimal extra work, as our extraction of $\text{MT}_{nh}(\cdot)$ and the cancellation of remaining diagonal and off-diagonal terms remains valid.
        
        \item[(ix)] One can also view our choice of ``neighborhood'' of $f\otimes\chi$ in terms of local representation theory. Let $\mathcal{H}_{it}$ be the set of normalized Hecke--Maass newforms of any level, trivial central character, and spectral parameter $t$. Given $f\in\mathcal{H}_{it}$, denote by $\pi_f\simeq\pi_{f,\infty}\otimes\bigotimes_p'\pi_{f,p}$ the associated automorphic representation. For non-quadratic $\chi$, the set of principal series representations
        \begin{align*}
            \mathcal{F}_{\chi,p} \coloneqq \{\pi(\chi\abs{\cdot}^{it/\log p},\chi^{-1}\abs{\cdot}^{-it/\log p})\,:\,t\in[0,2\pi)\}
        \end{align*}
        forms a connected component of the unitary dual of $PGL_2(\Q_p)$ with respect to the Fell topology. Fixing $\chi$ non-quadratic with $c(\chi)=p^a$ and $t$, we then have
        \begin{align}
            \{f\otimes\chi\,:\,f\in\mathcal{H}_{it}(m,\overline{\chi}^2),\,m\mid p^a\} = \{f\in\mathcal{H}_{it}\,:\,\pi_{f,p}\in\mathcal{F}_{\chi,p},\,\pi_{f,\tilde{p}}\text{ unramified }\forall\,\tilde{p}\neq p\}. \label{eq:identity_local}
        \end{align}
        Defining the neighborhoods $\chi[b] \coloneqq \{\chi_1\in\Q_p^{\times\wedge}\,:\,c(\chi\chi_1^{-1})\leq b\}$, $1\leq b<a$, of $\chi$, one has
        \begin{align*}
            &\bigcup_{\phi\,(\mo p^b)}\{f\otimes\chi\phi\,:\,f\in\mathcal{H}_{it}(m,\overline{\chi\phi}^2),\,m\mid p^a\} \\
            &= \{f\in\mathcal{H}_{it}\,:\,\pi_{f,p}\in\mathcal{F}_{\chi_1,p},\,\pi_{f,\tilde{p}}\text{ unramified }\forall\,\tilde{p}\neq p,\,\chi_1\in\chi[b]\}.
        \end{align*}
        Given this rephrasing, it would be interesting to try and compute an asymptotic formula for the cubic moment with $\mathcal{F}_{\chi,p}$ on the right-hand side of \eqref{eq:identity_local} replaced by other parts of the unitary dual. We hope to address this question in future work with $\mathcal{F}_{\chi,p}$ consisting of supercuspidal representations.

        \item[(x)] An asymptotic formula for $\E_{\phi\,(\mo p^b)}[\mathcal{M}_{nh}(\chi\phi,\pmb{\alpha})]$ with an explicit main term (rather than merely an upper bound) opens the door to several potential applications. On the one hand, establishing an asymptotic allows one to deploy Soundararajan's resonance method \cite{S} (and its refinements) to show existence of large central $L$-values within our family. On the other hand, combining our proof of Theorem \ref{thm:main} with mollification techniques could yield simultaneous non-vanishing results for central $L$-values in the spirit of Michel and VanderKam \cite{MVdk}. For example, it would be interesting to adapt our proof to compute an asymptotic formula for the (simplified) mixed cubic moment
        \begin{align*}
            \sum_{\phi}\sum_{t,m}\sum_{f\in\mathcal{H}_{it}(m,\overline{\chi\phi}^2)}L(1/2,f\otimes\chi\phi)L(1/2,f\otimes\chi\phi\otimes\eta_1)L(1/2,f\otimes\chi\phi\otimes\eta_2) + \text{(Eis.)}
        \end{align*}
        for some Dirichlet characters $\eta_1,\eta_2$, possibly with $\eta_1=\eta$, $\eta_2=\eta^2$ for $\eta$ a cubic character. In a similar vein, given a self-dual $GL_2$ newform $g$, our framework could be modified to approach the evaluation of moments of the form 
        \begin{align*}
            \sum_{\phi}\sum_{t,m}\sum_{f\in\mathcal{H}_{it}(m,\overline{\chi\phi}^2)}L(1/2,f\otimes\chi\phi)L(1/2,f\otimes\chi\phi\otimes g) + \text{(Eis.)}.
        \end{align*}
    \end{enumerate}
\end{remark}

\subsection{Overview of the proof}
We prove Theorem \ref{thm:main} throughout Sections \ref{sec:set_up}--\ref{sec:completing_the_proof}, and in Section \ref{sec:conjecture} we match our asymptotic formula against the conjecture from \cite{CFKRS}. A precise outline of the proof of Theorem \ref{thm:main} is as follows:
\begin{enumerate}
    \item[Section \ref{sec:set_up}:] We recall the definitions of the $L$-functions under consideration, and apply the approximate functional equation (resulting in a triple sum $\sum_{n_1,n_2,n_3\geq1}(\dotsc)$) and Kuznetsov trace formula (resulting in a diagonal and an off-diagonal sum $\sum_{\substack{c\geq1\,:\, c\equiv0\,(\mo p^a)}}(\dotsc)$) to $\mathcal{M}_{nh}(\chi,\pmb{\alpha})$.
    \item[Section \ref{sec:diag}:] We evaluate the diagonal term coming out of the trace formula, and express it as a sum of contour integration residues. We decompose this sum into the conjectured main term and other terms that will cancel out among each other or with certain off-diagonal terms.
    \item[Section \ref{sec:averaging_kloosterman_sums}:] We evaluate the average over $\phi\,(\mo p^b)$ of the exponential sums appearing in the off-diagonal terms of the trace formula, following the strategy of Hu \cite{H}. As a result, all terms with $a<\nu_p(c)<a+b$ in the off-diagonal contribution of the trace formula vanish. The surviving terms with $\nu_p(c)=a$ are crucial in our work, and were not previously analyzed in \cite{H}. These terms lead to the off-diagonal terms that cancel out with diagonal terms.
    \item[Section \ref{sec:poisson}:] We apply Poisson summation to each $n_j$ in the off-diagonal, resulting in a triple dual sum $\sum_{m_1,m_2,m_3\in\Z}(\dotsc)$.
    \item[Section \ref{sec:evaluation_of_gauss_sums}:] We evaluate the Gauss-type sums resulting from Poisson summation. 
    \item[Section \ref{sec:evaluation_of_generating_functions}:] We evaluate certain generating functions of the Gauss-type sums, which will be applied later in large-sieve estimates in Section \ref{sec:completing_the_proof}.
    \item[Section \ref{sec:cancellation}:] We obtain cancellation among the diagonal terms that were separated off in Section \ref{sec:diag} and the off-diagonal terms with $m_1m_2m_3=0$. This section is the heart of the proof.
    \item[Section \ref{sec:completing_the_proof}:] We complete the proof of Theorem \ref{thm:main} by bounding the remaining off-diagonal terms using stationary-phase results from \cite{PeY} and large-sieve estimates.
\end{enumerate}

\subsection{Notation}
Let $c,m,n\in\Z_{\geq1}$, and $p$ a prime. For $x\in\R$, let $e(x)\coloneqq\exp(2\pi i x)$ and $e_n(x)\coloneqq e(x/n)$. Write $U_p(n)\coloneqq1+p^n\Z\subset\Z$, and let $\nu_p:\Q\rightarrow\Z\cup\{\infty\}$ denote $p$-adic valuation. Given a Dirichlet character $\psi$ modulo $c$, denote its conductor by $c(\psi)$, and the primitive Dirichlet character inducing it by $\psi'$. Let $\tau(\psi)\coloneqq\sum_{x\,(\mo c)}\psi(x)e_{c}(x)$ denote the Gauss sum attached to $\psi$. Writing
$$\,\sideset{}{^*}\sum_{y\,(\mo c)}(\dotsc)$$
for summation over $y\in(\Z/c\Z)^\times$, we define the Sali\'e sum
\begin{align*}
    S_\psi(m,n;c) \coloneqq \,\sideset{}{^*}\sum_{y\,(\mo c)}\overline{\psi}(y)e_c(my+n\overline{y}),
\end{align*}
so that $S(m,n;c)\coloneqq S_{\chi_0}(m,n;c)$ is just the classical Kloosterman sum. Define also the Ramanujan sum $R_{c}(n)\coloneqq S(n,0;c)$. 

Consider shifts $\pmb{\alpha}=(\alpha_1,\alpha_2,\alpha_3)\in\C^3$ and permutations $\pmb{\sigma}=(\sigma_1,\sigma_2,\sigma_3)\in\{\pm1\}^3$. We will often make use of the ratios of $\Gamma$-functions
\begin{align*}
    \Gamma_{\rat}(\delta,t,\pmb{\alpha},\pmb{\sigma}) &\coloneqq \prod_{1\leq j\leq 3}\gamma_{\rat}(\delta,t,\sigma_j\alpha_j,\alpha_j), \\
    \gamma_{\rat}(\delta,t,u,v) &\coloneqq \frac{\Gamma_\R(1/2+\delta+u+it)\Gamma_\R(1/2+\delta+u-it)}{\Gamma_\R(1/2+\delta+v+it)\Gamma_\R(1/2+\delta+v-it)}.
\end{align*}
Given $r\in\Z_{\geq1}$, write $\zeta^{(r)}(s)\coloneqq\zeta(s)\prod_{p\mid r}(1-p^{-s})$ for the Riemann-zeta function defined via the usual Euler product, but with the local factors at primes dividing $r$ omitted.

We call a map $w:\R\rightarrow\R_{\geq0}$ a \textit{dyadic partition function} of $\Z_{\geq1}$ if $w$ is smooth, compactly supported on $[1/2,3]$, $w\vert_{[1,2]}\equiv1$, and 
\begin{align*}
    \sum_{k\geq0}w(n/2^k)=1\quad\text{ for all }n\in\Z_{\geq1}.
\end{align*}
We will often write $w_{2^k}(\cdot)\coloneqq w(\cdot/2^k)$.

For the sake of consistency, much of the remaining notation in this paper is similar to that in \cite{Pe} and \cite{PeY}.

\section*{Acknowledgements}
The author would like to thank her PhD advisor Ian Petrow for his valuable guidance on this project, and for his careful proofreading of the paper. The author also expresses gratitude towards Alberto Acosta Reche for helpful comments.
\section{Set-up}\label{sec:set_up}
We define the $L$-functions under consideration, and state the approximate functional equation and Kuznetsov trace formula. For a more detailed background, see \cite[Section 2]{PeY}.

\subsection{Automorphic forms and $L$-functions}\label{sec:automorphic_forms}Let $q\in\Z_{\geq1}$, $m\mid q$, and consider a primitive Dirichlet character $\chi$ modulo $q$. Let $u\in\mathcal{H}_{it}(m,\overline{\chi}^2)\cup\mathcal{H}_{it,\text{Eis}}(m,\overline{\chi}^2)$. Then $u\otimes\chi$ is a newform of level $q^2$ and trivial central character. For $\re s>1$, we define the $L$-series
\begin{align*}
    L(s,u\otimes\chi) \coloneqq \sum_{n\geq1}\frac{\lambda_u(n)\chi(n)}{n^s},
\end{align*}
as well as the local $L$-function at the infinite place
\begin{align}
    L_\infty(s,u\otimes\chi) &\coloneqq \Gamma_\R(\delta_{u\otimes\chi}+s+it)\Gamma_\R(\delta_{u\otimes\chi}+s-it), \label{eq:Linfty} \\
    \delta_{u\otimes\chi} &\coloneqq \frac{1-\lambda_u(-1)\chi(-1)}{2},
\end{align}
and the completed $L$-function $\Lambda(s,u\otimes\chi)\coloneqq q^sL_\infty(s,u\otimes\chi)L(s,u\otimes\chi)$. The functional equation $\Lambda(s,u\otimes\chi)=\epsilon(u\otimes\chi)\Lambda(1-s,u\otimes\chi)$ then holds, with root number $\epsilon(u\otimes\chi)=\lambda_u(-1)$.

\subsection{Approximate functional equation}
Keep the same notation as in Section \ref{sec:automorphic_forms}, and specify to $q=p^a$. Let $u\in\mathcal{H}_{it}(m,\overline{\chi}^2)\cup\mathcal{H}_{it,\text{Eis}}(m,\overline{\chi}^2)$. For $y\in\R_{>0}$, define the weight function (cf. \cite[(5.13)]{IK})
\begin{align*}
    &V_{1/2+\alpha_j}(y,t,\delta_{u\otimes\chi}) \coloneqq \frac{1}{2\pi i}\int_{(\sigma)}y^{-s}\frac{L_\infty(1/2+s,u\otimes\chi)}{L_\infty(1/2+\alpha_j,u\otimes\chi)}\frac{G(s-\alpha_j)}{s-\alpha_j}\,ds,\quad G(s)\coloneqq e^{2s^2},
\end{align*}
where $\sigma\in\R$ lies to the right of all poles of the integrand. The function $V_{1/2+\alpha_j}(y,t,\cdot)$ decays rapidly for $y\gg1+\abs{t}$ (see Lemma \ref{lemma:Vyt} below), so it makes sense to speak of a ``truncated'' Dirichlet series
\begin{align*}
    L_0(\alpha_j,u\otimes\chi) \coloneqq \sum_{n\geq1}\frac{\lambda_u(n)\chi(n)}{\sqrt{n}}V_{1/2+\alpha_j}\left(\frac{n}{p^a},t,\delta_{u\otimes\chi}\right).
\end{align*}

\begin{lemma}[Approximate functional equation]
For $m\mid p^a$ and $u\in\mathcal{H}_{it}(m,\overline{\chi}^2)\cup\mathcal{H}_{it,\text{Eis}}(m,\overline{\chi}^2)$, one has
\begin{align}
    &L(1/2+\alpha_j,u\otimes\chi) \\
    &= p^{-a\alpha_j}\bigg(L_0(\alpha_j,u\otimes\chi) + \lambda_u(-1)\frac{L_\infty(1/2-\alpha_j,u\otimes\chi)}{L_\infty(1/2+\alpha_j,u\otimes\chi)}L_0(-\alpha_j,u\otimes\chi)\bigg). \label{eq:AFE_IK}
\end{align}
\end{lemma}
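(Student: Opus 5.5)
The plan is the standard contour-shift argument from \cite[Theorem 5.3]{IK}, applied to the completed $L$-function $\Lambda(s,u\otimes\chi)=q^sL_\infty(s,u\otimes\chi)L(s,u\otimes\chi)$ with $q=p^a$. Consider the integral
\begin{align*}
    I \coloneqq \frac{1}{2\pi i}\int_{(\sigma)}\frac{\Lambda(1/2+s,u\otimes\chi)}{p^{a/2}L_\infty(1/2+\alpha_j,u\otimes\chi)}\frac{G(s-\alpha_j)}{s-\alpha_j}\,ds
\end{align*}
with $\sigma$ large. For $\re(1/2+s)>1$, expand $L(1/2+s,u\otimes\chi)$ as its Dirichlet series and interchange sum and integral. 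Since $p^{a(1/2+s)}p^{-a/2}n^{-1/2-s}=n^{-1/2}(n/p^a)^{-s}$, this gives
\begin{align*}
    I=\sum_{n\geq1}\frac{\lambda_u(n)\chi(n)}{\sqrt n}V_{1/2+\alpha_j}(n/p^a,t,\delta_{u\otimes\chi})=L_0(\alpha_j,u\otimes\chi),
\end{align*}
which is exactly the definition.

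Next shift the contour to $\re s=-\sigma$. The only pole crossed is the simple pole at $s=\alpha_j$ from $1/(s-\alpha_j)$. Its residue is
\begin{align*}
    \Lambda(1/2+\alpha_j,u\otimes\chi)/\bigl(p^{a/2}L_\infty(1/2+\alpha_j,u\otimes\chi)\bigr)=p^{a\alpha_j}L(1/2+\alpha_j,u\otimes\chi),
\end{align*}
using $G(0)=1$. Two analytic inputs justify this step.
\begin{enumerate}
    \item[(a)] $\Lambda(s,u\otimes\chi)$ is entire and of moderate growth in vertical strips. For cusp forms this holds because $u\otimes\chi$ is a newform of level $q^2$. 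For the newform Eisenstein series, $L(s,E\otimes\chi)$ is a product of two Dirichlet $L$-functions of characters of conductor $p^a$ (nonprincipal since $\chi$ is primitive with $a\geq1$), so no poles arise. The Gamma factors cannot cancel poles here because only $\Lambda$ appears.
    \item[(b)] The factor $G(s-\alpha_j)=e^{2(s-\alpha_j)^2}$ decays like $e^{-2(\im s)^2}$. This beats the polynomial growth of $\Lambda$ (Phragmén–Lindelöf together with Stirling), so the horizontal segments vanish.
\end{enumerate}

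On the line $\re s=-\sigma$, apply the functional equation $\Lambda(1/2+s)=\lambda_u(-1)\Lambda(1/2-s)$ and substitute $s\mapsto -s$. The integrand becomes
\begin{align*}
    \lambda_u(-1)\frac{\Lambda(1/2+s,u\otimes\chi)}{p^{a/2}L_\infty(1/2+\alpha_j,u\otimes\chi)}\frac{G(-s-\alpha_j)}{-s-\alpha_j}
\end{align*}
on $\re s=\sigma$, and the orientation reversal supplies a minus sign. Since $G$ is even, $G(-s-\alpha_j)=G(s+\alpha_j)$. The factor $-1/(s+\alpha_j)$ together with the orientation sign gives $+1/(s-(-\alpha_j))$. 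Multiplying and dividing by $L_\infty(1/2-\alpha_j,u\otimes\chi)$, this second integral equals
\begin{align*}
    \lambda_u(-1)\frac{L_\infty(1/2-\alpha_j)}{L_\infty(1/2+\alpha_j)}L_0(-\alpha_j,u\otimes\chi),
\end{align*}
by the same Dirichlet-series expansion as before. Hence
\begin{align*}
    L_0(\alpha_j)=p^{a\alpha_j}L(1/2+\alpha_j)-\lambda_u(-1)\frac{L_\infty(1/2-\alpha_j)}{L_\infty(1/2+\alpha_j)}L_0(-\alpha_j).
\end{align*}
The sign here is an artifact of the orientation of the shifted contour and must be tracked carefully: the shifted integral enters with the opposite sign of the residue, so the two terms add. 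Multiplying through by $p^{-a\alpha_j}$ then yields \eqref{eq:AFE_IK}.

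The only delicate point is this sign and orientation bookkeeping, together with checking that $\delta_{u\otimes\chi}$ is the same on both sides, so that $V_{1/2-\alpha_j}$ is evaluated with the same Gamma factor. This holds because the functional equation maps $u\otimes\chi$ to itself, as it is self-dual with trivial central character. The remaining entire-ness and growth facts in (a) and (b) are routine.
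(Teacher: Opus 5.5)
Your route is the paper's route. The paper simply quotes \cite[Theorem 5.3]{IK} with $X=1$, $q(f)=p^{2a}$, $\gamma(f,s)=L_\infty(s,u\otimes\chi)$, $\varepsilon(f)=\lambda_u(-1)$, and your contour shift is the proof of that theorem written out in this setting. Your check that $\Lambda(s,u\otimes\chi)$ is entire in the Eisenstein case is a sensible addition.

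The final identity you reach is correct. However, the sign bookkeeping you single out as the delicate point contains two errors that happen to cancel.

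First, substituting $s=-w$ on the line $(-\sigma)$ produces no net sign, because $ds=-dw$ cancels the reversal of orientation:
\[
\frac{1}{2\pi i}\int_{(-\sigma)}F(s)\,ds=\frac{1}{2\pi i}\int_{(\sigma)}F(-w)\,dw .
\]
The only minus sign comes from the factor $1/(-w-\alpha_j)$. So the shifted integral equals $-\lambda_u(-1)\frac{L_\infty(1/2-\alpha_j,u\otimes\chi)}{L_\infty(1/2+\alpha_j,u\otimes\chi)}L_0(-\alpha_j,u\otimes\chi)$, not $+\lambda_u(-1)(\cdots)$ as you claim.

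Second, the shifted integral enters with the same sign as the residue, not the opposite sign. The rectangle is traversed counterclockwise, so $\frac{1}{2\pi i}\int_{(\sigma)}=\operatorname{Res}_{s=\alpha_j}+\frac{1}{2\pi i}\int_{(-\sigma)}$.

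With both points corrected, you get $L_0(\alpha_j)=p^{a\alpha_j}L(1/2+\alpha_j)-\lambda_u(-1)\frac{L_\infty(1/2-\alpha_j)}{L_\infty(1/2+\alpha_j)}L_0(-\alpha_j)$ directly. Multiplying by $p^{-a\alpha_j}$ gives the lemma, and no appeal to an ``artifact of the orientation'' is needed.
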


\begin{proof}
    See \cite[Theorem 5.3]{IK}; the result follows after plugging in $f=u\otimes\chi$, $X=1$, $s=1/2+\alpha_j$, $q(f)=p^{2a}$, $\gamma(f,s)=L_\infty(s,u\otimes\chi)$, $\varepsilon(f)=\lambda_u(-1)$ into loc.\ cit.
\end{proof}

Now write $\alpha\coloneqq\alpha_1+\alpha_2+\alpha_3$. Also, for $\pmb{\sigma}\coloneqq(\sigma_1,\sigma_2,\sigma_3)\in\{\pm1\}^3$, write $\sigma\coloneqq\sigma_1\sigma_2\sigma_3$. Applying first \eqref{eq:AFE_IK} to $L(1/2+\alpha_j,u\otimes\chi)$ for each $1\leq j\leq 3$, and then the Hecke multiplicativity relation
\begin{align*}
    \lambda_u(n_2)\lambda_u(n_3) = \sum_{d\mid(n_2,n_3)}\lambda_u(n_2n_3/d^2)\overline{\chi}^2(d),
\end{align*}
we obtain
\begin{align}
    \prod_{1\leq j\leq 3}L(1/2+\alpha_j,u\otimes\chi) &= p^{-a\alpha}\sum_{\pmb{\sigma}\in\{\pm1\}^3}\sum_{\substack{d\geq1 \\ (d,p)=1}}\frac{1}{d}\sum_{n_1,n_2,n_3\geq1}\frac{\lambda_u(\sigma n_1)\overline{\lambda_u}(n_2n_3)\chi(n_1)\overline{\chi}(n_2n_3)}{\sqrt{n_1n_2n_3}} \\
    &\quad\quad\quad\quad\quad\quad\quad\quad \cdot V_{1/2+\sigma_1\alpha_1}\bigg(\frac{n_1}{p^a},\cdot\bigg)V_{1/2+\sigma_2\alpha_2}\bigg(\frac{dn_2}{p^a},\cdot\bigg) V_{1/2+\sigma_3\alpha_3}\bigg(\frac{dn_3}{p^a},\cdot\bigg) \\
    &\quad\quad\quad\quad\quad\quad\quad\quad\quad\quad\quad\quad\quad\quad\quad\quad\quad\quad\,\,\,\, \cdot \prod_{1\leq j\leq 3}\frac{L_\infty(1/2+\sigma_j\alpha_j,u\otimes\chi)}{L_\infty(1/2+\alpha_j,u\otimes\chi)}. \label{eq:cube_after_afe}
\end{align}
The conjugates in the last expression appear merely for convenience, as $\lambda_u(n)\chi(n)\in\R$ for all $n\in\Z_{\geq1}$.

We will later use the following growth properties of $V_{1/2+\alpha_j}(\cdot)$ and its derivatives.
\begin{lemma}\label{lemma:Vyt}
    \leavevmode
    \vspace{-13pt}
    \begin{enumerate}
        \item[(i)] For any given $y\in\R_{>0}$, $V_{1/2+\alpha_j}(y,t,\delta)$ is entire and even as a function of $t$. 
        
        \item[(ii)] Assume $t=(p^a)^{o(1)}$, $\re\alpha_j\ll \frac{1}{\log p^a}$ and $\im\alpha_j=O(1)$. Then, if $t\in\R$, we have 
        \begin{align}
            y^k(1/2+it)^l\frac{\partial^{k+l}}{\partial y^k\partial t^l}V_{1/2+\alpha_j}(y,t,\delta)\ll_{A,k,l}y^{-\re\alpha_j}\bigg(1+\frac{y}{1+\abs{t}}\bigg)^{-A}
            \label{eq:Vyt_t_real}
        \end{align}
    for any $A>0$. Moreover, for $t=-i/2+v$ with $v\in\R$, we have
    \begin{align}
        y^k\frac{\partial^k}{\partial y^k}V_{1/2+\alpha_j}(y,-i/2+v,\delta)\ll_{A,k}(y^{-\re\alpha_j}+1)\left(1+\frac{y}{1+\abs{v}}\right)^{-A} \label{eq:Vyt_t_shifted}
    \end{align}
    for any $A>0$.
    \end{enumerate}
\end{lemma}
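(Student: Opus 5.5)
The plan is to work directly from the Mellin integral defining $V_{1/2+\alpha_j}(y,t,\delta)$ and handle the gamma ratio
\begin{align*}
    \frac{L_\infty(1/2+s,\cdot)}{L_\infty(1/2+\alpha_j,\cdot)} = \frac{\Gamma_\R(1/2+\delta+s+it)\Gamma_\R(1/2+\delta+s-it)}{\Gamma_\R(1/2+\delta+\alpha_j+it)\Gamma_\R(1/2+\delta+\alpha_j-it)}
\end{align*}
with Stirling's formula, with the Gaussian factor $G(s-\alpha_j)=e^{2(s-\alpha_j)^2}$ supplying absolute convergence throughout.

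\emph{Part (i).} For fixed $y$ I would take the line $\re s=\sigma$ large, say $\sigma\geq 2$. On that line the integrand is holomorphic in $t$ wherever the shifted gamma arguments $1/2+\delta+s\pm it$ avoid the poles of $\Gamma$. The poles of the denominator $1/\Gamma_\R$ do not matter, since $1/\Gamma$ is entire. For $t$ in a compact set, I would choose $\sigma>\abs{\im t}+\abs{\re\alpha_j}+1$ so that no gamma poles are met. Contour independence then shows that $V$ is well defined and holomorphic in $t$ on every compact set, hence entire. The decay of $G$ on vertical lines, together with the at most exponential growth of the gamma ratio given by Stirling, justifies differentiating under the integral sign. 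Evenness in $t$ is immediate because the integrand is invariant under $t\mapsto -t$: the product $\Gamma_\R(\cdot+it)\Gamma_\R(\cdot-it)$ is symmetric.

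\emph{Part (ii), real $t$.} The factor $y^k\partial_y^k$ produces the polynomial $(-s)(-s-1)\cdots(-s-k+1)$, which is absorbed by $G$. For the $t$-derivatives I would use Stirling in the form $\partial_t^l$ of $\log\Gamma_\R(1/2+\delta+s\pm it)$ minus the same at $s=\alpha_j$. Each derivative of the ratio is the ratio itself times a polynomial in digamma-type differences, and these are $\ll (1+\abs{s})^{l}(1/2+\abs{t})^{-l}$ for $\abs{t}$ large relative to $\abs{s}$. This gives the factor $(1/2+it)^l$ on the left-hand side. Stirling also gives
\begin{align*}
    \frac{L_\infty(1/2+s,\cdot)}{L_\infty(1/2+\alpha_j,\cdot)} \ll (1+\abs{t})^{\re(s-\alpha_j)}e^{O(\abs{s}^2/(1+\abs{t}))}\cdot(\text{polynomial in }\abs{s}).
\end{align*}
When $y\geq1+\abs{t}$, I would shift to $\re s=A$ and obtain $\ll y^{-A}(1+\abs{t})^{A}$, which gives the decay $(1+y/(1+\abs{t}))^{-A}$. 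When $y<1+\abs{t}$, I would shift to $\re s$ slightly left of $\re\alpha_j$, picking up the residue at $s=\alpha_j$. That residue equals $y^{-\alpha_j}$ times $1$ if $k=l=0$, and vanishes after differentiation in $t$. The remaining integral on $\re s=\re\alpha_j-\eta$ is bounded by $y^{-\re\alpha_j}(y/(1+\abs{t}))^{\eta}\ll y^{-\re\alpha_j}$. The hypotheses $\re\alpha_j\ll 1/\log p^a$ and $\im\alpha_j=O(1)$ keep the constants uniform. I also need no gamma poles between the two lines, which holds since the nearest poles sit at $\re s=-1/2-\delta$.

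\emph{Shifted $t=-i/2+v$.} This is where I expect the main obstacle. The numerator becomes $\Gamma_\R(1+\delta+s+iv)\Gamma_\R(\delta+s-iv)$. For $\delta=0$ it has a pole at $s=iv$, on the line $\re s=0$, which lies near the contour shift at $\re s\approx\re\alpha_j$. In the shift I would move to a line $\re s=-\eta$ with $0<\eta<1$ and collect two residues. The residue at $s=\alpha_j$ gives the $y^{-\re\alpha_j}$ term. The residue at $s=iv$ is bounded by $\abs{y^{-iv}}\cdot\abs{G(iv-\alpha_j)}/\abs{iv-\alpha_j}$ times a bounded gamma ratio, which is $O(1)$. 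This second residue is what produces the extra ``$+1$'' in \eqref{eq:Vyt_t_shifted}. For the part $y\gg 1+\abs{v}$, I would shift right exactly as in the real case. The denominator $\Gamma_\R(1+\delta+\alpha_j+iv)\Gamma_\R(\delta+\alpha_j-iv)$ could vanish near $v=\im\alpha_j$ only in the sense of poles of $\Gamma$, which only make $1/\Gamma$ small, so it causes no harm. The delicate points are keeping Stirling uniform for small $\abs{v}$ and checking that the residue at $s=iv$, when it collides with $s=\alpha_j$, is handled by a merged double-pole computation, which is still $O(1+y^{-\re\alpha_j}\log)$. I would treat that collision separately by moving the contour slightly.
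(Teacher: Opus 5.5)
Your route is the standard one, and it is what the paper relies on. The paper omits the proof and defers to \cite[Lemma 10.1]{PeY}. The only new feature is that the residue of $G(s-\alpha_j)/(s-\alpha_j)$ now sits at $s=\alpha_j$, which produces the factor $y^{-\re\alpha_j}$. The following parts of your proposal are correct:
\begin{itemize}
\item part (i);
\item the right shift for $y\geq 1+|t|$ and the left shift past $s=\alpha_j$ otherwise;
\item the observation that the $t$-derivatives of that residue vanish;
\item the identification of the pole of $\Gamma_\R(s-iv)$ at $s=iv$ (for $\delta=0$) as the source of the extra $+1$ in the shifted bound.
\end{itemize}

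The step that fails as written is your bound for the residue at $s=iv$. You bound it by $|G(iv-\alpha_j)|/|iv-\alpha_j|$ times a ``bounded gamma ratio''. This is not uniform: under the hypotheses $\re\alpha_j$ may be $0$ and $v$ ranges over $\R$, so $iv-\alpha_j$ can be arbitrarily small. Your proposed repair, a merged double-pole computation, points the wrong way. A genuine double pole would produce a term like $y^{-\alpha_j}\log y$, which is not dominated by $y^{-\re\alpha_j}+1$ (take $\re\alpha_j=0$ and $y\to0$). So you would not recover the stated bound.

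The missing observation is that no double pole ever occurs. The denominator $L_\infty(1/2+\alpha_j,\cdot)$ contains $\Gamma_\R(\alpha_j-iv)$, and its reciprocal vanishes exactly where the two poles collide. Put $w=\alpha_j-iv$. Then $1/(w\Gamma_\R(w))=\pi^{w/2}/(2\Gamma(1+w/2))$ is entire, so the residue at $s=iv$ equals
\[ -\,y^{-iv}\,\frac{\Gamma_\R(1+2iv)}{\Gamma_\R(1+\alpha_j+iv)}\cdot\frac{\pi^{(\alpha_j-iv)/2}}{\Gamma\bigl(1+\frac{\alpha_j-iv}{2}\bigr)}\,G(iv-\alpha_j). \]
By Stirling this is $\ll e^{-(v-\im\alpha_j)^2}$, uniformly in $v$ and in $\alpha_j$ in the allowed range. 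The same bound holds after inserting the polynomial factor $(-iv)(-iv-1)\cdots$ that comes from $y^k\partial_y^k$. At $\alpha_j=iv$ exactly, the integrand vanishes identically. Hence the two residues are separately $O(y^{-\re\alpha_j})$ and $O(1)$, and no logarithm appears.

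Relatedly, state Stirling as $(1+|t|)^{\re(s-\alpha_j)}(1+|s|)^{O(1)}$ on the vertical strips you use. Your factor $e^{O(|s|^2/(1+|t|))}$ comes from an expansion valid only for $|s|\ll|t|$. For bounded $t$ it would not be absorbed by $G(s)=e^{2s^2}$ unless its implied constant were below $2$.
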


\begin{remark}\label{rmk:afe_bounds}
    \leavevmode
    \vspace{-13pt}
    \begin{enumerate}
        \item[(i)] We omit proving Lemma \ref{lemma:Vyt}, as only a slight modification of the proof of \cite[Lemma 10.1]{PeY} is required to account for the shifts $\alpha_j$.
        \item[(ii)] The bounds \eqref{eq:Vyt_t_real} and \eqref{eq:Vyt_t_shifted} differ from those in \cite[Lemma 10.1]{PeY} due to the possible presence of a singularity of $V_{1/2+\alpha_j}(y,\cdot)$ at $y=0$ when $\alpha_j\neq0$. In practice we will have $y\geq\frac{1}{2p^a}$, and $p^{a\re\alpha_j}\ll1$ by assumption.
    \end{enumerate}
\end{remark}

\subsection{Kuznetsov trace formula}
Consider a test function $h(t,\cdot)$ holomorphic in $t$ in the strip $\abs{\im(t)}\leq1/2+\beta$, satisfying $h(t,\cdot)=h(-t,\cdot)$, and $\abs{h(t,\cdot)}\ll(1+\abs{t})^{-2-\beta}$ for some $\beta\in\R_{>0}$. We recall the bound
\begin{align}
    \abs{S_\psi(m,n;c)} \leq \tau(c)(m,n,c)^{1/2}c^{1/2}c(\psi)^{1/2} \label{eq:weil_bound}
\end{align} 
for Sali\'e sums from \cite[Chapter 9]{KL}. To state the relevant trace formula, we also consider the Bessel transforms
\begin{align*}
    &g_0(\cdot) \coloneqq \frac{1}{4\pi}\int_{\R}\tanh(\pi t)t h(t,\cdot)\,dt, \\
    & g^+(x,\cdot) \coloneqq \frac{i}{2}\int_{\R}\frac{J_{2it}(x)}{\cosh(\pi t)}t h(t,\cdot)\,dt, \\
    &g^-(x,\cdot) \coloneqq \frac{1}{\pi}\int_{\R}K_{2it}(x)\sinh(\pi t)t h(t,\cdot)\,dt.
\end{align*}

\begin{remark}
    In \cite[(2.14)]{PeY}, the integrals defining the latter Bessel transforms contain some normalization typos. The correct normalizations can be found in \cite[(1.1), (1.2), (1.23)]{HPY}.
\end{remark}

\begin{prop}[{\cite[Proposition 2.1]{PeY}}]\label{prop:kuznetsov}
    Suppose $\chi$ is primitive of conductor $p^a$, and not quadratic. 
    There exist positive weights $w_{f,\ell}\gg p^{-a}(p^a(1+\abs{t}))^{-\varepsilon}$ and $w_{E,\ell}\gg p^{-a}(p^a(1+\abs{t}))^{-\varepsilon}$ so that, for any $n_1,n_2\in\Z_{\geq1}$ with $(n_1n_2,p)=1$, one has
    \begin{align}
        \sum_{t}h(t,\cdot)\sum_{\ell m=p^a}\sum_{f\in\mathcal{H}_{it}(m,\overline{\chi}^2)}w_{f,\ell}\lambda_f(n_1)\overline{\lambda_f}(n_2)+\frac{1}{4\pi}\int_{\R}h(t,\cdot)\sum_{\ell m=p^a}\sum_{E\in \mathcal{H}_{it,\text{Eis}}(m,\overline{\chi}^2)}w_{E,\ell}\lambda_E(n_1)\overline{\lambda_E}(n_2)\,dt \\
        = \mathbbm{1}_{\{n_1=n_2\}}g_0(\cdot)+\sum_{\substack{c\geq1 \\ c\equiv0\,(\mo p^a)}}\frac{S_{\overline{\chi}^2}(n_1,n_2;c)}{c}g^+\bigg(\frac{4\pi\sqrt{n_1n_2}}{c},\cdot\bigg). \label{eq:kuznetsov_rhs}
    \end{align}
\end{prop}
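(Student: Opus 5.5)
We would derive the proposition from the classical Kuznetsov formula for $\Gamma_0(q)$, $q=p^a$, with nebentypus $\psi=\overline{\chi}^2$, at the cusp pair $(\infty,\infty)$. In the standard normalization (e.g.\ Deshouillers--Iwaniec or \cite[Chapter 16]{IK}) this reads
\[
\sum_{u_j}h(t_j)\overline{\rho_j(n_1)}\rho_j(n_2)+\frac{1}{4\pi}\sum_{\mathfrak{c}}\int_{\R}h(t)\overline{\rho_{\mathfrak{c}}(n_1,t)}\rho_{\mathfrak{c}}(n_2,t)\,dt
=\mathbbm{1}_{\{n_1=n_2\}}g_0+\sum_{c\equiv0\,(\mo q)}\frac{S_{\psi}(n_1,n_2;c)}{c}\,g^+\bigg(\frac{4\pi\sqrt{n_1n_2}}{c}\bigg),
\]
where $u_j$ runs over an orthonormal basis of Maass cusp forms and $\mathfrak{c}$ over the singular cusps for $\psi$. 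We would take the Bessel transforms with the corrected normalizations of \cite{HPY}. Since $n_1,n_2\geq1$, only the same-sign Kloosterman term with $J_{2it}$ appears. The decay and holomorphy hypotheses on $h$ are exactly those needed for absolute convergence, by the Weil-type bound \eqref{eq:weil_bound}.

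The key step is to choose the orthonormal basis adapted to Atkin--Lehner--Li theory. First, $\chi$ non-quadratic and $p$ odd imply that $\overline{\chi}^2$ has conductor exactly $p^a$. Hence every newform contributing has level $m$ with $\ell m=p^a$, and the oldspace it generates is spanned by $f(dz)$, $d\mid\ell$. We would apply Gram--Schmidt to $\{f(dz)\}_{d\mid \ell}$, using the explicit inner products $\langle f(d_1z),f(d_2z)\rangle$. These are computed via Rankin--Selberg and depend only on $\lambda_f(p^k)$, which vanish or are unimodular in the ramified-character situation, since the nebentypus has full conductor at $p$.

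Now take $n$ coprime to $p$. Then the $n$-th Fourier coefficient of each orthonormalized basis element equals $\lambda_f(n)$ times the first coefficient of that element, because $f(dz)$ contributes only at multiples of $d$. Summing $|\cdot|^2$ of these first coefficients over the oldform basis attached to $f$ gives a positive number; we group it by $\ell$ and call it $w_{f,\ell}$. The Eisenstein part is handled identically. We would parametrize the continuous spectrum by the newform Eisenstein series $E\in\mathcal{H}_{it,\text{Eis}}(m,\overline{\chi}^2)$ attached to pairs of characters whose product is $\overline{\chi}^2$, together with their $d$-shifts. This replaces the sum over cusps by a sum over these Eisenstein newforms and their oldforms, and produces $w_{E,\ell}$ in the same way.

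The main obstacle is the lower bound $w_{f,\ell}\gg p^{-a}(p^a(1+|t|))^{-\varepsilon}$. We would express $|\rho_f(1)|^2$ for an $L^2$-normalized newform as $\cosh(\pi t)/(\mathrm{vol}(\Gamma_0(m))\,L(1,\mathrm{Ad}f)\cdot(\text{local factors at }p))$. Combining this with the upper bound $L(1,\mathrm{Ad}f)\ll(m(1+|t|))^{\varepsilon}$ and the boundedness of the Gram--Schmidt factors gives the bound, the $\cosh$ factor being absorbed into the normalization of $g^\pm$. For $E$ the analogue uses $|L(1+2it,\chi_1\overline{\chi_2})|^{-2}\gg (q(1+|t|))^{-\varepsilon}$, which follows from standard zero-free-region bounds.
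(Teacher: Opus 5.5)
Your proposal is correct and is essentially the standard derivation behind \cite[Proposition 2.1]{PeY}, which the paper quotes without proof: the Kuznetsov formula for $\Gamma_0(p^a)$ with nebentypus $\overline{\chi}^2$, a newform-adapted basis together with the newform Eisenstein series, and the bound $L(1,\mathrm{Ad}f)\ll(p^a(1+\abs{t}))^{\varepsilon}$ to bound the weights from below. Three small corrections: since $c(\overline{\chi}^2)=p^a$, every contributing form has level exactly $p^a$, so only $\ell=1$ occurs and no Gram--Schmidt step is needed; the $\cosh(\pi t)$ in $\abs{\rho_f(1)}^2$ is cancelled by the $1/\cosh(\pi t_j)$ weight on the spectral side in the Deshouillers--Iwaniec normalization (which your displayed formula omits), rather than being absorbed into $g^\pm$; and the Eisenstein lower bound needs only the elementary upper bound $\abs{L(1+2it,\chi^{\pm 2})}\ll\log(p^a(1+\abs{t}))$, not a zero-free region, since a zero-free region gives the opposite inequality.
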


The opposite-sign case of Proposition \ref{prop:kuznetsov}, i.e. when $n_1n_2<0$, states the same as \eqref{eq:kuznetsov_rhs} except that no diagonal term appears and $g^+(\frac{4\pi\sqrt{n_1n_2}}{c},\cdot)$ is replaced by $g^-(\frac{4\pi\sqrt{-n_1n_2}}{c},\cdot)$.

\subsection{Identifying diagonal and off-diagonal terms of $\mathcal{M}_{nh}(\cdot)$}
We first split
\begin{align*}
    \mathcal{M}_{nh}(\chi,\pmb{\alpha}) = \sum_{\pm}\mathcal{M}_{nh}^\pm(\chi,\pmb{\alpha})
\end{align*}
into the contribution from even and odd forms. After filling in \eqref{eq:cube_after_afe} into $\mathcal{M}^\pm_{nh}(\chi,\pmb{\alpha})$, we are ready to apply the Kuznetsov trace formula (for the indices $\sigma n_1$ and $n_2n_3$) with test function
\begin{align*}
    &h(t,n_1,n_2,n_3,d,\delta_\chi^\pm,\pmb{\alpha},\pmb{\sigma}) \\
    &\coloneqq h_0(t)V_{1/2+\sigma_1\alpha_1}\left(\frac{n_1}{p^a},t,\delta_\chi^\pm\right)V_{1/2+\sigma_2\alpha_2}\left(\frac{dn_2}{p^a},t,\delta_\chi^\pm\right)V_{1/2+\sigma_3\alpha_3}\left(\frac{dn_3}{p^a},t,\delta_\chi^\pm\right)\Gamma_{\rat}(\delta_\chi^\pm,t,\pmb{\alpha},\pmb{\sigma})
\end{align*}
to forms of parity $\delta_\chi^\pm\coloneqq\frac{1\mp\chi(-1)}{2}$.
We then have that
\begin{align}
    &\mathcal{M}_{nh}^\pm(\chi,\pmb{\alpha}) \\
    &=\sum_th_0(t)\sum_{\ell m=p^a}\sum_{f}\frac{1\pm\lambda_f(-1)}{2}\prod_{1\leq j\leq 3}L(1/2+\alpha_j,f\otimes\chi) \\
    &\quad\, + \frac{1}{4\pi}\int_\R h_0(t)\sum_{\ell m=p^a}\sum_{E}\frac{1\pm\lambda_E(-1)}{2}\prod_{1\leq j\leq 3}L(1/2+\alpha_j,E\otimes\chi)\, dt \\
    &= \frac{1}{4p^{a\alpha}}\sum_{\pmb{\sigma}\in\{\pm1\}^3}\bigg((1+\sigma\pm(1-\sigma))\mathcal{D}(\pm,\chi,\pmb{\alpha},\pmb{\sigma})+\sum_{\,\,\,\pm_\lambda}(1\pm_\lambda\sigma\pm(1\mp_\lambda\sigma))\mathcal{S}(\pm,\pm_\lambda,\chi,\pmb{\alpha},\pmb{\sigma})\bigg) \\
    &= \frac{1}{2p^{a\alpha}}\sum_{\pmb{\sigma}\in\{\pm1\}^3}(\pm1)^{\frac{1-\sigma}{2}}\bigg(\mathcal{D}(\pm,\chi,\pmb{\alpha},\pmb{\sigma})+\sum_{\,\,\,\pm_\lambda}(\pm_\lambda1)^{\frac{1\mp1}{2}}\mathcal{S}(\pm,\pm_\lambda,\chi,\pmb{\alpha},\pmb{\sigma})\bigg), \label{eq:avg_setup}
\end{align}
where
\begin{align*}
    \mathcal{D}(\pm,\chi,\pmb{\alpha},\pmb{\sigma}) &\coloneqq \sum_{\substack{d\geq1 \\ (d,p)=1}}\frac{1}{d}\sum_{\substack{n_1,n_2,n_3\geq1 \\ n_1=n_2n_3}}\frac{\chi(n_1)\overline{\chi}(n_2n_3)}{\sqrt{n_1n_2n_3}}g_0(n_2,n_3,d,\delta_\chi^\pm,\pmb{\alpha},\pmb{\sigma}) \\
    &\,= \sum_{\substack{d,n_2,n_3\geq1 \\ (dn_2n_3,p)=1}}\frac{g_0(n_2,n_3,d,\delta_\chi^\pm,\pmb{\alpha},\pmb{\sigma})}{dn_2n_3}, \\
    \mathcal{S}(\pm,\pm_\lambda,\chi,\pmb{\alpha},\pmb{\sigma}) &\coloneqq \sum_{\substack{d\geq1 \\ (d,p)=1}}\frac{1}{d}\sum_{n_1,n_2,n_3\geq1}\frac{\chi(n_1)\overline{\chi}(n_2n_3)}{\sqrt{n_1n_2n_3}}\sum_{\substack{c\geq1 \\ c\equiv0\,(\mo p^a)}}\frac{S_{\overline{\chi}^2}(\pm_\lambda n_1,n_2n_3;c)}{c} \\
    &\quad\quad\quad\quad\quad\quad\quad\quad\quad\quad\quad\quad\quad\quad\quad\quad\,\, \cdot g^{\pm_\lambda}\left(\frac{4\pi\sqrt{n_1n_2n_3}}{c},n_1,n_2,n_3,d,\delta_\chi^\pm,\pmb{\alpha},\pmb{\sigma}\right)
\end{align*}
denote the diagonal and off-diagonal contributions, respectively. We have obtained the following.
\begin{lemma}\label{lemma:summarizing_kuznetsov}
    One has
    \begin{align*}
        &\E_{\phi\,(\mo p^b)}[\mathcal{M}_{nh}(\chi\phi,\pmb{\alpha})] \\
        &= \frac{1}{2p^{a\alpha}}\sum_{\pm}\sum_{\pmb{\sigma}\in\{\pm1\}^3}(\pm1)^{\frac{1-\sigma}{2}}\bigg(\frac{1}{\varphi(p^b)}\sum_{\phi\,(\mo p^b)}\mathcal{D}(\pm,\chi\phi,\pmb{\alpha},\pmb{\sigma}) \\
        &\quad\quad\quad\quad\quad\quad\quad\quad\quad\quad\quad\quad\,\,\, +\sum_{\,\,\,\pm_\lambda}(\pm_\lambda1)^{\frac{1\mp1}{2}}\sum_{\substack{d\geq1 \\ (d,p)=1}}\sum_{n_1,n_2,n_3\geq1}\sum_{\substack{c\geq1 \\ c\equiv0\,(\mo p^a)}}\frac{X(\cdot)}{cd\sqrt{n_1n_2n_3}}\bigg),
    \end{align*}
    where 
    \begin{align}
        &X(\pm,\pm_\lambda,\chi,n_1,n_2,n_3,c,d,\pmb{\alpha},\pmb{\sigma}) \\
        &\coloneqq \frac{1}{\varphi(p^b)}\sum_{\phi\,(\mo p^b)}(\chi\phi)(n_1)(\overline{\chi\phi})(n_2n_3)S_{\overline{(\chi\phi)}^2}(\pm_\lambda n_1,n_2n_3;c)g^{\pm_\lambda}(\cdot,\delta_{\chi\phi}^\pm,\cdot).  \label{eq:X_def}
    \end{align}
\end{lemma}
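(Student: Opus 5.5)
This is essentially a bookkeeping statement, so the plan is to average the already-derived identity \eqref{eq:avg_setup} over $\phi\,(\mo p^b)$ and then interchange the finite $\phi$-sum with the sums over $d,n_1,n_2,n_3,c$ in $\mathcal{S}$.

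\textbf{Step 1: apply \eqref{eq:avg_setup} to $\chi\phi$.} For each $\phi\,(\mo p^b)$, the character $\chi\phi$ is primitive of conductor $p^a$, because $b<a$. Proposition \ref{prop:kuznetsov} additionally requires $\chi\phi$ to be non-quadratic. I would check this: if $a\geq2$, a quadratic character modulo an odd prime power has conductor at most $p$, so this is automatic. The remaining case falls under whatever standing non-quadratic convention \cite{PeY} uses.

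\textbf{Step 2: justify the application of Kuznetsov.} The indices $\pm_\lambda n_1$ and $n_2n_3$ are coprime to $p$ because $\chi\phi$ kills their multiples. The test function $h(t,\ldots)$ is admissible: evenness and holomorphy in $t$ come from Lemma \ref{lemma:Vyt}(i), decay comes from $h_0$, and the bounds for $\Gamma_{\rat}$ follow from Stirling. With this in place, \eqref{eq:avg_setup} gives $\mathcal{M}_{nh}(\chi\phi,\pmb{\alpha})$ as $\frac{1}{2p^{a\alpha}}\sum_\pm\sum_{\pmb\sigma}(\pm1)^{\frac{1-\sigma}{2}}$ applied to the combination of $\mathcal{D}$ and $\mathcal{S}$ evaluated at $\chi\phi$.

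The prefactors do not depend on $\phi$. The factor $p^{-a\alpha}$ and the signs $(\pm1)^{\frac{1-\sigma}{2}}$ and $(\pm_\lambda1)^{\frac{1\mp1}{2}}$ are independent of $\phi$. The parity $\delta^\pm_{\chi\phi}$ may depend on $\phi$, but that dependence is kept inside $g^{\pm_\lambda}$ and $g_0$. This is exactly why $g^{\pm_\lambda}$ sits inside the $\phi$-average in \eqref{eq:X_def}.

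\textbf{Step 3: sum over $\phi$, divide by $\varphi(p^b)$, and move the average inside.} The $\phi$-sum is finite, so it commutes with the finite sums over $\pm$, $\pmb\sigma$ and $\pm_\lambda$. The diagonal part simply becomes $\frac{1}{\varphi(p^b)}\sum_\phi\mathcal{D}(\pm,\chi\phi,\ldots)$.

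For $\mathcal{S}$, I need to move $\frac{1}{\varphi(p^b)}\sum_\phi$ inside the sums over $d,n_1,n_2,n_3,c$. This yields $\sum_{d,n_j,c}\frac{X(\cdot)}{cd\sqrt{n_1n_2n_3}}$ with $X$ as in \eqref{eq:X_def}.

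\textbf{Main obstacle: convergence of the $\mathcal{S}$ series.} Interchanging a finite sum with each convergent series is legitimate, so the only real point is that $\mathcal{S}(\pm,\pm_\lambda,\chi\phi,\ldots)$ converges for each fixed $\phi$. Absolute convergence is the cleanest way to see this, and I would take it from \cite{PeY}:

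\begin{itemize}
\item The $V$-factors in Lemma \ref{lemma:Vyt} give rapid decay once $n_1$ or $dn_2,dn_3$ exceed $p^a(1+|t|)$.
\item The Weil-type bound \eqref{eq:weil_bound} gives $S\ll c^{1/2+\varepsilon}(n_1,n_2n_3,c)^{1/2}p^{a/2}$.
\item The standard small-argument bound $g^\pm(x)\ll x^{1+\beta'}$ for some $\beta'>0$ comes from shifting the $t$-contour to $\im t=-1/2-\beta'$. This is where the holomorphy of $h$ in a strip wider than $1/2$ is needed.
\end{itemize}

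Together these make the $c$-sum converge absolutely, as $\sum_c c^{-3/2-\beta'+\varepsilon}$, and the sums over $n_j$ and $d$ are then effectively finite. The only care required is that the shift in $t$ must stay inside the strip of holomorphy of $V$ and $\Gamma_{\rat}$, which holds because the $\re\alpha_j$ are small.
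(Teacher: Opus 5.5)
Your proposal is correct and is the paper's argument: the paper gets the lemma by applying \eqref{eq:avg_setup} to each $\chi\phi$, averaging over $\phi\,(\mo p^b)$, and moving the finite $\phi$-average inside the absolutely convergent sums over $d,n_1,n_2,n_3,c$. A few of your side justifications need small corrections: the quadratic ``remaining case'' is vacuous, since $1\leq b<a$ forces $a\geq2$; holomorphy of $h$ in the strip uses that the zeros of the $V$-factors cancel the poles of $\Gamma_{\rat}$ near $\im t=\pm1/2$ (when $\delta=0$), not just Lemma \ref{lemma:Vyt}(i); and for convergence, the bound $g^{\pm}(x)\ll x^{1-\varepsilon}$ from Lemma \ref{lemma:bounds_for_bessel_transforms} together with \eqref{eq:weil_bound} already suffices, so you do not need a contour shift past $\im t=-1/2$, which would cross the pole of $1/\cosh(\pi t)$.
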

We evaluate the diagonal contribution in Section \ref{sec:diag}, and evaluate $X(\cdot)$ in Section \ref{sec:averaging_kloosterman_sums}.

\begin{remark}
    We do not average the Kuznetsov trace formula over $\phi\,(\mo p^b)$, rather we evaluate this average after multiplying by an extra factor $(\chi\phi)(n_1)(\overline{\chi\phi})(n_2n_3)$ coming out of the approximate functional equation.
\end{remark}

We end this section by stating upper bounds for the Bessel transforms appearing in the off-diagonal of the trace formula.
\begin{lemma}\label{lemma:bounds_for_bessel_transforms}
    Suppose $T=(p^a)^{o(1)}$, and $\re\alpha_j\ll\frac{1}{\log p^a}$ and $\im\alpha_j=O(1)$ for all $1\leq j\leq 3$. Then, for $k\in\Z_{\geq0}$, one has 
    \begin{align*}
        \frac{\partial^k}{\partial x^k}g^\pm(x,\cdot) \ll_{\varepsilon,k,A}x(x^{-k}+x^k)T^{k+1}\bigg(1+\frac{\max\{n_1,dn_2,dn_3\}}{p^aT}\bigg)^{-A}\cdot\begin{dcases}
            1 &\text{ in the $(+)$-case} \\
            x^{-\varepsilon}T^\varepsilon &\text{ in the $(-)$-case}
        \end{dcases}.
    \end{align*}
\end{lemma}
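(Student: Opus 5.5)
The plan is to derive these bounds directly from the integral definitions of $g^\pm$, treating the test function $h(t,\cdot)$ as a product of $h_0(t)$, the three weight functions $V_{1/2+\sigma_j\alpha_j}$, and the Gamma ratio $\Gamma_{\rat}(\delta,t,\pmb{\alpha},\pmb{\sigma})$. The argument follows the proof of the analogous bounds in \cite[Section 10]{PeY} (and \cite{HPY}).

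\textbf{Step 1: controlling $h$.} First I record the size and analyticity of $h$ in the strip $\abs{\im t}\leq 1/2+\beta$, say $\abs{\im t}\le 1$ after shifting. By Lemma \ref{lemma:Vyt}(i), each $V$ is entire and even in $t$. By \eqref{eq:Vyt_t_real} and \eqref{eq:Vyt_t_shifted}, each $V$ is bounded by $(1+y/(1+\abs{t}))^{-A}$ up to factors $y^{-\re\alpha_j}\ll1$, since $y\geq 1/(2p^a)$ and $\re\alpha_j\ll 1/\log p^a$. Stirling gives $\Gamma_{\rat}\ll (1+\abs{t})^{O(\abs{\re\alpha_j})}$, which is harmless. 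The Gaussian $h_0$ localizes to $\abs{t}\ll T^{1+\varepsilon}$. Together these give
\[
t\,h(t,\cdot)\ll T^{1+\varepsilon}\Big(1+\tfrac{\max\{n_1,dn_2,dn_3\}}{p^aT}\Big)^{-A}
\]
uniformly on the strip. This produces the factor $(1+\max/(p^aT))^{-A}$ in the lemma.

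\textbf{Step 2: the $(+)$-case.} Write $g^+$ using the Mellin–Barnes/power series of $J_{2it}(x)/\cosh(\pi t)$. For $x\leq1$, shift the $t$-contour to $\im t=-1/2$. There $J_{2it}(x)\ll x^{1}$, and $\cosh(\pi t)^{-1}$ has a pole at $t=-i/2$, which is cancelled by the standard symmetrization $h(t)=h(-t)$ used in \cite{PeY}. Each $x$-derivative then costs $x^{-1}$ and the $t$-integral costs $T\cdot T^{k}$, which gives $x\cdot x^{-k}T^{k+1}$. For $x\geq1$, I use the integral representation $J_{2it}(x)/\cosh(\pi t)$ as a Fourier transform, i.e. the formula
\[
g^+(x)=\int_{\R}\hat{\tilde h}(v)\cos(x\cosh v)\,dv
\]
for a suitable transform $\tilde h$ of $th(t)$. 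Each $x$-derivative then brings down at most $\cosh v$, and $\tilde h$ concentrates at $\abs{v}\ll T^{-1+\varepsilon}$, so the bound is $x^{k}$ times a uniform bound. Multiplying by the trivial extra factor $x$ accommodates both ranges.

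\textbf{Step 3: the $(-)$-case.} This case is analogous, with $K_{2it}(x)\sinh(\pi t)$. For small $x$, shifting the contour to $\im t=-1/2+\varepsilon$ rather than $-1/2$ (because of a pole of $K$ at the boundary) costs $x^{-\varepsilon}T^\varepsilon$, which is the source of the extra factor in the lemma. For large $x$ the integral representation $K_{2it}(x)=\int_0^\infty \cos(2tv)e^{-x\cosh v}\,dv$ gives exponential decay, so there the bound is even stronger.

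\textbf{Main obstacle.} I expect the main difficulty to be the uniformity of the contour shift near $\im t=-1/2$. One has to check that $V(\cdot,-i/2+v,\cdot)$ satisfies the needed decay, which is exactly \eqref{eq:Vyt_t_shifted}, and that $\Gamma_{\rat}$ introduces no poles for $\delta=0$ when $\abs{\re\alpha_j}$ is small. I would verify this by noting that the poles of $\Gamma_\R(1/2+\sigma_j\alpha_j\pm it)$ sit at $\im t=\pm(1/2+\re\sigma_j\alpha_j)$, so stopping at distance $\varepsilon$ from them (absorbing $\re\alpha_j\ll1/\log p^a$) suffices.
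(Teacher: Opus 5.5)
Your overall route is the one the paper intends. Its proof simply imports the arguments of \cite[Lemmas 10.2 and 10.4]{PeY}, namely a shift of the $t$-contour to $\im t=-1/2$, with Lemma \ref{lemma:Vyt} as the new input. There is, however, a genuine gap exactly where the shifts $\alpha_j$ enter.

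In Step 1 you assert that $\Gamma_{\rat}$ is harmless uniformly on the strip. For $\delta=0$ and $\sigma_j=-1$ this is false: $\gamma_{\rat}(0,t,-\alpha_j,\alpha_j)$ has poles at $t=\pm i(1/2-\alpha_j)$. The lower pole lies strictly between $\R$ and the line $\im t=-1/2$ when $\re\alpha_j>0$, and on that line when $\re\alpha_j=0$. So the shift in Step 2 is not justified as written. Nor can Lemma \ref{lemma:Vyt}, which bounds $V$ alone, be combined with a separate bound for $\Gamma_{\rat}$ on $\im t=-1/2$.

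Your remedy, stopping at distance $\varepsilon$ from these poles, contradicts Step 2. It also only gives $x^{1-O(\varepsilon)}$ in the $(+)$-case, which is not the stated $(+)$-bound, since that must hold uniformly as $x\to0$.

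The missing observation is that these poles are cancelled by zeros of the weight functions. $V_{1/2+\sigma_j\alpha_j}(y,t,\delta)$ contains the entire factor $1/L_\infty(1/2+\sigma_j\alpha_j,u\otimes\chi)$, and in fact
\[
V_{1/2+\sigma_j\alpha_j}(y,t,\delta)\,\gamma_{\rat}(\delta,t,\sigma_j\alpha_j,\alpha_j)=\frac{1}{2\pi i}\int_{(\sigma)}y^{-s}\,\gamma_{\rat}(\delta,t,s,\alpha_j)\,\frac{G(s-\sigma_j\alpha_j)}{s-\sigma_j\alpha_j}\,ds ,
\]
which is entire in $t$. This is also what makes $h$ admissible in Proposition \ref{prop:kuznetsov} at all. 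You should bound this product directly, on $\R$ and on $t=-i/2+v$, by the same Stirling argument as for Lemma \ref{lemma:Vyt}. With that, the shift to $\im t=-1/2$ goes through as in \cite{PeY} with no loss in the exponent of $x$.

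Several secondary claims also need correcting.

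\textbf{The pole at $t=-i/2$.} The pole of $1/\cosh(\pi t)$ there is removed by the factor $t^2+1/4$ in $h_0$, not by the evenness of $h$. Symmetrizing still leaves a residue proportional to $J_1(x)-J_{-1}(x)=2J_1(x)\neq0$.

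\textbf{The $(-)$-case factor.} $K_\nu(x)$ is entire in $\nu$, so there is no ``pole of $K$'' behind the factor $x^{-\varepsilon}$.

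\textbf{The $x\geq1$ argument in Step 2.} It fails as written, for two reasons. First, $\hat{\tilde h}$ is not concentrated on $\abs{v}\ll T^{-1+\varepsilon}$, since $\tanh(\pi t)$ and the $V$'s vary on scale $1+\abs{t}$. Second, the poles of $\tanh(\pi t)$ at $\pm3i/2$ are not cancelled, so $\hat{\tilde h}(v)$ decays only like $e^{-3\abs{v}}$. Differentiating $k\geq3$ times under the integral therefore produces a divergent $\int\abs{\hat{\tilde h}(v)}\cosh^k v\,dv$.

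This step is also unnecessary. Poisson's integral gives, uniformly in $x>0$,
\[
\partial_x^kJ_{1+2iv}(x)\ll x(1+\abs{v})^{-1}e^{\pi\abs{v}}\big(1+((1+\abs{v})/x)^k\big).
\]
So the single shift to $\im t=-1/2$ already yields $x(x^{-k}+x^k)T^{k+1}$ for all $x$.

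\textbf{The intermediate range in the $(-)$-case.} What you wrote does not cover $\log T\ll x\ll T$. The bound $\abs{K_{2it}(x)}\leq K_0(x)$ only beats $\sinh(\pi t)$ when $x\gg T$. After the contour shift, the trivial bound for $I_{1+2iv}(x)$ carries a factor $e^{x}$. In this range one has to exploit the oscillation in $t$, as in \cite{PeY}.
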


\begin{proof}
    The result follows from combining the proofs of \cite[Lemma 10.2]{PeY} and \cite[Lemma 10.4]{PeY} with Lemma \ref{lemma:Vyt}.
\end{proof}
\section{Evaluating the diagonal contribution}
\label{sec:diag}
We work similarly to the holomorphic setting in \cite[Section 3]{Pe}. Define the function
\begin{align*}
    &\,\,L(s_1,s_2,s_3,\delta,t,\pmb{\alpha},\pmb{\sigma}) \\
    &\coloneqq p^{a(s_1+s_2+s_3)}(s_1+s_2)(s_1+s_3)(s_2+s_3)\zeta^{(p)}(1+s_1+s_2)\zeta^{(p)}(1+s_1+s_3)\zeta^{(p)}(1+s_2+s_3) \\
    &\quad\,\, \cdot \prod_{1\leq j\leq 3}\gamma_{\rat}(\delta,t,s_j,\alpha_j)G(s_j-\sigma_j\alpha_j),
\end{align*}
which is holomorphic on $\mathcal{A}\coloneqq\{(s_1,s_2,s_3)\in\C^3\,:\,\re s_j>-1/2-\delta\text{ for }1\leq j\leq 3\}$ as the poles of $\zeta^{(p)}(\cdot)$ are cancelled out by the factor $(s_1+s_2)(s_1+s_3)(s_2+s_3)$. We consider $t\in\R$, and recall that $t=(p^a)^{o(1)}$ as $a\rightarrow\infty$ or $p\rightarrow\infty$. Note that $L(\cdot)$ is invariant under simultaneous permutations of $(s_1,s_2,s_3)$, $(\alpha_1,\alpha_2,\alpha_3)$ and $(\sigma_1,\sigma_2,\sigma_3)$, but it is not invariant under simultaneous permutations of only $(\alpha_1,\alpha_2,\alpha_3)$ and $(\sigma_1,\sigma_2,\sigma_3)$.

\subsection{Combinatorial expression for $\mathcal{D}(\pm,\chi,\pmb{\alpha},\pmb{\sigma})$}
Plugging in the definitions, we obtain
\begin{align*}
    \mathcal{D}(\pm,\chi,\pmb{\alpha},\pmb{\sigma}) &= \frac{1}{4\pi}\sum_{\substack{d,n_2,n_3\geq1 \\ (dn_2n_3,p)=1}}\frac{1}{dn_2n_3}\int_{\R}\tanh(\pi t)t h(t,n_2n_3,n_2,n_3,d,\delta_\chi^\pm,\pmb{\alpha},\pmb{\sigma})\,dt \\
    &= \frac{1}{4\pi}\int_\R \tanh(\pi t)t h_0(t)f(\delta_\chi^\pm,t,\pmb{\alpha},\pmb{\sigma})\,dt,
\end{align*}
where
\begin{align*}
    &f(\delta,t,\pmb{\alpha},\pmb{\sigma}) \\
    &\coloneqq \bigg(\frac{1}{2\pi i}\bigg)^3\int_{(\frac{1}{6})}\int_{(\frac{1}{5})}\int_{(\frac{1}{4})}\frac{L(s_1,s_2,s_3,\delta,t,\pmb{\alpha},\pmb{\sigma})}{(s_1-\sigma_1\alpha_1)(s_2-\sigma_2\alpha_2)(s_3-\sigma_3\alpha_3)(s_1+s_2)(s_2+s_3)(s_3+s_1)}ds_1ds_2ds_3.
\end{align*}

We claim the following decomposition.
\begin{prop}\label{prop:diag_decomposition}
    If $t=(p^a)^{o(1)}$, and $\re\alpha_j\ll\frac{1}{\log p^a}$ for all $1\leq j\leq3$, then
    \begin{align}
        f(\delta,t,\pmb{\alpha},\pmb{\sigma}) &= f_{\text{MT}}(\delta,t,\pmb{\alpha},\pmb{\sigma}) \label{eq:f_final_expression} \\
        &\quad\, + N(\sigma_1\alpha_1,\sigma_2\alpha_2,\sigma_3\alpha_3,\delta,t,\pmb{\alpha},\pmb{\sigma}) \\
        &\quad\, + N(\sigma_1\alpha_1,\sigma_3\alpha_3,\sigma_2\alpha_2,\delta,t,(\alpha_1,\alpha_3,\alpha_2),(\sigma_1,\sigma_3,\sigma_2)) \\ 
        &\quad\, + N(\sigma_2\alpha_2,\sigma_3\alpha_3,\sigma_1\alpha_1,\delta,t,(\alpha_2,\alpha_3,\alpha_1),(\sigma_2,\sigma_3,\sigma_1)) \\
        &\quad\, + \frac{1}{2}\frac{L(0,0,0,\delta,t,\pmb{\alpha},\pmb{\sigma})}{(-\sigma_1\alpha_1)(-\sigma_2\alpha_2)(-\sigma_3\alpha_3)} + O_\varepsilon(p^{a(-\frac{1}{3}+\varepsilon)}),
    \end{align}
    where 
    \begin{align*}
        f_{\text{MT}}(\delta,t,\pmb{\alpha},\pmb{\sigma}) \coloneqq \frac{L(\sigma_1\alpha_1,\sigma_2\alpha_2,\sigma_3\alpha_3,\delta,t,\pmb{\alpha},\pmb{\sigma})}{(\sigma_1\alpha_1+\sigma_2\alpha_2)(\sigma_2\alpha_2+\sigma_3\alpha_3)(\sigma_3\alpha_3+\sigma_1\alpha_1)},
    \end{align*}
    and $N(\cdot)$ is defined in \eqref{eq:N_def} below.
\end{prop}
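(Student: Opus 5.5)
The plan is to evaluate the triple integral defining $f(\delta,t,\pmb{\alpha},\pmb{\sigma})$ by shifting contours one variable at a time to the line $\re s_j=-1/3$, collecting residues, and bounding everything left on the $(-1/3)$-lines by $O_\varepsilon(p^{a(-1/3+\varepsilon)})$. Since $L$ is holomorphic on $\mathcal{A}$ and carries the factor $p^{a(s_1+s_2+s_3)}$, a term in which at least one variable still sits on $\re s_j=-1/3$ is small. Here I use that the other variables are either at residue points with real part $O(1/\log p^a)$ or on lines whose total real part is at most $-1/3+\varepsilon$. Stirling together with the Gaussian decay of $G(s_j-\sigma_j\alpha_j)$ makes these integrals absolutely convergent, with only $T^{O(1)}=(p^a)^{o(1)}$ loss. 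The poles that can occur are those of the explicit denominator: $s_j=\sigma_j\alpha_j$, and $s_i=-s_j$ coming from the factors $(s_i+s_j)^{-1}$. No poles of $\gamma_{\rat}$ are crossed, because $-1/3>-1/2-\delta$.

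The steps, in order, are as follows. First I move $s_1$ from $1/4$ to $-1/3$. This crosses $s_1=\sigma_1\alpha_1$ and also $s_1=-s_2$ and $s_1=-s_3$, since $\re s_2,\re s_3\in\{1/5,1/6\}$. The residue at $s_1=\sigma_1\alpha_1$ leaves a double integral in $(s_2,s_3)$. Then I move $s_2$ to $-1/3$, picking up $s_2=\sigma_2\alpha_2$ and $s_2=-\sigma_1\alpha_1$ (from $(s_1+s_2)^{-1}$ with $s_1=\sigma_1\alpha_1$); the $s_2=-s_3$ pole is not crossed in the right order, so I will fix the order so that $\re(s_2+s_3)$ stays controlled. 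Finally I move $s_3$ in the same way. The iterated residue at $(\sigma_1\alpha_1,\sigma_2\alpha_2,\sigma_3\alpha_3)$ is exactly $f_{\text{MT}}$. The residues where some $s_j=-s_i$ are to be grouped by which pair collapses. I expect the three groups to assemble into the three $N$-terms, with arguments permuted as in the statement. The remaining doubly-degenerate configurations, namely the points $s_1=-s_2$, $s_3=\dots$ that lead back to the point $(0,0,0)$, should combine to $\tfrac12 L(0,0,0,\cdot)/\prod(-\sigma_j\alpha_j)$. The factor $\tfrac12$ arises from the symmetrization of a residue taken at $s_i+s_j=0$ on a line that itself passes through a pole; equivalently, I average the two orderings of the shifts, using the invariance of $L$ under simultaneous permutations of the $s_j,\alpha_j,\sigma_j$.

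I would define $N(s_1,s_2,s_3,\dots)$ (matching the later \eqref{eq:N_def}) as the one-variable contour integral, or residue contribution, arising when $s_1=-s_2$ is collapsed and the remaining variable is still to be evaluated. I would keep this as an integral over a line of real part in $(-1/3,0)$ rather than evaluating it, since it is only needed for later cancellation against off-diagonal terms. I would then check that the residues from the other orderings are accounted for by the permuted copies of $N$.

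The main obstacle is the bookkeeping in the second and third shifts. After one residue at $s_1=-s_2$, the factor $p^{a(s_1+s_2+s_3)}$ loses its decay in $s_2$, so one cannot shift $s_2$ to gain a saving. One must instead identify this piece with $N$, and treat the coincident poles (e.g.\ $s_3=\sigma_3\alpha_3$ together with $s_3=-s_1$ when $s_1=-s_2$) carefully. To handle this I would assume $\alpha_j$ generic, with distinct $\pm\sigma_j\alpha_j$ and all pairwise sums nonzero, so that all poles are simple. The formula then extends to all $\pmb{\alpha}$ by analytic continuation, since both sides are holomorphic in $\pmb{\alpha}$: the apparent singularities cancel, and the error bound is uniform because the contours stay fixed.
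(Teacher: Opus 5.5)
Your overall strategy matches the paper's: iterated contour shifts, with the integrals of $L(s,-s,\mu,\dots)$ left unevaluated because their power $p^{a\mu}$ does not decay in $s$. But the content of the proposition is the exact residue bookkeeping. Your plan leaves that bookkeeping as an expectation, and gets several concrete points wrong.

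First, your blanket claim fails: a term with a variable left on $\re s_j=-1/3$ is not always $O_\varepsilon(p^{a(-1/3+\varepsilon)})$. After two collapses you meet one-variable integrals of $L(s,-s,-s,\dots)$ and $L(-s,s,-s,\dots)$, whose power of $p^a$ is $p^{-as}$. Moving these left makes them of size $p^{a/3}$. The paper instead moves them to the right: see the third term in the expansion of \eqref{eq:integral_B} and the second term in that of \eqref{eq:integral_C}; this crosses no poles.

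Second, with $s_3$ on $(1/6)$, moving $s_2$ from $(1/5)$ to $(-1/3)$ \emph{does} cross $s_2=-s_3$. Its residue, an integral of $L(\sigma_1\alpha_1,-s_3,s_3,\dots)$, is one of the non-decaying $M$-type integrals, and no choice of order avoids it.

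Third, the $\tfrac12$ is neither a symmetrization nor an average over orderings. It is the ordinary residue at $s=0$ of the factor $(2s)^{-1}$ in the second term of the expansion of \eqref{eq:integral_B}, namely the integral of $L(s,s,-s,\dots)/[(s-\sigma_1\alpha_1)(s-\sigma_2\alpha_2)(-s-\sigma_3\alpha_3)\,2s]$. The $2$ is, up to sign, the determinant of $(s_1,s_2,s_3)\mapsto(s_1+s_2,s_1+s_3,s_2+s_3)$.

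Fourth, $N$ in \eqref{eq:N_def} is $M$ on the line $(1/6)$ minus the residues of its integrand at $s=-\lambda$ and $s=\mu$ only. Equivalently, it is an integral over a contour with $\kappa,-\mu$ on its left and $-\lambda,\mu$ on its right. If you keep the $M$-integral on a line with real part in $(-1/3,0)$, you also pass $s=\kappa$ and $s=-\mu$. Your object then differs from the stated $N$ by those two residues, which are not small.

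Fifth, the residues do not simply sort themselves into the three $N$'s; some must cancel in pairs. In the paper's computation, take the term with $L(s_1,-\sigma_3\alpha_3,\sigma_3\alpha_3,\dots)$ in the expansion of \eqref{eq:integral_A}. Its residue at $s_1=-\sigma_3\alpha_3$ lies in no $N$. It is cancelled by the residue at the same point of the term with $L(s_1,s_1,-s_1,\dots)$ in the expansion of \eqref{eq:integral_B}. Also, since you begin with $s_1$ on $(1/4)$ rather than $s_3$, your $M$-integrals are the paper's with indices $1$ and $3$ interchanged. Identifying them with the stated $N$'s then requires \eqref{eq:symmetry_in_N}.

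To close the gap:
\begin{enumerate}
\item Write out all residues after the first two shifts.
\item Sort each remaining one-variable integral by its power of $p^a$: for $p^{as}$ move left, for $p^{-as}$ move right, and if it is independent of $s$ keep it as an $M$-integral.
\item Verify term by term that what survives is $f_{\text{MT}}$, the last two summands of each of the three $N$'s, and $\tfrac12 L(0,0,0,\dots)/\prod_j(-\sigma_j\alpha_j)$.
\end{enumerate}
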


\begin{proof}
Within $\mathcal{A}$, the integrand of $f(\delta,t,\pmb{\alpha},\pmb{\sigma})$ has poles at $s_j=\sigma_j\alpha_j$ and at $s_j=-s_k$ for distinct $1\leq j,k\leq 3$. Shifting the $s_3$-line $(1/4)$ to $(-5/12)$, we pick up simple poles at $s_3\in\{\sigma_3\alpha_3,-s_1,-s_2\}$ with respective residues
\begin{align}
    &\bigg(\frac{1}{2\pi i}\bigg)^2\int_{(\frac{1}{6})}\int_{(\frac{1}{5})}\frac{L(s_1,s_2,\sigma_3\alpha_3,\delta,t,\pmb{\alpha},\pmb{\sigma})}{(s_1-\sigma_1\alpha_1)(s_2-\sigma_2\alpha_2)(s_1+s_2)(s_1+\sigma_3\alpha_3)(s_2+\sigma_3\alpha_3)}ds_1ds_2, \label{eq:integral_A}\\
    &\bigg(\frac{1}{2\pi i}\bigg)^2\int_{(\frac{1}{6})}\int_{(\frac{1}{5})}\frac{L(s_1,s_2,-s_1,\delta,t,\pmb{\alpha},\pmb{\sigma})}{(s_1-\sigma_1\alpha_1)(s_2-\sigma_2\alpha_2)(-s_1-\sigma_3\alpha_3)(-s_1+s_2)(s_1+s_2)}ds_1ds_2, \label{eq:integral_B}\\
    &\bigg(\frac{1}{2\pi i}\bigg)^2\int_{(\frac{1}{6})}\int_{(\frac{1}{5})}\frac{L(s_1,s_2,-s_2,\delta,t,\pmb{\alpha},\pmb{\sigma})}{(s_1-\sigma_1\alpha_1)(s_2-\sigma_2\alpha_2)(-s_2-\sigma_3\alpha_3)(s_1+s_2)(s_1-s_2)}ds_1ds_2. \label{eq:integral_C}
\end{align}
The remainder integral can be bounded as
\begin{align*}
    \bigg(\frac{1}{2\pi i}\bigg)^3\int_{(\frac{1}{24})}\int_{(\frac{1}{24})}\int_{(-\frac{5}{12})}(\dotsc)\,ds_1ds_2ds_3 \ll_\varepsilon p^{a(-\frac{1}{3}+\varepsilon)}
\end{align*}
after shifting the $s_1$- and $s_2$-lines to the left to $(1/24)$. Note that the latter bound is not optimal (as the contours can be shifted further to the left), but it is merely for convenience, as it will get absorbed by the upper bounds $O_\varepsilon(p^{-b+a\varepsilon})$ in Section \ref{sec:completing_the_proof} (since $1\leq b\leq a/3$). In each of the double integrals \eqref{eq:integral_A}--\eqref{eq:integral_C}, we further shift resp. the $s_2$-line $(1/5)$, the $s_2$-line $(1/5)$, and the $s_1$-line $(1/6)$ all to $(-5/12)$, picking up more residues at simple poles, i.e.
\begin{align*}
    \eqref{eq:integral_A} &= \frac{1}{2\pi i}\int_{(\frac{1}{6})}\bigg(\frac{L(s_1,\sigma_2\alpha_2,\sigma_3\alpha_3,\delta,t,\pmb{\alpha},\pmb{\sigma})}{(s_1-\sigma_1\alpha_1)(s_1+\sigma_2\alpha_2)(s_1+\sigma_3\alpha_3)(\sigma_2\alpha_2+\sigma_3\alpha_3)} \\
    &\quad\quad\quad\quad\quad\quad +\frac{L(s_1,-s_1,\sigma_3\alpha_3,\delta,t,\pmb{\alpha},\pmb{\sigma})}{(s_1-\sigma_1\alpha_1)(-s_1-\sigma_2\alpha_2)(s_1+\sigma_3\alpha_3)(-s_1+\sigma_3\alpha_3)} \\
    &\quad\quad\quad\quad\quad\quad +\frac{L(s_1,-\sigma_3\alpha_3,\sigma_3\alpha_3,\delta,t,\pmb{\alpha},\pmb{\sigma})}{(s_1-\sigma_1\alpha_1)(-\sigma_3\alpha_3-\sigma_2\alpha_2)(s_1-\sigma_3\alpha_3)(s_1+\sigma_3\alpha_3)}\bigg)ds_1 + O_\varepsilon(p^{a(-\frac{1}{3}+\varepsilon)}), \\
    \eqref{eq:integral_B} &= \frac{1}{2\pi i}\int_{(\frac{1}{6})}\bigg(\frac{L(s_1,\sigma_2\alpha_2,-s_1,\delta,t,\pmb{\alpha},\pmb{\sigma})}{(s_1-\sigma_1\alpha_1)(-s_1-\sigma_3\alpha_3)(-s_1+\sigma_2\alpha_2)(s_1+\sigma_2\alpha_2)} \\
    &\quad\quad\quad\quad\quad\quad +\frac{L(s_1,s_1,-s_1,\delta,t,\pmb{\alpha},\pmb{\sigma})}{(s_1-\sigma_1\alpha_1)(s_1-\sigma_2\alpha_2)(-s_1-\sigma_3\alpha_3)2s_1} \\
    &\quad\quad\quad\quad\quad\quad +\frac{L(s_1,-s_1,-s_1,\delta,t,\pmb{\alpha},\pmb{\sigma})}{(s_1-\sigma_1\alpha_1)(-s_1-\sigma_2\alpha_2)(-s_1-\sigma_3\alpha_3)(-2s_1)}\bigg)ds_1 + O_\varepsilon(p^{a(-\frac{1}{3}+\varepsilon)}), \\
    \eqref{eq:integral_C} &= \frac{1}{2\pi i}\int_{(\frac{1}{5})}\bigg(\frac{L(\sigma_1\alpha_1,s_2,-s_2,\delta,t,\pmb{\alpha},\pmb{\sigma})}{(s_2-\sigma_2\alpha_2)(-s_2-\sigma_3\alpha_3)(\sigma_1\alpha_1+s_2)(\sigma_1\alpha_1-s_2)} \\
    &\quad\quad\quad\quad\quad\quad +\frac{L(-s_2,s_2,-s_2,\delta,t,\pmb{\alpha},\pmb{\sigma})}{(-s_2-\sigma_1\alpha_1)(s_2-\sigma_2\alpha_2)(-s_2-\sigma_3\alpha_3)(-2s_2)}\bigg)ds_2 + O_\varepsilon(p^{a(-\frac{1}{3}+\varepsilon)}).
\end{align*}
Note that both the integral of the third term in the expression for \eqref{eq:integral_B} and the integral of the second term in the expression for \eqref{eq:integral_C} are sufficiently small to be absorbed into the respective error terms (after shifting the contours sufficiently to the right). As in \cite[(11)]{Pe}, we observe in $f(\delta,t,\pmb{\alpha},\pmb{\sigma})$ terms of the form
\begin{align*}
    M(\kappa,\lambda,\mu,\delta,t,\pmb{\alpha},\pmb{\sigma}) \coloneqq \frac{1}{2\pi i}\int_{(\frac{1}{6})}\frac{L(s,-s,\mu,\delta,t,\pmb{\alpha},\pmb{\sigma})}{(s-\kappa)(-s-\lambda)(\mu-s)(\mu+s)}ds,
\end{align*}
where $\kappa,\lambda,\mu\in\{\sigma_1\alpha_1,\sigma_2\alpha_2,\sigma_3\alpha_3\}$. As $L(s,-s,\mu,\cdot)$ is at least constant-sized in $p^a$ in every vertical strip, shifting the contour $(1/6)$ to the left does not save us anything in terms of $p^a$. Instead, we set 
\begin{align}
    &N(\kappa,\lambda,\mu,\delta,t,\pmb{\alpha},\pmb{\sigma}) \\
    &\coloneqq M(\kappa,\lambda,\mu,\delta,t,\pmb{\alpha},\pmb{\sigma}) + \frac{L(-\lambda,\lambda,\mu,\delta,t,\pmb{\alpha},\pmb{\sigma})}{(-\lambda-\kappa)(\mu+\lambda)(\mu-\lambda)} + \frac{L(\mu,-\mu,\mu,\delta,t,\pmb{\alpha},\pmb{\sigma})}{(\mu-\kappa)(-\mu-\lambda)(2\mu)}.\label{eq:N_def}
\end{align}
Shifting the relevant contours $(1/6)$ and $(1/5)$ in \eqref{eq:integral_A}--\eqref{eq:integral_C} to $(-5/12)$, we obtain the desired result.
\end{proof}

\begin{remark} 
    Note the symmetry relation
    \begin{align}
        N(\kappa,\lambda,\mu,\delta,t,\pmb{\alpha},\pmb{\sigma})=N(\lambda,\kappa,\mu,\delta,t,(\alpha_2,\alpha_1,\alpha_3),(\sigma_2,\sigma_1,\sigma_3)), \label{eq:symmetry_in_N}
    \end{align}
    which follows from
    \begin{align*}
        M(\kappa,\lambda,\mu,\delta,t,\pmb{\alpha},\pmb{\sigma}) &= \frac{1}{2\pi i}\int_{(\frac{1}{6})}\frac{L(-s,s,\mu,\delta,t,(\alpha_2,\alpha_1,\alpha_3),(\sigma_2,\sigma_1,\sigma_3))}{(s-\kappa)(-s-\lambda)(\mu-s)(\mu+s)}ds \\
        &= \frac{1}{2\pi i}\int_{(\frac{1}{6})}\frac{L(s,-s,\mu,\delta,t,(\alpha_2,\alpha_1,\alpha_3),(\sigma_2,\sigma_1,\sigma_3))}{(-s-\kappa)(s-\lambda)(\mu+s)(\mu-s)}ds \\
        &\quad\, + \frac{L(-\kappa,\kappa,\mu,\delta,t,(\alpha_2,\alpha_1,\alpha_3),(\sigma_2,\sigma_1,\sigma_3))}{(-\kappa-\lambda)(\mu-\kappa)(\mu+\kappa)} - \frac{L(-\lambda,\lambda,\mu,\delta,t,\pmb{\alpha},\pmb{\sigma})}{(-\lambda-\kappa)(\mu+\lambda)(\mu-\lambda)} \\
        &\quad\, - \frac{L(\mu,-\mu,\mu,\delta,t,\pmb{\alpha},\pmb{\sigma})}{(\mu-\kappa)(-\mu-\lambda)(2\mu)} + \frac{L(\mu,-\mu,\mu,\delta,t,(\alpha_2,\alpha_1,\alpha_3),(\sigma_2,\sigma_1,\sigma_3))}{(-\mu-\kappa)(\mu-\lambda)(2\mu)},
    \end{align*}
    where in the second step we applied a change of variables $s\mapsto -s$, followed by a contour shift to the right. As a result of \eqref{eq:symmetry_in_N}, the asymptotic formula in Proposition \ref{prop:diag_decomposition} is symmetric in the three pairs $(\alpha_j,\sigma_j)$.
\end{remark}

Now consider the right-hand side of \eqref{eq:f_final_expression}. Regarding the first term $f_{\text{MT}}(\cdot)$, we will show the following in Section \ref{sec:conjecture}.
\begin{lemma}\label{lemma:conjectured_main_term}
    The conjectured main term of $\mathcal{M}^\pm_{nh}(\chi,\pmb{\alpha})$ is given by 
    $$\sum_{\pmb{\sigma}\in\{\pm1\}^3}(\pm1)^{\frac{1-\sigma}{2}}\text{MT}_{nh}^\pm(\chi,\pmb{\alpha},\pmb{\sigma}),$$
    where
    \begin{align}
        &\text{MT}_{nh}^\pm(\chi,\pmb{\alpha},\pmb{\sigma})\coloneqq \frac{1}{2p^{a\alpha}}\cdot\frac{1}{4\pi}\int_\R\tanh(\pi t)th_0(t)f_{\text{MT}}(\delta_\chi^\pm,t,\pmb{\alpha},\pmb{\sigma})\,dt. \label{eq:conjectured_main_term_after_extraction}
    \end{align}
\end{lemma}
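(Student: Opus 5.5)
The plan is to run the CFKRS recipe on the family $\{f\otimes\chi\}$ directly, and then check that the output agrees, term by term in $\pmb{\sigma}$, with \eqref{eq:conjectured_main_term_after_extraction}.

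\textbf{Step 1: set up the recipe.}
\begin{itemize}
\item Start from the approximate functional equation \eqref{eq:AFE_IK}. For each $j$, write $L(1/2+\alpha_j,u\otimes\chi)$ as the sum of two pieces.
  \begin{itemize}
  \item The first is $p^{-a\alpha_j}\cdot p^{a\alpha_j}L(1/2+\alpha_j)$, i.e.\ the original Dirichlet series.
  \item The second is the ``dual'' piece $\lambda_u(-1)p^{-2a\alpha_j}\gamma_{\rat}(\delta,t,-\alpha_j,\alpha_j)L(1/2-\alpha_j)$.
  \end{itemize}
\item Multiply out the three factors. This gives a sum over $\pmb{\sigma}\in\{\pm1\}^3$ of products of root-number factors $\lambda_u(-1)^{\#\{j:\sigma_j=-1\}}$, conductor factors $p^{a(\sigma\alpha-\alpha)}$, and the gamma ratio $\Gamma_{\rat}(\delta,t,\pmb{\alpha},\pmb{\sigma})$.
\item Following the recipe, replace $\lambda_u(-1)^{\#}$ by its family average. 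Restricted to forms of parity $\pm$, this average is $(\pm1)^{\frac{1-\sigma}{2}}$, because $\lambda_u(-1)^{\#}$ depends only on the parity of the number of sign flips.
\item Replace the product of the three Dirichlet series $\prod_j\sum_n\lambda_u(n)\chi(n)n^{-1/2-\sigma_j\alpha_j}$ by its expected value over the family. Here the expectation is computed with the Kuznetsov weights: via Proposition \ref{prop:kuznetsov}, keep only the diagonal $\mathbbm{1}_{n_1=n_2n_3}$ after applying the Hecke relation.
\end{itemize}

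\textbf{Step 2: compute the averaged Dirichlet series.}
The diagonal computation already performed for $\mathcal{D}$ shows that this average is the Euler product
\begin{align*}
\sum_{\substack{d,n_2,n_3\geq1 \\ (dn_2n_3,p)=1}}\frac{1}{d^{1+\sigma_2\alpha_2+\sigma_3\alpha_3}(n_2n_3)^{1+\sigma_1\alpha_1}n_2^{\sigma_2\alpha_2}n_3^{\sigma_3\alpha_3}}.
\end{align*}
This factors as $\zeta^{(p)}(1+\sigma_1\alpha_1+\sigma_2\alpha_2)\,\zeta^{(p)}(1+\sigma_1\alpha_1+\sigma_3\alpha_3)\,\zeta^{(p)}(1+\sigma_2\alpha_2+\sigma_3\alpha_3)$. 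The factors are coprime to $p$ because $\chi$ kills multiples of $p$.

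The $t$-dependence enters only through $\Gamma_{\rat}$. It is then integrated against $\frac{1}{4\pi}\tanh(\pi t)th_0(t)$, which is the spectral measure coming from $g_0$. The leading factor $\frac{1}{2p^{a\alpha}}$ comes from the normalisation in \eqref{eq:avg_setup}.

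\textbf{Step 3: match with $f_{\text{MT}}$.}
It remains to identify the recipe output with $f_{\text{MT}}$. By definition of $L(\cdot)$ evaluated at $s_j=\sigma_j\alpha_j$:
\begin{itemize}
\item the factor $p^{a\sigma\alpha}$ is the conductor factor;
\item the product $(s_1+s_2)(s_1+s_3)(s_2+s_3)$ cancels exactly against the denominator in $f_{\text{MT}}$;
\item the three $\zeta^{(p)}$ factors are those found in Step 2;
\item $\gamma_{\rat}(\delta,t,\sigma_j\alpha_j,\alpha_j)$ reproduces $\Gamma_{\rat}$;
\item $G(0)=1$.
\end{itemize}
Hence the recipe term for $\pmb{\sigma}$ equals $\frac{1}{2p^{a\alpha}}\frac{1}{4\pi}\int\tanh(\pi t)th_0(t)f_{\text{MT}}(\delta_\chi^\pm,t,\pmb{\alpha},\pmb{\sigma})\,dt$, with sign $(\pm1)^{\frac{1-\sigma}{2}}$.

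\textbf{Expected obstacle.}
The delicate point is bookkeeping the powers of $p^a$ and the root-number signs, so that the combination yields $p^{a\sigma\alpha}/p^{a\alpha}$ rather than, say, $p^{-2a\alpha}$-type factors. In particular, one must check that the recipe's discarding of terms with an odd number of sign changes in the full (both-parity) family corresponds exactly to the factor $(1\pm\sigma)/2$. I would verify this by recombining the $\pm$ parities, as was done in \eqref{eq:avg_setup}.
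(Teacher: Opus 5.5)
Your proposal is correct and is essentially the paper's argument: run the CFKRS recipe, identify the averaged coefficient sums with the product of the three $\zeta^{(p)}(1+\sigma_j\alpha_j+\sigma_k\alpha_k)$, and use that $\mathcal{X}(1/2-\alpha_j,u\otimes\chi)^{-1}=p^{-2a\alpha_j}\gamma_{\rat}(\delta_{u\otimes\chi},t,-\alpha_j,\alpha_j)$ depends on $u$ only through $t$ and the parity. The paper then evaluates the weighted family sum by Kuznetsov with $(n_1,n_2)=(1,\pm1)$; you should make two points explicit within that step. First, the factor $\frac12$ arises there from the projection $\frac{1\pm\lambda_u(-1)}{2}$, rather than being imported from the normalisation of the actual moment. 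Second, the off-diagonal terms involving $S_{\overline{\chi}^2}(1,\pm1;c)$ are $O_\varepsilon(p^{a(-1+\varepsilon)})$ and are therefore absorbed into the conjectured error term.
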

As a consequence, the conjectured main term of $\E_{\phi\,(\mo p^b)}[\mathcal{M}_{nh}(\chi\phi,\pmb{\alpha})]$ is given by 
\begin{align*}
    &\frac{1}{\varphi(p^b)}\sum_{\pmb{\sigma}\in\{\pm1\}^3}\sum_{\pm}(\pm1)^{\frac{1-\sigma}{2}}\sum_{\phi\,(\mo p^b)}\text{MT}_{nh}^\pm(\chi\phi,\pmb{\alpha},\pmb{\sigma}) \\
    &= \frac{1}{4p^{a\alpha}}\sum_{\pmb{\sigma}\in\{\pm1\}^3}\sum_\pm(\pm1)^{\frac{1-\sigma}{2}}\frac{1}{4\pi}\int_\R\tanh(\pi t)th_0(t)\sum_{\,\,\,\pm_b}f_{\text{MT}}(\tfrac{1\mp\chi(-1)(\pm_b1)}{2},t,\pmb{\alpha},\pmb{\sigma})\,dt.
\end{align*}
Noting that the two $(\pm)$-terms cancel out when $\sigma=-1$, while they are equal when $\sigma=1$, we obtain the main term on the right-hand side of \eqref{eq:claim_asymptotic}.

We call the remaining four terms on the right-hand side of \eqref{eq:f_final_expression} ``false'' main terms due to the following two types of cancellation:
\begin{enumerate}
    \item[(i)] The second term in the definition of each $N(\cdot)$-term cancels out with the same term for a different $\pmb{\sigma}$, e.g.
    \begin{align*}
        \tfrac{L(-\sigma_2\alpha_2,\sigma_2\alpha_2,\sigma_3\alpha_3,\delta,t,\pmb{\alpha},\pmb{\sigma})}{(-\sigma_2\alpha_2-\sigma_1\alpha_1)(\sigma_3\alpha_3+\sigma_2\alpha_2)(\sigma_3\alpha_3-\sigma_2\alpha_2)} + \tfrac{L(-(-\sigma_2\alpha_2),-\sigma_2\alpha_2,\sigma_3\alpha_3,\delta,t,\pmb{\alpha},(-\sigma_1,-\sigma_2,\sigma_3))}{(-(-\sigma_2\alpha_2)-(-\sigma_1\alpha_1))(\sigma_3\alpha_3+(-\sigma_2\alpha_2))(\sigma_3\alpha_3-(-\sigma_2\alpha_2))} = 0.
    \end{align*}

    \item[(ii)] The remaining two terms in the definition of each $N(\cdot)$-term and the $L(0,0,0,\cdot)$-term cancel out with off-diagonal terms; see Section \ref{sec:cancellation}.
\end{enumerate}

\subsection{Leading-order terms of $\mathcal{M}_{nh}^\pm(\chi,\pmb{0})$}\label{sec:asymptotic_for_diag}
Write $\sigma\alpha\coloneqq\sigma_1\alpha_1+\sigma_2\alpha_2+\sigma_3\alpha_3$. Recalling Lemma \ref{lemma:conjectured_main_term}, we obtain
\begin{align}
    \sum_{\pmb{\sigma}\in\{\pm1\}^3}(\pm1)^{\frac{1-\sigma}{2}}\mt_{nh}^\pm(\chi,\pmb{\alpha},\pmb{\sigma}) &= \frac{1}{2p^{a\alpha}}\sum_{\pmb{\sigma}\in\{\pm1\}^3}(\pm1)^{\frac{1-\sigma}{2}}p^{a\sigma\alpha}\bigg(\prod_{1\leq j<k\leq 3}\zeta^{(p)}(1+\sigma_j\alpha_j+\sigma_k\alpha_k)\bigg) \\
    &\quad\quad\quad\quad\quad\quad\,\,\,\,\cdot \frac{1}{4\pi}\int_\R \tanh(\pi t)t h_0(t)\Gamma_{\rat}(\delta_\chi^\pm,t,\pmb{\alpha},\pmb{\sigma})\,dt.
\end{align}

In particular, the leading-order terms of $\mathcal{M}_{nh}^\pm(\chi,\pmb{0})$ are given by
\begin{align}
    &\lim_{\pmb{\alpha}\rightarrow\,\pmb{0}}\frac{1}{2p^{a\alpha}}\sum_{\pmb{\sigma}\in\{\pm1\}^3}(\dotsc) \\
    &= \lim_{\pmb{\alpha}\rightarrow\,\pmb{0}}\frac{1}{2p^{a\alpha}}\sum_{\pmb{\sigma}\in\{\pm1\}^3}(\pm1)^{\frac{1-\sigma}{2}}\bigg(\sum_{n\geq0}\frac{(\sigma\alpha\log p^a)^n}{n!}\bigg)\prod_{1\leq j<k\leq 3}\left(\frac{1-p^{-1}}{\sigma_j\alpha_j+\sigma_k\alpha_k}+O(1)\right) \\
    &\quad\quad\quad\quad\quad\quad\quad\quad\,\,\,\,\, \cdot\frac{1}{4\pi}\int_\R \tanh(\pi t)t h_0(t)\prod_{\substack{1\leq j\leq 3 \\ \sigma_j=-1}}(1+O_\varepsilon((1+\abs{t})^\varepsilon\abs{\alpha_j}))\,dt \\
    &= \begin{dcases}
        \bigg(\frac{8}{6}(1-p^{-1})^3(\log p^a)^3 + O((\log p^a)^2)\bigg)\cdot\frac{1}{4\pi}\int_{\R}\tanh(\pi t)t h_0(t)\,dt &\text{ in the $(+)$-case} \\
        0 &\text{ in the $(-)$-case}
    \end{dcases}, \label{eq:size_of_main_term}
\end{align}
where in the second step all polar terms in terms of $\sigma_1\alpha_1,\sigma_2\alpha_2,\sigma_3\alpha_3$ cancel out due to the summation over $\pmb{\sigma}$. The leading constants in \eqref{eq:size_of_main_term} confirm the conjecture \eqref{eq:conjecture_conrey_farmer}. The term $O((\log p^a)^2)$ is merely a truncation of a complicated expression in terms of $\log p^a$ and $p$ that can be explicitly computed, and it does not represent the limit of our method. Note also that the integral 
\begin{align}
    \frac{1}{4\pi}\int_{\R}\tanh(\pi t)t h_0(t)\,dt \label{eq:size_of_family}
\end{align}
is the Plancherel volume at $\infty$ of the family of Maass forms in the definition of $\mathcal{M}_{nh}(\chi,\pmb{\alpha})$.
\section{Exponential sums averaged over a family}\label{sec:averaging_kloosterman_sums}
Recall the definition of $X(\cdot)$ from \eqref{eq:X_def}. Given the modulus $c$, let $c_0\coloneqq p^{\nu_p(c)}$ and $c'\coloneqq c/c_0$. Recall that $\nu_p(c)\geq a$. Write $\overline{c'}$ for an inverse of $c'$ modulo $c_0$, and $\overline{c_0}$ for an inverse of $c_0$ modulo $c'$. In this section, we prove the following.
\begin{prop}\label{prop:sum_of_kloosterman_sums}
    If $\nu_p(c)\geq a+b$, then 
    \begin{align*}
        X(\cdot) = \frac{1}{2}\chi(n_1)\overline{\chi}(n_2n_3)S_{\overline{\chi}^2}(\pm_\lambda n_1,n_2n_3;c)\sum_{\,\,\,\pm_b}(\pm_\lambda1)^{\frac{1\mp_b1}{2}}g^{\pm_\lambda}(\cdot,\tfrac{1\mp\chi(-1)(\pm_b1)}{2},\cdot).
    \end{align*}
    If $\nu_p(c)=a$, then 
    \begin{align*}
        X(\cdot) &=  \frac{1}{2(1-p^{-1})}\chi(\mp_\lambda1)S(\pm_\lambda n_1\overline{p^a},n_2n_3\overline{p^a};c')\sum_{\,\,\,\pm_b}(\mp_\lambda1)^{\frac{1\mp_b1}{2}}g^{\pm_\lambda}(\cdot,\tfrac{1\mp\chi(-1)(\pm_b1)}{2},\cdot) \\
        &\quad\, \cdot\sum_{\psi\,(\mo p^{a-b})}\frac{\overline{\psi}(\mp_\lambda n_1n_2n_3\overline{c'}^2)\tau(\chi\psi)}{\tau(\chi\overline{\psi})}.
    \end{align*}
    If $a<\nu_p(c)<a+b$, then $X(\cdot)=0$.
\end{prop}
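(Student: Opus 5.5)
The plan is to reduce $X(\cdot)$ to a single averaged Sali\'e sum at the prime $p$, and then treat the three ranges of $\nu_p(c)$ separately.

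\emph{Separating parity and factoring the Sali\'e sum.} The transform $g^{\pm_\lambda}(\cdot,\delta^\pm_{\chi\phi},\cdot)$ depends on $\phi$ only through $\phi(-1)$. I will insert $\frac{1}{2}\sum_{\pm_b}\frac{1+(\pm_b1)\phi(-1)}{2}\cdot 2$, so that $\delta^\pm_{\chi\phi}=\tfrac{1\mp\chi(-1)(\pm_b1)}{2}$ on each piece. This produces the outer factor $\tfrac12\sum_{\pm_b}$ and replaces the $\phi$-dependence of $g$ by a twist $\phi(\pm_b1)$. Since $\overline{\chi\phi}^2$ is a character modulo $p^a\mid c_0$, the Chinese remainder theorem factors
\begin{align*}
S_{\overline{\chi\phi}^2}(m,n;c)=S_{\overline{\chi\phi}^2}(m\overline{c'},n\overline{c'};c_0)\,S(m\overline{c_0},n\overline{c_0};c').
\end{align*}
Only the $c_0$-part and the prefactor $(\chi\phi)(n_1)(\overline{\chi\phi})(n_2n_3)$ involve $\phi$. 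Opening the $c_0$-sum over $y\in(\Z/c_0)^\times$ and using orthogonality,
\begin{align*}
\frac{1}{\varphi(p^b)}\sum_{\phi\,(\mo p^b)}\phi(x y^2)=\mathbbm{1}_{xy^2\equiv1\,(\mo p^b)},\qquad x=\pm_b n_1\overline{n_2n_3}.
\end{align*}
Hence the task is to evaluate the restricted sum
\begin{align*}
\Sigma=\sideset{}{^*}\sum_{\substack{y\,(\mo c_0)\\ xy^2\equiv1\,(\mo p^b)}}\chi^2(y)\,e_{c_0}(my+n\overline{y}),\qquad m=\pm_\lambda n_1\overline{c'},\quad n=n_2n_3\overline{c'}.
\end{align*}

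\emph{The case $k=\nu_p(c)>a$, following Hu \cite{H}.} The idea is to average $\Sigma$ over the substitution $y\mapsto y(1+p^{k-b}u)$ with $u$ running over residues mod $p^b$. This substitution preserves the constraint, since $k-b\ge b$. The phase changes by $e\big(u(my-n\overline y)/p^b\big)$.

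If $k\ge a+b$, then $\chi^2(1+p^{k-b}u)=1$, so summing over $u$ forces $my\equiv n\overline{y}\pmod{p^b}$, i.e.\ $y^2\equiv n\overline m\pmod{p^b}$. This congruence coincides with the constraint exactly when $\pm_b=\pm_\lambda$, up to the parity bookkeeping. So the constraint becomes redundant, $\Sigma$ is the full Sali\'e sum, and I recover the first formula. The sign factor $(\pm_\lambda1)^{\frac{1\mp_b1}{2}}$ tracks which parity survives.

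If $a<k<a+b$, then $k-b\ge 2a/3>a/2$ because $b\le a/3$. Hence $\chi^2(1+p^{k-b}u)=e(2\ell_\chi u/p^{a+b-k})$ for some unit $\ell_\chi$, via the $p$-adic logarithm. The $u$-sum then detects
\begin{align*}
(my-n\overline y)/p^b+2\ell_\chi/p^{a+b-k}\in\Z .
\end{align*}
On the support of the constraint the first term is integral, while the second is not because $1\le a+b-k<b$. Therefore $\Sigma=0$.

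\emph{The case $k=a$, which I expect to be the main obstacle.} Here I will instead expand the additive characters. For $(z,p)=1$,
\begin{align*}
e_{p^a}(z)=\frac{1}{\varphi(p^a)}\sum_{\eta\,(\mo p^a)}\overline{\eta}(z)\,\tau(\eta)
\end{align*}
(terms with $\eta$ imprimitive contribute harmlessly or vanish). Doing this for both $e_{p^a}(my)$ and $e_{p^a}(n\overline y)$ and summing over $y$ collapses the pair of characters to $\eta_2=\eta_1\chi^2\phi^2$. Substituting $\eta_1=\overline{\chi\phi}\,\overline{\psi}$ should make the pair look like $\tau(\chi\psi)/\tau(\chi\overline\psi)$ after using $\tau(\eta)\tau(\overline\eta)=\eta(-1)p^a$. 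The $\phi$-average then restricts $\psi$ to characters modulo $p^{a-b}$; I expect this to be where the normalisation $\frac{1}{1-p^{-1}}$ arises. The character factor should come out as $\overline\psi(\mp_\lambda n_1n_2n_3\overline{c'}^2)\chi(\mp_\lambda1)$, together with the sign $(\mp_\lambda1)^{\frac{1\mp_b1}{2}}$ coming from $\phi(-1)$.

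The delicate points are the reindexing $\eta\leftrightarrow\chi\phi\psi$, which must be bijective onto characters of conductor exactly $p^a$, and the tracking of all signs, the $\overline{c'}^2$, and the Gauss-sum normalisations. I would verify these last, by checking the resulting formula against the unaveraged case $b=0$.
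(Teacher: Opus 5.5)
Your overall route is the paper's: parity split, twisted multiplicativity, $\phi$-orthogonality, a Postnikov localization of $y$ for $\nu_p(c)>a$, and Gauss-sum Fourier inversion at $\nu_p(c)=a$. Your shift $y\mapsto y(1+p^{k-b}u)$ gives the same localization as \eqref{eq:after_postnikov} with $i=k-b$. However, the parity step is wrong as written, and this hides a missing argument.

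The factor $\frac{1\pm_b\phi(-1)}{2}$ is not a single twist $\phi(\pm_b1)$. Each $\pm_b$-piece carries $1+(\pm_b1)\phi(-1)$, so every piece contains both constraints $x=n_1\overline{n_2n_3}$ and $x=-n_1\overline{n_2n_3}$. Taken literally, your reduction would kill the piece with $\pm_b\neq\pm_\lambda$ when $k\ge a+b$. That contradicts the factor $(\pm_\lambda1)^{\frac{1\mp_b1}{2}}$ you then claim, which is nonzero for both signs. So in every range you must treat the same-sign constraint ($x=m\overline{n}$) and the opposite-sign one ($x=-m\overline{n}$). For $k>a$ your shift handles both. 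For $a<k<a+b$ with the opposite sign, your stated reason fails, because there $my-n\overline{y}\equiv 2my$ is a unit. The conclusion still holds: $(my-n\overline{y})/p^b$ has exact denominator $p^b>p^{a+b-k}$, so the total is still non-integral.

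The genuine gap is at $k=a$, in two places.

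\emph{The same-sign piece must vanish, and your Gauss-sum expansion does not show this.} That piece becomes $\frac{1}{\varphi(p^a)}\sum_\psi\overline{\psi}(mn)\frac{1}{\varphi(p^b)}\sum_\phi\tau(\chi\phi\psi)\tau(\overline{\chi\phi}\psi)$. The inner average is supported on primitive $\psi$ and is not visibly zero. You need Postnikov here, as the paper does in Lemma \ref{lemma:cancellation_relevant} with $i=a-1$. Your own shift $y\mapsto y(1+p^{a-b}u)$, $u$ mod $p^b$, does the job: on $my^2\equiv n\pmod{p^b}$ the $u$-sum detects $2\ell_\chi\equiv0\pmod{p^b}$, which is impossible since $\ell_\chi$ is a unit.

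\emph{For the opposite-sign piece, $\phi$ does not drop out of the Gauss sums after reindexing $\eta_1\leftrightarrow\chi\phi\psi$.} The actual content of this step is
\[
\frac{1}{\varphi(p^b)}\sum_{\phi\,(\mo p^b)}\phi(-1)\tau(\chi\phi\psi)\tau(\overline{\chi\phi}\psi)=\tau(\chi\psi)\tau(\overline{\chi}\psi)\mathbbm{1}_{\{c(\psi)\mid p^{a-b}\}},
\]
which the paper imports from \cite[Lemma 4.1.13]{K}. You can prove it directly.
\begin{enumerate}
\item[(a)] Open both Gauss sums and substitute $x=-yw$. The $\phi$-average then forces $w\equiv1\pmod{p^b}$.
\item[(b)] The remaining sum $\sum_y\psi^2(y)e_{p^a}(y(1-w))$ vanishes unless $\nu_p(1-w)=a-j$, where $c(\psi^2)=p^j$; if $\psi^2$ is trivial, it vanishes unless $\nu_p(1-w)\ge a-1$.
\item[(c)] If $c(\psi)\nmid p^{a-b}$, this contradicts $w\equiv1\pmod{p^b}$, so the average is zero. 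If $c(\psi)\mid p^{a-b}$, the restriction on $w$ is automatic, so the average equals the $\phi=1$ term.
\end{enumerate}
Finally, $\tau(\chi\overline{\psi})\tau(\overline{\chi}\psi)=(\chi\psi)(-1)p^a$ produces the ratio $\tau(\chi\psi)/\tau(\chi\overline{\psi})$. The constant $(1-p^{-1})^{-1}=p^a/\varphi(p^a)$ comes from this step, not from the $\phi$-average.
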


\begin{remark}
    We emphasize the relevance of Proposition \ref{prop:sum_of_kloosterman_sums}: after summing over $n_1,n_2,n_3,c,d$ as in Lemma \ref{lemma:summarizing_kuznetsov}, some of the $(p^a\pdiv c)$-terms give rise to terms that cancel with false main terms, while the remaining $(p^a\pdiv c)$-terms and the $(p^{a+b}\mid c)$-terms lead to the dual moment in the Motohashi formula. This will be made more explicit in Sections \ref{sec:poisson}--\ref{sec:cancellation}.
\end{remark}

In our proof of Proposition \ref{prop:sum_of_kloosterman_sums}, we will apply the following lemma, which allows us to ``linearize'' the multiplicative character $\chi$.
\begin{lemma}[{\cite[Lemma 2.1]{PeYfourth}}]\label{lemma:postnikov}
    Let $p$ be an odd prime, and $\beta\in\Z_{\geq2}$. There exists a unique group homomorphism 
    $$\ell:\widehat{(\Z/p^{\beta}\Z)^\times}\rightarrow\Z/p^{\beta-1}\Z:\chi\mapsto\ell_\chi$$
    such that the Postnikov formula
    $$\chi(1+pt) = e_{p^\beta}(\ell_\chi\log_p(1+pt))$$
    holds for all $\chi\in\widehat{(\Z/p^{\beta}\Z)^\times}$, $t\in\Z$. Here $\log_p:1+p\Z_p\rightarrow p\Z_p$ denotes the $p$-adic logarithm defined by the convergent power series expansion
    \begin{align*}
        \log_p(1+x) = \sum_{n\geq1}(-1)^{n+1}\frac{x^n}{n}.
    \end{align*} 
    The map $\ell$ is surjective, and for $1\leq\alpha\leq\beta$ we have $\ell_{\chi_1}\equiv\ell_{\chi_2}\,(\mo p^{\beta-\alpha})$ if and only if $\chi_1\overline{\chi_2}$ is a character modulo $p^\alpha$.
\end{lemma}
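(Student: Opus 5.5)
The plan is to construct $\ell_\chi$ directly from the restriction of $\chi$ to the principal units $U_p(1)/U_p(\beta)\subset(\Z/p^\beta\Z)^\times$, which we will identify with an additive group through the $p$-adic logarithm. The first step is the standard fact that, for $p$ odd and every $k\geq1$, the map $\log_p$ restricts to a group isomorphism
\begin{align*}
    1+p^k\Z_p\xrightarrow{\ \sim\ }p^k\Z_p, \qquad \log_p(xy)=\log_p x+\log_p y.
\end{align*}
To see this, note that for $\nu_p(x)\geq1$ and $n\geq2$ one has $\nu_p(x^n/n)\geq n\nu_p(x)-\nu_p(n)>\nu_p(x)$, since $p\geq3$. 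Hence $\log_p(1+x)\equiv x$ modulo higher valuation, so $\log_p$ maps $1+p^k\Z_p$ into $p^k\Z_p$. The inverse is $\exp_p$, which converges on $p\Z_p$ for odd $p$ because $\nu_p(n!)\leq(n-1)/(p-1)<n/2$. This is the only genuinely analytic input, and it is where oddness of $p$ is used; I expect it to be the main (if standard) obstacle. Passing to quotients, we get an isomorphism $U_p(1)/U_p(\beta)\simeq p\Z/p^\beta\Z\simeq\Z/p^{\beta-1}\Z$, under which $U_p(\alpha)/U_p(\beta)$ corresponds to $p^\alpha\Z/p^\beta\Z$.

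Next, I will use the fact that every additive character of $p\Z/p^\beta\Z$ has the form $y\mapsto e_{p^\beta}(\ell y)$ for a unique $\ell\in\Z/p^{\beta-1}\Z$. Indeed, $e_{p^\beta}(\ell p)=e_{p^{\beta-1}}(\ell)$ runs over all characters of the cyclic group of order $p^{\beta-1}$ exactly once. Given $\chi$, the function $y\mapsto\chi(\exp_p(y))$ is such a character, and we define $\ell_\chi$ to be the corresponding $\ell$. This gives the Postnikov formula, and it also gives uniqueness, since the formula determines $\ell_\chi$ modulo $p^{\beta-1}$. The map $\chi\mapsto\ell_\chi$ is a homomorphism because $(\chi_1\chi_2)\circ\exp_p=(\chi_1\circ\exp_p)(\chi_2\circ\exp_p)$ and exponents add. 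It is surjective because restriction of characters from the finite abelian group $(\Z/p^\beta\Z)^\times$ to its subgroup $U_p(1)/U_p(\beta)$ is surjective (characters extend from subgroups), and every $\ell$ defines a character of that subgroup.

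Finally, for $1\leq\alpha\leq\beta$, a character of $(\Z/p^\beta\Z)^\times$ is induced from modulus $p^\alpha$ if and only if it is trivial on $U_p(\alpha)$, since $\alpha\geq1$. Applying this to $\chi_1\overline{\chi_2}$, whose invariant is $\ell_{\chi_1}-\ell_{\chi_2}$ by the homomorphism property, triviality on $U_p(\alpha)$ means
\begin{align*}
    e_{p^\beta}\big((\ell_{\chi_1}-\ell_{\chi_2})y\big)=1\quad\text{for all }y\in p^\alpha\Z_p,
\end{align*}
using that $\log_p$ maps $U_p(\alpha)$ onto $p^\alpha\Z_p$. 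Taking $y=p^\alpha$, this condition is equivalent to $p^\beta\mid(\ell_{\chi_1}-\ell_{\chi_2})p^\alpha$, that is, to $\ell_{\chi_1}\equiv\ell_{\chi_2}\,(\mo p^{\beta-\alpha})$, which proves the last claim.
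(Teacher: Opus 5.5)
Your proof is correct and complete. The paper gives no proof of this lemma; it quotes it from \cite[Lemma 2.1]{PeYfourth}. Your argument is the standard proof behind that citation: $\log_p$ induces an isomorphism $(1+p\Z_p)/(1+p^\beta\Z_p)\cong p\Z_p/p^\beta\Z_p$ carrying $1+p^\alpha\Z_p$ onto $p^\alpha\Z_p$, and the rest is duality for the cyclic group $p\Z/p^\beta\Z$ together with extension of characters from a subgroup.

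One optional simplification: you never need $\exp_p$ or its convergence. The identity $\nu_p(\log_p(1+x))=\nu_p(x)$, which you already prove, makes the induced homomorphism on the finite quotients injective. Both quotients have order $p^{\beta-1}$, so it is bijective.
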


\begin{proof}[Proof of Proposition \ref{prop:sum_of_kloosterman_sums}]
First note that
\begin{align}
    X(\cdot) = \sum_{\,\,\,\pm_b}g^{\pm_\lambda}(\cdot,\tfrac{1\mp\chi(-1)(\pm_b1)}{2},\cdot)\frac{1}{\varphi(p^b)}\sum_{\phi\,(\mo p^b)}\frac{1\pm_b\phi(-1)}{2}(\chi\phi)(n_1)(\overline{\chi\phi})(n_2n_3)S_{\overline{(\chi\phi)}^2}(\cdot). \label{eq:X_after_parity}
\end{align}

We can then rewrite
\begin{align}
    S_{\overline{\chi}^2}(\cdot) &= S_{\,\overline{\chi}^2}(\pm_\lambda n_1,n_2n_3;c) \\
    &= S(\pm_\lambda n_1 \overline{c_0},n_2n_3\overline{c_0};c')\,\,\sideset{}{^*}\sum_{y\,(\mo c_0)}\chi^2(y)e_{c_0}((\pm_\lambda n_1y+n_2n_3\overline{y})\overline{c'}) \label{eq:twisted_multiplicativity}
\end{align} 
by twisted multiplicativity of Sali\'e sums. From this point onwards, we work similarly to \cite[Section 4]{H}.

Considering the sum over $y$ in \eqref{eq:twisted_multiplicativity}, we decompose $y\in(\Z/c_0\Z)^\times$ as $y=y_0(1+p^{\ceil{\nu_p(c)/2}}y_1)$ with unique $y_0\in(\Z/c_0\Z)^\times/U_p(\ceil{\nu_p(c)/2})$ and $y_1\in\Z/p^{\nu_p(c)-\ceil{\nu_p(c)/2}}\Z$. By Lemma \ref{lemma:postnikov}, there exists an element $\ell_\chi\in\Z/p^{\nu_p(c)-1}\Z$ satisfying $p^{\nu_p(c)-a}\pdiv \ell_\chi$ and for which
\begin{align}
    \chi^2(1+p^{\ceil{\nu_p(c)/2}}y_1)=e_{p^{\nu_p(c)-\ceil{\nu_p(c)/2}}}(2\ell_\chi y_1), \label{eq:postnikov}
\end{align}
and we also know that $\ell_\chi\equiv \ell_{\chi\phi}\,(\mo p^{\nu_p(c)-b})$ for any Dirichlet character $\phi\,(\mo p^b)$. So we obtain
\begin{align}
    &\,\sideset{}{^*}\sum_{y\,(\mo c_0)}\chi^2(y)e_{c_0}((\pm_\lambda n_1y+n_2n_3\overline{y})\overline{c'}) \label{eq:twisted_kloosterman}\\
    &= \sum_{y_0}\chi^2(y_0)e_{c_0}((\pm_\lambda n_1y_0+n_2n_3\overline{y_0})\overline{c'}) \\
    &\quad\quad\,\,\,\, \cdot\sum_{y_1}\chi^2(1+p^{\ceil{\nu_p(c)/2}}y_1)e_{p^{\nu_p(c)-\ceil{\nu_p(c)/2}}}((\pm_\lambda n_1y_0y_1-n_2n_3\overline{y_0}y_1)\overline{c'}) \\
    &= \,\sideset{}{^*}\sum_{\substack{y\,(\mo c_0) \\ 2c'\ell_\chi y\pm_\lambda n_1y^2-n_2n_3\equiv0\,(\mo p^{\nu_p(c)-i})}}\chi^2(y)e_{c_0}((\pm_\lambda n_1y+n_2n_3\overline{y})\overline{c'}) \label{eq:after_postnikov}
\end{align}
for arbitrary $\ceil{\nu_p(c)/2}\leq i\leq \nu_p(c)$; this identity corresponds to \cite[Lemma 4.14]{H}.

Combining \eqref{eq:X_after_parity}, \eqref{eq:twisted_multiplicativity} and \eqref{eq:after_postnikov}, we see that
\begin{align}
    X(\cdot) = \frac{1}{2}S(\pm_\lambda n_1\overline{c_0},n_2n_3\overline{c_0};c')\sum_{\,\,\,\pm_b}g^{\pm_\lambda}(\cdot,\tfrac{1\mp\chi(-1)(\pm_b1)}{2},\cdot)(\Sigma(+,\cdot)\pm_b\chi(-1)\Sigma(-,\cdot)), \label{eq:X_contains_Sigma}
\end{align}
where 
\begin{align}
    \Sigma(\cdot) &\,= \Sigma(\pm_\eta,\pm_\lambda,\chi,n_1,n_2,n_3,c_0,c') \\
    &\coloneqq \frac{1}{\varphi(p^b)}\sum_{\phi\,(\mo p^b)}(\chi\phi)(\pm_\eta n_1)(\overline{\chi\phi})(n_2n_3) \\
    &\quad\quad\quad\quad\quad\quad\quad\,\,\,\, \cdot \,\sideset{}{^*}\sum_{\substack{y\,(\mo c_0) \\ 2c'\ell_\chi y\pm_\lambda n_1y^2-n_2n_3\equiv0\,(\mo p^{\nu_p(c)-i})}}(\chi\phi)^2(y)e_{c_0}((\pm_\lambda n_1y+n_2n_3\overline{y})\overline{c'}) \label{eq:average_of_twisted_kloosterman_sums}
\end{align}
for independent signs $\pm_\eta$ and $\pm_\lambda$. We have thus reduced to evaluating $\Sigma(\cdot)$.

To evaluate $\Sigma(\cdot)$, we follow the same strategy as the proof of \cite[Lemma 4.19]{H} for the principal series representation case. The latter however does not treat the case $a=\nu_p(c)$ (which is crucial in our work) due to the non-vanishing congruence condition
\begin{align} 
    2c'\ell_\chi y\pm_\lambda n_1y^2-n_2n_3\equiv0\,(\mo p^{\nu_p(c)-\ceil{\nu_p(c)/2}}).\label{eq:nonvanishing_congruence}
\end{align}
being possibly degenerate (see \cite[Remark 4.15]{H}). We proceed by case distinction, recalling that $1\leq b<a\leq\nu_p(c)$.

\begin{lemma}\label{lemma:cancellation_irrelevant}
    If $a+b\leq\nu_p(c)$, then 
    \begin{align}
        \Sigma(\cdot) = \begin{dcases}
            \chi(\pm_\eta n_1)\overline{\chi}(n_2n_3)\,\sideset{}{^*}\sum_{y\,(\mo c_0)}\chi^2(y)e_{c_0}((\pm_\lambda n_1y+n_2n_3\overline{y})\overline{c'}) &\text{ if }\pm_\eta1\cdot\pm_\lambda1=1 \\ 
            0 &\text{ if }\pm_\eta1\cdot\pm_\lambda1=-1
        \end{dcases}. \label{eq:Sigma_final_aplusb}
    \end{align}
\end{lemma}

\begin{proof}
    We choose $i=\max\{\ceil{\nu_p(c)/2},a\}$, so that 
    \begin{align*}
        \pm_\lambda n_1y^2\equiv n_2n_3\,(\mo p^{\nu_p(c)-i})
    \end{align*} 
    by \eqref{eq:nonvanishing_congruence}. Since $b\leq\nu_p(c)-i$, we then have
    \begin{align*}
        (\chi\phi)\bigg(\frac{\pm_\lambda n_1y^2}{n_2n_3}\bigg) =\chi\bigg(\frac{\pm_\lambda n_1y^2}{n_2n_3}\bigg)
    \end{align*}
    for all $\phi\,(\mo p^b)$. So we get
    \begin{align}
        \Sigma(\cdot) &= \chi(\pm_\eta n_1)\overline{\chi}(n_2n_3)\,\sideset{}{^*}\sum_{y\,(\mo c_0)}\chi^2(y)e_{c_0}((\pm_\lambda n_1y+n_2n_3\overline{y})\overline{c'})\frac{1}{\varphi(p^b)}\sum_{\phi\,(\mo p^b)}\phi(\pm_\eta1\cdot\pm_\lambda1),
    \end{align}
    from which we conclude.
\end{proof}
    
\begin{lemma}\label{lemma:cancellation_relevant}
    If $a+b>\nu_p(c)$, then 
    \begin{align}
        \Sigma(\cdot) &= \begin{dcases}
            0 &\text{ if }\pm_\eta1\cdot\pm_\lambda1=1 \\
            &\text{ or if }\pm_\eta1\cdot\pm_\lambda1=-1,\,a<\nu_p(c) \\
            \\
            \frac{1}{1-p^{-1}}\sum_{\psi\,(\mo p^{a-b})}\frac{\overline{\psi}(\mp_\lambda n_1n_2n_3\overline{c'}^2)\tau(\chi\psi)}{\tau(\chi\overline{\psi})} &\text{ if }\pm_\eta1\cdot\pm_\lambda1=-1,\,a=\nu_p(c)
        \end{dcases}.
    \end{align}
\end{lemma}

\begin{proof}
    We choose $i=a-1$, so that 
    \begin{align*}
        \pm_\lambda n_1y^2-n_2n_3\equiv-2c'\ell_\chi y\equiv-2c'\ell_{\chi\phi} y\not\equiv0\,(\mo p^{\nu_p(c)-i})
    \end{align*}
    by \eqref{eq:nonvanishing_congruence}. Since $b\geq\nu_p(c)-i$, we then have
    \begin{align*}
        \frac{\pm_\lambda n_1y^2}{n_2n_3}\not\equiv1\,(\mo p^b).
    \end{align*}
    We conclude that
    \begin{align*}
        \Sigma(\cdot) &= \chi(\pm_\eta n_1)\overline{\chi}(n_2n_3)\,\sideset{}{^*}\sum_{y\,(\mo c_0)}\chi^2(y)e_{c_0}((\pm_\lambda n_1y+n_2n_3\overline{y})\overline{c'})\frac{1}{\varphi(p^b)}\sum_{\phi\,(\mo p^b)}\phi\bigg(\frac{\pm_\eta n_1y^2}{n_2n_3}\bigg) \\
        &= \begin{dcases}
            0 &\text{ if }\pm_\eta1\cdot\pm_\lambda1=1 \\
            &\text{ or if }\pm_\eta1\cdot\pm_\lambda1=-1,\,a<\nu_p(c) \\
            \\
            \chi(\pm_\eta n_1)\overline{\chi}(n_2n_3) &\text{ if }\pm_\eta1\cdot\pm_\lambda1=-1,\,a=\nu_p(c) \\
            \cdot\,\sideset{}{^*}\sum_{\substack{y\,(\mo p^a) \\ \pm_\eta n_1\overline{n_2n_3}y^2\equiv1\,(\mo p^b)}}\chi^2(y)e_{p^a}((\pm_\lambda n_1y+n_2n_3\overline{y})\overline{c'})
        \end{dcases}.
    \end{align*}
    Indeed, by orthogonality of Dirichlet characters modulo $p^b$ we have 
    \begin{align*}
        \frac{1}{\varphi(p^b)}\sum_{\phi\,(\mo p^b)}\phi\bigg(\frac{\pm_\eta n_1y^2}{n_2n_3}\bigg) = \mathbbm{1}_{\{\pm_\eta n_1\overline{n_2n_3}y^2\equiv1\,(\mo p^b)\}},
    \end{align*}
    which is clearly $0$ if $\pm_\eta1\cdot\pm_\lambda1=1$. If $\pm_\eta1\cdot\pm_\lambda1=-1$ on the other hand, the congruences
    \begin{align*}
        \frac{\pm_\lambda n_1y^2}{n_2n_3}\equiv1\,(\mo p^{\nu_p(c)-a})\quad\text{ and }\quad\frac{\pm_\eta n_1y^2}{n_2n_3}\equiv1\,(\mo p^b)
    \end{align*}
    contradict each other unless $\nu_p(c)=a$. 

    If $\pm_\eta1\cdot\pm_\lambda1=-1$ and $a=\nu_p(c)$, then it is not too hard to further show that
    $$\Sigma(\cdot) = p^{a-b}\mathbbm{1}_{\{(c'\ell_\chi)^2\equiv\mp_\lambda n_1n_2n_3\,(\mo p^{a-b})\}}$$
    as long as $b\geq a/3$. If $b<a/3$, higher-order terms will appear due to the evaluation of $\overline{y}$ and the Postnikov formula for $\chi(\pm_\eta n_1\overline{n_2n_3}y^2)$, making exact computation a hard combinatorial task. So, to avoid restricting the range of $b$, we do not attempt to compute $\Sigma(\cdot)$ exactly, but instead resort to Fourier inversion as in \cite[Lemma 4.1.13]{K}, which turns out to be sufficient for our purposes. More precisely, writing $n\coloneqq n_1n_2n_3\overline{c'}^2$, we have
    \begin{align}
        \Sigma(\cdot) &= \frac{1}{\varphi(p^b)}\sum_{\phi\,(\mo p^b)}(\chi\phi)(\mp_\lambda n)S_{\overline{\chi\phi}^2}(\pm_\lambda n,1;p^a) \\
        &= \frac{1}{\varphi(p^b)}\sum_{\phi\,(\mo p^b)}(\chi\phi)(\mp_\lambda n)\frac{1}{\varphi(p^a)}\sum_{\eta\,(\mo p^a)}\overline{\eta}(\pm_\lambda n)\tau(\eta)\tau(\overline{\chi\phi}^2\eta) \\
        &= \frac{1}{\varphi(p^a)}\sum_{\psi\,(\mo p^a)}\overline{\psi}(n)\widehat{S}(\psi),
    \end{align}
    where 
    \begin{align*}
        \widehat{S}(\psi) &\coloneqq \sum_{m\,(\mo p^a)}\psi(m)\frac{1}{\varphi(p^b)}\sum_{\phi\,(\mo p^b)}(\chi\phi)(\mp_\lambda m)\frac{1}{\varphi(p^a)}\sum_{\eta\,(\mo p^a)}\overline{\eta}(\pm_\lambda m)\tau(\eta)\tau(\overline{\chi\phi}^2\eta) \\
        &\,= \frac{\psi(\mp_\lambda 1)\tau(\chi\psi)p^a}{\tau(\chi\overline{\psi})}\mathbbm{1}_{\{c(\overline{\chi}\psi)=p^a,\,c(\psi^2)\mid p^{a-b}\}}
    \end{align*}
    as in loc.\ cit. Also note that $\mathbbm{1}_{\{c(\overline{\chi}\psi)=p^a,\,c(\psi^2)\mid p^{a-b}\}}=\mathbbm{1}_{\{c(\psi)\mid p^{a-b}\}}$ since $p$ is odd and $a-b\geq1$. The result now follows.
\end{proof}

Combining \eqref{eq:X_after_parity}, \eqref{eq:twisted_multiplicativity}, \eqref{eq:X_contains_Sigma} and Lemmas \ref{lemma:cancellation_irrelevant}--\ref{lemma:cancellation_relevant}, we conclude the proof of Proposition \ref{prop:sum_of_kloosterman_sums}.
\end{proof}
\section{Poisson summation}\label{sec:poisson}
We recall the shape of the off-diagonal in Lemma \ref{lemma:summarizing_kuznetsov}, followed by the evaluation of $X(\cdot)$ in Proposition \ref{prop:sum_of_kloosterman_sums}. Our next step is to apply Poisson summation to each of the summation variables $n_1,n_2,n_3$ modulo $c$. As our computations closely follow those in \cite[Sections 4.3--4.4]{PeY}, we refer the reader to loc.\ cit.\ for additional details.

To prepare the off-diagonal for Poisson summation, we apply a smooth dyadic partition of unity to each of the summation variables $n_1,n_2,n_3$. Such a partition of unity in particular allows us to extend the summation range from $\Z_{\geq1}^3$ to $\Z^3$. Concretely, consider dyadic numbers $N_1,N_2,N_3,C$, and let 
$$w_0(\cdot)\coloneqq w_0(n_1,n_2,n_3,c)\coloneqq w_{N_1}(n_1)w_{N_2}(n_2)w_{N_3}(n_3)w_C(c)$$
be the component of a partition of unity that localizes the variables by $n_j\asymp N_j$, $c\asymp C$. This leads us to define the localized Bessel transforms
\begin{align*}
    &J_0^+(x,n_1,n_2,n_3,c,d,\delta,\pmb{\alpha},\pmb{\sigma}) \coloneqq \frac{i}{2}w_0(\cdot)\int_{\R}\frac{J_{2it}(x)}{\cosh(\pi t)}t h(t,\cdot)\,dt, \\
    &J_0^-(x,n_1,n_2,n_3,c,d,\delta,\pmb{\alpha},\pmb{\sigma}) \coloneqq\frac{1}{\pi} w_0(\cdot)\int_{\R}K_{2it}(x)\sinh(\pi t)th(t,\cdot)\,dt.
\end{align*}
Note that $w_0(\cdot)$ suppresses the singularity at $y=0$ of the weight functions $V_{1/2+\sigma_j\alpha_j}(y,\cdot)$ appearing in the definition of $h(t,\cdot)$.

Similar to \cite[Proposition 4.3]{PeY}, we formulate ranges for $N_1,N_2,N_3,C$ outside of which the dyadically localized sums are sufficiently small. Concretely, if we restrict to
\begin{align}
    N_1\ll(p^aT)^{1+\varepsilon},\quad N_2,N_3\ll d^{-1}(p^aT)^{1+\varepsilon},\quad p^{a+b}\ll C\ll (p^aT)^{100}, \label{eq:conditions_normal_case}
\end{align}
for terms with $p^{a+b}\mid c$, and
\begin{align}
    N_1\ll(p^aT)^{1+\varepsilon},\quad N_2,N_3\ll d^{-1}(p^aT)^{1+\varepsilon},\quad p^{a}\ll C\ll (p^aT)^{100}, \label{eq:conditions_b_case}
\end{align}
for terms with $p^a\pdiv c$, then the remaining off-diagonal terms account for an error of size $O_\varepsilon(p^{-b+a(-\frac{1}{2}+\varepsilon)})$. This follows from combining the Weil bound \eqref{eq:weil_bound} and Lemma \ref{lemma:bounds_for_bessel_transforms}.

Applying Poisson summation separately to each of the summation variables $n_1,n_2,n_3$ modulo $c$, we obtain sums over dual summation variables $m_1,m_2,m_3\in\Z$. Recalling the notation $c'\coloneqq cp^{-\nu_p(c)}$, we write
\begin{align}
    G^\pm_\chi(m_1,m_2,m_3;c) &\coloneqq c^{-3}\,\sideset{}{^*}\sum_{y\,(\mo c)}\sum_{x_1,x_2,x_3\,(\mo c)}\chi(x_1)\overline{\chi}(x_2x_3)\chi^2(y) \\
    &\quad\quad\quad\quad\quad\quad\quad\quad\quad\quad\quad\,\,\,\,\,\cdot e_c(m_1x_1+m_2x_2+m_3x_3\pm x_1y+x_2x_3\overline{y}), \\
    G^{b,\pm}_\chi(m_1,m_2,m_3;p^{a-b}c') &\coloneqq \frac{\chi(\mp1)}{(p^{a-b}c')^3(1-p^{-1})}\sum_{\psi\,(\mo p^{a-b})}\frac{\tau(\chi\psi)}{\tau(\chi\overline{\psi})}\,\sideset{}{^*}\sum_{y\,(\mo c')}\sum_{\substack{x_1,x_2,x_3\,(\mo p^{a-b}c')}} \\
    &\quad\quad\quad\quad\quad\quad\quad\quad\quad\quad\quad\,\, \overline{\psi}(\mp x_1x_2x_3\overline{c'}^2) e_{p^{a-b}c'}(m_1x_1+m_2x_2+m_3x_3\\
    &\quad\quad\quad\quad\quad\quad\quad\quad\quad\quad\quad\quad\quad\quad\quad\quad\quad\quad\quad + p^{a-b}\overline{p^a}(\pm x_1y+x_2x_3\overline{y})), \label{eq:Gb_def} \\
    K_0^\pm(m_1,m_2,m_3,c,d,\delta,\pmb{\alpha},\pmb{\sigma}) &\coloneqq \int_{\R^3}J_0^\pm\left(\frac{4\pi\sqrt{t_1t_2t_3}}{c},\cdot\right)e_c(-m_1t_1-m_2t_2-m_3t_3)\,dt_1dt_2dt_3, \\
    K_0^{b,\pm}(m_1,m_2,m_3,c,d,\delta,\pmb{\alpha},\pmb{\sigma}) &\coloneqq \int_{\R^3}J_0^\pm\left(\frac{4\pi\sqrt{t_1t_2t_3}}{c},\cdot\right)e_{p^{a-b}c'}(-m_1t_1-m_2t_2-m_3t_3)\,dt_1dt_2dt_3 \\
    &\,= K_0^\pm(m_1p^b,m_2p^b,m_3p^b,c,d,\delta,\pmb{\alpha},\pmb{\sigma}) \label{eq:Kb0pm_def}
\end{align}
for the relevant Gauss-type sums and Fourier transforms of the Bessel transforms, respectively. The superscript $b$ indicates contribution from terms with $p^a\pdiv c$, while the absence of the superscript indicates contribution from terms with $p^{a+b}\mid c$. Note that $G^\pm_\chi(\cdot)$ depends on the sign $\pm$; this dependence was not explicitly included in the definition of the same object $G(\cdot)$ in \cite[Section 4.3]{PeY}.

We separate the terms with $m_1m_2m_3=0$ from those with $m_1m_2m_3\neq0$ by defining 
\begin{align*}
    \mathcal{T}_0(\pm,\pm_\lambda,\chi,d,\pmb{\alpha},\pmb{\sigma}) &\coloneqq \frac{1}{C\sqrt{N_1N_2N_3}}\sum_{\substack{c\geq1 \\ p^{a+b}\mid c}}\sum_{\substack{m_1,m_2,m_3\in\Z \\ m_1m_2m_3=0}}G_\chi^{\pm_\lambda}(m_1,m_2,m_3;c) \\
    &\quad\quad\quad\quad\quad\quad\quad\quad\quad\quad\quad\quad\quad\quad\,\,\, \cdot \sum_{\,\,\,\pm_b}(\pm_\lambda1)^{\frac{1\mp_b1}{2}}K_0^{\pm_\lambda}(\cdot,\tfrac{1\mp\chi(-1)(\pm_b1)}{2},\cdot), \\
    \mathcal{T}_{\epsilon_1,\epsilon_2,\epsilon_3}(\pm,\pm_\lambda,\chi,d,\pmb{\alpha},\pmb{\sigma}) &\coloneqq \frac{1}{C\sqrt{N_1N_2N_3}}\sum_{\substack{c\geq1 \\ p^{a+b}\mid c}}\sum_{\substack{m_1,m_2,m_3\in\Z \\ m_1\epsilon_1,m_2\epsilon_2,m_3\epsilon_3\geq1}}G_\chi^{\pm_\lambda}(m_1,m_2,m_3;c) \\
    &\quad\quad\quad\quad\quad\quad\quad\quad\quad\quad\quad\quad\quad\quad\quad\quad\,\,\, \cdot \sum_{\,\,\,\pm_b}(\pm_\lambda1)^{\frac{1\mp_b1}{2}}K^{\pm_\lambda}(\cdot,\tfrac{1\mp\chi(-1)(\pm_b1)}{2},\cdot)
\end{align*}
for some choice of $\epsilon_1,\epsilon_2,\epsilon_3\in\{\pm1\}$ (i.e. there are 8 different configurations). Similarly define $\mathcal{T}^b_0(\pm,\pm_\lambda,\chi,d,\pmb{\alpha},\pmb{\sigma})$ and $\mathcal{T}^b_{\epsilon_1,\epsilon_2,\epsilon_3}(\pm,\pm_\lambda,\chi,d,\pmb{\alpha},\pmb{\sigma})$. Later we will in fact show that $\mathcal{T}_0(\cdot)=0$; see Lemma \ref{lemma:T0}.

We also apply a smooth dyadic partition of unity to those $m_1,m_2,m_3$ that are all non-zero. That is, we consider dyadic numbers $M_1,M_2,M_3$, and let 
\begin{align*}
    w(\cdot) &\coloneqq w(m_1,m_2,m_3) \coloneqq w_{M_1}(\abs{m_1})w_{M_2}(\abs{m_2})w_{M_3}(\abs{m_3})
\end{align*}
be the component of a partition of unity that localizes the variables by $\abs{m_j}\asymp M_j$. This leads us to define
\begin{align}
    K^\pm(m_1,m_2,m_3,c,d,\delta,\pmb{\alpha},\pmb{\sigma}) &\coloneqq w(\cdot)K^\pm_0(\cdot), \\
    K^{b,\pm}(m_1,m_2,m_3,c,d,\delta,\pmb{\alpha},\pmb{\sigma}) &\coloneqq w(\cdot)K_0^{b,\pm}(\cdot) \\
    &\,= w_{M_1}(\abs{m_1})w_{M_2}(\abs{m_2})w_{M_3}(\abs{m_3})K^\pm_0(m_1p^b,m_2p^b,m_3p^b,c,d,\delta,\pmb{\alpha},\pmb{\sigma}). \label{eq:Kbpm_rewritten}
\end{align}
Under the assumptions \eqref{eq:conditions_normal_case} and \eqref{eq:conditions_b_case}, applying repeated integration by parts to $K^\pm(\cdot),K^{b,\pm}(\cdot)$ shows that we may restrict to $M_j\ll (p^aT)^{A}$ for all $1\leq j\leq 3$ and some (possibly very large) fixed $A>0$.

Executing all of the above, we obtain the following dual expression for the off-diagonal.
\begin{prop}
    Suppose $T=(p^a)^{o(1)}$, and $\re\alpha_j\ll\frac{1}{\log p^a}$ and $\im\alpha_j=O(1)$ for all $1\leq j\leq 3$. Then, one has 
    \begin{align}
        &\sum_{\substack{d\geq1 \\ (d,p)=1}}\sum_{n_1,n_2,n_3\geq1}\sum_{\substack{c\geq1 \\ c\equiv0\,(\mo p^a)}}\frac{X(\cdot)}{cd\sqrt{n_1n_2n_3}} \\
        &= \frac{1}{2}\bigg(\sum_{\substack{d\geq1 \\ (d,p)=1}}\frac{1}{d}\bigg(\sum_{\substack{N_1,N_2,N_3 \\ \text{dyadic, satisfying \eqref{eq:conditions_normal_case}}}}\sum_{\substack{M_1,M_2,M_3\ll(p^aT)^A \\ \text{dyadic}}}\sum_{\epsilon_1,\epsilon_2,\epsilon_3\in\{\pm1\}}\mathcal{T}_{\epsilon_1,\epsilon_2,\epsilon_3}(\cdot) \\
        &\quad\quad\quad\quad\quad\quad\quad\, + \mathcal{T}_0^b(\cdot) + \sum_{\substack{N_1,N_2,N_3 \\ \text{dyadic, satisfying \eqref{eq:conditions_b_case}}}}\sum_{\substack{M_1,M_2,M_3\ll(p^aT)^A \\ \text{dyadic}}}\sum_{\epsilon_1,\epsilon_2,\epsilon_3\in\{\pm1\}}\mathcal{T}^b_{\epsilon_1,\epsilon_2,\epsilon_3}(\cdot)\bigg)\bigg) \\ \label{eq:off_diagonal_after_poisson}
        &\quad\quad\quad\quad\quad\quad\quad\quad\quad\quad\quad\quad\quad\quad\quad\quad\quad\quad\quad\quad\quad\quad\quad\quad\quad\quad\quad\quad\,\, + O_\varepsilon(p^{-b+a(-\frac{1}{2}+\varepsilon)}).
    \end{align}
\end{prop}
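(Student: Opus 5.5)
We start from the off-diagonal expression in Lemma \ref{lemma:summarizing_kuznetsov} and substitute the evaluation of $X(\cdot)$ from Proposition \ref{prop:sum_of_kloosterman_sums}. The terms with $a<\nu_p(c)<a+b$ vanish identically. The remaining $c$ split into two classes: those with $p^{a+b}\mid c$, carrying the twisted Kloosterman sum $S_{\overline{\chi}^2}(\pm_\lambda n_1,n_2n_3;c)$, and those with $p^a\pdiv c$, carrying $S(\pm_\lambda n_1\overline{p^a},n_2n_3\overline{p^a};c')$ times the $\psi$-sum modulo $p^{a-b}$. The factor $\frac12\sum_{\pm_b}$ in both cases of Proposition \ref{prop:sum_of_kloosterman_sums} produces the overall $\frac12$ and the $\pm_b$-sums inside $\mathcal{T}$ and $\mathcal{T}^b$.

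We insert the dyadic partition $w_0$ in $n_1,n_2,n_3,c$. Dyadic boxes violating \eqref{eq:conditions_normal_case} (resp. \eqref{eq:conditions_b_case}) are discarded as follows:
\begin{itemize}
\item the large $N_j$ are killed by the factor $(1+\max\{n_1,dn_2,dn_3\}/(p^aT))^{-A}$ of Lemma \ref{lemma:bounds_for_bessel_transforms};
\item the large $C$ are handled by the factor $x=4\pi\sqrt{n_1n_2n_3}/c$ in that lemma, together with the Weil bound \eqref{eq:weil_bound} for Sali\'e sums (with $c(\psi)\le p^a$);
\item the small $C$ are automatic, since $c\ge p^{a}$ (resp. $p^{a+b}$).
\end{itemize}
The error coming from these discarded ranges should be bounded by summing $x\cdot c^{-1/2+\varepsilon}p^{a/2}/\sqrt{n_1n_2n_3}$ over the relevant tails. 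The $p^{-b}$ saving in $O_\varepsilon(p^{-b+a(-1/2+\varepsilon)})$ comes from $c\ge p^{a+b}$ in the first class. In the second class it comes from the normalization of the $\psi$-sum: it has $\varphi(p^{a-b})$ terms of modulus $1$, so it is $\ll p^{a-b}$, and combined with the Ramanujan-type bound on $S(\cdot;c')$ it again saves $p^{-b}$ relative to the trivial bound. I expect this bookkeeping to be routine but fiddly.

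Inside an admissible box we apply Poisson summation in each $n_j$. In the first class the summand is periodic modulo $c$ (the arithmetic factor is $\chi(n_1)\overline{\chi}(n_2n_3)S_{\overline{\chi}^2}(\cdot;c)$). Opening the Kloosterman sum as a sum over $y$ yields exactly $G^{\pm_\lambda}_\chi(m_1,m_2,m_3;c)$ times the Fourier transform $K^{\pm_\lambda}_0$. In the second class the arithmetic factor is $\overline{\psi}(\mp n_1n_2n_3\overline{c'}^2)S(\cdot;c')$, which is periodic modulo $p^{a-b}c'$, not modulo $c=p^ac'$. So we apply Poisson with modulus $p^{a-b}c'$. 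This produces $G^{b,\pm}_\chi$ as in \eqref{eq:Gb_def}, with $e_{p^{a-b}c'}(p^{a-b}\overline{p^a}(\cdots))$ encoding $e_{c'}(\overline{p^a}\cdots)$. The transform becomes $K_0^{b,\pm}$, which by rescaling equals $K_0^\pm(m_jp^b,\cdot)$ as recorded in \eqref{eq:Kb0pm_def}. The sign factor $\chi(\mp_\lambda 1)$ and $(\mp_\lambda1)^{(1\mp_b1)/2}$ versus $(\pm_\lambda 1)^{(1\mp_b1)/2}$ must be tracked carefully here. Getting the signs and the exact periodicity right in this second class is the step I expect to need the most care.

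Finally we separate $m_1m_2m_3=0$ from $m_1m_2m_3\neq0$, and for the latter split by sign patterns $\epsilon_j$ with dyadic $M_j$. Repeated integration by parts in $t_j$ in $K^\pm$ shows that for $M_j\gg(p^aT)^A$ the transform is negligible. This uses the derivative bounds of Lemma \ref{lemma:bounds_for_bessel_transforms} and Lemma \ref{lemma:Vyt}, where each derivative costs at most $(p^aT)^{O(1)}/N_j$ against a gain of $C/M_j$. It lets us truncate the $M$-sums and gives \eqref{eq:off_diagonal_after_poisson}. The $m_1m_2m_3=0$ terms for $p^{a+b}\mid c$ are kept in the display only implicitly, via $\mathcal{T}_0$, which is shown to vanish later in Lemma \ref{lemma:T0}.
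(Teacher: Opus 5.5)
Your proposal is correct and follows the same route as the paper. The steps match: dyadic localization, with out-of-range boxes discarded via the Weil bound \eqref{eq:weil_bound} and Lemma \ref{lemma:bounds_for_bessel_transforms}; Poisson summation in each $n_j$, modulo $c$ when $p^{a+b}\mid c$ and modulo $p^{a-b}c'$ when $p^a\pdiv c$, as encoded in the definitions of $G^{b,\pm}_\chi$ and $K_0^{b,\pm}$; removal of $\mathcal{T}_0$ by Lemma \ref{lemma:T0}; and truncation of the $M_j$ by repeated integration by parts. One minor point: the discarded tails are in fact far smaller than $p^{-b+a(-\frac{1}{2}+\varepsilon)}$, so your tracking of where the factor $p^{-b}$ comes from is harmless but unnecessary.
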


We study the sums $G^{\pm}_\chi(\cdot)$, $G^{b,\pm}_\chi(\cdot)$ and related generating functions in Sections \ref{sec:evaluation_of_gauss_sums}--\ref{sec:evaluation_of_generating_functions}. The sum $\mathcal{T}_0^b(\cdot)$ is treated in Section \ref{sec:cancellation}, where we show that it can be matched against the false main terms from Section \ref{sec:diag}. To finish the proof of Theorem \ref{thm:main}, it then suffices to bound $\mathcal{T}_{\epsilon_1,\epsilon_2,\epsilon_3}(\cdot)$ and $\mathcal{T}^b_{\epsilon_1,\epsilon_2,\epsilon_3}(\cdot)$, which we do in Section \ref{sec:completing_the_proof}.
\section{Evaluating $G^\pm_\chi(\cdot)$ and $G^{b,\pm}_\chi(\cdot)$}\label{sec:evaluation_of_gauss_sums}
We proceed similarly to \cite[Section 5]{PeY}.

\subsection{The case of $G^\pm_\chi(\cdot)$}
Write $c=p^{a+b}r$ for some $r\in\Z_{\geq1}$, and $r=r'p^{\nu_p(r)}$ with $(r',p)=1$. By the same calculations as in \cite[(5.3), (5.4)]{PeY}, we obtain the generating functions
\begin{align}
    Z^\pm(s_1,s_2,s_3,s_4) &\coloneqq \sum_{m_1,m_2,m_3\geq1}\sum_{\substack{c\geq1 \\ p^{a+b}\mid c}}\frac{cp^aG^\pm_\chi(m_1,m_2,m_3;c)e_c(\mp m_1m_2m_3)\chi(\mp1)}{m_1^{s_1}m_2^{s_2}m_3^{s_3}(c/p^{a+b})^{s_4}} \\
    &\,= \sum_{\substack{m_1,m_2,m_3,r\geq1 \\ (m_1,pr)=1}}\frac{H_\chi(\pm m_1,m_2,m_3;p^br)}{m_1^{s_1}m_2^{s_2}m_3^{s_3}r^{s_4}}, \label{eq:GH_identity}
\end{align}
where 
\begin{align}
    H_\chi(m_1,m_2,m_3;p^br) &\coloneqq \sum_{t,u\,(\mo p^a)}\chi(t)\overline{\chi}(u)\chi(-m_1+p^bru)\overline{\chi}(-m_2+p^brt) \\
    &\quad\quad\quad\quad\quad\,\,\,\, \cdot e_c(m_3(-m_1+p^bru)(-m_2+p^brt)-m_1m_2m_3). \label{eq:Hchi_def}
\end{align}
Replacing $r$ by $p^br$ everywhere in \cite[Section 5.1]{PeY}, the condition (5.7) in loc.\ cit.\ implies $H_\chi(m_1,m_2,m_3;p^br)=0$ if $p\mid m_1m_2m_3$, so in particular we obtain the following.
\begin{lemma}\label{lemma:T0}
    One has $\mathcal{T}_0(\cdot)=0$.
\end{lemma}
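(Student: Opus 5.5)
The plan is to show that every Gauss-type sum $G^{\pm_\lambda}_\chi(m_1,m_2,m_3;c)$ with $p^{a+b}\mid c$ and $m_1m_2m_3=0$ vanishes identically. Then each summand of $\mathcal{T}_0(\cdot)$ is zero, independently of the Bessel transforms $K_0^{\pm_\lambda}(\cdot)$, and no analytic input is needed. First, I would use the Chinese remainder theorem, writing $c=p^{\nu_p(c)}c'$, to factor $G^\pm_\chi$ into a $p$-part (with modulus $c_0=p^{\nu_p(c)}$, carrying the character $\chi$) times a $c'$-part (with trivial character). As in \cite[Section 5]{PeY}, the $p$-part is a finite sum that depends on $m_j$ only through $m_j$ twisted by a unit modulo $c_0$. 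So it suffices to show that the $p$-part vanishes whenever $p\mid m_1m_2m_3$. This covers $m_1m_2m_3=0$, since $p\mid 0$.

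Second, I would relate the $p$-part to $H_\chi$ via the identity \eqref{eq:GH_identity}, with $r$ replaced by $p^br$. Equivalently, I would carry out the computation of \cite[(5.3)--(5.7)]{PeY} directly. The $x_1$-sum modulo $c_0$ is a Gauss sum of the primitive character $\chi$ of conductor $p^a$ at frequency $m_1\pm y$. The $x_2$-sum is a Gauss sum at $m_2+x_3\overline{y}$. Because $\nu_p(c)\ge a+b>a$, a Gauss sum of a primitive character modulo $p^a$, taken over the larger modulus $c_0$, vanishes unless $p^{\nu_p(c)-a}$ divides the frequency. The result is then $\overline{\chi}$ of the reduced frequency times $\tau(\chi)$. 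This gives congruences $m_1\equiv \mp y$ and $m_2\equiv -x_3\overline{y}$ modulo $p^{\nu_p(c)-a}$, together with factors $\chi$ and $\overline{\chi}$ evaluated at the quotients. Those quotients are exactly the variables $t,u$ modulo $p^a$ in \eqref{eq:Hchi_def}. In particular, since $y$ is a unit, $m_1$ must be a unit modulo $p$. This accounts for the restriction $(m_1,pr)=1$ in \eqref{eq:GH_identity}.

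Third, and I expect this to be the main obstacle, I must show that $H_\chi(\pm m_1,m_2,m_3;p^br)=0$ when $p\mid m_2m_3$; this is condition (5.7) of \cite{PeY}. If $p\mid m_2$, then $\overline{\chi}(-m_2+p^brt)$ vanishes for all $t$, since the argument is divisible by $p$ and $b\ge1$. For $p\mid m_3$, I would substitute $u\mapsto u(1+p^{a-1}v)$ and average over $v$. Using Lemma \ref{lemma:postnikov}, the character factors $\chi(t)\overline{\chi}(u)\chi(-m_1+p^bru)$ pick up an additive phase that is linear in $v$ with coefficient $\ell_\chi$ times a unit. Because $p^b$ divides the shift inside $\chi(-m_1+p^bru)$, that factor changes only at a higher level. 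Meanwhile the exponential $e_c(m_3(\dots))$ changes by a phase divisible by $p^{a+b-1}\cdot m_3/c$. When $p\mid m_3$ this phase is trivial modulo the relevant modulus, while the character phase is a nontrivial additive character in $v$ because $\chi$ is primitive. Summing over $v$ therefore kills $H_\chi$. The delicate bookkeeping is checking that the exponent $p^b r$ (rather than $r$) leaves every divisibility in \cite[(5.7)]{PeY} intact, and I would verify it step by step in the same way. The same argument covers the case $m_j=0$ for any $j$, including $m_1=0$, which is excluded by the unit condition. Hence every term of $\mathcal{T}_0(\cdot)$ vanishes.
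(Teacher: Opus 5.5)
Your proposal is correct and takes essentially the paper's approach. The paper proves the lemma in one line: it reruns the computation of \cite[Section 5.1]{PeY} with $r$ replaced by $p^br$, and then invokes condition (5.7) there to get $H_\chi(m_1,m_2,m_3;p^br)=0$ whenever $p\mid m_1m_2m_3$, which covers $m_1m_2m_3=0$. Your explicit checks are exactly what that citation packages, and they go through. These are the Gauss-sum congruence forcing $p\nmid m_1$, the vanishing of $\overline{\chi}(-m_2+p^brt)$ when $p\mid m_2$, and the average over $u\mapsto u(1+p^{a-1}v)$ when $p\mid m_3$, which uses $b\geq 1$ and the primitivity of $\chi$.
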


We assume from now on that $(m_1m_2m_3,p)=1$, since $H_\chi(\cdot)$ vanishes otherwise. Departing from (5.5) in loc.\ cit., and substituting consecutively $t\rightarrow \overline{m_3}t$ and $u\rightarrow\overline{m_2m_3}u$, we arrive at
\begin{align*}
    H_\chi(m_1,m_2,m_3;p^br) = \sum_{t,u\,(\mo p^a)}\chi(t+u)\overline{\chi}(u)\chi(-1+\overline{m_1m_2m_3}p^bru)\overline{\chi}(1+\overline{m_1m_2m_3}p^brt)e_{p^a}(t).
\end{align*}
Hence $H_\chi(\pm m_1,m_2,m_3;p^br)=H_\chi(1,1,1;p^{b+\nu_p(r)}\cdot\overline{\pm w})$ in \eqref{eq:GH_identity} for fixed $\nu_p(r)$ can be viewed as a function of $w\coloneqq m_1m_2m_3\overline{r'}$ on $(\Z/p^{a-b}\Z)^\times$. By orthogonality, we then have
\begin{align*}
    H_\chi(\pm m_1,m_2,m_3;p^br) &= \frac{1}{\varphi(p^{a-b})}\sum_{\psi\,(\mo p^{a-b})}\widehat{H}_\chi(\psi;p^{b+\nu_p(r)})\psi(\pm w),
\end{align*}
where
\begin{align}
    \widehat{H}_\chi(\psi;p^{b+\nu_p(r)}) &\coloneqq \sum_{v\,(\mo p^{a-b})}H_\chi(1,1,1;p^{b+\nu_p(r)}\overline{v})\overline{\psi}(v), \label{eq:Hhatplus_def}
\end{align}
similar to \cite[(5.11), (5.12)]{PeY}.
Note that we have dispensed with the case of $c(\psi)=p^a$ (since $b\geq1$), so we do not need to treat the hybrid character sum $g(\chi,\psi)$ as in \cite[Lemma 6.4]{PeY}.

Defining
\begin{align*}
    Z_{\fin}(\psi;s_4) \coloneqq \sum_{k\geq0}\frac{\widehat{H}_\chi(\psi;p^{b+k})}{p^{ks_4}},
\end{align*}
we conclude the following.
\begin{lemma}\label{lemma:Zpm_final}
    One has 
    \begin{align*}
        Z^\pm(s_1,s_2,s_3,s_4) = \frac{1}{\varphi(p^{a-b})}\sum_{\psi\,(\mo p^{a-b})}\psi(\pm1)\frac{L(s_1,\psi)L(s_2,\psi)L(s_3,\psi)L(s_4,\overline{\psi})}{\zeta^{(p)}(s_1+s_4)}Z_{\fin}(\psi;s_4).
    \end{align*}
\end{lemma}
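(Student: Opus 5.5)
The plan is to start from the identity \eqref{eq:GH_identity} and insert the character expansion of $H_\chi(\pm m_1,m_2,m_3;p^br)$ obtained above. For $(m_1m_2m_3,p)=1$ we have
\[
H_\chi(\pm m_1,m_2,m_3;p^br)=\frac{1}{\varphi(p^{a-b})}\sum_{\psi\,(\mo p^{a-b})}\widehat{H}_\chi(\psi;p^{b+\nu_p(r)})\psi(\pm1)\psi(m_1)\psi(m_2)\psi(m_3)\overline{\psi}(r'),
\]
where $w=m_1m_2m_3\overline{r'}$. For $p\mid m_1m_2m_3$ the summand vanishes, by the consequence of \cite[(5.7)]{PeY} already used in Lemma \ref{lemma:T0}. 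We may therefore extend the $m_j$-sums over all positive integers and use $\psi(m_j)=0$ when $p\mid m_j$ (for $a-b\geq1$ every $\psi$ modulo $p^{a-b}$ kills multiples of $p$). In the same way, the restriction $(m_1,p)=1$ in \eqref{eq:GH_identity} becomes automatic.

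Next, write $r=p^kr'$ with $(r',p)=1$. The summand then factors into a function of $k$ alone and a multiplicative function of $(m_1,m_2,m_3,r')$. The only remaining coupling is the condition $(m_1,r')=1$ inherited from \eqref{eq:GH_identity}. Summing over $k\geq0$ with the weight $p^{-ks_4}$ produces exactly $Z_{\fin}(\psi;s_4)$.

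The coprimality condition can be removed by Möbius inversion. Using
\[
\sum_{(m_1,r')=1}\psi(m_1)\overline{\psi}(r')m_1^{-s_1}r'^{-s_4}=\sum_{(e,p)=1}\mu(e)\psi(e)\overline{\psi}(e)e^{-s_1-s_4}\,L(s_1,\psi)L(s_4,\overline{\psi}),
\]
and noting that $\psi\overline{\psi}(e)=1$ for $(e,p)=1$, the $e$-sum equals $1/\zeta^{(p)}(s_1+s_4)$. Meanwhile the free sums over $m_2$ and $m_3$ give $L(s_2,\psi)L(s_3,\psi)$. Assembling these pieces and interchanging the finite $\psi$-sum with the Dirichlet series, which is legitimate for $\re s_j$ large, yields the claimed formula.

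Two points need care. The first is the bookkeeping check that $H_\chi(\pm m_1,m_2,m_3;p^br)$ depends only on $w$ modulo $p^{a-b}$ and on $\nu_p(r)$. This follows from the displayed formula: $p^br$ enters only through $p^{b+\nu_p(r)}r'\overline{m_1m_2m_3}$ multiplying $t$ and $u$ modulo $p^a$. The second is the sign $\psi(\pm1)$, which arises from $H_\chi(\pm m_1,\dots)=H_\chi(1,1,1;p^{b+\nu_p(r)}\overline{\pm w})$. I expect the main obstacle to be the first point: the normalization of $\widehat{H}_\chi$ as a function of $v$ versus $\overline v$ must match the definition \eqref{eq:Hhatplus_def}, so that the character appearing is $\psi(\pm w)$ rather than $\overline{\psi}$.
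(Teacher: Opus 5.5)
Your proposal is correct and follows the paper's own route. The paper likewise expands $H_\chi(\pm m_1,m_2,m_3;p^br)=H_\chi(1,1,1;p^{b+\nu_p(r)}\overline{\pm w})$ by orthogonality of characters modulo $p^{a-b}$ in $w=m_1m_2m_3\overline{r'}$, uses the vanishing when $p\mid m_1m_2m_3$, and sums over $r=p^kr'$ to produce $Z_{\fin}(\psi;s_4)$. Your M\"obius-inversion step makes explicit that the factor $1/\zeta^{(p)}(s_1+s_4)$ comes from the coupling condition $(m_1,r')=1$, which the paper leaves implicit by analogy with Petrow--Young.
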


\subsection{The case of $G^{b,\pm}_\chi(\cdot)$}
Recalling the definition of $G^{b,\pm}_\chi(\cdot)$ from \eqref{eq:Gb_def}, the following familiar identity (see e.g. \cite[Lemma 7.1]{PeYfourth} with $q=p^m$, $n=\gamma p^k$) will be helpful to us.
\begin{lemma}\label{lemma:gauss_sum_general}
Let $m\in\Z_{\geq1}$, and $\psi$ a non-trivial Dirichlet character modulo $p^m$ of conductor $p^j$ for some $1\leq j\leq m$. For $k\in\Z_{\geq0}$ and $\gamma\in(\Z/p^m\Z)^\times$, one has
\begin{align*}
    \sum_{v\,(\mo p^m)}e_{p^m}(\gamma vp^k)\psi(v) = \begin{dcases}
        p^{k}\overline{\psi}(\gamma)\tau(\psi') &\text{ if } k=m-j \\
        0 &\text{ otherwise}
    \end{dcases}.
\end{align*}
\end{lemma}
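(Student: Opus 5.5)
We reduce to the case $\gamma=1$ by substituting $v\mapsto\overline{\gamma}v$, which permutes $(\Z/p^m\Z)$ and fixes $0$. This gives
\begin{align*}
    \sum_{v\,(\mo p^m)}e_{p^m}(\gamma vp^k)\psi(v)=\overline{\psi}(\gamma)\sum_{v\,(\mo p^m)}e_{p^m}(vp^k)\psi(v).
\end{align*}
We may also assume $k<m$, since for $k\geq m$ the exponential is identically $1$. Then the sum is $\sum_v\psi(v)=0$ by orthogonality, as $\psi$ is non-trivial, and this is consistent with the claim because $k=m-j$ would force $j\leq0$. For $k<m$ we write $e_{p^m}(vp^k)=e_{p^{m-k}}(v)$. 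The summand now depends on $v$ only through $\psi(v)$ and through $v$ modulo $p^{m-k}$, so we split $v=v_0+p^{m-k}v_1$ with $v_0\,(\mo p^{m-k})$ and $v_1\,(\mo p^k)$.

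Next we analyze the inner sum $\sum_{v_1}\psi(v_0+p^{m-k}v_1)$. Suppose first that $m-k<j$, i.e. $k>m-j$. For $p\nmid v_0$ the inner sum factors as $\psi(v_0)\sum_{v_1}\psi(1+p^{m-k}\overline{v_0}v_1)$. The map $x\mapsto\psi(x)$ restricted to $U_p(m-k)$ is a non-trivial character of that subgroup, since $\psi$ has conductor $p^j>p^{m-k}$. As $v_1$ runs over $\Z/p^k\Z$, the element $1+p^{m-k}\overline{v_0}v_1$ runs over $U_p(m-k)/U_p(m)$ exactly once, so the inner sum vanishes. For $p\mid v_0$ the term $\psi(v)$ is $0$ anyway. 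Hence the whole sum is $0$ when $k>m-j$.

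Now suppose $m-k\geq j$. Then $\psi(v)=\psi'(v)$ depends only on $v$ modulo $p^j$, so we write $v=v_0+p^jv_1$ with $v_0\,(\mo p^j)$ and $v_1\,(\mo p^{m-j})$. This gives
\begin{align*}
    \sum_{v_0\,(\mo p^j)}\psi'(v_0)e_{p^{m-k}}(v_0)\sum_{v_1\,(\mo p^{m-j})}e_{p^{m-k-j}}(v_1).
\end{align*}
If $m-k>j$, the inner sum vanishes, since $e_{p^{m-k-j}}(\cdot)$ is a non-trivial additive character of $\Z/p^{m-j}\Z$. In that case the total is $0$. If $k=m-j$, the inner sum equals $p^{m-j}=p^k$ and the outer sum is $\tau(\psi')$. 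Restoring the factor $\overline{\psi}(\gamma)$ yields $p^k\overline{\psi}(\gamma)\tau(\psi')$, as claimed.

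There is no real obstacle here. The only step needing care is the case $k>m-j$, where one must use primitivity of $\psi'$: $\psi$ is non-trivial on $U_p(m-k)$ precisely because its conductor exceeds $p^{m-k}$. The alternative, citing \cite[Lemma 7.1]{PeYfourth} directly, would also suffice.
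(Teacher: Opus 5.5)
Your proof is correct. The paper does not prove this lemma at all. It quotes it as a special case of \cite[Lemma 7.1]{PeYfourth}, with $q=p^m$ and $n=\gamma p^k$.

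Your argument is a self-contained elementary derivation. After the substitution $v\mapsto\overline{\gamma}v$ you split into three cases:
\begin{itemize}
\item $k\geq m$: the sum vanishes by orthogonality of the non-trivial $\psi$.
\item $m-j<k<m$: the sum over each coset $v_0U_p(m-k)$ vanishes because $\psi$ is non-trivial on $U_p(m-k)$. This is exactly where the hypothesis $c(\psi)=p^j$ with $j>m-k$ enters.
\item $k\leq m-j$: $\psi=\psi'$ factors through $p^j$, and the additive character in $v_1$ either kills the sum (if $k<m-j$) or produces $p^k\tau(\psi')$ (if $k=m-j$).
\end{itemize}

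The citation buys brevity and access to a more general identity for arbitrary moduli and frequencies. Your route makes transparent where primitivity of $\psi'$ is used. It also handles explicitly the edge case $k\geq m$, which the statement covers but the paper leaves implicit.
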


For each $1\leq j\leq 3$, we write $m_j=m_j'm_{j0}$ with $m_{j0}\mid p^\infty$ and $(m_j',p)=1$. Given a Dirichlet character $\psi$ modulo $p^{a-b}$, write its conductor as $p^j$ for some $0\leq j\leq a-b$. We then have by Lemma \ref{lemma:gauss_sum_general} (recalling that $(c',p)=1$)
\begin{align*}
    \sum_{x_1\,(\mo p^{a-b}c')}&\overline{\psi}(x_1)e_{p^{a-b}c'}((m_1\pm p^{a-b}\overline{p^a}y)x_1) \\
    &= \,\sideset{}{^*}\sum_{x_1\,(\mo p^{a-b})}\overline{\psi}(x_1)e_{p^{a-b}}(m_1\overline{c'}x_1)\sum_{x_1'\,(\mo c')}e_r((m_1\pm\overline{p^b}y)x_1'\overline{p^{a-b}}) \\
    &= c'\mathbbm{1}_{\{c'\mid p^bm_1\pm y\}}\cdot\begin{dcases}
        R_{p^{a-b}}(m_{10}) &\text{ if }j=0\\
        m_{10}\psi(m_1'\overline{c'})\tau(\overline{\psi'})\mathbbm{1}_{\{m_{10}=p^{a-b-j}\}} &\text{ if }j\geq1
    \end{dcases}
\end{align*}
and
\begin{align*}
    \sum_{x_2\,(\mo p^{a-b}c')}&\overline{\psi}(x_2)e_{p^{a-b}c'}((m_2+p^{a-b}\overline{p^a}x_3\overline{y})x_2) \\
    &= c'\mathbbm{1}_{\{c'\mid p^bm_2+x_3\overline{y}\}}\cdot\begin{dcases}
        R_{p^{a-b}}(m_{20}) &\text{ if }j=0 \\
        m_{20}\psi(m_2'\overline{c'})\tau(\overline{\psi'})\mathbbm{1}_{\{m_{20}=p^{a-b-j}\}} &\text{ if }j\geq1
    \end{dcases},
\end{align*}
so that 
\begin{align}
    &G^{b,\pm}_\chi(m_1,m_2,m_3;p^{a-b}c') \\
    &= \frac{\chi(\mp1)}{p^{3(a-b)}c'(1-p^{-1})}\,\sideset{}{^*}\sum_{\substack{y\,(\mo c') \\ y\equiv \mp p^bm_1\,(\mo c')}}\sum_{\substack{x_3\,(\mo p^{a-b}c') \\ p^bm_2+x_3\overline{y}\equiv0\,(\mo c')}}e_{p^{a-b}c'}(m_3x_3) \\
    &\quad\quad \cdot \sum_{\psi\,(\mo p^{a-b})}\frac{\overline{\psi}(\mp x_3\overline{c'}^2)\tau(\chi\psi)}{\tau(\chi\overline{\psi})}\cdot\begin{dcases}
        R_{p^{a-b}}(m_{10})R_{p^{a-b}}(m_{20}) &\text{ if }j=0 \\
        p^{2(a-b-j)}\psi(m_1'm_2'\overline{c'}^2)\tau(\overline{\psi'})^2 &\text{ if }j\geq1, \\
        &\quad\, m_{10}=m_{20}=p^{a-b-j} \\
        0 &\text{ otherwise}
    \end{dcases} \\
    &= \frac{\mathbbm{1}_{\{(m_1,c')=1\}}\chi(\mp1)}{p^{3(a-b)}c'(1-p^{-1})}\sum_{\substack{x_3\,(\mo p^{a-b}c') \\ x_3\equiv \pm p^{2b}m_1m_2\,(\mo c')}}e_{p^{a-b}c'}(m_3x_3)\sum_{\psi\,(\mo p^{a-b})}(\dotsc) \\
    &= \frac{\mathbbm{1}_{\{(m_1,c')=1\}}\chi(\mp1)}{p^{3(a-b)}c'(1-p^{-1})}e_{c'}(\pm m_1m_2m_3p^{2b}\overline{p^{a-b}})\,\sideset{}{^*}\sum_{x_3\,(\mo p^{a-b})}e_{p^{a-b}}(m_3\overline{c'}x_3)\sum_{\psi\,(\mo p^{a-b})}(\dotsc) \\
    &= \frac{\mathbbm{1}_{\{(m_1,c')=1\}}\chi(\mp1)}{cp^{2a-3b}(1-p^{-1})}e_c(\pm m_1m_2m_3p^{3b})H_\chi^{b,\pm}(m_1,m_2,m_3;p^{a-b}c'), \label{eq:Gbpm_final}
\end{align}
where
\begin{align}
    &H_\chi^{b,\pm}(m_1,m_2,m_3;p^{a-b}c') \\
    &\coloneqq \sum_{\psi\,(\mo p^{a-b})}\frac{\overline{\psi}(\mp c')\tau(\chi\psi)}{\tau(\chi\overline{\psi})}\cdot\begin{dcases}
        R_{p^{a-b}}(m_{10})R_{p^{a-b}}(m_{20})R_{p^{a-b}}(m_{30}) &\text{ if }j=0 \\
        p^{3(a-b-j)}\psi(m_1'm_2'm_3')\tau(\overline{\psi'})^3 &\text{ if }j\geq1, \\
        &\quad\, m_{10}=m_{20}=m_{30}=p^{a-b-j} \\
        0 &\text{ otherwise}
    \end{dcases}. \label{eq:Hbpm_final}
\end{align}

Define as before the generating functions
\begin{align*}
    &Z^{b,\,\pm}(s_1,s_2,s_3,s_4) \\
    &\coloneqq \sum_{m_1,m_2,m_3\geq1}\sum_{\substack{c\geq1 \\ p^a\pdiv c}}\frac{cp^{2a-3b}(1-p^{-1})G_\chi^{b,\pm}(m_1,m_2,m_3;p^{a-b}c')e_c(\mp m_1m_2m_3p^{3b})\chi(\mp1)}{m_1^{s_1}m_2^{s_2}m_3^{s_3}(c/p^a)^{s_4}} \\
    &\,= \sum_{\substack{m_1,m_2,m_3,c'\geq1 \\ (m_1,c')=1}}\frac{H^{b,\,\pm}_\chi(m_1,m_2,m_3;p^{a-b}c')}{m_1^{s_1}m_2^{s_2}m_3^{s_3}c'^{s_4}}.
\end{align*}
Writing 
\begin{align}
    &Z_{\fin}^{b}(\psi;s_1,s_2,s_3) \\
    &\coloneqq \varphi(p^{a-b})\cdot\begin{dcases}
        \sum_{m_{10},m_{20},m_{30}\mid p^\infty}\frac{R_{p^{a-b}}(m_{10})R_{p^{a-b}}(m_{20})R_{p^{a-b}}(m_{30})}{m_{10}^{s_1}m_{20}^{s_2}m_{30}^{s_3}} &\text{ if }j=0 \\
        \\
        p^{(a-b-j)(3-s_1-s_2-s_3)}\tau(\overline{\psi'})^3 &\text{ if }j\geq1
    \end{dcases}, \label{eq:Zbfin_def}
\end{align}
we conclude the following.
\begin{lemma}
    One has
    \begin{align*}
        &Z^{b,\pm}(s_1,s_2,s_3,s_4) \\
        &= \frac{1}{\varphi(p^{a-b})}\sum_{\psi\,(\mo p^{a-b})}\frac{\psi(\mp1)\tau(\chi\psi)}{\tau(\chi\overline{\psi})}\frac{L(s_1,\psi)L(s_2,\psi)L(s_3,\psi)L(s_4,\overline{\psi})}{\zeta^{(p)}(s_1+s_4)}Z_{\fin}^b(\psi;s_1,s_2,s_3).
    \end{align*}
\end{lemma}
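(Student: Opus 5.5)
The plan is to substitute the expression \eqref{eq:Gbpm_final} for $G^{b,\pm}_\chi$ into the definition of $Z^{b,\pm}$; this is legitimate in the region of absolute convergence. The normalising factor $cp^{2a-3b}(1-p^{-1})e_c(\mp m_1m_2m_3p^{3b})\chi(\mp1)$ has been chosen to cancel exactly the prefactor in \eqref{eq:Gbpm_final}, using $\chi(\mp1)^2=1$. What remains is the second displayed expression
\begin{align*}
    Z^{b,\pm}(s_1,s_2,s_3,s_4)=\sum_{\substack{m_1,m_2,m_3,c'\geq1\\ (m_1,c')=1}}\frac{H^{b,\pm}_\chi(m_1,m_2,m_3;p^{a-b}c')}{m_1^{s_1}m_2^{s_2}m_3^{s_3}c'^{s_4}}.
\end{align*}
Here $c/p^a=c'$, and $c'$ runs over all positive integers coprime to $p$. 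Next I insert \eqref{eq:Hbpm_final} and interchange the finite sum over $\psi\,(\mo p^{a-b})$ with the Dirichlet series.

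For fixed $\psi$ of conductor $p^j$, I factor each $m_i=m_i'm_{i0}$. Note that $\overline{\psi}(\mp c')=\psi(\mp1)\overline{\psi}(c')$.

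\textbf{Case $j\geq1$.} The summand is $\psi(m_1'm_2'm_3')\overline{\psi}(c')$ times $p^{3(a-b-j)}\tau(\overline{\psi'})^3$, and it is supported on $m_{i0}=p^{a-b-j}$. The $p$-parts then contribute $p^{3(a-b-j)}p^{-(a-b-j)(s_1+s_2+s_3)}\tau(\overline{\psi'})^3$, which equals $Z^b_{\fin}(\psi;\cdot)/\varphi(p^{a-b})$. The prime-to-$p$ parts give
\begin{align*}
    \sum_{\substack{m_1',m_2',m_3',c'\,\text{coprime to }p\\ (m_1',c')=1}}\frac{\psi(m_1')\psi(m_2')\psi(m_3')\overline{\psi}(c')}{m_1'^{s_1}m_2'^{s_2}m_3'^{s_3}c'^{s_4}}.
\end{align*}
Since $\psi$ is non-principal mod $p$, the Dirichlet series $L(s,\psi)$ already omit $p$, so the only work is removing the coprimality condition $(m_1',c')=1$. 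I will do this by M\"obius inversion over $e\mid(m_1',c')$ with $(e,p)=1$. Since $\psi(e)\overline{\psi}(e)=1$, this inserts $\sum_{(e,p)=1}\mu(e)e^{-s_1-s_4}=1/\zeta^{(p)}(s_1+s_4)$. The result is exactly $L(s_1,\psi)L(s_2,\psi)L(s_3,\psi)L(s_4,\overline\psi)/\zeta^{(p)}(s_1+s_4)$.

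\textbf{Case $j=0$.} Here $\psi$ is principal mod $p^{a-b}$. The $p$-parts $m_{i0}$ run freely over all powers of $p$ with the Ramanujan sum weights, which produces the first branch of \eqref{eq:Zbfin_def} divided by $\varphi(p^{a-b})$. The prime-to-$p$ parts give the same Dirichlet series with $\psi=\chi_0$, namely $\zeta^{(p)}$ factors. These agree with $L(s,\chi_0)$ for $\chi_0$ the principal character mod $p^{a-b}$ (using $a-b\geq1$), and the M\"obius step is identical.

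Collecting the factor $\psi(\mp1)\tau(\chi\psi)/\tau(\chi\overline\psi)$ together with the normalising $1/\varphi(p^{a-b})$ gives the claimed identity. I expect no real obstacle. The points needing care are the bookkeeping of the powers of $p$ (so that the $\varphi(p^{a-b})$ normalisation in $Z^b_{\fin}$ is consistent) and the observation that the coprimality condition involves only the prime-to-$p$ part of $m_1$, because $c'$ is prime to $p$.
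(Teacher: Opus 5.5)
Your proposal is correct and follows the paper's route. The paper only writes ``we conclude'' after \eqref{eq:Gbpm_final}, \eqref{eq:Hbpm_final} and \eqref{eq:Zbfin_def}; you fill in the implicit steps accurately: cancelling the normalising factor, splitting each $m_i$ into $p$-part and prime-to-$p$ part, matching the $p$-parts with $Z^b_{\fin}(\psi;\cdot)/\varphi(p^{a-b})$, and removing $(m_1',c')=1$ by M\"obius inversion (using $(c',p)=1$ and $\psi(e)\overline{\psi}(e)=1$) to produce $1/\zeta^{(p)}(s_1+s_4)$.
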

\section{Evaluating $Z^\pm(\cdot)$ and $Z^{b,\,\pm}(\cdot)$}\label{sec:evaluation_of_generating_functions}
We proceed similarly to \cite[Sections 6--8]{PeY}.

\subsection{The case of $Z^\pm(\cdot)$}
We first evaluate $Z_{\fin}(\psi;s_4)$ for an arbitrary Dirichlet character $\psi$ modulo $p^{a-b}$, for which we proceed by case distinction on the conductor of $\psi$. Recall the definition of $\widehat{H}_\chi(\psi;p^{b+k})$ from \eqref{eq:Hhatplus_def}.

\begin{lemma}\label{lemma:Hhat_triv}
    If $\psi=\chi_0$ is the trivial character, then
\begin{align*}
    \widehat{H}_\chi(\psi;p^{b+k}) = \chi(-1)p^aR_{p^{a-b}}(p^k).
\end{align*}
\end{lemma}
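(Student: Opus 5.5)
The plan is to evaluate the inner $(t,u)$-sum of $H_\chi(1,1,1;p^{b+k}\overline{v})$ in closed form for each fixed $v$, and only then sum over $v$ against the trivial character. By \eqref{eq:Hhatplus_def} with $\psi=\chi_0$, we have $\widehat{H}_\chi(\chi_0;p^{b+k})=\sum_{v}^{*}H_\chi(1,1,1;p^{b+k}\overline{v})$, the sum running over $v\in(\Z/p^{a-b}\Z)^\times$. By the simplified form of $H_\chi$ derived just before Lemma \ref{lemma:Zpm_final}, with $x\coloneqq p^{b+k}\overline{v}\equiv0\,(\mo p)$,
\begin{align*}
    H_\chi(1,1,1;x) = \chi(-1)\sum_{t,u\,(\mo p^a)}\chi(t+u)\overline{\chi}(u)\chi(1-xu)\overline{\chi}(1+xt)e_{p^a}(t).
\end{align*}
The goal is to show that this inner sum equals $p^a e_{p^{a-b}}(p^k v)$, or something that differs from it only by terms that vanish after summing over $v$. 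Granting this, the $v$-sum is exactly the Ramanujan sum: $\sum_v^* e_{p^{a-b}}(p^kv)=R_{p^{a-b}}(p^k)$, which is the claim.

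To evaluate the $(t,u)$-sum, I would first remove $\overline{\chi}(1+xt)$ with the Möbius substitution $t\mapsto t/(1-xt)$, which is a bijection modulo $p^a$ because $p\mid x$. One should check that after this substitution and a companion substitution in $u$, the factors $\chi(1-xu)\overline{\chi}(1+xt)$ are absorbed. This should reduce the $\chi$-part to $\chi(t+u)\overline{\chi}(u)$, or to $\chi(1+\overline{u}t)$ after writing $u\mapsto ut$. The additive character becomes $e_{p^a}(t/(1-xt))$.

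Next I would execute the $u$-sum using $\sum_u\chi(1+u)\overline{\chi}(u)$-type orthogonality. Because $\chi$ is primitive modulo $p^a$, the complete sum $\sum_{u}\chi(t+u)\overline{\chi}(u)$ equals $-\chi(\ldots)$ unless $p^a\mid t$; more precisely it is $\varphi(p^a)$ or $-p^{a-1}$ depending on $t$ modulo $p^a$ versus $p^{a-1}$. This collapses the $t$-sum to essentially $t\equiv0$, up to a correction supported on $p^{a-1}\mid t$. The survivors then contribute $p^a\,e_{p^a}(\text{explicit function of }x)$. Expanding $t/(1-xt)=t+xt^2+\dotsb$ and using $t\equiv0\,(\mo p^{a-1})$ should leave a phase depending on $x$ only through $p^{b+k}$ and $v$. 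This bookkeeping produces the Ramanujan sum after the $v$-summation, and the sign $\chi(-1)$ carries through.

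The main obstacle is the substitution step. The naive Möbius changes do not simultaneously absorb both factors $\chi(1-xu)$ and $\overline{\chi}(1+xt)$, since one picks up an extra $(1-xu)$. If no clean change of variables works, I would fall back on the Postnikov formula of Lemma \ref{lemma:postnikov}. Writing $\chi(1-xu)\overline{\chi}(1+xt)=e_{p^a}\bigl(\ell_\chi(\log_p(1-xu)-\log_p(1+xt))\bigr)$ linearises these factors. Summing over $v$ first, which is now an additive-character sum in $v$ modulo $p^{a-b}$, then forces congruences on $t,u$ modulo powers of $p$. Lemma \ref{lemma:gauss_sum_general} in the trivial-character form (i.e.\ Ramanujan sums) handles the resulting additive sums, and the remaining $(t,u)$-sum is then a complete sum that evaluates to $\chi(-1)p^a$ times the Ramanujan-sum factor.
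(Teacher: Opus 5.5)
There is a genuine gap, and it sits at the step you flag yourself.

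\emph{The per-$v$ evaluation does not go through.} For fixed $v$ you cannot absorb $\chi(1-xu)\overline{\chi}(1+xt)$ while keeping the phase a function of $t$ alone. Keeping $t$ forces a new variable $u'$ with $1+t/u'=\frac{(t+u)(1-xu)}{u(1+xt)}$. This has no solution at $t=0$, where the right-hand side is $1-xu$, and $p^{a-1}\mid t$ is exactly where the Ramanujan sum lives. Changing $t$ instead destroys the phase $e_{p^a}(t)$. So the reduction to $\sum_u\chi(t+u)\overline{\chi}(u)$ never takes place.

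\emph{The per-$v$ target is also not the right one.} Already for $2(b+k)\geq a$ one gets $H_\chi(1,1,1;p^{b+k}\overline{v})=\chi(-1)p^a e_{p^{a-b}}(\ell_\chi^2p^k\overline{v})$, not $\chi(-1)p^ae_{p^{a-b}}(p^kv)$. Allowing discrepancies ``that vanish after summing over $v$'' merely restates the lemma.

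\emph{The Postnikov fallback fails in the essential range.} With $x=p^{b+k}\overline{v}$, the phase $\ell_\chi(\log_p(1-xu)-\log_p(1+xt))$ is, modulo $p^a$, a polynomial of degree at least $2$ in $\overline{v}$ whenever $2(b+k)<a$. This always happens for $k=0$, since $b\leq a/3$. Hence the $v$-sum is not a linear additive character sum, Lemma \ref{lemma:gauss_sum_general} does not apply, and no congruence on $t,u$ is forced.

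The missing idea is a change of variables that uses the $v$-sum to decouple $v$ from $(t,u)$, as in the proof of Lemma \ref{lemma:Hhat_nontriv}. Replace $\overline{v}$ by $v$, then substitute $v\mapsto v\overline{u}$ and $t\mapsto tu$. For $\psi=\chi_0$ the $u$-sum becomes a Ramanujan sum:
\[
\widehat{H}_\chi(\chi_0;p^{b+k})=\sideset{}{^*}\sum_{v\,(\mo p^{a-b})}\,\sum_{t\,(\mo p^a)}\chi(t+1)\chi(-1+vp^{b+k})\overline{\chi}(1+vp^{b+k}t)R_{p^a}(t).
\]
Write $R_{p^a}(t)=p^a\mathbbm{1}_{\{t\equiv0\}}-p^{a-1}\mathbbm{1}_{\{p^{a-1}\mid t\}}$. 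The second piece vanishes: for $t=p^{a-1}s$ one has $\overline{\chi}(1+vp^{b+k}t)=1$, and $\sum_{s\,(\mo p)}\chi(1+p^{a-1}s)=0$ by primitivity. The first piece leaves $\chi(-1)p^a\sideset{}{^*}\sum_{v}\chi(1-p^{b+k}v)$. This sum equals $R_{p^{a-b}}(p^k)$ by orthogonality of $\chi$ on the groups $U_p(j)$, $j<a$, on which $\chi$ is non-trivial.

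Your instinct to sum over $v$ first is sound, but the mechanism is a M\"obius substitution, not linearisation. Fix $(t,u)$ with $t+u$ a unit and set $w=\overline{v}$. The bijection $w\mapsto (t+u)w/(1+p^{b+k}wt)$ of $(\Z/p^{a-b}\Z)^\times$ turns $\sideset{}{^*}\sum_w\chi\bigl(\frac{1-p^{b+k}wu}{1+p^{b+k}wt}\bigr)$ into $\sideset{}{^*}\sum_w\chi(1-p^{b+k}w)$, which is independent of $(t,u)$. The remaining sum is then $\sum_{t,u}\chi(t+u)\overline{\chi}(u)e_{p^a}(t)=\sum_tR_{p^a}(t)e_{p^a}(t)=p^a$.
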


The latter statement can be proved in similar fashion to \cite[Lemma 6.5]{PeY}.
So if $\psi=\chi_0$, we obtain
\begin{align}
    Z_{\fin}(\psi;s_4) &= \chi(-1)\bigg(-\frac{p^{2a-b-1}}{p^{(a-b-1)s_4}}+\sum_{k\geq a-b}\frac{p^a\varphi(p^{a-b})}{p^{ks_4}}\bigg) \\
    &= \chi(-1)p^{-a(s_4-2)+(b+1)(s_4-1)}\frac{p-p^{s_4}}{p^{s_4}-1} \\
    &= \begin{dcases}
        O(p^{\frac{3a-b}{2}}) &\text{ if }\re s_4=1/2+\varepsilon \\
        O(p^{a}) &\text{ if } \re s_4=1+\varepsilon
    \end{dcases}. \label{eq:bound_on_Zfin_psi_trivial}
\end{align}

\begin{lemma}\label{lemma:Hhat_nontriv}
    If $\psi$ has conductor $p^j$ for some $1\leq j\leq a-b$, then 
    \begin{align*}
        \widehat{H}_\chi(\psi;p^{b+k}) = p^{2a-b-2j}\tau(\overline{\psi'})\mathbbm{1}_{\{b+k=a-j\}}\sum_{v,t\,(\mo p^j)}\chi(p^{a-j}t+1)\chi(-1+vp^{a-j})\overline{\chi}(1+vp^{2(a-j)}t)\psi(vt).
    \end{align*}
\end{lemma}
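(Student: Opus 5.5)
Starting from the simplified expression for $H_\chi(1,1,1;p^{b+k}\overline{v})$ obtained above (after the substitutions $t\to\overline{m_3}t$, $u\to\overline{m_2m_3}u$), I write
\begin{align*}
    \widehat{H}_\chi(\psi;p^{b+k}) = \sum_{v\,(\mo p^{a-b})}\overline{\psi}(v)\sum_{t,u\,(\mo p^a)}\chi(t+u)\overline{\chi}(u)\chi(-1+p^{b+k}\overline{v}u)\overline{\chi}(1+p^{b+k}\overline{v}t)e_{p^a}(t),
\end{align*}
and swap the order of summation so that the $v$-sum is innermost. The plan mirrors \cite[Lemma 6.6]{PeY}. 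First I replace $v$ by $\overline{v}$, so the summand carries $\psi(v)$ and the dependence on $v$ enters only through $\chi(-1+p^{b+k}vu)\overline{\chi}(1+p^{b+k}vt)$. Since $b+k\geq1$, both arguments lie in the cosets $\mp(1+p\Z)$, and Lemma \ref{lemma:postnikov} linearizes them. To leading order, $\chi(-1+p^{b+k}vu)\overline{\chi}(1+p^{b+k}vt)$ is then an additive character in $v$ modulo $p^{a-b-k}$ with frequency $\approx -\ell_\chi(u+t)p^{b+k}$, corrected by higher-order terms in $p^{2(b+k)}$.

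To extract the conductor condition, I extend $v$ from modulo $p^{a-b}$ to a sum over $v\,(\mo p^{a-b})$ against $\psi$ of conductor $p^j$. I split $v=v_0(1+p^j v_1)$, or equivalently sum over $v$ modulo $p^j$ and over lifts. The lifting sum is a complete additive sum which, by Lemma \ref{lemma:gauss_sum_general}, forces the effective frequency to be a unit times $p^{a-b-j}$-scaled quantity. This yields the indicator $\mathbbm{1}_{\{b+k=a-j\}}$ and a factor $\tau(\overline{\psi'})$, together with a power of $p$ coming from the number of lifts. For $b+k\neq a-j$ the sum should vanish: either the argument $p^{b+k}v$ is too divisible, so the summand is constant in the part of $v$ that $\psi$ sees, or it is not divisible enough and we are left with a Gauss sum of the wrong level.

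On the surviving term $b+k=a-j$, I make further changes of variables. I substitute $u\to u t$ (or otherwise rescale to pull $t$ out), and then use the $u$-sum with $e_{p^a}(t)$ to collapse $t,u$ modulo $p^a$ to $t,v$ modulo $p^j$. This produces the displayed kernel $\chi(p^{a-j}t+1)\chi(-1+vp^{a-j})\overline{\chi}(1+vp^{2(a-j)}t)\psi(vt)$. The remaining contributions from residues modulo $p^a$ above $p^j$ account for the prefactor $p^{2a-b-2j}$, since $p^{2(a-j)}$ lifts combine with the $p^{-b}$ from normalizing the $v$-range. Throughout, I would compare against the trivial-character computation in Lemma \ref{lemma:Hhat_triv} as a consistency check of the powers of $p$.

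The main obstacle is the bookkeeping of the change of variables that converts the $(t,u)$ sum modulo $p^a$ into the $(v,t)$ sum modulo $p^j$ with exactly the stated arguments. In particular, the cross term $vp^{2(a-j)}t$ has to be tracked: it arises from the product $(p^{a-j}v)(p^{a-j}t)$ after the substitution and must not be dropped. I would first verify the identity in the simplest case $j=a-b$, $k=0$, and then run the general case by the same substitutions.
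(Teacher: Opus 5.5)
There is a genuine gap: the route you propose for producing $\tau(\overline{\psi'})$ and $\mathbbm{1}_{\{b+k=a-j\}}$ does not work.

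\emph{Why the $v$-sum route fails.} After $v\mapsto\overline{v}$, your innermost sum $\sum_{v}\psi(v)\chi(-1+p^{b+k}vu)\overline{\chi}(1+p^{b+k}vt)$ contains no additive character. Lemma \ref{lemma:postnikov} turns the $\chi$-factors into $e_{p^a}\big(\ell_\chi[\log_p(1-p^{b+k}vu)-\log_p(1+p^{b+k}vt)]\big)$. This is a polynomial phase in $v$, linear only when $2(b+k)\geq a$. That already fails in your proposed test case $k=0$, $j=a-b$ whenever $b<a/2$, and in particular under the standing hypothesis $b\leq a/3$. So Lemma \ref{lemma:gauss_sum_general} does not apply.

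Even the linear part does not give what you need. Its frequency is $-\ell_\chi p^{b+k}(u+t)$, so evaluating it as a Gauss sum would yield a condition on $\nu_p(u+t)$ and a factor $\tau(\psi')$. It would not yield an indicator in $k$, and not $\tau(\overline{\psi'})$.

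The lifting split $v=v_0+p^jv_1$ does not help either. It makes $\psi$ constant on the lift-sum and leaves a sum of $\chi$-values over a coset, i.e. a congruence relating $u$ and $t$, not a Gauss sum of $\psi$. Note also that the target formula still contains $\chi(-1+vp^{a-j})$ and an unevaluated $v$-sum modulo $p^j$, so the $v$-sum cannot be where the Gauss sum comes from.

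Finally, the substitution $u\to ut$ is illegitimate. Only $u$ is forced to be a unit (by $\overline{\chi}(u)$); the surviving $t$ have $\nu_p(t)=a-j\geq1$.

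\emph{The missing idea.} You need to transfer $\psi$ onto $u$, which is the variable paired with the only genuine additive character $e_{p^a}(t)$. Substituting $t\to tu$ and then $v\to v\overline{u}$ gives
\begin{align*}
    \widehat{H}_\chi(\psi;p^{b+k}) = \sum_{v\,(\mo p^{a-b})}\sum_{t,u\,(\mo p^a)}\chi(t+1)\chi(-1+vp^{b+k})\overline{\chi}(1+vp^{b+k}t)\psi(v\overline{u})e_{p^a}(tu).
\end{align*}
Now $\sum_u\overline{\psi}(u)e_{p^a}(tu)$ is evaluated by Lemma \ref{lemma:gauss_sum_general} with $m=a$. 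It forces $t=p^{a-j}t'$ with $t'\in(\Z/p^j\Z)^\times$ and produces $p^{a-j}\psi(t')\tau(\overline{\psi'})$.

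The indicator then comes from a periodicity argument in $v$, not from any Gauss sum:
\begin{itemize}
    \item If $b+k<a-j$, then $\psi(v)\overline{\chi}(1+vp^{a+b+k-j}t')$ is $p^j$-periodic. Meanwhile $\chi(-1+vp^{b+k})$, summed over a residue class modulo $p^j$, runs over a coset of $U_p(b+k+j)$, on which $\chi$ is nontrivial. So the sum vanishes.
    \item If $b+k>a-j$, both $\chi$-factors are $p^{j-1}$-periodic in $v$, while $\psi$ has least period $p^j$. So again the sum vanishes.
    \item If $b+k=a-j$, the whole $v$-summand is $p^j$-periodic. Reducing $v$ from modulo $p^{a-b}$ to modulo $p^j$ gives a factor $p^{a-b-j}$, and $p^{a-j}\cdot p^{a-b-j}=p^{2a-b-2j}$.
\end{itemize}
No Postnikov linearization is needed.
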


\begin{proof}
    Combining \eqref{eq:Hchi_def} and \eqref{eq:Hhatplus_def}, then changing variables $v\rightarrow v\overline{u}$ and $t\rightarrow tu$, we arrive at
    \begin{align}
        \widehat{H}_\chi(\psi;p^{b+k}) &= \sum_{v\,(\mo p^{a-b})}\sum_{t,u\,(\mo p^a)}\chi(t+1)\chi(-1+vp^{b+k})\overline{\chi}(1+vp^{b+k}t)\psi(v\overline{u})e_{p^a}(tu).
    \end{align}
    Applying next Lemma \ref{lemma:gauss_sum_general}, we get
    \begin{align}
        \widehat{H}_\chi(\psi;p^{b+k}) = p^{a-j}\tau(\overline{\psi'})\sum_{v\,(\mo p^{a-b})}\sum_{t\,(\mo p^j)}\chi(p^{a-j}t+1)\chi(-1+vp^{b+k})\overline{\chi}(1+vp^{a+b+k-j}t)\psi(vt).
    \end{align}
    We now argue as in the proof of \cite[Lemma 6.8]{PeY} to show that $\widehat{H}_\chi(\psi;p^{b+k})$ vanishes if $b+k\neq a-j$. Indeed, if $b+k<a-j$ then this follows from the fact that $\overline{\chi}(1+vp^{a+b+k-j}t)\psi(v)$ is $p^j$-periodic in $v$, and $\chi(-1+vp^{b+k})$ is at least $p^{j+1}$-periodic in $v$. If $b+k>a-j$ on the other hand, then the claim follows from the observation that $\chi(-1+vp^{b+k})\overline{\chi}(1+vp^{a+b+k-j}t)$ is $p^{j-1}$-periodic in $v$, and $\psi(v)$ has least period $p^{j}$ in $v$. So we obtain
    \begin{align*}
        &\widehat{H}_\chi(\psi;p^{b+k}) \\
        &= p^{a-j}\tau(\overline{\psi'})\mathbbm{1}_{\{b+k=a-j\}}\sum_{v\,(\mo p^{a-b})}\sum_{t\,(\mo p^j)}\chi(p^{a-j}t+1)\chi(-1+vp^{a-j})\overline{\chi}(1+vp^{2(a-j)}t)\psi(vt),
    \end{align*}
    from which we conclude.
\end{proof}
By \cite[Lemma 2.8]{PeYfourth}, the sum over $v,t\,(\mo p^j)$ in Lemma \ref{lemma:Hhat_nontriv} is $O(p^j)$. So if $c(\psi)=p^j$ with $1\leq j\leq a-b$, we obtain 
\begin{align}
    Z_{\fin}(\psi;s_4) &= O(p^{2a-b-\frac{j}{2}-(a-b-j)\re s_4}) \\
    &= O(p^{\frac{3a-b}{2}}) \quad \text{ if }\re s_4=1/2+\varepsilon. \label{eq:bound_on_Zfin}
\end{align}

\subsection{The case of $Z^{b,\,\pm}(\cdot)$}

Recalling \eqref{eq:Zbfin_def}, we only need to evaluate $Z_{\fin}^{b}(\psi;s_1,s_2,s_3)$ for the trivial character $\psi=\chi_0$. That is, one has
\begin{align}
    Z_{\fin}^{b}(\psi;s_1,s_2,s_3) &= \varphi(p^{a-b})\prod_{1\leq n\leq 3}\bigg(-\frac{p^{a-b-1}}{p^{(a-b-1)s_n}}+\sum_{k\geq a-b}\frac{\varphi(p^{a-b})}{p^{ks_n}}\bigg) \\
    &= \varphi(p^{a-b})p^{3(a-b-1)}\prod_{1\leq n\leq 3}\frac{p^{-(a-b-1)s_n}(p-p^{s_n})}{p^{s_n}-1} \\
    &= \begin{dcases}
        O(p^{\frac{5(a-b)}{2}}) &\text{ if } \re s_n=1/2+\varepsilon\text{ for all }1\leq n\leq 3 \\
        O(p^{a-b}) &\text{ if } \re s_n=1+\varepsilon\text{ for all }1\leq n\leq 3
    \end{dcases}. \label{eq:bound_on_Zfin_psi_trivial_b_case}
\end{align}

If on the other hand $\psi$ has conductor $p^j$ for some $1\leq j\leq a-b$, then note that
\begin{align}
    Z_{\fin}^{b}(\psi;s_1,s_2,s_3) = O(p^{\frac{5(a-b)}{2}}) \quad\text{ if }\re s_n=1/2+\varepsilon\text{ for all }1\leq n\leq 3. \label{eq:bound_on_Zfin_b_case}
\end{align}

\subsection{Large-sieve inequalities}
We state the analogs of \cite[Lemma 8.1]{PeY} for $Z^\pm(\cdot)$ and $Z^{b,\,\pm}(\cdot)$. First observe that we have the decompositions
\begin{align}
    Z^\pm(s_1,s_2,s_3,s_4) &= Z_0^\pm(s_1,s_2,s_3,s_4) + Z_1^\pm(s_1,s_2,s_3,s_4), \label{eq:Zpm_decomposition} \\
    Z^{b,\,\pm}(s_1,s_2,s_3,s_4) &= Z^{b,\,\pm}_0(s_1,s_2,s_3,s_4) + Z_1^{b,\,\pm}(s_1,s_2,s_3,s_4), \label{eq:Zbpm_decomposition}
\end{align}
where $Z_0^\pm(\cdot)$, $Z_0^{b,\,\pm}(\cdot)$ denote the contributions of the trivial character $\psi=\chi_0$, and $Z_1^\pm(\cdot)$, $Z_1^{b,\,\pm}(\cdot)$ the remainders.

\begin{lemma}\label{lemma:hybrid}
    The function $Z_0^\pm(\cdot)$ is meromorphic whenever $\re s_n\geq 1/2$ for all $1\leq n\leq 4$, and analytic whenever all $\re s_n>1$. It has a pole whenever some $s_n=1$ and the other variables are kept fixed. The function $Z_1^\pm(\cdot)$ is analytic whenever $\re s_n\geq1/2$ for all $1\leq n\leq 4$. 
    
    Moreover, if all $\re s_n\geq\sigma>1$, then $Z^\pm_0(\cdot)\ll_\sigma p^b$. If $\re u_n=1/2+\varepsilon$ for all $n$ and some small $\varepsilon>0$, then we have
    \begin{align}
        \int_{-Y}^Y\abs{Z_\delta^\pm(u_1-iy,u_2-iy,u_3-iy,u_4+iy)}\,dy \ll_\varepsilon Y^{1+\varepsilon} \cdot 
        \begin{dcases}
            p^{\frac{a+b}{2}(1+\varepsilon)} &\text{ if }\delta=0 \\
            p^{\frac{3a-b}{2}(1+\varepsilon)} &\text{ if }\delta=1
        \end{dcases} \label{eq:lemma_hybrid}
    \end{align}
    for $Y\gg1$. 
\end{lemma}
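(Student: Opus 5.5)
We start from Lemma \ref{lemma:Zpm_final}, which writes $Z^\pm$ as a sum over $\psi\,(\mo p^{a-b})$ of $\frac{1}{\varphi(p^{a-b})}\psi(\pm1)L(s_1,\psi)L(s_2,\psi)L(s_3,\psi)L(s_4,\overline{\psi})Z_{\fin}(\psi;s_4)/\zeta^{(p)}(s_1+s_4)$. We treat the trivial-character term $Z_0^\pm$ and the remaining terms $Z_1^\pm$ separately.

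\textbf{Analytic properties.} For $\psi=\chi_0$, each $L(s_n,\chi_0)$ equals $\zeta(s_n)(1-p^{-s_n})$. It is meromorphic with a simple pole only at $s_n=1$. The factor $Z_{\fin}(\chi_0;s_4)$ is given explicitly by \eqref{eq:bound_on_Zfin_psi_trivial}; it is a rational function of $p^{s_4}$ whose only singularities lie on $\re s_4=0$. For $\psi\neq\chi_0$, the functions $L(s,\psi)$ are entire. By Lemma \ref{lemma:Hhat_nontriv}, only the single value $k=a-b-j$ contributes to $Z_{\fin}(\psi;s_4)$, so it is a monomial in $p^{-s_4}$ and hence entire. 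Finally, $1/\zeta^{(p)}(s_1+s_4)$ is analytic for $\re(s_1+s_4)\geq1$ away from the line $\re=1$. On the line itself it is analytic because $\zeta$ does not vanish there and has a pole at $s_1+s_4=1$, so its reciprocal has a zero rather than a pole. This gives the meromorphy and analyticity claims, and the poles of $Z_0^\pm$ come exactly from $\zeta(s_n)$ at $s_n=1$ ($n\leq3$) and from $L(s_4,\chi_0)$ at $s_4=1$.

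\textbf{Bound for $\re s_n\geq\sigma>1$.} All the $L$-values and $1/\zeta^{(p)}$ are $O_\sigma(1)$ there. By \eqref{eq:bound_on_Zfin_psi_trivial}, $Z_{\fin}(\chi_0;s_4)=O(p^a)$ already at $\re s_4=1+\varepsilon$. Dividing by $\varphi(p^{a-b})\asymp p^{a-b}$ gives $Z_0^\pm\ll_\sigma p^{b}$. If needed, one can read the bound uniformly in $\re s_4\geq\sigma$ from the closed form $p^{-a(s_4-2)+(b+1)(s_4-1)}\frac{p-p^{s_4}}{p^{s_4}-1}$, whose modulus is maximized at the smallest $\re s_4$.

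\textbf{Integral bounds on $\re u_n=1/2+\varepsilon$.} For $\delta=0$ we use $Z_{\fin}(\chi_0;\cdot)=O(p^{(3a-b)/2})$ at $\re s_4=1/2+\varepsilon$. After dividing by $p^{a-b}$, the $p$-power is $p^{(a+b)/2}$. The remaining factor $\zeta(u_1-iy)\zeta(u_2-iy)\zeta(u_3-iy)\zeta(u_4+iy)/\zeta(u_1+u_4)$ has $s_1+s_4=u_1+u_4$ independent of $y$, with real part $1+2\varepsilon$, so $1/\zeta$ is bounded. Integrating $|\zeta|^4$ over $[-Y,Y]$ on the $1/2+\varepsilon$ line gives $Y^{1+\varepsilon}$ by Hölder and the fourth moment of $\zeta$; the extra $p^{\varepsilon}$ absorbs the local factors at $p$.

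For $\delta=1$ we write $Z_1^\pm=\frac{1}{\varphi(p^{a-b})}\sum_{\psi\neq\chi_0}(\cdots)$ and bound each $|Z_{\fin}(\psi;u_4+iy)|\ll p^{(3a-b)/2}$ by \eqref{eq:bound_on_Zfin}, uniformly in $y$ since only $\re s_4$ enters. We then need
\begin{align*}
\sum_{\psi\,(\mo p^{a-b})}\int_{-Y}^{Y}|L(u_1-iy,\psi)L(u_2-iy,\psi)L(u_3-iy,\psi)L(u_4+iy,\overline{\psi})|\,dy\ll (p^{a-b}Y)^{1+\varepsilon},
\end{align*}
which follows from Hölder and the hybrid large sieve fourth moment $\sum_\psi\int|L(1/2+\varepsilon+iy,\psi)|^4dy\ll(qY)^{1+\varepsilon}$, as in \cite[Lemma 8.1]{PeY}. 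Combining the three factors gives $p^{-(a-b)}\cdot p^{(3a-b)/2}\cdot p^{(a-b)(1+\varepsilon)}Y^{1+\varepsilon}=p^{\frac{3a-b}{2}(1+\varepsilon)}Y^{1+\varepsilon}$.

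\textbf{Main obstacle.} The delicate point is making the $Z_{\fin}$ bounds uniform in the imaginary shift and in $j$. In \eqref{eq:bound_on_Zfin} the exponent $2a-b-j/2-(a-b-j)(1/2+\varepsilon)$ must be maximized over $1\leq j\leq a-b$; it equals $(3a-b)/2+j/2-\ldots$ only up to checking the sign. We would first verify this maximum directly, using the $O(p^j)$ character-sum bound from \cite[Lemma 2.8]{PeYfourth}, with $|\tau(\overline{\psi'})|=p^{j/2}$ and the prefactor $p^{2a-b-2j}$.
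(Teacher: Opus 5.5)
Your argument for the two integral bounds and the $\re s_n\geq\sigma>1$ bound is correct and is essentially the paper's. The trivial character is handled by the explicit bound $Z_{\fin}(\chi_0;s_4)\ll p^{(3a-b)/2}$ together with the fourth moment of $\zeta$. The nontrivial characters are handled by the uniform bound on $Z_{\fin}(\psi;s_4)$ together with a hybrid large-sieve fourth moment. The only difference is in how that fourth moment is supplied. You cite $\sum_{\psi}\int|L(\tfrac12+\varepsilon+iy,\psi)|^4\,dy\ll(p^{a-b}Y)^{1+\varepsilon}$ and apply H\"older. The paper instead uses $|xyzw|\le|xy|^2+|zw|^2$ and then proves the mean square of $L(u_1-iy,\psi)L(u_2-iy,\psi)$ directly: approximate functional equation of length $c(\psi)(1+|y|)$, Cauchy--Schwarz in the $v$-integral, and Gallagher's hybrid large sieve. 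That is the same input.

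Your ``main obstacle'' is not an obstacle. The exponent simplifies exactly:
$2a-b-\tfrac j2-(a-b-j)(\tfrac12+\varepsilon)=\tfrac{3a-b}{2}-(a-b-j)\varepsilon\le\tfrac{3a-b}{2}$.
So the bound is uniform in $j$, and it is uniform in $y$ because only $\re s_4$ enters.

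One claim in your analytic paragraph is false. You say $L(s_4,\chi_0)=\zeta(s_4)(1-p^{-s_4})$ produces a pole of $Z_0^\pm$ at $s_4=1$. It does not, because $Z_{\fin}(\chi_0;s_4)$ contains the factor $p-p^{s_4}$ and so has a simple zero at $s_4=1$. Equivalently, $\sum_{k\ge0}R_{p^{a-b}}(p^k)p^{-k}=-1+1=0$. Meanwhile $1/\zeta^{(p)}(s_1+s_4)$ is regular and nonzero there, since $\re(s_1+s_4)\geq 3/2$. Hence ``a pole whenever some $s_n=1$'' holds only for $n\in\{1,2,3\}$, and only for generic values of the other variables.

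This is an inaccuracy in the lemma itself, which the paper covers with ``straightforward''. It is harmless downstream, because the treatment of $\mathcal{T}_{00}$ only uses where poles can occur, not that they do occur.
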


\begin{proof}
    The meromorphicity and analyticity statements are straightforward. 
    
    To prove the upper bounds, we first recall the bounds \eqref{eq:bound_on_Zfin_psi_trivial} and \eqref{eq:bound_on_Zfin} on $Z_{\fin}(\psi;s_4)$. Then \eqref{eq:lemma_hybrid} for $\delta=0$ follows from applying a large-sieve inequality to the fourth moment of $\zeta(s)$. To show \eqref{eq:lemma_hybrid} for $\delta=1$, it suffices (due to Lemma \ref{lemma:Zpm_final} and \eqref{eq:bound_on_Zfin}) to bound the fourth moment 
    \begin{align}
        \sum_{\substack{\psi\,(\mo p^{a-b}) \\ \psi\neq\chi_0}}\int_{-Y}^Y\abs{L(u_1-iy,\psi)L(u_2-iy,\psi)L(u_3-iy,\psi)L(u_4+iy,\overline{\psi})}\,dy. \label{eq:lemma_fourth_moment}
    \end{align}
    Since the integrand in \eqref{eq:lemma_fourth_moment} is $\leq \abs{L(u_1-iy,\psi)L(u_2-iy,\psi)}^2+\abs{L(u_3-iy,\psi)L(u_4+iy,\overline{\psi})}^2$, we can without loss of generality proceed by just bounding
    \begin{align}
        \sum_{\substack{\psi\,(\mo p^{a-b}) \\ \psi\neq\chi_0}}\int_{-Y}^Y\abs{L(u_1-iy,\psi)L(u_2-iy,\psi)}^2\,dy. \label{eq:lemma_before_AFE}
    \end{align}
    By the approximate functional equation \cite[Theorem 5.3]{IK}, we have 
    \begin{align}
        &L(u_1-iy,\psi)L(u_2-iy,\psi) \\
        &= \sum_{n\geq1}\sigma_{u_1,u_2}(n)\psi(n)n^{iy}V_{u_1-iy,u_2-iy}\bigg(\frac{n}{c(\psi)}\bigg) \label{eq:AFE_lemma} \\
        &\quad\, + \epsilon(u_1-iy,u_2-iy,\psi)\sum_{n\geq1}\sigma_{-u_1,-u_2}(n)\overline{\psi}(n)n^{-1-iy}V_{1-u_1+iy,1-u_2+iy}\bigg(\frac{n}{c(\psi)}\bigg), 
    \end{align}
    where 
    \begin{align*}
        &\sigma_{s_1,s_2}(n) \coloneqq \sum_{\substack{n_1,n_2\geq1 \\ n=n_1n_2}}n_1^{-s_1}n_2^{-s_2}, \\
        &V_{s_1,s_2}(t) \coloneqq \frac{1}{2\pi i}\int_{(\sigma)}t^{-v}\frac{\gamma(s_1+v,\psi)\gamma(s_2+v,\psi)}{\gamma(s_1,\psi)\gamma(s_2,\psi)}\frac{G(v)}{v}\,dv, \\
        &\gamma(s,\psi) \coloneqq \pi^{-\frac{s}{2}}\Gamma\bigg(\frac{s+\delta_\psi}{2}\bigg), \\
        &\epsilon(s_1,s_2,\psi) \coloneqq i^{-2\delta_\psi}\tau(\psi')^2c(\psi)^{-s_1-s_2}\frac{\gamma(1-s_1,\psi)\gamma(1-s_2,\psi)}{\gamma(s_1,\psi)\gamma(s_2,\psi)},
    \end{align*}
    and $\sigma\in\R$ lies to the right of all poles of the integrand defining $V_{s_1,s_2}(t)$. Both sums in \eqref{eq:AFE_lemma} have ``effective length'' $c(\psi)(1+\abs{y})$. So it suffices to bound \eqref{eq:lemma_before_AFE} with $L(u_1-iy,\psi)L(u_2-iy,\psi)$ replaced by (without loss of generality)
    \begin{align}
        &Q(\psi,u_1,u_2,y) \coloneqq \sum_{1\leq n\ll p^{a-b}Y}\sigma_{u_1,u_2}(n)\psi(n)n^{iy}V_{u_1-iy,u_2-iy}\bigg(\frac{n}{c(\psi)}\bigg) \\
        &= \frac{1}{2\pi i}\int_{(\varepsilon)}c(\psi)^v\frac{\gamma(u_1-iy+v)\gamma(u_2-iy+v)}{\gamma(u_1-iy)\gamma(u_2-iy)}\frac{G(v)}{v}\bigg(\sum_{1\leq n\ll p^{a-b}Y}\sigma_{u_1+v,u_2+v}(n)\psi(n)n^{iy}\bigg)\,dv
    \end{align}
    for $\varepsilon>0$ small. Note that we should distinguish between even and odd characters $\psi$ (since $\gamma(s)$ depends on $\delta_\psi$), but we keep this implicit for the sake of brevity. Applying Cauchy--Schwarz, we obtain 
    \begin{align*}
        \abs{Q(\cdot)}^2 &\leq \int_{(\varepsilon)}\bigg\lvert c(\psi)^v\frac{\gamma(u_1-iy+v)\gamma(u_2-iy+v)}{\gamma(u_1-iy)\gamma(u_2-iy)}\frac{G(v)^{1/2}}{v}\bigg\rvert^2\,dv \\
        &\quad\, \cdot \int_{(\varepsilon)}\bigg\lvert G(v)^{1/2}\sum_{1\leq n\ll p^{a-b}Y}\sigma_{u_1+v,u_2+v}(n)\psi(n)n^{iy}\bigg\rvert^2\,dv.
    \end{align*}
    The first factor is $\ll_\varepsilon (p^{a-b}(1+\abs{y}))^{\varepsilon}$ by the third bound in the proof of \cite[Proposition 5.4]{IK}. Swapping the orders of $\sum_\psi$, $\int_{-Y}^Y$ and $\int_{(\varepsilon)}$, it now suffices to apply the hybrid large sieve \cite[Theorem 2]{G} in the form
    \begin{align*}
        \sum_{\psi\,(\mo p^{a-b})}\int_{-Y}^Y\bigg\lvert\sum_{1\leq n \ll p^{a-b}Y}\sigma_{u_1+v,u_2+v}(n)\psi(n)n^{iy}\bigg\rvert^2\,dy &\ll p^{a-b}Y\sum_{1\leq n\ll p^{a-b}Y}\abs{\sigma_{u_1+v,u_2+v}(n)}^2 \\
        &\ll_\varepsilon p^{a-b}Y\sum_{1\leq n\ll p^{a-b}Y}n^{-1+\varepsilon} \\
        &\ll_\varepsilon (p^{a-b}Y)^{1+\varepsilon}.
    \end{align*}
    Combining everything, we obtain the desired result.
\end{proof}

\begin{lemma}\label{lemma:hybrid_b_case}
    The function $Z_0^{b,\,\pm}(\cdot)$ is meromorphic whenever $\re s_n\geq 1/2$ for all $1\leq n\leq 4$, and analytic whenever all $\re s_n>1$. It has a pole whenever some $s_n=1$ and the other variables are kept fixed. The function $Z_1^{b,\,\pm}(\cdot)$ is analytic whenever $\re s_n\geq1/2$ for all $1\leq n\leq 4$. 

    Moreover, if all $\re s_n\geq\sigma>1$, then $Z^{b,\,\pm}_0(\cdot)\ll_\sigma1$. If $\re u_n=1/2+\varepsilon$ for all $n$ and some small $\varepsilon>0$, then we have
    \begin{align*}
        \int_{-Y}^Y\abs{Z_\delta^{b,\,\pm}(u_1-iy,u_2-iy,u_3-iy,u_4+iy)}\,dy \ll_\varepsilon Y^{1+\varepsilon}\cdot\begin{dcases}
            p^{\frac{3(a-b)}{2}(1+\varepsilon)} &\text{ if }\delta=0 \\
            p^{\frac{5(a-b)}{2}(1+\varepsilon)} &\text{ if }\delta=1
        \end{dcases}
    \end{align*}
    for $Y\gg1$. 
\end{lemma}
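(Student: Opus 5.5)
The argument runs parallel to the proof of Lemma \ref{lemma:hybrid}, starting from the factorization of $Z^{b,\pm}$ in the preceding lemma. For each $\psi$ modulo $p^{a-b}$ we have the expression
\begin{align*}
    \frac{\psi(\mp1)\tau(\chi\psi)}{\varphi(p^{a-b})\tau(\chi\overline{\psi})}\frac{L(s_1,\psi)L(s_2,\psi)L(s_3,\psi)L(s_4,\overline{\psi})}{\zeta^{(p)}(s_1+s_4)}Z^b_{\fin}(\psi;s_1,s_2,s_3).
\end{align*}
When $c(\chi\psi)=c(\chi\overline\psi)=p^a$, which holds since $\chi$ is primitive modulo $p^a$ and $c(\psi)\le p^{a-b}<p^a$, the ratio $\tau(\chi\psi)/\tau(\chi\overline\psi)$ has modulus $1$. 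The analytic statements then follow term by term. For $\psi=\chi_0$, each $L(s_n,\chi_0)$ is $\zeta^{(p)}(s_n)$, which is meromorphic with a simple pole at $s_n=1$. The finite part $Z^b_{\fin}(\chi_0;\cdot)$ is the explicit rational function in $p^{s_n}$ computed in \eqref{eq:bound_on_Zfin_psi_trivial_b_case}; its denominators $p^{s_n}-1$ do not vanish for $\re s_n\ge1/2$. Finally, $1/\zeta^{(p)}(s_1+s_4)$ is analytic for $\re(s_1+s_4)\ge1$. For nontrivial $\psi$ the $L$-functions are entire, and $Z^b_{\fin}(\psi;\cdot)$ is an entire exponential polynomial by \eqref{eq:Zbfin_def}. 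This gives the meromorphicity, analyticity and pole claims.

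For the bound $Z_0^{b,\pm}\ll_\sigma 1$ when all $\re s_n\ge\sigma>1$, observe that the $\zeta$-factors are $O_\sigma(1)$ and $1/\zeta^{(p)}(s_1+s_4)$ is bounded. The finite part satisfies $Z^b_{\fin}(\chi_0)=O(p^{a-b})$ by \eqref{eq:bound_on_Zfin_psi_trivial_b_case}, and this is cancelled by the normalization $1/\varphi(p^{a-b})$. The only point to check is that the $O(p^{a-b})$ bound persists for all $\re s_n\ge\sigma>1$ and not only on the line $1+\varepsilon$. This holds because each factor $p^{-(a-b-1)s_n}(p-p^{s_n})/(p^{s_n}-1)$ is $O(p^{-(a-b-1)\re s_n})$, and this quantity decreases as $\re s_n$ grows.

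For the integral bounds, put $\re u_n=1/2+\varepsilon$. We have $1/\zeta^{(p)}(u_1+u_4)\ll_\varepsilon (1+|y|)^\varepsilon$ at real part $1+2\varepsilon$ (the $iy$ shifts cancel in $s_1+s_4$), and $Z^b_{\fin}=O(p^{5(a-b)/2})$ uniformly in $y$ by \eqref{eq:bound_on_Zfin_psi_trivial_b_case} and \eqref{eq:bound_on_Zfin_b_case}. With these, the case $\delta=0$ reduces to $p^{-(a-b)}p^{5(a-b)/2}\int_{-Y}^Y|\zeta(u_1-iy)\zeta(u_2-iy)\zeta(u_3-iy)\zeta(u_4+iy)|\,dy$. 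By H\"older and the fourth moment of $\zeta$ on the line $1/2+\varepsilon$, this is $\ll Y^{1+\varepsilon}p^{3(a-b)/2}$. For $\delta=1$ we gain nothing from the prefactor $1/\varphi(p^{a-b})$ and the finite part contributes $p^{5(a-b)/2}$. It therefore remains to bound the sum over $\psi\ne\chi_0$ of the integrals of the four-fold products by $(p^{a-b}Y)^{1+\varepsilon}$, keeping the normalization $p^{-(a-b)}$. This is exactly what the argument for \eqref{eq:lemma_fourth_moment} supplies: we apply AM--GM to reduce to second moments of $L(u_1-iy,\psi)L(u_2-iy,\psi)$, then the approximate functional equation and Cauchy--Schwarz, and finally Gallagher's hybrid large sieve. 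Combining gives $p^{-(a-b)}\cdot p^{5(a-b)/2}\cdot(p^{a-b}Y)^{1+\varepsilon}=Y^{1+\varepsilon}p^{5(a-b)/2(1+\varepsilon)}$ after absorbing epsilons.

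The main obstacle is bookkeeping rather than a new idea. The finite parts have to be bounded uniformly in $y$ along the shifted lines, including the $\tau(\overline{\psi'})^3$ factor, which has size $p^{3j/2}$, together with $p^{(a-b-j)(3-\sum\re s_n)}$; this combination must stay $\le p^{5(a-b)/2}$ for every conductor $p^j$. One also has to confirm that the Gauss sum ratio does not spoil absolute convergence. Once \eqref{eq:bound_on_Zfin_b_case} is accepted, the rest follows the earlier lemma verbatim.
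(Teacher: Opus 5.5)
The bounds follow the paper's own route: rerun the proof of Lemma \ref{lemma:hybrid} with \eqref{eq:bound_on_Zfin_psi_trivial_b_case} and \eqref{eq:bound_on_Zfin_b_case} in place of the earlier finite-part bounds. Your bookkeeping for $Z_0^{b,\pm}\ll_\sigma 1$ and for both integral estimates is correct, including the uniform check $\varphi(p^{a-b})p^{(a-b-j)(3/2-3\varepsilon)}p^{3j/2}\le p^{5(a-b)/2}$ over all conductors $p^j$.

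The step that fails is your derivation of the pole claim. You check that the denominators $p^{s_n}-1$ of $Z^b_{\fin}(\chi_0;\cdot)$ do not vanish. However, each factor also carries the numerator $p-p^{s_n}$, which vanishes at $s_n=1$. This zero cancels the simple pole of $L(s_n,\chi_0)=\zeta^{(p)}(s_n)$ for $n=1,2,3$.

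The conceptual reason is that Ramanujan sums have mean zero. Since $R_{p^{a-b}}(m)$ depends only on the $p$-part of $m$, one has $\zeta^{(p)}(s)\sum_{k\ge0}R_{p^{a-b}}(p^k)p^{-ks}=\sum_{m\ge1}R_{p^{a-b}}(m)m^{-s}=p^{(a-b)(1-s)}(1-p^{-(1-s)})\zeta(s)$. This is entire; it is exactly the Dirichlet series computed in the proof of Lemma \ref{lemma:m2nonzero}.

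Hence $Z_0^{b,\pm}$ is regular at $s_n=1$ for $n\le 3$. Its only polar hyperplane in the region $\re s_n\ge 1/2$ is $s_4=1$, coming from $L(s_4,\overline{\chi_0})=\zeta^{(p)}(s_4)$. Nothing cancels that one, for generic values of the other variables: $Z^b_{\fin}$ does not involve $s_4$, and $1/\zeta^{(p)}(s_1+s_4)$ is finite and nonzero there.

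So the pole sentence of the statement holds only in the weaker form \emph{all poles lie on hyperplanes $s_n=1$}; the paper's one-line proof does not address this either. That weaker form is all Section \ref{sec:completing_the_proof} uses when shifting contours. It is what you should prove, rather than inferring an actual pole at every $s_n=1$ from the zeta factors.
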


\begin{proof}
    The proof of Lemma \ref{lemma:hybrid} carries over almost verbatim, the only modification being that \eqref{eq:bound_on_Zfin_psi_trivial} and \eqref{eq:bound_on_Zfin} are now replaced by \eqref{eq:bound_on_Zfin_psi_trivial_b_case} and \eqref{eq:bound_on_Zfin_b_case}.
\end{proof}
\section{Cancellation between certain diagonal and off-diagonal terms}\label{sec:cancellation}
Recall the decompositions \eqref{eq:avg_setup}, \eqref{eq:f_final_expression} and \eqref{eq:off_diagonal_after_poisson}. In analogy with the holomorphic case in \cite[Section 3]{Pe}, we assert the following.

\begin{prop}[Cancellation of false main terms]\label{prop:cancellation}
If $T=(p^a)^{o(1)}$, and $\re\alpha_j\ll \frac{1}{\log p^a}$ and $\im\alpha_j=O(1)$ for all $1\leq j\leq 3$, we have
\begin{align}
    &\sum_{\pmb{\sigma}\in\{\pm1\}^3}(\pm1)^{\frac{1-\sigma}{2}}\bigg(\frac{1}{\varphi(p^b)}\sum_{\phi\,(\mo p^b)}\bigg(\frac{1}{2p^{a\alpha}}\mathcal{D}(\pm,\chi\phi,\pmb{\alpha},\pmb{\sigma}) - \text{MT}^\pm_{nh}(\chi\phi,\pmb{\alpha},\pmb{\sigma})\bigg) \label{eq:diag_before_cancellation} \\
    &\quad\quad\quad\quad\quad\quad\quad\,\,\, + \frac{1}{2p^{a\alpha}}\sum_{\,\,\,\pm_\lambda}(\pm_\lambda1)^{\frac{1\mp1}{2}}\sum_{\substack{d\geq1 \\ (d,p)=1}}\frac{1}{d}\sum_{\substack{N_1,N_2,N_3,C \\ \text{dyadic}}}\mathcal{T}_0^b(\pm,\pm_\lambda,\chi,d,\pmb{\alpha},\pmb{\sigma})\bigg) \label{eq:second_match} \\
    &\ll_\varepsilon p^{a(-\frac{1}{3}+\varepsilon)}. \label{eq:after_canceling}\\
\end{align}
\end{prop}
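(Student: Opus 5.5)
Using Proposition \ref{prop:diag_decomposition}, I would write the diagonal part of \eqref{eq:diag_before_cancellation} as the false main terms plus $O_\varepsilon(p^{a(-1/3+\varepsilon)})$. These are the three $N(\cdot)$-terms and the term $\frac12 L(0,0,0,\cdot)/((-\sigma_1\alpha_1)(-\sigma_2\alpha_2)(-\sigma_3\alpha_3))$, integrated against $\frac{1}{4\pi}\tanh(\pi t)th_0(t)$ and averaged over the parities $\delta=\frac{1\mp\chi(-1)(\pm_b1)}{2}$. Summed over $\pmb{\sigma}$ with the weights $(\pm1)^{\frac{1-\sigma}{2}}$, the second term in each $N(\cdot)$ cancels by the pairing $\pmb{\sigma}\leftrightarrow(-\sigma_1,-\sigma_2,\sigma_3)$ (and its permutations) described in (i) after Lemma \ref{lemma:conjectured_main_term}. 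Since $\sigma$ is unchanged under this pairing, the weights agree. What remains on the diagonal side is an explicit combination of the contour integrals $M(\kappa,\lambda,\mu,\cdot)$, the residues $L(\mu,-\mu,\mu,\cdot)/((\mu-\kappa)(-\mu-\lambda)2\mu)$, and the $L(0,0,0,\cdot)$-terms.

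Next I would compute $\mathcal{T}^b_0$ explicitly. Its Gauss sum $G^{b,\pm}_\chi(m_1,m_2,m_3;p^{a-b}c')$ has the closed form \eqref{eq:Gbpm_final}--\eqref{eq:Hbpm_final}. When some $m_j=0$, only the trivial character $\psi=\chi_0$ survives with non-negligible weight: the $j\geq1$ branch forces $m_{j0}=p^{a-b-j}$ finite and nonzero. So the zero frequencies produce Ramanujan sums $R_{p^{a-b}}(0)=\varphi(p^{a-b})$, together with a factor $\tau(\chi)/\tau(\chi)=1$ from $\psi=\chi_0$. I would then sum the nonzero $m_j$, the modulus $c'$, and $d$ by Mellin inversion. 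This turns the arithmetic part into ratios of $\zeta^{(p)}$ values at $1+s_i+s_k$, in the same shape as in $L(s_1,s_2,s_3,\cdot)$. For the archimedean part, I would write $K_0^{b,\pm_\lambda}$ at $m_j=0$ through the Mellin--Barnes representation of the Bessel kernels $J_{2it}$, $K_{2it}$. Integrating over $t_j$ with some frequencies zero gives Mellin transforms of the $V$-functions. The reflection formula for $\Gamma$ should then convert the ratio $\Gamma(\cdot)/\Gamma(\cdot)$ coming from $J_{2it}$ or $K_{2it}$ into $\gamma_{\rat}$-ratios times $\tanh(\pi t)$, with a $\delta$-dependent sign from $\cosh(\pi t)$ versus $\sinh(\pi t)$. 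The outcome should be a family of contour integrals over $s$ of $L(s,-s,\mu,\cdot)$-type integrands, and of residues at $s_1=s_2=s_3=0$.

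Then I would match these term by term. My expectation, following Remark \ref{rmk:remark_intro}(ii), is that only the opposite-sign case $\pm_\lambda=-$ contributes nontrivially, since its sign $(\pm_\lambda1)^{\frac{1\mp1}{2}}$ and the character factor $\chi(\mp_\lambda1)\psi(\mp1)$ combine to reproduce $(\pm1)^{\frac{1-\sigma}{2}}$. The same-sign contributions, and any leftover contour pieces, would be shifted to $\re s=-1/3$ and bounded by $p^{a(-1/3+\varepsilon)}$, as in Remark \ref{rmk:remark_intro}(iii). To see the cancellation, I would sum over both $\pm_b$, which averages the parities $\delta\in\{0,1\}$, and over all $\pmb{\sigma}$. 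The goal is to show that the off-diagonal integrals equal minus $M(\kappa,\lambda,\mu,\cdot)$ plus the $2\mu$-residues, and minus the $L(0,0,0,\cdot)$-terms.

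I expect this final combinatorial matching to be the main obstacle: aligning the Gamma-function identities across $\delta$ and $\pm_b$, and the eight choices of $\pmb{\sigma}$. If the direct matching gets too messy, I would first try verifying the identity of meromorphic functions in $\pmb{\alpha}$ residue by residue, in the spirit of the $(s\mapsto -s)$ argument behind \eqref{eq:symmetry_in_N}.
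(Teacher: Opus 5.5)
Your opening steps match the paper: you start from Proposition~\ref{prop:diag_decomposition}, cancel the second terms of the $N(\cdot)$'s by the pairing $\pmb\sigma\leftrightarrow(-\sigma_1,-\sigma_2,\sigma_3)$, reduce $G^{b,\pm}_\chi$ at $m_1m_2m_3=0$ to Ramanujan sums with only $\psi=\chi_0$ surviving, and use Mellin--Barnes for $J_{2it}$ and $K_{2it}$. The gap is in the matching step. You plan to keep only the opposite-sign ($\pm_\lambda=-$) terms and to bound the same-sign ($J$-Bessel) terms by $O_\varepsilon(p^{a(-\frac13+\varepsilon)})$ after shifting to $\re s=-1/3$, and that bound is false.

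\begin{itemize}
\item[(a)] Shifting the $s$-line (for the $(0,0,0)$-terms) or the $u$-line (for the $A_i$-terms) to $-1/3$ passes the poles at $s=\sigma_j\alpha_j$, resp.\ $u=\sigma_1\alpha_1$, coming from $G(s-\sigma_j\alpha_j)/(s-\sigma_j\alpha_j)$. These residues carry only $p^{a\sigma_j\alpha_j}\asymp1$.
\item[(b)] For the $A_i$-terms, the $s$-integral left on $\re s=5/12$ depends on $p^a$ only through $p^{a\sigma_1\alpha_1}$. No contour shift can make it small.
\item[(c)] The sign bookkeeping does not remove these terms either. Summing over $\pm_b$ attaches a factor $(-1)^\delta$ to them, but the integrand itself depends on $\delta$, so nothing cancels.
\end{itemize}

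In the paper these same-sign residues are indispensable. Since $th_0(t)/\cosh(\pi t)$ is odd in $t$ and the rest of the $J$-integrand is even, one may replace $\Gamma(1/2-s+it)/\Gamma(1/2+s+it)$ by its odd part. Duplication and reflection then give
\[
\frac12\Big(\frac{\Gamma(1/2-s+it)}{\Gamma(1/2+s+it)}-\frac{\Gamma(1/2-s-it)}{\Gamma(1/2+s-it)}\Big)\gamma_{\rat}(\delta,t,s,-s)=\frac{-i\sin(\pi s)\sinh(\pi t)}{(2\pi)^{2s}\big((-1)^\delta\sin(\pi s)+\cosh(\pi t)\big)},
\]
while in the $K$-case
\[
\Gamma(1/2-s+it)\Gamma(1/2-s-it)\gamma_{\rat}(\delta,t,s,-s)=\frac{\pi}{(2\pi)^{2s}\big((-1)^\delta\sin(\pi s)+\cosh(\pi t)\big)}.
\]
Against a common $\tanh(\pi t)$-integrand, the same-sign and opposite-sign terms therefore carry the weights $\frac{(-1)^\delta\sin(\pi s)}{(-1)^\delta\sin(\pi s)+\cosh(\pi t)}$ and $\frac{\cosh(\pi t)}{(-1)^\delta\sin(\pi s)+\cosh(\pi t)}$, and these sum to exactly $1$. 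That identity is what reproduces the diagonal $2\mu$-residues, the $L(0,0,0,\cdot)$-term and the integrals $M(\cdot)$. It does so after reindexing one $\sigma_j\mapsto-\sigma_j$, which is needed because one variable enters $L(\cdot)$ as $-s$.

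With the opposite-sign terms alone you are left with a discrepancy $\frac{(-1)^\delta\sin(\pi s)}{(-1)^\delta\sin(\pi s)+\cosh(\pi t)}\times(\cdots)$ of order one. This happens, for example, when $\im\alpha_j\asymp1$, or on the line $\re s=5/12$ for the $A_i$-terms. Remark~\ref{rmk:remark_intro}(ii) describes behaviour only as $\pmb\alpha\to\pmb0$. It cannot replace the exact matching at general shifts, which requires both sign cases of the Kuznetsov formula.
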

To prove the latter result, we express $G^{b,\pm}_\chi(\cdot)$ in terms of Ramanujan sums in Section \ref{sec:gauss_sums_ramanujan}, next we obtain Mellin inversion formulas for $K^{b,\,\pm}_0(\cdot)$ in Sections \ref{sec:Kbp_mellin_inverted}--\ref{sec:Kbm_mellin_inverted}, then we combine these results in Section \ref{sec:combining_all_info_offdiag} to obtain an expression for the sum of $\mathcal{T}_0^b(\cdot)$'s in \eqref{eq:second_match} that matches with the diagonal terms in \eqref{eq:diag_before_cancellation}.

\subsection{$G^{b,\,\pm}_\chi(\cdot)$ with $m_1m_2m_3=0$}\label{sec:gauss_sums_ramanujan}
Suppose throughout that $m_1m_2m_3=0$. Assuming that the written $m_j$'s are $\neq0$, we have by \eqref{eq:Gbpm_final} and \eqref{eq:Hbpm_final} the simplified expressions
\begin{align}
    &G^{b,\pm}_\chi(0,0,0;p^{a-b}c') = \mathbbm{1}_{\{c'=1\}}\chi(\mp1)(1-p^{-1})^2, \label{eq:gauss_sum_all0} \\
    &G^{b,\pm}_\chi(m_1,0,0;p^{a-b}c') = \mathbbm{1}_{\{(m_1,c')=1\}}\chi(\mp1)\frac{1-p^{-1}}{p^{a-b}c'}R_{p^{a-b}}(m_1), \label{eq:gauss_sum_m1nonzero} \\
    &G^{b,\pm}_\chi(0,m_2,0;p^{a-b}c') = \mathbbm{1}_{\{c'=1\}}\chi(\mp1)\frac{1-p^{-1}}{p^{a-b}}R_{p^{a-b}}(m_2), \label{eq:gauss_sum_m2nonzero} \\
    &G^{b,\pm}_\chi(0,0,m_3;p^{a-b}c') = \mathbbm{1}_{\{c'=1\}}\chi(\mp1)\frac{1-p^{-1}}{p^{a-b}}R_{p^{a-b}}(m_3), \label{eq:gauss_sum_m3nonzero} \\
    &G^{b,\pm}_\chi(m_1,m_2,0;p^{a-b}c') = \mathbbm{1}_{\{(m_1,c')=1\}}\chi(\mp1)\frac{1}{p^{2(a-b)}c'}R_{p^{a-b}}(m_1)R_{p^{a-b}}(m_2), \label{eq:gauss_sum_m1m2nonzero} \\
    &G^{b,\pm}_\chi(m_1,0,m_3;p^{a-b}c') = \mathbbm{1}_{\{(m_1,c')=1\}}\chi(\mp1)\frac{1}{p^{2(a-b)}c'}R_{p^{a-b}}(m_1)R_{p^{a-b}}(m_3), \label{eq:gauss_sum_m1m3nonzero} \\
    &G^{b,\pm}_\chi(0,m_2,m_3;p^{a-b}c') = \mathbbm{1}_{\{c'=1\}}\chi(\mp1)\frac{1}{p^{2(a-b)}}R_{p^{a-b}}(m_2)R_{p^{a-b}}(m_3).\label{eq:gauss_sum_m2m3nonzero}
\end{align}
Note that these formulas match \cite[(14)--(18)]{Pe} with different normalizations.

\subsection{$K^{b,+}_0(\cdot)$ with $m_1m_2m_3=0$}\label{sec:Kbp_mellin_inverted}
We now compute an integral representation for $K^{b,+}_0(\cdot)$ in terms of $\Gamma(\cdot)$. First recall the Mellin--Barnes integral representation
\begin{align}
    x^{-1}J_\nu(x) &= \frac{1}{2\pi i}\int_{-i\infty}^{+i\infty}\frac{\Gamma(-s)}{\Gamma(1+s+\nu)}2^{-2s-\nu}x^{-1+2s+\nu}\,ds \label{eq:mellin_bessel_J} \\
    &= \frac{1}{2\pi i}\int_{\frac{1+\re\nu}{2}-i\infty}^{\frac{1+\re\nu}{2}+i\infty}\frac{\Gamma\left(\frac{1-2s+\nu}{2}\right)}{\Gamma\left(\frac{1+2s+\nu}{2}\right)}2^{1-2s}x^{2(s-1)}\,ds
\end{align}
for $x,\re \nu\in\R_{>0}$, where the integration path in the first integral can be chosen as the vertical line $(0)$ with a small indentation to the left near $s=0$ as this is a pole of $\Gamma(-s)$; see e.g. \cite[(3.4.21)]{PaK}. We would like to take $\nu=2it$ with $t\in\R$, and $x=4\pi\sqrt{t_1t_2t_3}/c$. By Stirling's formula, the modulus of the integrand in \eqref{eq:mellin_bessel_J} is of size $O(\abs{\im s}^{-2\re s-\re\nu-1})$ as $\im s\rightarrow\pm\infty$, so the corresponding integral is not absolutely convergent for $\re \nu=0$. However, shifting the integration contour in \eqref{eq:mellin_bessel_J} slightly to the right while keeping an indentation on the left of $s=0$, we obtain by analytic continuation that \eqref{eq:mellin_bessel_J} with integration over this new contour (call it $\mathcal{C}$) also holds for $\re\nu=0$. In particular, we get
\begin{align*}
    &\frac{c}{4\pi\sqrt{t_1t_2t_3}}J_{2it}\bigg(\frac{4\pi\sqrt{t_1t_2t_3}}{c}\bigg) = \frac{1}{2\pi i}\int_{\mathcal{C}+\frac{1}{2}}\frac{\Gamma(1/2-s+it)}{\Gamma(1/2+s+it)}2^{1-2s}\left(\frac{4\pi}{c}\right)^{2(s-1)}(t_1t_2t_3)^{s-1}\,ds,
\end{align*}
where $\mathcal{C}+\frac{1}{2}$ denotes the contour $\mathcal{C}$ horizontally translated to the right by a shift of $\frac{1}{2}$.

We also have the Mellin inversion formula
\begin{align}
    e(-mt_j) &= \frac{1}{2\pi i}\int_{(\theta)}\frac{\Gamma(s)}{(2\pi im)^s}t_j^{-s}\,ds \label{eq:e_mellin_inverted}
\end{align}
for $m\in\R_{\neq0}$ and $\theta\in(0,1/2]$, the integral being convergent as argued in \cite[Section 2.4, Example 1]{PaK}. So by definition of $V_{1/2+\alpha_j}(\cdot)$ and Mellin convolution/Parseval's formula (see e.g. \cite[(3.1.14)]{PaK}), we obtain for $m\in\R$ and $n\in\R_{>0}$ that
\begin{align*}
    &\int_{\R_{>0}}V_{1/2+\alpha_j}(nt_j,t,\delta)e(-mt_j)t_j^s\frac{dt_j}{t_j} \\
    &= 
    \begin{dcases}
        \frac{1}{2\pi i}\int_{(\theta_j)}n^{-u}\gamma_{\rat}(\delta,t,u,\alpha_j)\frac{G(u-\alpha_j)}{u-\alpha_j}\frac{\Gamma(s-u)}{(2\pi im)^{s-u}}du &\text{ if }m\neq0,\,\theta_j\in(\re\alpha_j,+\infty), \\
        &\quad\, \re s\in(\theta_j,1/2+\theta_j] \\
        \\
        n^{-s}\gamma_{\rat}(\delta,t,s,\alpha_j)\frac{G(s-\alpha_j)}{s-\alpha_j} &\text{ if }m=0,\,\re s\in(\re\alpha_j,+\infty)
    \end{dcases}.
\end{align*}

Recalling the definition of $K_0^{b,+}(\cdot)$ from \eqref{eq:Kb0pm_def}, we now have the Mellin inversion formula
\begin{align*}
    &\sum_{\substack{N_1,N_2,N_3 \\ \text{dyadic}}}\frac{1}{\sqrt{N_1N_2N_3}}K_0^{b,+}(m_1,m_2,m_3,c,d,\delta,\pmb{\alpha},\pmb{\sigma}) \\
    &=\frac{ic}{4\pi}w_{C}(c)\int_{\R}\frac{th_0(t)}{\cosh(\pi t)}\Gamma_{\rat}(\delta,t,\pmb{\alpha},\pmb{\sigma})\bigg(\frac{1}{2\pi i}\int_{\mathcal{C}+\frac{1}{2}}\frac{\Gamma(1/2-s+it)}{\Gamma(1/2+s+it)}\left(\frac{2\pi}{c}\right)^{2s} \\
    &\quad\,\cdot\bigg(\int_{\R_{>0}}V_{1/2+\sigma_1\alpha_1}\bigg(\frac{t_1}{p^a},t,\delta\bigg)e_{p^{a-b}c'}(-m_1t_1)t_1^s\frac{dt_1}{t_1} \\
    &\quad\quad\,\,\,\, \cdot\prod_{j\in\{2,3\}}\int_{\R_{>0}}V_{1/2+\sigma_j\alpha_j}\bigg(\frac{dt_j}{p^a},t,\delta\bigg)e_{p^{a-b}c'}(-m_jt_j)t_j^s\frac{dt_j}{t_j}\bigg)ds\bigg)dt.
\end{align*}
If $m_j=0$ for some $1\leq j\leq 3$, then the Gaussian $G(s-\sigma_j\alpha_j)$ resulting from the $t_j$-integral has rapid decay on vertical lines, resolving the earlier lack of absolute convergence of the $s$-integral for $\re\nu=0$, so it is now justified to deform the contour $\mathcal{C}+\frac{1}{2}$ back to the left to a vertical line $(\eta)$ with $\eta\in(\re\alpha_j,1/2)$.

We now specify to the relevant cases with $m_1m_2m_3=0$. To simplify our formulas, we use the notation
\begin{align*}
    \{(m_1,m_2,m_3)\in\Z^3\,:\,m_1m_2m_3=0\} = \{(0,0,0)\} \sqcup \bigsqcup_{1\leq i\leq 3}A_i \sqcup \bigsqcup_{\substack{1\leq i<j\leq 3}}P_{ij},
\end{align*}
where the $A_i\coloneqq\{(m_1,m_2,m_3)\in\Z^3\,:\,m_i\neq0,\,m_j=0\text{ for }i\neq j\}$ describe coordinate axes and the $P_{ij}\coloneqq\{(m_1,m_2,m_3)\in\Z^3\,:\,m_im_j\neq0,\,m_k=0\text{ for }k\neq i,j\}$ describe coordinate planes. We then obtain
\begin{align}
    &\sum_{\substack{N_1,N_2,N_3 \\ \text{dyadic}}}\frac{1}{\sqrt{N_1N_2N_3}}K_0^{b,+}(m_1,m_2,m_3,p^ac',\cdot) \\
    &= \frac{ip^ac'}{4\pi}w_C(p^ac')\int_{\R}\frac{th_0(t)}{\cosh(\pi t)}\bigg(\frac{1}{2\pi i}\int_{(\frac{1}{4})}\frac{\Gamma(1/2-s+it)}{\Gamma(1/2+s+it)}\bigg(\frac{2\pi}{c'}\bigg)^{2s}Q(\cdot)\,ds\bigg)\,dt, \label{eq:Kbp0_general}
\end{align}
where $Q(\cdot) = Q(m_1,m_2,m_3,c',d,\delta,t,s,\pmb{\alpha},\pmb{\sigma})$ is defined as follows. If $(m_1,m_2,m_3)=(0,0,0)$ and $c'=1$, then
\begin{align}
    Q(\cdot) \coloneqq d^{-2s}p^{as}\prod_{1\leq j\leq 3}\gamma_{\rat}(\delta,t,s,\alpha_j)\frac{G(s-\sigma_j\alpha_j)}{s-\sigma_j\alpha_j}.
\end{align}
If $(m_1,m_2,m_3)\in A_1$, then 
\begin{align}
    Q(\cdot) &\coloneqq d^{-2s}\prod_{j\in\{2,3\}}\gamma_{\rat}(\delta,t,s,\alpha_j)\frac{G(s-\sigma_j\alpha_j)}{s-\sigma_j\alpha_j}\\
    &\quad\,\, \cdot \frac{1}{2\pi i}\int_{(\frac{1}{8})}p^{au}\gamma_{\rat}(\delta,t,u,\alpha_1)\frac{G(u-\sigma_1\alpha_1)}{u-\sigma_1\alpha_1}\frac{\Gamma(s-u)}{(2\pi i\frac{m_1}{p^{a-b}c'})^{s-u}}\,du.
\end{align}
If $(m_1,m_2,m_3)\in A_2$ and $c'=1$, then 
\begin{align}
    Q(\cdot) &\coloneqq d^{-s}\prod_{j\in\{1,3\}}\gamma_{\rat}(\delta,t,s,\alpha_j)\frac{G(s-\sigma_j\alpha_j)}{s-\sigma_j\alpha_j} \\
    &\quad\,\, \cdot \frac{1}{2\pi i}\int_{(\frac{1}{8})}d^{-u}p^{au}\gamma_{\rat}(\delta,t,u,\alpha_2)\frac{G(u-\sigma_2\alpha_2)}{u-\sigma_2\alpha_2}\frac{\Gamma(s-u)}{(2\pi i\frac{m_2}{p^{a-b}})^{s-u}}\,du.
\end{align}
If $(m_1,m_2,m_3)\in P_{12}$, then 
\begin{align}
    Q(\cdot) &\coloneqq d^{-s}p^{-as}\gamma_{\rat}(\delta,t,s,\alpha_3)\frac{G(s-\sigma_3\alpha_3)}{s-\sigma_3\alpha_3} \\
    &\quad\,\, \cdot \frac{1}{2\pi i}\int_{(\frac{1}{8})}p^{au}\gamma_{\rat}(\delta,t,u,\alpha_1)\frac{G(u-\sigma_1\alpha_1)}{u-\sigma_1\alpha_1}\frac{\Gamma(s-u)}{(2\pi i\frac{m_1}{p^{a-b}c'})^{s-u}}\,du \\
    &\quad\,\, \cdot \frac{1}{2\pi i}\int_{(\frac{1}{8})}d^{-v}p^{av}\gamma_{\rat}(\delta,t,v,\alpha_2)\frac{G(v-\sigma_2\alpha_2)}{v-\sigma_2\alpha_2}\frac{\Gamma(s-v)}{(2\pi i\frac{m_2}{p^{a-b}c'})^{s-v}}\,dv.
\end{align}
If $(m_1,m_2,m_3)\in P_{23}$ and $c'=1$, then 
\begin{align}
    Q(\cdot) &\coloneqq p^{-as}\gamma_{\rat}(\delta,t,s,\alpha_1)\frac{G(s-\sigma_1\alpha_1)}{s-\sigma_1\alpha_1} \\
    &\quad\,\, \cdot \frac{1}{2\pi i}\int_{(\frac{1}{8})}d^{-u}p^{au}\gamma_{\rat}(\delta,t,u,\alpha_2)\frac{G(u-\sigma_2\alpha_2)}{u-\sigma_2\alpha_2}\frac{\Gamma(s-u)}{(2\pi i\frac{m_2}{p^{a-b}})^{s-u}}\,du \\
    &\quad\,\, \cdot \frac{1}{2\pi i}\int_{(\frac{1}{8})}d^{-v}p^{av}\gamma_{\rat}(\delta,t,v,\alpha_3)\frac{G(v-\sigma_3\alpha_3)}{v-\sigma_3\alpha_3}\frac{\Gamma(s-v)}{(2\pi i\frac{m_3}{p^{a-b}})^{s-v}}\,dv.
\end{align}

The remaining formulas with $(m_1,m_2,m_3)\in A_3\sqcup P_{13}$ are obtained by symmetry. The above identities are quite lenghty, but keeping such detailed track of parameters turns out to be crucial when carefully proving the cancellation result Proposition \ref{prop:cancellation}. The most important features to keep track of are the following.
\begin{enumerate}
    \item[-] we have $\alpha_j\ll\frac{1}{\log p^a}$ in practice,
    \item[-] the function $u\mapsto\gamma_{\rat}(\cdot,u,\cdot)$ is holomorphic to the right of the $(-1/2-\delta)$-line, on which it has one double pole or two simple poles, 
    \item[-] the function $u\mapsto \gamma_{\rat}(\delta,t,u,v)\frac{G(u-\sigma_j\alpha_j)}{u-\sigma_j\alpha_j}$ is holomorphic to the right of the simple pole $u=\sigma_j\alpha_j$, and exponentially decays in terms of $\abs{\im u}$ as $\im u\rightarrow\pm\infty$ to the right of $u=-1/2-\delta$, 
    \item[-] the $u$- and $v$-integrals in the definition of $Q(\cdot)$ are absolutely convergent because of the aforementioned properties of their integrands.
\end{enumerate}

\subsection{$K^{b,-}_0(\cdot)$ with $m_1m_2m_3=0$}\label{sec:Kbm_mellin_inverted}
We now compute an integral representation for $K^{b,-}_0(\cdot)$ in terms of $\Gamma(\cdot)$. By \cite[(3.4.18)]{PaK}, we have
\begin{align*}
    x^{-1}K_\nu(x) &= \frac{1}{2\pi i}\int_{(\beta)}\Gamma(-s)\Gamma(-s-\nu)2^{-1-2s-\nu}x^{-1+2s+\nu}ds \\
    &= \frac{1}{2\pi i}\int_{(\frac{1+2\beta+\re\nu}{2})}\Gamma\bigg(\frac{1-2s+\nu}{2}\bigg)\Gamma\bigg(\frac{1-2s-\nu}{2}\bigg)2^{-2s}x^{2(s-1)}ds
\end{align*}
for $\beta<\min\{0,\re\nu\}$. In particular, we get 
\begin{align*}
    &\frac{c}{4\pi\sqrt{t_1t_2t_3}}K_{2it}\bigg(\frac{4\pi\sqrt{t_1t_2t_3}}{c}\bigg) \\
    &= \frac{1}{2\pi i}\int_{(\beta)}\Gamma(1/2-s+it)\Gamma(1/2-s-it)2^{-2s}\left(\frac{4\pi}{c}\right)^{2(s-1)}(t_1t_2t_3)^{s-1}ds
\end{align*}
for $\beta\in\R_{<\frac{1}{2}}$. 

Recalling the definition of $K_0^{b,-}(\cdot)$ from \eqref{eq:Kb0pm_def}, we now have the Mellin inversion formula
\begin{align*}
    &\sum_{\substack{N_1,N_2,N_3 \\ \text{dyadic}}}\frac{1}{\sqrt{N_1N_2N_3}}K^{b,-}_0(m_1,m_2,m_3,c,d,\delta,\pmb{\alpha},\pmb{\sigma}) \\
    &= \frac{c}{4\pi^2}w_C(c)\int_{\R}\sinh(\pi t)th_0(t)\Gamma_{\rat}(\delta,t,\pmb{\alpha},\pmb{\sigma})\bigg(\frac{1}{2\pi i}\int_{(\frac{1}{4})}\Gamma(1/2-s+it)\Gamma(1/2-s-it)\left(\frac{2\pi}{c}\right)^{2s} \\
    &\quad\, \cdot\bigg(\int_{\R_{>0}}V_{1/2+\sigma_1\alpha_1}\bigg(\frac{t_1}{p^a},t,\delta\bigg)e_{p^{a-b}c'}(-m_1t_1)t_1^s\frac{dt_1}{t_1} \\
    &\quad\quad\,\,\,\, \cdot\prod_{j\in\{2,3\}}\int_{\R_{>0}}V_{1/2+\sigma_j\alpha_j}\bigg(\frac{dt_j}{p^{a}},t,\delta\bigg)e_{p^{a-b}c'}(-m_jt_j)t_j^s\frac{dt_j}{t_j}\bigg)ds\bigg)dt.
\end{align*}
Specifying to the case $m_1m_2m_3=0$, we obtain
\begin{align}
    &\sum_{\substack{N_1,N_2,N_3 \\ \text{dyadic}}}\frac{1}{\sqrt{N_1N_2N_3}}K_0^{b,-}(m_1,m_2,m_3,p^ac',\cdot) \\
    &= \frac{p^ac'}{4\pi^2}w_C(p^ac')\int_{\R}\sinh(\pi t)th_0(t) \\
    &\quad\quad\quad\quad\quad\quad\quad\quad\,\, \cdot\bigg(\frac{1}{2\pi i}\int_{(\frac{1}{4})}\Gamma(1/2-s+it)\Gamma(1/2-s-it)\left(\frac{2\pi}{c'}\right)^{2s}Q(\cdot)\,ds\bigg)\,dt, \label{eq:Kbm0_general}
\end{align}
where $Q(\cdot)$ has exactly the same definition as in Section \ref{sec:Kbp_mellin_inverted}.
The remaining formulas with $(m_1,m_2,$ $m_3)\in A_3\sqcup P_{13}$ are obtained by symmetry.

\subsection{Combining all info}\label{sec:combining_all_info_offdiag}

We now obtain the analogs to \cite[Lemmas 1-3]{Pe}:

\begin{lemma}[Matching the $(0,0,0)$-terms]\label{lemma:all0}
If $T=(p^a)^{o(1)}$, and $\re\alpha_j\ll \frac{1}{\log p^a}$ and $\im\alpha_j=O(1)$ for all $1\leq j\leq 3$, we have
\begin{align*}
    &\sum_{\pmb{\sigma}\in\{\pm1\}^3}(\pm1)^{\frac{1-\sigma}{2}}\sum_{\,\,\,\pm_\lambda}(\pm_\lambda1)^{\frac{1\mp1}{2}}\sum_{\substack{d\geq1 \\ (d,p)=1}}\frac{1}{d}\sum_{\substack{N_1,N_2,N_3,C \\ \text{dyadic}}}\frac{1}{C\sqrt{N_1N_2N_3}}\sum_{\substack{c'\geq1 \\ (c',p)=1}}G_\chi^{b,\pm_\lambda}(0,0,0;p^{a-b}c') \\
    &\quad\quad\quad\quad\quad\quad\quad\quad\quad\quad\quad\quad\quad\quad\quad\quad\quad\quad\quad\quad\,\, \cdot\sum_{\,\,\,\pm_b}(\mp_\lambda1)^{\frac{1\mp_b1}{2}}K^{b,\pm_\lambda}_0(0,0,0,\cdot,\tfrac{1\mp\chi(-1)(\pm_b1)}{2},\cdot) \\
    &= -\sum_{\pmb{\sigma}\in\{\pm1\}^3}(\pm1)^{\frac{1-\sigma}{2}}\frac{1}{4\pi}\int_{\R}\tanh(\pi t)th_0(t) \\
    &\quad\quad\quad\quad\quad\quad\quad\quad\quad\quad\quad\,\,\, \cdot\sum_{\delta\in\{0,1\}}\bigg(\frac{L(\sigma_1\alpha_1,-\sigma_1\alpha_1,\sigma_1\alpha_1,\delta,t,(\alpha_2,\alpha_3,\alpha_1),(\sigma_2,\sigma_3,\sigma_1))}{(\sigma_1\alpha_1-\sigma_2\alpha_2)(-\sigma_1\alpha_1-\sigma_3\alpha_3)(2\sigma_1\alpha_1)} \\
    &\quad\quad\quad\quad\quad\quad\quad\quad\quad\quad\quad\quad\quad\quad\quad\,\,\,\, + \frac{L(\sigma_2\alpha_2,-\sigma_2\alpha_2,\sigma_2\alpha_2,\delta,t,(\alpha_1,\alpha_3,\alpha_2),(\sigma_1,\sigma_3,\sigma_2))}{(\sigma_2\alpha_2-\sigma_1\alpha_1)(-\sigma_2\alpha_2-\sigma_3\alpha_3)(2\sigma_2\alpha_2)} \\
    &\quad\quad\quad\quad\quad\quad\quad\quad\quad\quad\quad\quad\quad\quad\quad\,\,\,\, + \frac{L(\sigma_3\alpha_3,-\sigma_3\alpha_3,\sigma_3\alpha_3,\delta,t,\pmb{\alpha},\pmb{\sigma})}{(\sigma_3\alpha_3-\sigma_1\alpha_1)(-\sigma_3\alpha_3-\sigma_2\alpha_2)(2\sigma_3\alpha_3)} \\
    &\quad\quad\quad\quad\quad\quad\quad\quad\quad\quad\quad\quad\quad\quad\quad\,\,\,\, + \frac{L(0,0,0,\delta,t,\pmb{\alpha},\pmb{\sigma})}{2(-\sigma_1\alpha_1)(-\sigma_2\alpha_2)(-\sigma_3\alpha_3)}\bigg)\,dt + O_\varepsilon(p^{a(-\frac{1}{3}+\varepsilon)}).
\end{align*}
\end{lemma}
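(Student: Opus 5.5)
The plan is to substitute the explicit formulas into the left-hand side, collapse everything to a single $s$-integral, and then identify residues. First insert \eqref{eq:gauss_sum_all0}, which forces $c'=1$ and gives $G^{b,\pm_\lambda}_\chi(0,0,0;p^{a-b})=\chi(\mp_\lambda 1)(1-p^{-1})^2$. Next insert the Mellin representations \eqref{eq:Kbp0_general} and \eqref{eq:Kbm0_general} with $(m_1,m_2,m_3)=(0,0,0)$, where $Q=d^{-2s}p^{as}\prod_j\gamma_{\rat}(\delta,t,s,\alpha_j)G(s-\sigma_j\alpha_j)/(s-\sigma_j\alpha_j)$.

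With $c=p^a$ fixed, the dyadic sum over $C$ collapses since $\sum_C w_C(p^a)=1$. The factor $c/C$ cancels against the prefactor $p^a c'$ up to the normalization. The $d$-sum $\sum_{(d,p)=1}d^{-1-2s}$ produces $\zeta^{(p)}(1+2s)$, which is the diagonal value of $\prod_{j<k}\zeta^{(p)}(1+s_j+s_k)$ at $s_1=s_2=s_3=s$ only after accounting for the $(s_j+s_k)$ factors in $L$. I therefore expect to express everything through $L(s,-s,s,\dots)$-type evaluations later. The sum over $\pm_b$ and the parity twist $\chi(-1)$ convert $\delta_{\chi\phi}^\pm$ into the full $\sum_{\delta\in\{0,1\}}$, exactly as in the main-term computation after Lemma \ref{lemma:conjectured_main_term}. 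The signs $\chi(\mp_\lambda1)(\mp_\lambda1)^{(1\mp_b1)/2}(\pm_\lambda 1)^{(1\mp1)/2}$ should combine to an overall sign independent of $\chi$.

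The key analytic step is to add the $(+)$- and $(-)$-Kuznetsov contributions at the level of the $s$-integrand for fixed $t$. With $x=1/2-s$, I would combine
\[
\frac{i}{\cosh(\pi t)}\frac{\Gamma(x+it)}{\Gamma(1-x+it)}\pm\frac{\sinh(\pi t)}{\pi}\Gamma(x+it)\Gamma(x-it)
\]
using the reflection formula $\Gamma(1-x+it)\Gamma(x-it)=\pi/\sin(\pi(x-it))$. This gives $\Gamma(x+it)\Gamma(x-it)$ times a trigonometric factor in $s$ and $t$. The intent is to show, as in \cite[Section 3]{Pe}, that the combination equals $\tanh(\pi t)$ times a factor which, together with $2^{2s}\pi^{2s}$ and the doubling formula, turns $\Gamma(1/2-s+it)\Gamma(1/2-s-it)$ into the $\Gamma_\R$-ratios. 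Then $\prod_j\gamma_{\rat}(\delta,t,s,\alpha_j)$ times this factor becomes a product of $\gamma_{\rat}$'s evaluated at the arguments $(s,-s,s)$ appropriate to $L(s,-s,s,\dots)$, up to the residual term that vanishes after summing over $\delta$.

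Once the integrand has the shape
\[
-\frac{1}{2\pi i}\int_{(1/4)}\frac{L(s,-s,s,\delta,t,\cdot)}{\prod_j(s-\sigma_j\alpha_j)\cdot 2s}\,ds,
\]
or a symmetrized variant, I would shift the contour to $\re s=-1/3$. The remaining integral is $O_\varepsilon(p^{a(-1/3+\varepsilon)})$ because $p^{as}$ is present and $\delta=0$ poles are avoided, as in Remark \ref{rmk:remark_intro}(iii). The residues at $s=\sigma_j\alpha_j$ should give the three displayed $L(\sigma_j\alpha_j,-\sigma_j\alpha_j,\sigma_j\alpha_j,\dots)$ terms after using the permutation invariance of $L$ to relabel. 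The residue at $s=0$, a pole coming from $\zeta^{(p)}(1+2s)$, should give the $L(0,0,0,\cdot)/2\prod(-\sigma_j\alpha_j)$ term; the factor $1/2$ comes from $\zeta^{(p)}(1+2s)\sim(1-p^{-1})/(2s)$ absorbing the $(1-p^{-1})^2$ normalizations.

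The main obstacle is the Gamma-function identity in the third paragraph: verifying exactly that the same-sign and opposite-sign Bessel kernels, weighted by $(\pm_\lambda1)^{(1\mp1)/2}$ and summed over parities $\delta$, recombine into $\tanh(\pi t)\,t$ times the required $\gamma_{\rat}$ ratios with the correct sign and power of $2\pi$. I would first test this identity at $\pmb\alpha=\pmb0$ with the duplication formula $\Gamma_\R(s)\Gamma_\R(s+1)=\Gamma_\C(s)$, and then track the signs by comparison with the holomorphic computation \cite[Lemma 1]{Pe}.
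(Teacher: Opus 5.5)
Your architecture matches the paper's. You substitute \eqref{eq:gauss_sum_all0} and the Mellin forms \eqref{eq:Kbp0_general}, \eqref{eq:Kbm0_general}, and let the $d$-sum supply $\zeta^{(p)}(1+2s)$, so the integrand becomes $L(s,-s,s,\delta,t,\pmb{\alpha},(\sigma_1,-\sigma_2,\sigma_3))\gamma_{\rat}(\delta,t,s,-s)/(2s\prod_j(s-\sigma_j\alpha_j))$ times the Bessel Gamma factors. You then simplify those by duplication and reflection, shift to $\re s=-1/3$, and collect residues at $s=0,\sigma_j\alpha_j$. Adding the two Kuznetsov kernels before the shift, where the paper shifts each separately and adds residues, is a legitimate reordering.

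The gap is in the step you flag as the main obstacle: the leftover part of the combined kernel does \emph{not} vanish after summing over $\delta$. Your signs give the $(+_\lambda)$-term the weight $\chi(-1)(\pm_b1)=(\pm1)(-1)^\delta$ and the $(-_\lambda)$-term the weight $\pm1$. With $D\coloneqq(-1)^\delta\sin(\pi s)+\cosh(\pi t)$, your reflection/duplication computation gives exactly
\[
\bigg((-1)^\delta\frac{i}{\cosh(\pi t)}\frac{\Gamma(\tfrac12-s+it)}{\Gamma(\tfrac12+s+it)}+\frac{\sinh(\pi t)}{\pi}\Gamma(\tfrac12-s+it)\Gamma(\tfrac12-s-it)\bigg)(2\pi)^{2s}\gamma_{\rat}(\delta,t,s,-s)=\tanh(\pi t)+\frac{(-1)^\delta i\cos(\pi s)}{D}.
\]
The second term does not cancel between $\delta=0$ and $\delta=1$. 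The rest of the integrand depends on $\delta$ through the $\gamma_{\rat}(\delta,t,\cdot,\cdot)$ factors inside $L$. Even if it did not, $1/(\cosh(\pi t)+\sin(\pi s))\neq 1/(\cosh(\pi t)-\sin(\pi s))$.

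The missing idea is parity in $t$. Since $\gamma_{\rat}$ is even in $t$, the functions $L(s,-s,s,\delta,t,\dots)$, $\gamma_{\rat}(\delta,t,s,-s)$ and the leftover term are all even, while $th_0(t)$ is odd. After interchanging the $s$- and $t$-integrals (justified by the Gaussians $G$), the leftover integrates to zero over $t\in\R$. This is the paper's step of replacing $\Gamma(\frac12-s+it)/\Gamma(\frac12+s+it)$ by its odd part in $t$.

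Once you include it, your route is slightly cleaner than the paper's. The combined kernel is exactly $\tanh(\pi t)$, so a single shift of $\frac{1}{2\pi i}\int_{(1/4)}L(s,-s,s,\delta,t,\pmb{\alpha},(\sigma_1,-\sigma_2,\sigma_3))/(2s\prod_j(s-\sigma_j\alpha_j))\,ds$ yields all four residues with weight one. The paper instead shifts each kernel separately; the $\sin(\pi s)$ in the $(+_\lambda)$-kernel removes the pole at $s=0$ there. It then observes that the weights $(-1)^\delta\sin(\pi\sigma_j\alpha_j)/D$ and $\cosh(\pi t)/D$ at $s=\sigma_j\alpha_j$ add up to $1$. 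In either route, flipping one $\sigma_k$ in each residue term produces the overall minus sign and the arguments displayed in the statement. That flip is $\sigma_2$ for the $s=0$ and $s=\sigma_3\alpha_3$ terms, and $\sigma_3$ for the other two.
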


\begin{proof}
    By \eqref{eq:gauss_sum_all0}, the left-hand side in the Lemma statement becomes 
    \begin{align} 
        &(1-p^{-1})^2\sum_{\pmb{\sigma}\in\{\pm1\}^3}(\pm1)^{\frac{1-\sigma}{2}}\sum_{\,\,\,\pm_\lambda}(\pm_\lambda1)^{\frac{1\mp1}{2}}\chi(\mp_\lambda1)\sum_{\substack{d\geq1 \\ (d,p)=1}}\frac{1}{d}\sum_{\substack{N_1,N_2,N_3,C \\ \text{dyadic}}}\frac{1}{C\sqrt{N_1N_2N_3}} \label{eq:LHS_lemma_all0} \\
        &\quad\quad\quad\quad\quad\quad\quad\quad\quad\quad\quad\quad\quad\quad\quad \cdot\sum_{\,\,\,\pm_b}(\mp_\lambda1)^{\frac{1\mp_b1}{2}}K^{\pm_\lambda}_0(0,0,0,\cdot,\tfrac{1\mp\chi(-1)(\pm_b1)}{2},\cdot). 
    \end{align}
    For the $(+_\lambda)$-terms, we obtain by \eqref{eq:Kbp0_general} that
    \begin{align}
        &\sum_{\substack{d\geq1 \\ (d,p)=1}}\frac{1}{d}\sum_{\substack{N_1,N_2,N_3,C \\ \text{dyadic}}}\frac{1}{C\sqrt{N_1N_2N_3}}K_0^{b,+}(0,0,0,p^a,\cdot,\delta,\cdot) \\
        &= \frac{i}{4\pi(1-p^{-1})^2}\int_\R\frac{th_0(t)}{\cosh(\pi t)}\bigg(\frac{1}{2\pi i}\int_{(\frac{1}{4})}\frac{\Gamma(1/2-s+it)}{\Gamma(1/2+s+it)}(2\pi)^{2s}\frac{L(s,-s,s,\delta,t,\pmb{\alpha},(\sigma_1,-\sigma_2,\sigma_3))}{2s(s-\sigma_1\alpha_1)(s-\sigma_2\alpha_2)(s-\sigma_3\alpha_3)} \\
        &\quad\quad\quad\quad\quad\quad\quad\quad\quad\quad\quad\quad\quad\quad\quad\quad\quad \cdot \gamma_{\rat}(\delta,t,s,-s)\,ds\bigg)dt.  \label{eq:t_integral_to_study}
    \end{align}
    The $t$-integrand is ``almost odd'' in $t$, in the sense that the factor
    \begin{align*}
        \frac{\Gamma(1/2-s+it)}{\Gamma(1/2+s+it)}
    \end{align*}
    ``prevents'' the integrand from being odd. As the integral of an odd function over $\R$ vanishes, the equality \eqref{eq:t_integral_to_study} still holds if we replace
    \begin{align*}
        \frac{\Gamma(1/2-s+it)}{\Gamma(1/2+s+it)} \quad \text{ by } \quad \frac{\Gamma(1/2-s+it)}{\Gamma(1/2+s+it)}+\text{(any even function)},
    \end{align*}
    for example we can replace the fraction by its odd part 
    \begin{align*}
        \frac{1}{2}\bigg(\frac{\Gamma(1/2-s+it)}{\Gamma(1/2+s+it)}-\frac{\Gamma(1/2-s-it)}{\Gamma(1/2+s-it)}\bigg).
    \end{align*}
    Now note that
    \begin{align*}
        &\frac{1}{2}\bigg(\frac{\Gamma(1/2-s+it)}{\Gamma(1/2+s+it)}-\frac{\Gamma(1/2-s-it)}{\Gamma(1/2+s-it)}\bigg)\gamma_{\rat}(\delta,t,s,-s) \\
        &= \frac{1}{2(2\pi)^{2s}}\bigg(\frac{\Gamma_\C(1/2-s+it)}{\Gamma_\C(1/2+s+it)}-\frac{\Gamma_\C(1/2-s-it)}{\Gamma_\C(1/2+s-it)}\bigg)\gamma_{\rat}(\delta,t,s,-s) \\
        &= \frac{1}{2(2\pi)^{2s}}\bigg(\frac{\Gamma_\R(3/2-\delta-s+it)\Gamma_\R(1/2+\delta+s-it)}{\Gamma_\R(3/2-\delta+s+it)\Gamma_\R(1/2+\delta-s-it)}-\frac{\Gamma_\R(3/2-\delta-s-it)\Gamma_\R(1/2+\delta+s+it)}{\Gamma_\R(3/2-\delta+s-it)\Gamma_\R(1/2+\delta-s+it)}\bigg) \\
        &= \frac{1}{2(2\pi)^{2s}}\bigg(\frac{\cos(\pi(1/2-\delta+s+it)/2)}{\cos(\pi(1/2-\delta-s+it)/2)}-\frac{\cos(\pi(1/2-\delta+s-it)/2)}{\cos(\pi(1/2-\delta-s-it)/2)}\bigg) \\
        &= \frac{1}{2(2\pi)^{2s}}\frac{\cos(\pi(s+it))-\cos(\pi(s-it))}{\cos(\pi(1/2-\delta-s))+\cos(\pi it)} \\
        &= \frac{1}{(2\pi)^{2s}}\frac{-i\sin(\pi s)\sinh(\pi t)}{(-1)^\delta\sin(\pi s)+\cosh(\pi t)},
    \end{align*}
    where in the second step we applied Legendre's duplication formula, and in the third step Euler's reflection formula. Shifting the vertical line $(1/4)$ in the inner integral in \eqref{eq:t_integral_to_study} to the left to $(-1/3)$, we pass simple poles at $s\in\{\sigma_1\alpha_1,\sigma_2\alpha_2,\sigma_3\alpha_3\}$ (the pole at $s=0$ cancels out with the zero of $\sin(\pi s)$ at the same point). So, regarding the $(+_\lambda)$-terms in \eqref{eq:LHS_lemma_all0}, we end up with 
    \begin{align*}
        &\sum_{\substack{d\geq1 \\ (d,p)=1}}\frac{1}{d}\sum_{\substack{N_1,N_2,N_3,C \\ \text{dyadic}}}\frac{1}{C\sqrt{N_1N_2N_3}}K_0^{b,+}(0,0,0,p^a,\cdot,\delta,\cdot) \\
        &= \frac{1}{4\pi(1-p^{-1})^2}\int_\R\tanh(\pi t)th_0(t) \\
        &\quad\quad\quad\quad\quad\,\,\, \cdot\bigg(\frac{\sin(\pi \sigma_1\alpha_1)}{(-1)^{\delta}\sin(\pi \sigma_1\alpha_1)+\cosh(\pi t)}\frac{L(\sigma_1\alpha_1,-\sigma_1\alpha_1,\sigma_1\alpha_1,\delta,t,(\alpha_2,\alpha_3,\alpha_1),(\sigma_2,-\sigma_3,\sigma_1))}{2\sigma_1\alpha_1(\sigma_1\alpha_1-\sigma_2\sigma_2)(\sigma_1\alpha_1-\sigma_3\alpha_3)} \\
        &\quad\quad\quad\quad\quad\quad\quad + \frac{\sin(\pi\sigma_2\alpha_2)}{(-1)^{\delta}\sin(\pi\sigma_2\alpha_2)+\cosh(\pi t)}\frac{L(\sigma_2\alpha_2,-\sigma_2\alpha_2,\sigma_2\alpha_2,\delta,t,(\alpha_1,\alpha_3,\alpha_2),(\sigma_1,-\sigma_3,\sigma_2))}{2\sigma_2\alpha_2(\sigma_2\alpha_2-\sigma_1\alpha_1)(\sigma_2\alpha_2-\sigma_3\alpha_3)} \\
        &\quad\quad\quad\quad\quad\quad\quad + \frac{\sin(\pi\sigma_3\alpha_3)}{(-1)^{\delta}\sin(\pi\sigma_3\alpha_3)+\cosh(\pi t)}\frac{L(\sigma_3\alpha_3,-\sigma_3\alpha_3,\sigma_3\alpha_3,\delta,t,\pmb{\alpha},(\sigma_1,-\sigma_2,\sigma_3))}{2\sigma_3\alpha_3(\sigma_3\alpha_3-\sigma_1\alpha_1)(\sigma_3\alpha_3-\sigma_2\alpha_2)}\bigg)\,dt \\
        &\quad\quad\quad\quad\quad\quad\quad\quad\quad\quad\quad\quad\quad\quad\quad\quad\quad\quad\quad\quad\quad\quad\quad\quad\quad\quad\quad\quad\quad\quad\quad\quad\quad\quad\quad\quad + O_\varepsilon(p^{a(-\frac{1}{3}+\varepsilon)}). 
    \end{align*}
    
    Regarding the $(-_\lambda)$-terms, we have by \eqref{eq:Kbm0_general} that
    \begin{align}
        &\sum_{\substack{d\geq1 \\ (d,p)=1}}\frac{1}{d}\sum_{\substack{N_1,N_2,N_3,C \\ \text{dyadic}}}\frac{1}{C\sqrt{N_1N_2N_3}}K^{b,-}_0(0,0,0,p^a,\cdot,\delta,\cdot) \\
        &= \frac{1}{4\pi^2(1-p^{-1})^2}\int_{\R}\sinh(\pi t)th_0(t)\bigg(\frac{1}{2\pi i}\int_{(\frac{1}{4})}\Gamma(1/2-s+it)\Gamma(1/2-s-it)(2\pi)^{2s} \\
        &\quad\quad\quad\quad\quad\quad\quad\quad\quad \cdot\frac{L(s,-s,s,\delta,t,\pmb{\alpha},(\sigma_1,-\sigma_2,\sigma_3))}{2s(s-\sigma_1\alpha_1)(s-\sigma_2\alpha_2)(s-\sigma_3\alpha_3)}\gamma_{\rat}(\delta,t,s,-s)\,ds\bigg)dt. \label{eq:t_integral_to_study_minus}
    \end{align}
    Now note that
    \begin{align*}
        &\Gamma(1/2-s+it)\Gamma(1/2-s-it)\gamma_{\rat}(\delta,t,s,-s) \\
        &= \frac{\Gamma_\C(1/2-s-it)\Gamma_\C(1/2-s+it)}{2^2(2\pi)^{-(1-2s)}}\gamma_{\rat}(\delta,t,s,-s) \\
        &= \frac{\Gamma_\R(3/2-\delta-s-it)\Gamma_\R(3/2-\delta-s+it)\Gamma_\R(1/2+\delta+s+it)\Gamma_\R(1/2+\delta+s-it)}{2^2(2\pi)^{-(1-2s)}} \\
        &= \frac{1}{2^2(2\pi)^{-(1-2s)}\cos(\pi(1/2-\delta-s-it)/2)\cos(\pi(1/2-\delta-s+it)/2)} \\
        &= \frac{1}{2(2\pi)^{-(1-2s)}(\cos(\pi(1/2-\delta-s))+\cos(\pi it))} \\
        &= \frac{\pi}{(2\pi)^{2s}((-1)^\delta\sin(\pi s)+\cosh(\pi t))},
    \end{align*}
    where again in the second step we applied Legendre's duplication formula, and in the third step Euler's reflection formula. Shifting the vertical line $(1/4)$ in the inner integral in \eqref{eq:t_integral_to_study_minus} to the left to $(-1/3)$, we pass simple poles at $s\in\{0,\sigma_1\alpha_1,\sigma_2\alpha_2,\sigma_3\alpha_3\}$. So, regarding the $(-_\lambda)$-terms in \eqref{eq:LHS_lemma_all0}, we end up with
    \begin{align*}
        &\sum_{\substack{d\geq1 \\ (d,p)=1}}\frac{1}{d}\sum_{\substack{N_1,N_2,N_3,C \\ \text{dyadic}}}\frac{1}{C\sqrt{N_1N_2N_3}}K^{b,-}_0(0,0,0,p^a,\cdot,\delta,\cdot) \\
        &= \frac{1}{4\pi(1-p^{-1})^2}\int_{\R}\tanh(\pi t)th_0(t) \\
        &\quad\quad\quad\quad\quad\,\,\, \cdot\bigg(\frac{\cosh(\pi t)}{(-1)^{\delta}\sin(\pi\sigma_1\alpha_1)+\cosh(\pi t)}\frac{L(\sigma_1\alpha_1,-\sigma_1\alpha_1,\sigma_1\alpha_1,\delta,t,(\alpha_2,\alpha_3,\alpha_1),(\sigma_2,-\sigma_3,\sigma_1))}{2\sigma_1\alpha_1(\sigma_1\alpha_1-\sigma_2\alpha_2)(\sigma_1\alpha_1-\sigma_3\alpha_3)} \\
        &\quad\quad\quad\quad\quad\quad\quad + \frac{\cosh(\pi t)}{(-1)^{\delta}\sin(\pi\sigma_2\alpha_2)+\cosh(\pi t)}\frac{L(\sigma_2\alpha_2,-\sigma_2\alpha_2,\sigma_2\alpha_2,\delta,t,(\alpha_1,\alpha_3,\alpha_2),(\sigma_1,-\sigma_3,\sigma_2))}{2\sigma_2\alpha_2(\sigma_2\alpha_2-\sigma_1\alpha_1)(\sigma_2\alpha_2-\sigma_3\alpha_3)} \\
        &\quad\quad\quad\quad\quad\quad\quad + \frac{\cosh(\pi t)}{(-1)^{\delta}\sin(\pi\sigma_3\alpha_3)+\cosh(\pi t)}\frac{L(\sigma_3\alpha_3,-\sigma_3\alpha_3,\sigma_3\alpha_3,\delta,t,\pmb{\alpha},(\sigma_1,-\sigma_2,\sigma_3))}{2\sigma_3\alpha_3(\sigma_3\alpha_3-\sigma_1\alpha_1)(\sigma_3\alpha_3-\sigma_2\alpha_2)} \\
        &\quad\quad\quad\quad\quad\quad\quad + \frac{L(0,0,0,\delta,t,\pmb{\alpha},(\sigma_1,-\sigma_2,\sigma_3))}{2(-\sigma_1\alpha_1)(-\sigma_2\alpha_2)(-\sigma_3\alpha_3)}\bigg)\,dt + O_\varepsilon(p^{a(-\frac{1}{3}+\varepsilon)}).
    \end{align*} 

    Combining everything, one obtains after careful matching\footnote{using identities such as $\pm\chi(-1)=(-1)^{\frac{1\mp\chi(-1)}{2}}$ and $-(-1)^{\frac{1-\sigma}{2}}=(-1)^{\frac{1+\sigma}{2}}$} that \eqref{eq:LHS_lemma_all0} equals
    \begin{align*}
        &\sum_{\pmb{\sigma}\in\{\pm1\}^3}(\pm1)^{\frac{1-(-\sigma)}{2}}\frac{1}{4\pi}\int_\R\tanh(\pi t)th_0(t) \\
        &\quad\quad\quad\quad\quad\quad\quad\quad\quad\quad\,\,\,\, \cdot \sum_{\,\,\,\pm_b}\bigg(\frac{L(\sigma_1\alpha_1,-\sigma_1\alpha_1,\sigma_1\alpha_1,\tfrac{1\mp\chi(-1)(\pm_b1)}{2},t,(\alpha_2,\alpha_3,\alpha_1),(\sigma_2,-\sigma_3,\sigma_1))}{2\sigma_1\alpha_1(\sigma_1\alpha_1-\sigma_2\sigma_2)(\sigma_1\alpha_1-\sigma_3\alpha_3)} \\
        &\quad\quad\quad\quad\quad\quad\quad\quad\quad\quad\quad\quad\quad\,\,\,\, + \frac{L(\sigma_2\alpha_2,-\sigma_2\alpha_2,\sigma_2\alpha_2,\tfrac{1\mp\chi(-1)(\pm_b1)}{2},t,(\alpha_1,\alpha_3,\alpha_2),(\sigma_1,-\sigma_3,\sigma_2))}{2\sigma_2\alpha_2(\sigma_2\alpha_2-\sigma_1\alpha_1)(\sigma_2\alpha_2-\sigma_3\alpha_3)} \\
        &\quad\quad\quad\quad\quad\quad\quad\quad\quad\quad\quad\quad\quad\,\,\,\, + \frac{L(\sigma_3\alpha_3,-\sigma_3\alpha_3,\sigma_3\alpha_3,\tfrac{1\mp\chi(-1)(\pm_b1)}{2},t,\pmb{\alpha},(\sigma_1,-\sigma_2,\sigma_3))}{2\sigma_3\alpha_3(\sigma_3\alpha_3-\sigma_1\alpha_1)(\sigma_3\alpha_3-\sigma_2\alpha_2)} \\
        &\quad\quad\quad\quad\quad\quad\quad\quad\quad\quad\quad\quad\quad\,\,\,\, + \frac{L(0,0,0,\tfrac{1\mp\chi(-1)(\pm_b1)}{2},t,\pmb{\alpha},\pmb{\sigma})}{2(-\sigma_1\alpha_1)(-\sigma_2\alpha_2)(-\sigma_3\alpha_3)}\bigg)\,dt + O_\varepsilon(p^{a(-\frac{1}{3}+\varepsilon)}).
    \end{align*}
    Rearranging the permutations $\pmb{\sigma}$ in the latter expression, one obtains the right-hand side in the Lemma statement.
\end{proof}

\begin{lemma}[Matching the $A_1$-terms]\label{lemma:m1nonzero}
If $T=(p^a)^{o(1)}$, and $\re\alpha_j\ll \frac{1}{\log p^a}$ and $\im\alpha_j=O(1)$ for all $1\leq j\leq 3$, we have
\begin{align*}
    &\sum_{\pmb{\sigma}\in\{\pm1\}^3}(\pm1)^{\frac{1-\sigma}{2}}\sum_{\,\,\,\pm_\lambda}(\pm_\lambda1)^{\frac{1\mp1}{2}}\sum_{\substack{d\geq1 \\ (d,p)=1}}\frac{1}{d}\sum_{\substack{N_1,N_2,N_3,C \\ \text{dyadic}}}\frac{1}{C\sqrt{N_1N_2N_3}}\sum_{\substack{c'\geq1 \\ (c',p)=1}}\sum_{\substack{m_1\in\Z_{\neq0}}}G_\chi^{b,\pm_\lambda}(m_1,0,0;p^{a-b}c') \\
    &\quad\quad\quad\quad\quad\quad\quad\quad\quad\quad\quad\quad\quad\quad\quad\quad\quad\quad\quad\quad\,\, \cdot\sum_{\,\,\,\pm_b}(\mp_\lambda1)^{\frac{1\mp_b1}{2}}K_0^{b,\pm_\lambda}(m_1,0,0,\cdot,\tfrac{1\mp\chi(-1)(\pm_b1)}{2},\cdot) \\
    &= -\sum_{\pmb{\sigma}\in\{\pm1\}^3}(\pm1)^{\frac{1-\sigma}{2}}\frac{1}{4\pi}\int_{\R}\tanh(\pi t)th_0(t)\sum_{\delta\in\{0,1\}}M(\sigma_2\alpha_2,\sigma_3\alpha_3,\sigma_1\alpha_1,\delta,t,(\alpha_2,\alpha_3,\alpha_1),(\sigma_2,\sigma_3,\sigma_1))\,dt \\
    &\quad\quad\quad\quad\quad\quad\quad\quad\quad\quad\quad\quad\quad\quad\quad\quad\quad\quad\quad\quad\quad\quad\quad\quad\quad\quad\quad\quad\quad\quad\quad\quad\quad\quad\quad\,\,\, + O_\varepsilon(p^{a(-\frac{1}{3}+\varepsilon)}).
\end{align*}
\end{lemma}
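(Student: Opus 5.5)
We will follow the template of the proof of Lemma \ref{lemma:all0}, now with the $A_1$-data. First insert \eqref{eq:gauss_sum_m1nonzero}: the Gauss sum is $\mathbbm{1}_{\{(m_1,c')=1\}}\chi(\mp_\lambda1)\frac{1-p^{-1}}{p^{a-b}c'}R_{p^{a-b}}(m_1)$. Unlike the $(0,0,0)$-case, $c'$ is not forced to equal $1$, so we must sum over both $c'$ and $m_1$. We insert the Mellin representations \eqref{eq:Kbp0_general} and \eqref{eq:Kbm0_general} with the $A_1$-version of $Q(\cdot)$. In the $m_1$-dependent factor $(2\pi i m_1/(p^{a-b}c'))^{-(s-u)}$ we split $m_1$ by sign, which produces $e^{\mp\pi i(s-u)/2}$, and collect the arithmetic sum
\begin{align*}
\sum_{\substack{c'\geq1\\(c',p)=1}}\sum_{\substack{m_1\geq1\\(m_1,c')=1}}\frac{R_{p^{a-b}}(m_1)}{c'^{1-2s+(s-u)}\,c'^{\,\cdot}\,m_1^{s-u}}.
\end{align*}
This factors into a ratio of zeta functions times a local factor at $p$. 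After summing the Ramanujan sums, the local factor is $\varphi(p^{a-b})p^{-(a-b)(s-u)}(1-p^{s-u-1})$, up to normalisation. The $c'$-sum combined with the coprimality condition gives $\zeta^{(p)}(\cdot)/\zeta^{(p)}(\cdot)$ in the variables $2s-u$ and $s-u$. The powers of $p$ and the factor $p^{au}$ in $Q$ should then assemble into $p^{a(s_1+s_2+s_3)}$ with $(s_1,s_2,s_3)=(u,s,\cdot)$-type arguments. The aim is that the double integral becomes, after renaming variables, exactly the integrand of $M(\sigma_2\alpha_2,\sigma_3\alpha_3,\sigma_1\alpha_1,\ldots)$: a single integral of $L(s,-s,\mu,\cdot)$ with $\mu=\sigma_1\alpha_1$.

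Next we evaluate one of the two integrals by residues, so as to reduce to a single contour integral. We shift the $s$-integral (or the $u$-integral) to the left, past the pole coming from $\zeta^{(p)}$ at $s-u=1$ or at $u$-related points. On the remaining line we bound using $p^{a\cdot\re(\cdot)}$ with real part $-1/3$, which gives the error $O_\varepsilon(p^{a(-\frac13+\varepsilon)})$ as in Remark \ref{rmk:remark_intro}(iii). The surviving residue should identify the two Mellin variables, i.e. set the argument to $L(s,-s,\sigma_1\alpha_1,\ldots)$. The denominators $(s-\kappa)(-s-\lambda)$ come from the two $V$-factors with $m_2=m_3=0$, and $(\mu-s)(\mu+s)$ comes from the $u$-pole structure.

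The Bessel-kernel step mirrors Lemma \ref{lemma:all0}. We add the $(\pm_b)$ and $(\pm_\lambda)$ contributions, using the sign identities from the footnote there. The Gamma ratio $\Gamma(1/2-s+it)/\Gamma(1/2+s+it)$ from $J$ is replaced by its odd part in $t$. The factor $e^{\mp\pi i(s-u)/2}$ and the $\Gamma(s-u)$ factor must then be combined with it via the duplication and reflection formulas. The target is that the $+_\lambda$ and $-_\lambda$ kernels sum, as in Lemma \ref{lemma:all0}, to $\tanh(\pi t)$ times the ratio $\gamma_{\rat}$ appearing in $L$; the $\sin(\pi s)$ and $\cosh(\pi t)$ denominators should cancel between the two signs. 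Finally we relabel $\pmb{\sigma}$, flipping one $\sigma_j$ as in the last step of Lemma \ref{lemma:all0}, to obtain the cyclic ordering $(\alpha_2,\alpha_3,\alpha_1)$ and the overall minus sign.

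The main obstacle is this trigonometric identity. With the extra $m_1$-sum, the Mellin factor $\Gamma(s-u)e^{\mp\pi i(s-u)/2}$ interacts with the Bessel kernel, and it is not obvious that the $\pm$-combination collapses to $\tanh(\pi t)$ exactly; leftover terms would have to be shown to be odd in $t$ or to be shifted away. We will first try to write everything via $\Gamma_\R$ and reduce to cosines, as in Lemma \ref{lemma:all0}. If that fails, we will try evaluating the $s$-integral first and then recognise a Barnes-type lemma in $u$.
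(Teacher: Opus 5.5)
Your skeleton is the right one: insert \eqref{eq:gauss_sum_m1nonzero}, Mellin-invert, sum the arithmetic variables, take one residue, reuse the $t$-kernel identity of Lemma \ref{lemma:all0}, and relabel $\pmb{\sigma}$. But the step you flag as the main obstacle is the symptom of a missing idea, and the residue analysis is misplaced, so the argument does not close as written.

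\textbf{The missing idea is the functional equation of $\zeta$.} The factor $\Gamma(s-u)\,(2\pi i m_1/(p^{a-b}c'))^{-(s-u)}$ is never combined with the Bessel kernel. For fixed $c'$, put $w=s-u$, move $c'^{\,w}$ into the $c'$-sum, and sum $m_1$ over both signs. We have $R_{p^{a-b}}(-m)=R_{p^{a-b}}(m)$, $(\pm i)^{-w}=e^{\mp i\pi w/2}$, and $\sum_{m\geq1,(m,c')=1}R_{p^{a-b}}(m)m^{-w}=p^{(a-b)(1-w)}(1-p^{-(1-w)})\zeta^{(c')}(w)$. Hence the $m_1$-sum is
\[
p^{a-b}(1-p^{-(1-w)})\prod_{\widetilde{p}\mid c'}(1-\widetilde{p}^{\,-w})\cdot 2(2\pi)^{-w}\cos(\tfrac{\pi w}{2})\Gamma(w)\zeta(w)=p^{a-b}\zeta^{(p)}(1-s+u)\prod_{\widetilde{p}\mid c'}(1-\widetilde{p}^{\,-(s-u)}),
\]
using $\zeta(1-w)=2(2\pi)^{-w}\cos(\tfrac{\pi w}{2})\Gamma(w)\zeta(w)$. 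The $c'$-sum then gives $\zeta^{(p)}(1+s+u)/\zeta^{(p)}(1+2s)$, not a ratio in $2s-u$ and $s-u$. The $d$-sum gives $\zeta^{(p)}(1+2s)$. Using $\gamma_{\rat}(\delta,t,s,\alpha_3)=\gamma_{\rat}(\delta,t,-s,\alpha_3)\gamma_{\rat}(\delta,t,s,-s)$ and evenness of $G$, you land on $L(u,s,-s,\delta,t,\pmb{\alpha},(\sigma_1,\sigma_2,-\sigma_3))/((u-\sigma_1\alpha_1)(u+s)(u-s))$, multiplied by exactly the $s$-kernel of Lemma \ref{lemma:all0}.

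Without this step you are stuck with $\zeta(s-u)$ instead of the factor $\zeta^{(p)}(1+u-s)$ that $L(u,s,-s,\ldots)$ contains. No duplication or reflection manipulation in $t$ can produce it. You have also conflated two different signs: the $\mp$ in $e^{\mp\pi i(s-u)/2}$ is the sign of $m_1$, not $\pm_\lambda$. After the functional equation, nothing depending on $u$ or on the sign of $m_1$ remains in the $t$-integrand. The two kernels $\tanh(\pi t)\sin(\pi s)/((-1)^\delta\sin(\pi s)+\cosh(\pi t))$ and $\tanh(\pi t)\cosh(\pi t)/((-1)^\delta\sin(\pi s)+\cosh(\pi t))$ are weighted by $\chi(-1)(\pm_b1)=(\pm1)(-1)^\delta$ and $\pm1$ respectively. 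They add up to $\pm\tanh(\pi t)$ exactly as in Lemma \ref{lemma:all0}, with no leftover to show odd.

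\textbf{The residue step.} The residue is not one that ``identifies two Mellin variables'', and you should not shift $s$. After the $m_1$-sum the only power of $p$ is $p^{a-b}p^{au}$, so moving the $s$-line gains nothing; this is precisely why $M$ is a false main term that must be cancelled rather than bounded. The $(s,-s)$ pairing is present from the start, because $m_2=m_3=0$ puts the $t_2$- and $t_3$-transforms at the same $s$.

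Keep $s$ on $(5/12)$ and move $u$ from $(1/8)$ to $(-1/3)$. The poles of $\zeta^{(p)}(1+u\pm s)$ at $u=\mp s$ lie outside this strip, which is why $\re s=5/12$ is chosen. So the only residue is at $u=\sigma_1\alpha_1$. The remaining integral is $O_\varepsilon(p^{a-b}p^{a(-\frac13+\varepsilon)})$, and the factor $p^{-(a-b)}$ from the Gauss sum brings this down to $O_\varepsilon(p^{a(-\frac13+\varepsilon)})$.

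The surviving $s$-integral is never evaluated. After flipping $\sigma_3$ and using the permutation symmetry of $L$, it is the integral defining $M(\sigma_2\alpha_2,\sigma_3\alpha_3,\sigma_1\alpha_1,\delta,t,(\alpha_2,\alpha_3,\alpha_1),(\sigma_2,\sigma_3,\sigma_1))$. The overall minus sign comes from $s+\sigma_3\alpha_3=-(-s-\sigma_3\alpha_3)$.
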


\begin{proof}
    By \eqref{eq:gauss_sum_m1nonzero}, the left-hand side in the Lemma statement becomes
    \begin{align} 
        &\frac{1-p^{-1}}{p^{a-b}}\sum_{\pmb{\sigma}\in\{\pm1\}^3}(\pm1)^{\frac{1-\sigma}{2}}\sum_{\,\,\,\pm_\lambda}(\pm_\lambda1)^{\frac{1\mp1}{2}}\chi(\mp_\lambda1)\sum_{\substack{d\geq1 \\ (d,p)=1}}\frac{1}{d}\sum_{\substack{N_1,N_2,N_3,C \\ \text{dyadic}}}\frac{1}{C\sqrt{N_1N_2N_3}}\sum_{\substack{c'\geq1 \\ (c',p)=1}}\frac{1}{c'}  \\
        &\quad\quad\quad\quad\quad\quad\, \cdot\sum_{\substack{m_1\in\Z_{\neq0} \\ (m_1,c')=1}}R_{p^{a-b}}(m_1)\sum_{\,\,\,\pm_b}(\mp_\lambda1)^{\frac{1\mp_b1}{2}}K^{b,\pm_\lambda}_0(m_1,0,0,p^ac',\cdot,\tfrac{1\mp\chi(-1)(\pm_b1)}{2},\cdot). \label{eq:LHS_lemma_m1nonzero}
    \end{align}
    Due to the expressions \eqref{eq:Kbp0_general} and \eqref{eq:Kbm0_general} for $K_0^{b,\pm}(\cdot)$, we are led to consider the Dirichlet series (recalling that $(c',p)=1$)
    \begin{align*}
        \sum_{\substack{m_1\geq1 \\ (m_1,c')=1}}\frac{R_{p^{a-b}}(m_1)}{m_1^s} &= \sum_{\substack{m_1\geq1 \\ (m_1,c')=1}}\frac{\varphi(p^{a-b})}{(p^{a-b}m_1)^s}-\sum_{\substack{m_1\geq1 \\ (m_1,pc')=1}}\frac{p^{a-b-1}}{(p^{a-b-1}m_1)^s} \\
        &= p^{(a-b)(1-s)}(1-p^{-(1-s)})\zeta^{(c')}(s),
    \end{align*}
    which has abscissa of convergence 0 (as can be verified by summation by parts). In particular, our sums under consideration are of the form
    \begin{align*}
        &\frac{\Gamma(s-u)}{(2\pi i p^{-a+b})^{s-u}}\sum_{\substack{m_1\geq1 \\ (m_1,c')=1}}\frac{R_{p^{a-b}}(m_1)}{m_1^{s-u}} + \frac{\Gamma(s-u)}{(-2\pi ip^{-a+b})^{s-u}}\sum_{\substack{m_1\leq-1 \\ (m_1,c')=1}}\frac{R_{p^{a-b}}(m_1)}{\abs{m_1}^{s-u}} \\
        &= p^{a-b}\zeta^{(p)}(1-s+u)\prod_{\widetilde{p}\,\mid c'}(1-\widetilde{p}^{\,-(s-u)})
    \end{align*}
    for $\re s>\re u$. Here we used the asymmetric functional equation $\zeta(s)=2^s\pi^{s-1}\sin(\pi s/2)\Gamma(1-s)\zeta(1-s)$ and the fact that $\frac{1}{(\pm i)^{s-u}} = e^{\mp i(s-u)\frac{\pi}{2}}$. As for the summation over $c'$, we have
    \begin{align*}
        \sum_{\substack{c'\geq1 \\ (c',p)=1}}\frac{1}{c'^{1+s+u}}\prod_{\widetilde{p}\,\mid c'}(1-\widetilde{p}^{\,-(s-u)}) = \sum_{\substack{c'\geq1 \\ (c',p)=1}}\frac{(\mu*(\cdot)^{s-u})(c')}{c'^{1+2s}} = \frac{\zeta^{(p)}(1+s+u)}{\zeta^{(p)}(1+2s)}.
    \end{align*}

    For the $(+_\lambda)$-terms in \eqref{eq:LHS_lemma_m1nonzero}, we now obtain by \eqref{eq:Kbp0_general} that
    \begin{align*}
        &\sum_{\substack{d\geq1 \\ (d,p)=1}}\frac{1}{d}\sum_{\substack{N_1,N_2,N_3,C \\ \text{dyadic}}}\frac{1}{C\sqrt{N_1N_2N_3}}\sum_{\substack{c'\geq1 \\ (c',p)=1}}\frac{1}{c'}\sum_{\substack{m_1\in\Z_{\neq0} \\ (m_1,c')=1}}R_{p^{a-b}}(m_1)K_0^{b,+}(m_1,0,0,p^ac',\cdot,\delta,\cdot) \\
        &= \frac{ip^{a-b}}{4\pi(1-p^{-1})}\int_\R\frac{th_0(t)}{\cosh(\pi t)}\bigg(\frac{1}{2\pi i}\int_{(\frac{5}{12})}\frac{\Gamma(1/2-s+it)}{\Gamma(1/2+s+it)}\frac{(2\pi)^{2s}}{(s-\sigma_2\alpha_2)(s-\sigma_3\alpha_3)}\gamma_{\rat}(\delta,t,s,-s) \\
        &\quad\quad\quad\quad\quad\quad\quad\quad\quad\quad\quad\quad\quad\quad\quad\quad\quad \cdot\bigg(\frac{1}{2\pi i}\int_{(\frac{1}{8})}\frac{L(u,s,-s,\delta,t,\pmb{\alpha},(\sigma_1,\sigma_2,-\sigma_3))}{(u-\sigma_1\alpha_1)(u+s)(u-s)}\,du\bigg)\,ds\bigg)\,dt.
    \end{align*}
    Shifting the vertical line $(1/8)$ in the latter inner-most integral to the left to $(-1/3)$, we pass a simple pole at $u=\sigma_1\alpha_1$. So, similar to the proof of Lemma \ref{lemma:all0}, we end up with
    \begin{align*}
        &\frac{p^{a-b}}{4\pi(1-p^{-1})}\int_\R\tanh(\pi t)th_0(t) \\
        &\quad\quad\quad\quad\quad\,\, \cdot \bigg(\frac{1}{2\pi i}\int_{(\frac{5}{12})}\frac{\sin(\pi s)}{(-1)^{\delta}\sin(\pi s)+\cosh(\pi t)}\frac{L(\sigma_1\alpha_1,s,-s,\delta,t,\pmb{\alpha},(\sigma_1,\sigma_2,-\sigma_3))}{(\sigma_1\alpha_1+s)(\sigma_1\alpha_1-s)(s-\sigma_2\alpha_2)(s-\sigma_3\alpha_3)}\,ds\bigg)\,dt \\
        &\quad\quad\quad\quad\quad\quad\quad\quad\quad\quad\quad\quad\quad\quad\quad\quad\quad\quad\quad\quad\quad\quad\quad\quad\quad\quad\quad\quad\quad\quad\quad\quad\quad\quad+ O_\varepsilon(p^{a-b+a(-\frac{1}{3}+\varepsilon)}).
    \end{align*}

    Regarding the $(-_\lambda)$-terms in \eqref{eq:LHS_lemma_m1nonzero}, we have by \eqref{eq:Kbm0_general} that 
    \begin{align*}
        &\sum_{\substack{d\geq1 \\ (d,p)=1}}\frac{1}{d}\sum_{\substack{N_1,N_2,N_3,C \\ \text{dyadic}}}\frac{1}{C\sqrt{N_1N_2N_3}}\sum_{\substack{c'\geq1 \\ (c',p)=1}}\frac{1}{c'}\sum_{\substack{m_1\in\Z_{\neq0} \\ (m_1,c')=1}}R_{p^{a-b}}(m_1)K_0^{b,-}(m_1,0,0,p^ac',\cdot,\delta,\cdot) \\
        &= \frac{p^{a-b}}{4\pi^2(1-p^{-1})}\int_{\R}\sinh(\pi t)th_0(t)\bigg(\frac{1}{2\pi i}\int_{(\frac{5}{12})}\Gamma(1/2-s+it)\Gamma(1/2-s-it)\frac{(2\pi)^{2s}}{(s-\sigma_2\alpha_2)(s-\sigma_3\alpha_3)} \\
        &\quad\quad\quad\quad\quad\quad\quad\quad\quad\quad\quad\quad\,\, \cdot \gamma_{\rat}(\delta,t,s,-s)\bigg(\frac{1}{2\pi i}\int_{(\frac{1}{8})}\frac{L(u,s,-s,\delta,t,\pmb{\alpha},(\sigma_1,\sigma_2,-\sigma_3))}{(u-\sigma_1\alpha_1)(u+s)(u-s)}\,du\bigg)\,ds\bigg)\,dt.
    \end{align*}
    Shifting the vertical line $(1/8)$ in the latter inner-most integral to the left to $(-1/3)$, we pass a simple pole at $u=\sigma_1\alpha_1$, so we end up with 
    \begin{align*}
        &\frac{p^{a-b}}{4\pi(1-p^{-1})}\int_\R\tanh(\pi t)th_0(t) \\
        &\quad\quad\quad\quad\,\,\,\, \cdot\bigg(\frac{1}{2\pi i}\int_{(\frac{5}{12})}\frac{\cosh(\pi t)}{(-1)^{\delta}\sin(\pi s)+\cosh(\pi t)}\frac{L(\sigma_1\alpha_1,s,-s,\delta,t,\pmb{\alpha},(\sigma_1,\sigma_2,-\sigma_3))}{(\sigma_1\alpha_1+s)(\sigma_1\alpha_1-s)(s-\sigma_2\alpha_2)(s-\sigma_3\alpha_3)}\,ds\bigg)\,dt \\
        &\quad\quad\quad\quad\quad\quad\quad\quad\quad\quad\quad\quad\quad\quad\quad\quad\quad\quad\quad\quad\quad\quad\quad\quad\quad\quad\quad\quad\quad\quad\quad\quad\quad\,\,\, + O_\varepsilon(p^{a-b+a(-\frac{1}{3}+\varepsilon)}).
    \end{align*}

    Combining everything, one obtains after careful matching that \eqref{eq:LHS_lemma_m1nonzero} equals
    \begin{align*}
        &\sum_{\pmb{\sigma}\in\{\pm1\}^3}(\pm1)^{\frac{1-(-\sigma)}{2}}\frac{1}{4\pi}\int_\R\tanh(\pi t)th_0(t) \\
        &\quad\quad\quad\quad\quad\quad\quad\quad\quad\quad\,\,\,\,\cdot \bigg(\frac{1}{2\pi i}\int_{(\frac{5}{12})}\sum_{\pm_b}\frac{L(\sigma_1\alpha_1,s,-s,\tfrac{1\mp\chi(-1)(\pm_b1)}{2},t,\pmb{\alpha},(\sigma_1,\sigma_2,-\sigma_3))}{(\sigma_1\alpha_1+s)(\sigma_1\alpha_1-s)(s-\sigma_2\alpha_2)(s-\sigma_3\alpha_3)}\,ds\bigg)\,dt \\
        &\quad\quad\quad\quad\quad\quad\quad\quad\quad\quad\quad\quad\quad\quad\quad\quad\quad\quad\quad\quad\quad\quad\quad\quad\quad\quad\quad\quad\quad\quad\quad\quad\quad\quad+ O_\varepsilon(p^{a(-\frac{1}{3}+\varepsilon)}).
    \end{align*}
    Rearranging the permutations $\pmb{\sigma}$ in the latter expression, one obtains the right-hand side in the Lemma statement.
\end{proof}

\begin{lemma}[Matching the $A_2$-terms]\label{lemma:m2nonzero} 
If $T=(p^a)^{o(1)}$, and $\re\alpha_j\ll \frac{1}{\log p^a}$ and $\im\alpha_j=O(1)$ for all $1\leq j\leq 3$, we have
\begin{align}
    &\sum_{\pmb{\sigma}\in\{\pm1\}^3}(\pm1)^{\frac{1-\sigma}{2}}\sum_{\,\,\,\pm_\lambda}(\pm_\lambda1)^{\frac{1\mp1}{2}}\sum_{\substack{d\geq1 \\ (d,p)=1}}\frac{1}{d}\sum_{\substack{N_1,N_2,N_3,C \\ \text{dyadic}}}\frac{1}{C\sqrt{N_1N_2N_3}}\sum_{\substack{c'\geq1 \\ (c',p)=1}}\sum_{\substack{m_2\in\Z_{\neq0}}}G_\chi^{b,\pm_\lambda}(0,m_2,0;p^{a-b}c') \label{eq:m2nonzero} \\
    &\quad\quad\quad\quad\quad\quad\quad\quad\quad\quad\quad\quad\quad\quad\quad\quad\quad\quad\quad\quad\,\, \cdot \sum_{\,\,\,\pm_b}(\mp_\lambda1)^{\frac{1\mp_b1}{2}}K_0^{b,\pm_\lambda}(0,m_2,0,\cdot,\tfrac{1\mp\chi(-1)(\pm_b1)}{2},\cdot) \\
    &= -\sum_{\pmb{\sigma}\in\{\pm1\}^3}(\pm1)^{\frac{1-\sigma}{2}}\frac{1}{4\pi}\int_{\R}\tanh(\pi t)th_0(t)\sum_{\delta\in\{0,1\}}M(\sigma_1\alpha_1,\sigma_3\alpha_3,\sigma_2\alpha_2,\delta,t,(\alpha_1,\alpha_3,\alpha_2),(\sigma_1,\sigma_3,\sigma_2))\,dt \\
    &\quad\quad\quad\quad\quad\quad\quad\quad\quad\quad\quad\quad\quad\quad\quad\quad\quad\quad\quad\quad\quad\quad\quad\quad\quad\quad\quad\quad\quad\quad\quad\quad\quad\quad\quad + O_\varepsilon(p^{a(-\frac{1}{3}+\varepsilon)}).
\end{align}
\end{lemma}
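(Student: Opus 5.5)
The plan is to mirror the proof of Lemma \ref{lemma:m1nonzero}, with the roles of the indices $1$ and $2$ interchanged. Two structural differences have to be tracked. First, by \eqref{eq:gauss_sum_m2nonzero} the Gauss sum $G^{b,\pm}_\chi(0,m_2,0;p^{a-b}c')$ forces $c'=1$, so there is no sum over $c'$ and no factor $\zeta^{(p)}(1+s+u)/\zeta^{(p)}(1+2s)$. Second, the $Q(\cdot)$ for $A_2$ carries $d^{-s}$ together with $d^{-u}$ rather than $d^{-2s}$.

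Substituting \eqref{eq:gauss_sum_m2nonzero}, the left-hand side of \eqref{eq:m2nonzero} becomes
\[
\frac{1-p^{-1}}{p^{a-b}}\sum_{\pmb{\sigma}}(\pm1)^{\frac{1-\sigma}{2}}\sum_{\pm_\lambda}(\pm_\lambda1)^{\frac{1\mp1}{2}}\chi(\mp_\lambda1)\sum_{d}\frac1d\sum_{m_2\neq0}R_{p^{a-b}}(m_2)\sum_{\pm_b}(\dotsc)K_0^{b,\pm_\lambda}(0,m_2,0,p^a,\dotsc).
\]
I then insert \eqref{eq:Kbp0_general}, respectively \eqref{eq:Kbm0_general}, with the $A_2$ form of $Q(\cdot)$. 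I perform the $m_2$-sum (over both signs) inside the $u$-integral exactly as in Lemma \ref{lemma:m1nonzero}:
\[
\frac{\Gamma(s-u)}{(2\pi i p^{-a+b})^{s-u}}\sum_{m_2\geq1}\frac{R_{p^{a-b}}(m_2)}{m_2^{s-u}}+(\text{negative }m_2)=p^{a-b}\zeta^{(p)}(1-s+u),
\]
using the asymmetric functional equation of $\zeta$. This cancels the prefactor $p^{-(a-b)}$.

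The $d$-sum $\sum_{(d,p)=1}d^{-1-s-u}=\zeta^{(p)}(1+s+u)$ supplies the remaining zeta factor. Collecting the factors $p^{a(\cdot)}$, $\zeta^{(p)}$ and $\gamma_{\rat}$, the integrand should assemble into $L(s,u,-s,\delta,t,\dots)$ after a suitable permutation of the $(\alpha_j,\sigma_j)$, namely putting index $2$ in the "$u$" slot and indices $1,3$ in the $\pm s$ slots, together with one sign flip of some $\sigma_j$ coming from $\gamma_{\rat}(\delta,t,s,-s)$. The result is a double integral over $s$ on $(5/12)$ and $u$ on $(1/8)$, with denominator $(u-\sigma_2\alpha_2)(u+s)(u-s)(s-\sigma_1\alpha_1)(s-\sigma_3\alpha_3)$.

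Next I shift $u$ to $(-1/3)$, picking up only the pole at $u=\sigma_2\alpha_2$. The remaining integral is $O_\varepsilon(p^{a-b}p^{a(-1/3+\varepsilon)})$, which is fine after the prefactor $p^{-(a-b)}$. The pole at $u=-s$ lies to the left of $-1/3$ only when $\re s>1/3$, which is why $s$ sits at $(5/12)$. I then apply the Gamma-function identities already derived in Lemma \ref{lemma:all0}: the odd-part trick converts the $(+_\lambda)$ case to $\frac{\sin(\pi s)}{(-1)^\delta\sin(\pi s)+\cosh(\pi t)}$, and the $(-_\lambda)$ case gives $\frac{\cosh(\pi t)}{(-1)^\delta\sin(\pi s)+\cosh(\pi t)}$, each times $\tanh(\pi t)$. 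Combining $\pm_\lambda$ with the sign weights $\chi(\mp_\lambda1)(\mp_\lambda1)^{(1\mp_b1)/2}$ and the parity identities from the footnote in Lemma \ref{lemma:all0}, the two pieces should add to make the numerator equal to the denominator, for each $\delta=\frac{1\mp\chi(-1)(\pm_b1)}{2}$. This should leave
\[
-\frac{1}{4\pi}\int\tanh(\pi t)th_0(t)\sum_\delta\frac{1}{2\pi i}\int_{(5/12)}\frac{L(\sigma_2\alpha_2,s,-s,\dots)}{(\sigma_2\alpha_2+s)(\sigma_2\alpha_2-s)(s-\sigma_1\alpha_1)(s-\sigma_3\alpha_3)}ds\,dt.
\]

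Finally, I relabel $s\mapsto -s$ or $\pmb{\sigma}$, using the symmetry of $L$ under simultaneous permutations, to identify this with $M(\sigma_1\alpha_1,\sigma_3\alpha_3,\sigma_2\alpha_2,\dots)$, with its contour $(1/6)$ versus $(5/12)$. Moving between these contours crosses no poles, since the only poles are near $0$ and at $\pm\sigma_2\alpha_2$.

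The main obstacle is this bookkeeping. Because $c'=1$ and the $d$-exponents differ from the $A_1$ case, the variable attached to $\pm s$ is not the one attached to $d^{2}$. One must check carefully that the assembled $p^{a(\cdot)}$ and zeta product really are $L(\cdot)$ evaluated at a permuted triple, and that the sign factor $(\pm1)^{(1-\sigma)/2}$ flips correctly under the reindexing of $\pmb\sigma$. I would verify this first by tracking the exponents of $p^a$ and $d$ termwise before doing any contour shifts.
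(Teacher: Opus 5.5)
Your plan matches the paper's proof, which simply refers back to Lemma \ref{lemma:m1nonzero}. It substitutes \eqref{eq:gauss_sum_m2nonzero} (forcing $c'=1$), evaluates the $m_2$-sum by the functional equation as $p^{a-b}\zeta^{(p)}(1-s+u)$, and lets the $d$-sum supply $\zeta^{(p)}(1+s+u)$, so that the integrand becomes $L(s,u,-s,\delta,t,\pmb{\alpha},(\sigma_1,\sigma_2,-\sigma_3))\gamma_{\rat}(\delta,t,s,-s)$ over $(u-\sigma_2\alpha_2)(u+s)(u-s)(s-\sigma_1\alpha_1)(s-\sigma_3\alpha_3)$. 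It then shifts $u$ past $\sigma_2\alpha_2$ only and combines the $\pm_\lambda$ pieces via the $\sin(\pi s)$ and $\cosh(\pi t)$ identities.

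One bookkeeping fix in your final step. Since $\chi(-1)(\pm_b1)=(\pm1)(-1)^\delta$, the combined $\pm_\lambda$ weight is $(\pm1)$, not $-1$, and the right relabelling is $\sigma_3\mapsto-\sigma_3$, not $s\mapsto-s$. This relabelling flips $\sigma$, so it absorbs the extra $(\pm1)$ into $(\pm1)^{\frac{1-\sigma}{2}}$. It also produces the overall minus sign through $(s+\sigma_3\alpha_3)=-(-s-\sigma_3\alpha_3)$. Permutation invariance of $L$ and the pole-free move from $(5/12)$ to $(1/6)$ then give $-M(\sigma_1\alpha_1,\sigma_3\alpha_3,\sigma_2\alpha_2,\dots)$.
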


\begin{proof}
    By \eqref{eq:gauss_sum_m2nonzero}, the left-hand side in the Lemma statement becomes 
    \begin{align}
        &\frac{1-p^{-1}}{p^{a-b}}\sum_{\pmb{\sigma}\in\{\pm1\}^3}(\pm1)^{\frac{1-\sigma}{2}}\sum_{\,\,\,\pm_\lambda}(\pm_\lambda1)^{\frac{1\mp1}{2}}\chi(\mp_\lambda1)\sum_{\substack{d\geq1 \\ (d,p)=1}}\frac{1}{d}\sum_{\substack{N_1,N_2,N_3,C \\ \text{dyadic}}}\frac{1}{C\sqrt{N_1N_2N_3}} \\
        &\quad\quad\quad\quad\quad\quad\quad \cdot\sum_{\substack{m_2\in\Z_{\neq0}}}R_{p^{a-b}}(m_2)\sum_{\,\,\,\pm_b}(\mp_\lambda1)^{\frac{1\mp_b1}{2}}K_0^{b,\pm_\lambda}(0,m_2,0,p^a,\cdot,\tfrac{1\mp\chi(-1)(\pm_b1)}{2},\cdot). \label{eq:LHS_lemma_m2nonzero}
    \end{align}

    Due to the expressions \eqref{eq:Kbp0_general} and \eqref{eq:Kbm0_general} for $K^{b,\pm}_0(\cdot)$, we are led to consider the Dirichlet series
    \begin{align*}
        \sum_{m_2\geq1}\frac{R_{p^{a-b}}(m_2)}{m_2^s} = \sum_{m_2\geq1}\frac{\varphi(p^{a-b})}{(p^{a-b}m_2)^s} - \sum_{\substack{m_2\geq1 \\ (m_2,p)=1}}\frac{p^{a-b-1}}{(p^{a-b-1}m_2)^s} = p^{(a-b)(1-s)}(1-p^{-(1-s)})\zeta(s),
    \end{align*}
    which has abscissa of convergence $0$. In particular, our four sums under consideration are of the form
    \begin{align*}
        \frac{\Gamma(s-u)}{(2\pi ip^{-a+b})^{s-u}}\sum_{m_2\geq1}\frac{R_{p^{a-b}}(m_2)}{m_2^{s-u}} + \frac{\Gamma(s-u)}{(-2\pi ip^{-a+b})^{s-u}}\sum_{m_2\leq-1}\frac{R_{p^{a-b}}(m_2)}{\abs{m_2}^{s-u}} = p^{a-b}\zeta^{(p)}(1-s+u)
    \end{align*}
    with $\re s>\re u$.

    For the $(+_\lambda)$-terms in \eqref{eq:LHS_lemma_m2nonzero}, we now obtain by \eqref{eq:Kbp0_general} that
    \begin{align*}
        &\sum_{\substack{d\geq1 \\ (d,p)=1}}\frac{1}{d}\sum_{\substack{N_1,N_2,N_3,C \\ \text{dyadic}}}\frac{1}{C\sqrt{N_1N_2N_3}}\sum_{m_2\in\Z_{\neq0}}R_{p^{a-b}}(m_2)K_0^{b,+}(0,m_2,0,p^a,\cdot,\delta,\cdot) \\
        &= \frac{ip^{a-b}}{4\pi(1-p^{-1})}\int_{\R}\frac{th_0(t)}{\cosh(\pi t)}\bigg(\frac{1}{2\pi i}\int_{(\frac{5}{12})}\frac{\Gamma(1/2-s+it)}{\Gamma(1/2+s+it)}\frac{(2\pi)^{2s}}{(s-\sigma_1\alpha_1)(s-\sigma_3\alpha_3)}\gamma_{\rat}(\delta,t,s,-s) \\
        &\quad\quad\quad\quad\quad\quad\quad\quad\quad\quad\quad\quad\quad\quad\quad\quad\quad \cdot\bigg(\frac{1}{2\pi i}\int_{(\frac{1}{8})}\frac{L(s,u,-s,\delta,t,\pmb{\alpha},(\sigma_1,\sigma_2,-\sigma_3))}{(u-\sigma_2\alpha_2)(u+s)(u-s)}\,du\bigg)\,ds\bigg)\,dt.
    \end{align*}
    Regarding the $(-_\lambda)$-terms in \eqref{eq:LHS_lemma_m2nonzero}, we have by \eqref{eq:Kbm0_general} that
    \begin{align*}
        &\sum_{\substack{d\geq1 \\ (d,p)=1}}\frac{1}{d}\sum_{\substack{N_1,N_2,N_3,C \\ \text{dyadic}}}\frac{1}{C\sqrt{N_1N_2N_3}}\sum_{m_2\in\Z_{\neq0}}K_0^{b,-}(0,m_2,0,p^a,\cdot,\delta,\cdot)R_{p^{a-b}}(m_2) \\
        &= \frac{p^{a-b}}{4\pi^2(1-p^{-1})}\int_{\R}\sinh(\pi t)th_0(t)\bigg(\frac{1}{2\pi i}\int_{(\frac{5}{12})}\Gamma(1/2-s+it)\Gamma(1/2-s-it)\frac{(2\pi)^{2s}}{(s-\sigma_1\alpha_1)(s-\sigma_3\alpha_3)} \\
        &\quad\quad\quad\quad\quad\quad\quad\quad\quad\quad\quad\quad\quad \cdot\gamma_{\rat}(\delta,t,s,-s)\bigg(\frac{1}{2\pi i}\int_{(\frac{1}{8})}\frac{L(s,u,-s,\delta,t,\pmb{\alpha},(\sigma_1,\sigma_2,-\sigma_3))}{(u-\sigma_2\alpha_2)(u+s)(u-s)}\,du\bigg)\,ds\bigg)\,dt.
    \end{align*}
    Proceeding exactly the same way as in the proof of Lemma \ref{lemma:m1nonzero}, we obtain that \eqref{eq:LHS_lemma_m2nonzero} equals the right-hand side in the Lemma statement.
\end{proof}

\begin{lemma}[Matching the $A_3$-terms]\label{lemma:m3nonzero}
If $T=(p^a)^{o(1)}$, and $\re\alpha_j\ll \frac{1}{\log p^a}$ and $\im\alpha_j=O(1)$ for all $1\leq j\leq 3$, we have
\begin{align*}
    &\sum_{\pmb{\sigma}\in\{\pm1\}^3}(\pm1)^{\frac{1-\sigma}{2}}\sum_{\,\,\,\pm_\lambda}(\pm_\lambda1)^{\frac{1\mp1}{2}}\sum_{\substack{d\geq1 \\ (d,p)=1}}\frac{1}{d}\sum_{\substack{N_1,N_2,N_3,C \\ \text{dyadic}}}\frac{1}{C\sqrt{N_1N_2N_3}}\sum_{\substack{c'\geq1 \\ (c',p)=1}}\sum_{\substack{m_3\in\Z_{\neq0}}}G_\chi^{b,\pm_\lambda}(0,0,m_3;p^{a-b}c') \\
    &\quad\quad\quad\quad\quad\quad\quad\quad\quad\quad\quad\quad\quad\quad\quad\quad\quad\quad\quad\quad\,\,\, \cdot \sum_{\,\,\,\pm_b}(\mp_\lambda1)^{\frac{1\mp_b1}{2}}K_0^{b,\pm_\lambda}(0,0,m_3,p^ac',\cdot,\tfrac{1\mp\chi(-1)(\pm_b1)}{2},\cdot) \\
    &= -\sum_{\pmb{\sigma}\in\{\pm1\}^3}(\pm1)^{\frac{1-\sigma}{2}}\frac{1}{4\pi}\int_{\R}\tanh(\pi t)th_0(t)\sum_{\delta\in\{0,1\}}M(\sigma_1\alpha_1,\sigma_2\alpha_2,\sigma_3\alpha_3,\delta,t,\pmb{\alpha},\pmb{\sigma})\,dt + O_\varepsilon(p^{a(-\frac{1}{3}+\varepsilon)}).
\end{align*}
\end{lemma}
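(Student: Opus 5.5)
The plan is to mirror the proof of Lemma \ref{lemma:m2nonzero} (and Lemma \ref{lemma:m1nonzero}), exchanging the roles of the indices $2$ and $3$. First, \eqref{eq:gauss_sum_m3nonzero} forces $c'=1$ and rewrites the left-hand side as
\begin{align*}
\frac{1-p^{-1}}{p^{a-b}}\sum_{\pmb{\sigma}}(\pm1)^{\frac{1-\sigma}{2}}\sum_{\pm_\lambda}(\pm_\lambda1)^{\frac{1\mp1}{2}}\chi(\mp_\lambda1)\sum_{d}\frac{1}{d}\sum_{N_j,C}\frac{1}{C\sqrt{N_1N_2N_3}}\sum_{m_3\neq0}R_{p^{a-b}}(m_3)\sum_{\pm_b}(\mp_\lambda1)^{\frac{1\mp_b1}{2}}K_0^{b,\pm_\lambda}(0,0,m_3,p^a,\cdot).
\end{align*}
I then insert the $A_3$-analogue of \eqref{eq:Kbp0_general}, \eqref{eq:Kbm0_general}. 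By symmetry with the $A_2$ case, $Q(\cdot)$ contains $d^{-s}$ and the factors for $j\in\{1,2\}$ evaluated at $s$, together with a $u$-integral in the third slot carrying $d^{-u}p^{au}$ and $\Gamma(s-u)(2\pi i m_3/p^{a-b})^{-(s-u)}$.

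Next I sum over $m_3$ of both signs. As in Lemma \ref{lemma:m2nonzero}, the Ramanujan Dirichlet series $\sum R_{p^{a-b}}(m)m^{-w}=p^{(a-b)(1-w)}(1-p^{-(1-w)})\zeta(w)$, combined with the asymmetric functional equation of $\zeta$ and $(\pm i)^{-(s-u)}=e^{\mp i\pi(s-u)/2}$, collapses the $m_3$-sum into $p^{a-b}\zeta^{(p)}(1-s+u)$. The $d$-sum gives $\zeta^{(p)}(1+s+u)$, and together with the $s$-variable factors this assembles into $L(s,-s,u,\delta,t,\pmb{\alpha},\cdot)$ with an appropriate sign flip in $\pmb\sigma$ (now in the second coordinate, by analogy with Lemma \ref{lemma:m2nonzero}). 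The denominator is $(u-\sigma_3\alpha_3)(u+s)(u-s)(s-\sigma_1\alpha_1)(s-\sigma_2\alpha_2)$, and the prefactor $p^{a-b}$ cancels against $\frac{1-p^{-1}}{p^{a-b}}$ up to $(1-p^{-1})$ normalizations, which are absorbed by $\zeta^{(p)}$.

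I then shift the innermost $u$-line from $(1/8)$ to $(-1/3)$. This picks up only the residue at $u=\sigma_3\alpha_3$; the remainder is $O_\varepsilon(p^{a(-1/3+\varepsilon)})$ because of the factor $p^{au}$ and the bound discussed in Remark \ref{rmk:remark_intro}(iii). The $s$-integrand then simplifies via the gamma identities from the proof of Lemma \ref{lemma:all0}. On the $(+_\lambda)$ side, I first replace $\Gamma(1/2-s+it)/\Gamma(1/2+s+it)$ by its odd part in $t$, which gives $\sin(\pi s)/((-1)^\delta\sin(\pi s)+\cosh(\pi t))$ after multiplying by $-i\sinh(\pi t)/\cosh(\pi t)$. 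On the $(-_\lambda)$ side one gets $\cosh(\pi t)/((-1)^\delta\sin(\pi s)+\cosh(\pi t))$.

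The main obstacle is the sign bookkeeping that combines $(+_\lambda)$ and $(-_\lambda)$ with the parities $\delta=\frac{1\mp\chi(-1)(\pm_b1)}{2}$. After accounting for $(-1)^\delta$ and the signs $\chi(\mp_\lambda 1)$ and $(\mp_\lambda1)^{\frac{1\mp_b1}{2}}$, the numerators $\sin(\pi s)$ and $\cosh(\pi t)$ must add to exactly cancel the denominator, yielding $\tanh(\pi t)$ times
\begin{align*}
\frac{1}{2\pi i}\int_{(\frac{5}{12})}\frac{L(s,-s,\sigma_3\alpha_3,\delta,t,\cdot)}{(s-\sigma_1\alpha_1)(-s-\sigma_2\alpha_2)(\sigma_3\alpha_3-s)(\sigma_3\alpha_3+s)}\,ds.
\end{align*}
I would check this against the footnoted identities in Lemma \ref{lemma:all0}. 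Finally, I reindex $\pmb\sigma$ (flipping one coordinate changes $(\pm1)^{\frac{1-\sigma}{2}}$ into $-(\pm1)^{\frac{1-\sigma}{2}}$, producing the overall minus sign) and move the contour from $(5/12)$ to $(1/6)$; no poles are crossed there since $\alpha_j$ is small. This identifies the expression with $M(\sigma_1\alpha_1,\sigma_2\alpha_2,\sigma_3\alpha_3,\delta,t,\pmb{\alpha},\pmb{\sigma})$, summed over $\delta\in\{0,1\}$ via $\pm_b$.
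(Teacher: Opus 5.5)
Your proposal is correct and is essentially the paper's proof, which just says the argument is analogous to Lemma~\ref{lemma:m2nonzero} (itself run as in Lemma~\ref{lemma:m1nonzero}): $c'=1$ and $R_{p^{a-b}}(m_3)$ from \eqref{eq:gauss_sum_m3nonzero}, the $m_3$-sum collapsing to $p^{a-b}\zeta^{(p)}(1-s+u)$, the $d$-sum giving $\zeta^{(p)}(1+s+u)$, the residue at $u=\sigma_3\alpha_3$, and the $\sin(\pi s)$/$\cosh(\pi t)$ recombination yielding $\tanh(\pi t)$. One small correction to your sign bookkeeping: once you include the factor $(\pm_\lambda1)^{\frac{1\mp1}{2}}$ (which you do not list), the $\pm_\lambda$-combination is $(\pm1)\sum_{\delta}\big((-1)^\delta K^{b,+}_0+K^{b,-}_0\big)$, i.e.\ the paper's $(\pm1)^{\frac{1-(-\sigma)}{2}}$; the flip $\sigma_2\mapsto-\sigma_2$ only cancels this extra $\pm1$ and changes nothing in the $(+)$ case, so the overall minus sign in the lemma comes instead from the denominator $(s+\sigma_2\alpha_2)=-(-s-\sigma_2\alpha_2)$ that appears after the flip.
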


\begin{proof}
    Analogous to the proof of Lemma \ref{lemma:m2nonzero}.
\end{proof}

It remains to treat the terms with $(m_1,m_2,m_3)\in \sqcup_{1\leq i<j\leq3}P_{ij}$. If $(m_1,m_2,m_3)\in P_{12}$, then by \eqref{eq:gauss_sum_m1m2nonzero} and \eqref{eq:Kbm0_general} we have 
\begin{align}
    &\sum_{\substack{d\geq1 \\ (d,p)=1}}\frac{1}{d}\sum_{\substack{N_1,N_2,N_3,C \\ \text{dyadic}}}\frac{1}{C\sqrt{N_1N_2N_3}}\sum_{\substack{c'\geq1 \\ (c',p)=1}}\sum_{\substack{m_1,m_2\in\Z_{\neq0}}}G_\chi^{b,-}(m_1,m_2,0;p^{a-b}c')K_0^{b,-}(m_1,m_2,0,p^ac',\cdot,\delta,\cdot) \\ 
    &=\frac{1}{4\pi^2}\int_{\R}\sinh(\pi t)th_0(t)\bigg(\frac{1}{2\pi i}\int_{(\frac{5}{12})}\Gamma(1/2-s+it)\Gamma(1/2-s-it)(2\pi)^{2s}\gamma_{\rat}(\delta,t,s,-s) \\
    &\quad\quad\quad\quad \cdot \bigg(\bigg(\frac{1}{2\pi i}\bigg)^2\int_{(\frac{1}{24})}\int_{(\frac{1}{24})}\frac{L(-s,u,v,\delta,t,\pmb{\alpha},(-\sigma_1,\sigma_2,\sigma_3))}{(u-s)(v-s)(u+v)(s-\sigma_1\alpha_1)(u-\sigma_2\alpha_2)(v-\sigma_3\alpha_3)}\,dudv\bigg)\,ds\bigg)\,dt  \label{eq:bound_coord_plane} \\
    &\ll_\varepsilon p^{a(-\frac{1}{3}+\varepsilon)}, 
\end{align}
and the same bound holds for the $(+_\lambda)$-case. By \eqref{eq:gauss_sum_m1m3nonzero}, \eqref{eq:gauss_sum_m2m3nonzero}, \eqref{eq:Kbp0_general}, \eqref{eq:Kbm0_general}, the coordinate planes $P_{13}$, $P_{23}$ can be treated analogously.

Combining Lemmas \ref{lemma:all0}--\ref{lemma:m3nonzero} and bounds such as \eqref{eq:bound_coord_plane} for coordinate planes, we finally end up proving Proposition \ref{prop:cancellation}.
\section{Completing the proof of Theorem \ref{thm:main}}\label{sec:completing_the_proof}
We complete the proof of Theorem \ref{thm:main} by showing the following.
\begin{prop} 
If $T=(p^a)^{o(1)}$, and $\re\alpha_j\ll \frac{1}{\log p^a}$ and $\im\alpha_j=O(1)$ for all $1\leq j\leq 3$, we have
\begin{align*}
    \mathcal{T}_{\epsilon_1,\epsilon_2,\epsilon_3}(\pm,\pm_\lambda,\chi,d,\delta,\pmb{\alpha},\pmb{\sigma}),\,\mathcal{T}^b_{\epsilon_1,\epsilon_2,\epsilon_3}(\pm,\pm_\lambda,\chi,d,\delta,\pmb{\alpha},\pmb{\sigma}) \ll_\varepsilon p^{-b+a\varepsilon}.
\end{align*}
\end{prop}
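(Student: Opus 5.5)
We follow the strategy of \cite[Sections 9--10]{PeY}, with the generating functions $Z^\pm$ and $Z^{b,\pm}$ from Section \ref{sec:evaluation_of_generating_functions} replacing those of loc.\ cit. First, we rewrite each $\mathcal{T}_{\epsilon_1,\epsilon_2,\epsilon_3}(\cdot)$ and $\mathcal{T}^b_{\epsilon_1,\epsilon_2,\epsilon_3}(\cdot)$ so that the arithmetic and archimedean parts separate. The phase $e_c(\mp m_1m_2m_3)$, respectively $e_c(\mp m_1m_2m_3p^{3b})$, is absorbed into the weight, exactly as in \cite[Section 4.4]{PeY}. We then Mellin-invert the resulting smooth function of $(m_1,m_2,m_3,c)$ in all four variables, using the dyadic localizations $|m_j|\asymp M_j$ and $c\asymp C$. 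This expresses $\mathcal{T}^{(b)}_{\epsilon_1,\epsilon_2,\epsilon_3}$ as a four-fold contour integral of $Z^{(b),\pm}(s_1,s_2,s_3,s_4)$ against the Mellin transform of the weight. The signs $\epsilon_j$ only affect $\psi(\pm1)$-type factors and the weight.

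Second, we use the stationary-phase analysis of the Fourier transforms $K^\pm,K^{b,\pm}$ from \cite[Sections 10--11]{PeY}. Note that $K^{b,\pm}$ is just $K^\pm$ evaluated at $p^bm_j$ by \eqref{eq:Kbpm_rewritten}. This shows that the Mellin transform of the weight is concentrated on $|\im s_n|\ll (p^aT)^\varepsilon$, along a line of the form $(u_1-iy,u_2-iy,u_3-iy,u_4+iy)$. Off this line the transform decays rapidly by repeated integration by parts. Its size matches the normalization in \cite[Proposition 4.3]{PeY}, with an extra factor $p^{-3b}$ in the $b$-case from the rescaling $m_j\mapsto p^bm_j$.

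Third, we shift all contours to $\re s_n=1/2+\varepsilon$ and split according to \eqref{eq:Zpm_decomposition} and \eqref{eq:Zbpm_decomposition}.
\begin{itemize}
\item For $Z_1^{(b),\pm}$ the shift crosses no poles, by Lemmas \ref{lemma:hybrid} and \ref{lemma:hybrid_b_case}. The hybrid large-sieve bounds there then give the contributions, after inserting the normalizations $cp^a$ and $cp^{2a-3b}(1-p^{-1})$ from the definitions of $Z^\pm,Z^{b,\pm}$.
\item For $Z_0^{(b),\pm}$ we cross poles at $s_n=1$. Their residues are bounded using the bounds at $\re s_n>1$ ($\ll p^b$, resp.\ $\ll1$) together with the weight's size, and the remaining integrals are bounded again by the $\delta=0$ large-sieve bounds.
\end{itemize}

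Bookkeeping of the powers of $p$ should give the following. In the $p^{a+b}\mid c$ case, $p^{\frac{3a-b}{2}}\cdot p^{-a}\cdot(\text{weight size }\approx p^{-\frac{a+b}{2}})$, i.e.\ $p^{-b}$. In the $p^a\pdiv c$ case, $p^{\frac{5(a-b)}{2}}\cdot p^{-(2a-3b)}\cdot p^{-a/2}\cdot(\text{rescaling})$, which should also reduce to $p^{-b+a\varepsilon}$. The factors $T^{O(1)}$ are $p^{o(a)}$.

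The main obstacle is the exponent bookkeeping in the $b$-case. We must check that the factor $p^{-3b}$ from evaluating $K^\pm$ at $p^bm_j$ exactly compensates the growth $p^{\frac{5(a-b)}{2}}$ of $Z_{\fin}^b$ at $\re s_n=1/2$, so that no net positive power of $p^{a-b}$ survives. We must also check that the residues at $s_n=1$ from $Z_0^{b,\pm}$ are genuinely of size $p^{-b+a\varepsilon}$ and not main terms. To verify the first, I would track the scaling through a single model dyadic block, taking $M_j\approx C/N_j$ (the only range where $K^{b,\pm}$ is not negligible) together with the large-sieve lemma. For the second, I would use that at $m_1m_2m_3\neq0$ these residues are dominated by the weight's decay. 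If that fails, I would fall back on the explicit formula \eqref{eq:bound_on_Zfin_psi_trivial_b_case}, whose factor $(p-p^{s_n})$ vanishes at $s_n=1$; this should show that these poles are in fact removable.
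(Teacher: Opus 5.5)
\textbf{Verdict.} Your framework is the paper's: write the off-diagonal through $Z^{\pm}$ and $Z^{b,\pm}$, use the stationary-phase lemmas of \cite{PeY}, split $Z=Z_0+Z_1$, and apply the hybrid large sieve to $Z_1$ on $\re=1/2+\varepsilon$. Your worry about the $b$-case bookkeeping also resolves as you hoped. With $p^bM_j\asymp\sqrt N/N_j$ one has $M=p^{-3b}\sqrt N$, so $\frac{1}{p^{2a}M}\sqrt{MC/p^a}\,p^{5(a-b)/2}=p^{-b}\sqrt C/N^{1/4}$. However, the proposal has two genuine gaps.

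\textbf{Gap 1: the non-oscillatory range is missing.} You assert that the Mellin weight lives on $|\im s_n|\ll p^{a\varepsilon}$. That holds only in the oscillatory range $\sqrt N/C\gg p^{a\eta}$, via \cite[Lemma 11.1]{PeY}, where the phase $e_c(\mp m_1m_2m_3)$ cancels against the stationary point of $K^{+}$. In the complementary range one must use \cite[Lemma 11.2]{PeY}, and there the phase survives:
\begin{itemize}
\item the $y$-integral runs over $|y|\ll p^{a\varepsilon}+M/C$ with $f^\pm(y)\ll(1+|y|)^{-1/2}$;
\item $M/C$ can be huge, since $C$ may be far larger than $\sqrt N$;
\item the prefactor becomes $N/(C^3p^a)$ rather than $1/(p^aM)$.
\end{itemize}
The large sieve then costs a factor $1+\sqrt{M/C}$. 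You recover $p^{-b+a\varepsilon}$ only by using the constraints $M_jN_j/C\ll p^{a\varepsilon}$ and $\sqrt N/C\ll p^{a\eta}$, and neither appears in your plan.

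\textbf{Gap 2: the $Z_0$ contribution is not actually controlled.}
\begin{itemize}
\item Shifting $Z_0$ to $\re=1/2+\varepsilon$ crosses poles at $u_j=1+iy$. In the oscillatory range, and in the non-oscillatory range with $M/C\ll p^{a\varepsilon}$, these poles lie inside the effective support of the weight. So the residues are not ``dominated by the weight's decay'', and bounding them by ``the bounds at $\re s_n>1$'' is not meaningful as stated.
\item Your fallback is false. In \eqref{eq:bound_on_Zfin_psi_trivial_b_case} the factor $p-p^{s_n}$ cancels only the poles of $L(s_n,\chi_0)$ for $n\le3$. The pole of $L(s_4,\overline{\chi_0})$ at $s_4=1$, coming from the $c'$-sum, survives.
\item In $Z_0^\pm$ the situation is reversed: $Z_{\fin}(\chi_0;s_4)$ vanishes at $s_4=1$, but the poles at $s_1,s_2,s_3=1$ remain.
\end{itemize}

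\textbf{What the paper does for $Z_0$.} In the ranges where the poles are active, it does not move the $Z_0$-contours. It bounds directly at $\re=1+\varepsilon$ using $Z_0^\pm\ll p^b$ (resp.\ $Z_0^{b,\pm}\ll1$), and the saving comes from the size of $C$:
\begin{itemize}
\item Oscillatory range: the bound is $\frac{1}{p^aM}\frac{MC}{p^{a+b}}p^b=C/p^{2a}\ll p^{-a/2+\varepsilon}$, because $C\ll\sqrt N\ll p^{3a/2+\varepsilon}$.
\item Non-oscillatory range with $M/C\ll p^{a\varepsilon}$: the bound is $\frac{N}{C^3p^a}\frac{MC}{p^{a+b}}p^b\ll\frac{Np^{a\varepsilon}}{Cp^{2a}}\ll p^{-b+a\varepsilon}$. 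This uses $N\ll p^{3a+\varepsilon}$ and, crucially, the lower bound $C\gg p^{a+b}$ from \eqref{eq:conditions_normal_case}, which your proposal never invokes.
\item Non-oscillatory range with $M/C\gg p^{a\varepsilon}$: only here is $Z_0$ shifted to $1/2+\varepsilon$. The poles now sit at height $|y|\asymp M/C$, outside the $p^{a\varepsilon}$-localization of the $u$-integrals. The $\delta=0$ large sieve then gives $\frac{NM}{C^3p^a}\ll p^{a(-1+\varepsilon)}$.
\end{itemize}
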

We prove this using the stationary phase results from \cite[Section 11]{PeY} (the proofs of which are based on those in \cite[Section 10]{PeYpetersson}). The bounds in \cite[Section 12]{PeY} then imply that the cancellation due to summation over $\phi\,(\mo p^b)$ allows us to save a factor $p^{b}$ on off-diagonal terms.

Throughout, we write $N\coloneqq N_1N_2N_3$ and $M\coloneqq M_1M_2M_3$. 

\subsection{Modifications to stationary phase results}\label{sec:modifications}
The stationary phase results from \cite[Section 11]{PeY} apply to the specific case of $\{\alpha_1=\alpha_2=\alpha_3=0\}$. With some minor modifications, the results remain valid in our setting.

For the oscillatory case we may adopt \cite[Lemma 11.1]{PeY}, since \cite[Lemmas 10.3 and 10.5]{PeY} still apply. So in this case we can safely ignore the terms involving $K^{-}(\cdot)$, $K^{b,-}(\cdot)$ and focus solely on the terms involving $K^{+}(\cdot)$, $K^{b,+}(\cdot)$. Moreover, we suppress everywhere factors that are powers of $T$ as we are assuming $T\ll_\varepsilon p^{a\varepsilon}$.

For the non-oscillatory case we may adopt \cite[Lemma 11.2]{PeY}, as the bounds in \cite[Lemmas 10.2 and 10.4]{PeY} still hold due to Lemma \ref{lemma:bounds_for_bessel_transforms}. We also note the following modifications:
\begin{itemize}
    \item[-] We suppress everywhere factors that are powers of $T$, for the same reason as before.
    \item[-] For $K^-(\cdot)$, $K^{b,-}(\cdot)$, the $(-)$-sign in the phase factor $e_c(-m_1m_2m_3)$ should be a $(+)$; see also \cite[Lemma 8.1]{Y} for the correct sign. This correction only has an effect on the definition of $f(y)$ (though its stated properties remain the same), so we denote it instead by $f^+(y)$ or $f^-(y)$, respectively. (We do not make this distinction in the oscillatory case, as $K^-(\cdot)$, $K^{b,-}(\cdot)$ are then negligibly small.)
\end{itemize}

\subsection{Bounding $\mathcal{T}_{\epsilon_1,\epsilon_2,\epsilon_3}(\cdot)$}
Without loss of generality, we only treat the case $\{\epsilon_1=\epsilon_2=\epsilon_3=1\}$. Recalling \eqref{eq:GH_identity}, we have 
\begin{align}
    \mathcal{T}_{1,1,1}(\cdot) &\ll \frac{1}{C\sqrt{N}}\sum_{\,\,\,\pm_b}\bigg\lvert\sum_{\substack{m_1,m_3,m_3,r\geq1}}G_\chi^{\pm_\lambda}(m_1,m_2,m_3;p^{a+b}r)K^{\pm_\lambda}(\cdot,\tfrac{1\mp\chi(-1)(\pm_b1)}{2},\cdot)\bigg\rvert \\
    &\ll \frac{1}{C^2p^a\sqrt{N}}\sum_{\,\,\,\pm_b}\bigg\lvert\sum_{\substack{m_1,m_2,m_3,r\geq1 \\ (m_1,pr)=1}}e_{p^{a+b}r}(\pm_\lambda m_1m_2m_3)H_\chi(\pm_\lambda m_1,m_2,m_3;p^br) \label{eq:sum_of_H} \\
    &\quad\quad\quad\quad\quad\quad\quad\quad\quad\quad\quad\quad\quad\quad\quad\quad\quad\quad\quad\quad\quad\,\,\,\cdot K^{\pm_\lambda}(\cdot,\tfrac{1\mp\chi(-1)(\pm_b1)}{2},\cdot)\bigg\rvert.
\end{align}

The behaviour of $K^\pm(\cdot)$ depends on whether or not 
\begin{align}
    \frac{\sqrt{N}}{C}\gg p^{a\eta}\label{eq:oscillatory_condition}
\end{align}
holds for some $\eta>0$.

\subsubsection{Oscillatory case} 
We assume \eqref{eq:oscillatory_condition} holds. Then by \cite[Lemma 11.1]{PeY} (with modifications as stated in Section \ref{sec:modifications}) we obtain that \eqref{eq:sum_of_H} is
\begin{align*}
    &\ll_\varepsilon \frac{p^{a\varepsilon}}{p^aM}\bigg\lvert \int_{\abs{\pmb{u}}\ll p^{a\varepsilon}}\int_{\abs{y}\ll p^{a\varepsilon}}F(\pmb{u};y)p^{-(a+b)(iy+u_4)}M_1^{u_1}M_2^{u_2}M_3^{u_3}C^{u_4} \\
    &\quad\quad\quad\quad\quad\quad\quad\quad\quad\quad\quad\,\,\,\, \cdot Z^{+}(u_1-iy,u_2-iy,u_3-iy,u_4+iy)\,dyd\pmb{u} \bigg\rvert,
\end{align*}
where the contribution comes from terms with $m_j\asymp M_j$ for all $j$, $c\asymp C$, and
\begin{align}
    M_j\asymp\frac{\sqrt{N}}{N_j} \quad \text{ for all }j. \label{eq:stationary_condition}
\end{align}
Here $F(\pmb{u};y)$ is entire in terms of $\pmb{u}$, and satisfies $F(\pmb{u};y)\ll_{\re\pmb{u},A}(1+\abs{\pmb{u}})^{-A}(1+\abs{y})^{-A}$ for any $A>0$. We initially take $\re u_j=1+\varepsilon$ for all $j$. Recalling \eqref{eq:Zpm_decomposition}, we consider the contribution to the above integral by $Z_0^+(\cdot)$ and $Z_1^+(\cdot)$ separately. For $Z_0^+(\cdot)$, we keep the lines at $(1+\varepsilon)$, while for $Z_1^+(\cdot)$ we can move them to $(1/2+\varepsilon)$ without having to pick up any poles. We then obtain by the bounds in Lemma \ref{lemma:hybrid} that
\begin{align*}
    \mathcal{T}_{1,1,1}(\cdot) \ll_\varepsilon \frac{p^{a\varepsilon}}{p^aM} \bigg( \frac{MC}{p^{a+b}} p^b + \sqrt{\frac{MC}{p^{a+b}}} p^{\frac{3a-b}{2}}\bigg) \asymp p^{a\varepsilon}\bigg(\frac{C}{p^{2a}} + \frac{\sqrt{C}}{p^b N^{1/4}}\bigg) \ll_\varepsilon p^{-b+a\varepsilon},
\end{align*}
where in the second step we applied \eqref{eq:stationary_condition} and in the third step we used the fact that $C\ll \sqrt{N}\ll_\varepsilon p^{a(\frac{3}{2}+\varepsilon)}$. 

\subsubsection{Non-oscillatory case} 
We now assume that $\sqrt{N}/C\ll p^{a\eta}$ for some very small $\eta>0$. We can then apply \cite[Lemma 11.2]{PeY} (with modifications as stated in Section \ref{sec:modifications}) to obtain that
\begin{align*}
    \mathcal{T}_{1,1,1}(\cdot) &\ll_\varepsilon \frac{Np^{a\varepsilon}}{C^3p^a}\sum_{\,\,\,\pm_\lambda}\bigg\lvert\int_{\abs{\pmb{u}}\ll p^{a(\eta+\varepsilon)}}F(\pmb{u})\int_{\abs{y}\ll p^{a\varepsilon}+\frac{M}{C}}f^{\pm_\lambda}(y)p^{-(a+b)(iy+u_4)}M_1^{u_1}M_2^{u_2}M_3^{u_3}C^{u_4} \\
    &\quad\quad\quad\quad\quad\quad\quad\quad\quad\quad\quad\quad\quad\quad\quad\quad\quad\quad\,\,\,\,\, \cdot Z^{\pm_\lambda}(u_1-iy,u_2-iy,u_3-iy,u_4+iy)\,dyd\pmb{u}\bigg\rvert,
\end{align*}
where the contribution comes from terms with $m_j\asymp M_j$ for all $j$, $c\asymp C$, and
\begin{align}
    \frac{M_jN_j}{C}\ll_\varepsilon p^{a\varepsilon}\quad\text{ for all }j. \label{eq:non_oscillatory_condition}
\end{align}
Here $f^\pm(y)\ll(1+\abs{y})^{-1/2}$ and $F(\pmb{u})\ll_{A,\re\pmb{u},\varepsilon}p^{a\varepsilon}\prod_{l=1}^4(1+\abs{u_l}/p^{a\varepsilon})^{-A}$ for all $A>0$. If there exists $\varepsilon>0$ such that $\frac{M}{C}\gg p^{a\varepsilon}$, then $f^\pm(y)$ may be chosen to have support on $\abs{y}\asymp \frac{M}{C}$. We initially take $\re u_j=1+\varepsilon$ for all $j$. Recalling \eqref{eq:Zpm_decomposition}, we consider the contribution to the above integral by $Z^\pm_0(\cdot)$ and $Z^\pm_1(\cdot)$ separately, calling these parts $\mathcal{T}_{00}$ and $\mathcal{T}_1$, respectively.

In the case of $\mathcal{T}_1$, we can shift the contours to $\re u_j=1/2+\varepsilon$ for all $j$. By the large sieve-like bound \eqref{eq:lemma_hybrid}, we then have
\begin{align*}
    \mathcal{T}_1 \ll_\varepsilon \frac{Np^{a\varepsilon}}{C^3p^a}\sqrt{\frac{MC}{p^{a+b}}}\bigg(1+\sqrt{\frac{M}{C}}\bigg)p^{\frac{3a-b}{2}}.
\end{align*}
Applying first \eqref{eq:non_oscillatory_condition} and then the non-oscillatory condition, the latter bound further leads to
\begin{align*}
    \mathcal{T}_1 \ll_\varepsilon p^{-b+a\varepsilon}\bigg(\frac{\sqrt{N}}{C}+1\bigg) \ll_\varepsilon p^{-b+a\varepsilon}.
\end{align*}

Now for $\mathcal{T}_{00}$. If $\frac{M}{C}\gg p^{a\varepsilon}$ for some $\varepsilon>0$ then we shift the $u_j$-contours to $(1/2+\varepsilon)$, which is allowed as the poles of $Z_0^\pm(u_1-iy,u_2-iy,u_3-iy,u_4+iy)$ occur at height $\im u_j\asymp \abs{y}\asymp\frac{M}{C}$ but we are truncating the $u_j$-integrals at $\ll p^{a\varepsilon}$. The large sieve-like bound \eqref{eq:lemma_hybrid} and \eqref{eq:non_oscillatory_condition} then give
\begin{align}
    \mathcal{T}_{00} \ll_\varepsilon \frac{Np^{a\varepsilon}}{C^3p^a}\sqrt{\frac{MC}{p^{a+b}}}\sqrt{\frac{M}{C}}p^{\frac{a+b}{2}} \ll_\varepsilon p^{a(-1+\varepsilon)}.
\end{align}
Suppose now on the other hand that $\frac{M}{C}\ll_\varepsilon p^{a\varepsilon}$ for all $\varepsilon>0$. In this case we keep the $u_j$-contours at $(1+\varepsilon)$, which gives 
\begin{align*}
    \mathcal{T}_{00} \ll_\varepsilon \frac{Np^{a\varepsilon}}{C^3p^a}\frac{MC}{p^{a+b}}p^b \ll_\varepsilon \frac{Np^{a\varepsilon}}{Cp^{2a}} \ll_\varepsilon \frac{p^{3a}p^{a\varepsilon}}{p^{a+b}p^{2a}} =p^{-b+a\varepsilon},
\end{align*}
where in the last bound we applied \eqref{eq:conditions_normal_case}.

\subsection{Bounding $\mathcal{T}^b_{\epsilon_1,\epsilon_2,\epsilon_3}(\cdot)$}
Without loss of generality again we only treat the case $\{\epsilon_1=\epsilon_2=\epsilon_3=1\}$. We have
\begin{align}
    \mathcal{T}^b_{1,1,1}(\cdot) &\ll \frac{1}{C\sqrt{N}}\sum_{\,\,\,\pm_b}\bigg\lvert\sum_{\substack{m_1,m_2,m_3,c'\geq1 \\ (p,c')=1}}G_\chi^{b,\pm_\lambda}(m_1,m_2,m_3;p^{a-b}c')K^{b,\pm_\lambda}(\cdot,\tfrac{1\mp\chi(-1)(\pm_b1)}{2},\cdot)\bigg\rvert \\
    &\ll \frac{1}{C^2p^{2a-3b}\sqrt{N}}\sum_{\,\,\,\pm_b}\bigg\lvert\sum_{\substack{m_1,m_2,m_3,c'\geq1 \\ (m_1p,c')=1}}e_{p^ac'}(\pm_\lambda m_1m_2m_3p^{3b})H_\chi^{b,+}(\pm_\lambda m_1,m_2,m_3;p^{a-b}c') \label{eq:sum_of_Hb} \\
    &\quad\quad\quad\quad\quad\quad\quad\quad\quad\quad\quad\quad\quad\quad\quad\quad\quad\quad\quad\quad\quad\quad\quad\quad\quad\quad \cdot K^{b,\pm}(\cdot,\tfrac{1\mp\chi(-1)(\pm_b1)}{2},\cdot)\bigg\rvert, 
\end{align}
where we recall \eqref{eq:Kbpm_rewritten}. The ensuing bounds are obtained similarly to those for $\mathcal{T}_{1,1,1}(\cdot)$; we still give a bit of detail in the oscillatory case.

\subsubsection{Oscillatory case}
Suppose $\eqref{eq:oscillatory_condition}$ holds. Then we obtain as before that
\begin{align*}
    \mathcal{T}_{1,1,1}^b(\cdot) &\ll_\varepsilon \frac{p^{a\varepsilon}}{p^{2a}M}\bigg\lvert\int_{\abs{\pmb{u}}\ll p^{a\varepsilon}}\int_{\abs{y}\ll p^{a\varepsilon}}F(\pmb{u};y)p^{-a(iy+u_4)+3biy}M_1^{u_1}M_2^{u_2}M_3^{u_3}C^{u_4} \\
    &\quad\quad\quad\quad\quad\quad\quad\quad\quad\quad\quad\quad\, \cdot Z^{b,+}(u_1-iy,u_2-iy,u_3-iy,u_4+iy)\,dyd\pmb{u}\bigg\rvert,
\end{align*}
where now the contribution comes from terms with $m_j\asymp M_j$ for all $j$, $c\asymp C$, and 
\begin{align}
    p^bM_j\asymp\frac{\sqrt{N}}{N_j}\quad\text{ for all }j. \label{eq:stationary_condition_caseb}
\end{align}
By the bounds in Lemma \ref{lemma:hybrid_b_case}, we then have
\begin{align*}
    \mathcal{T}^b_{1,1,1}(\cdot) \ll_\varepsilon \frac{p^{a\varepsilon}}{p^{2a}M}\bigg(\frac{MC}{p^a}+\sqrt{\frac{MC}{p^a}}p^{\frac{5(a-b)}{2}}\bigg) \asymp p^{a\varepsilon}\bigg(\frac{C}{p^{3a}}+\frac{\sqrt{C}}{p^bN^{1/4}}\bigg) \ll_\varepsilon p^{-b+a\varepsilon},
\end{align*}
where in the second step we now applied \eqref{eq:stationary_condition_caseb}.

\subsubsection{Non-oscillatory case}
Now assume $\sqrt{N}/C\ll p^{a\eta}$ for some very small $\eta>0$. The contribution to $\mathcal{T}_{1,1,1}^b(\cdot)$ from $Z_1^{b,\,\pm}(\cdot)$ is $\ll_\varepsilon p^{-b+a\varepsilon}$, and from $Z_0^{b,\,\pm}(\cdot)$ is no worse than $p^{a(-1+\varepsilon)}$.
\section{Conjectured asymptotic formula for the cubic moment}
\label{sec:conjecture}
\subsection{Background}
Katz and Sarnak \cite{KS} introduced the seminal idea that certain symmetry types from random matrix theory determine the zero distributions of families of $L$-functions, a result that they rigorously proved for function fields \cite{KSbook}. Extensive evidence suggests this relationship extends to low-lying zeros for families of $L$-functions over number fields; see e.g. \cite{ILS}. This paradigm was further expanded by Conrey and Farmer \cite{CF}, and Keating and Snaith \cite{KSn0,KSn}. The first two authors applied the concept of symmetry types to analyze the mean values of families $\mathcal{F}$ of $L$-functions at the central point $s=1/2$. They conjecture that
\begin{align}
    \frac{1}{\mathcal{Q}^*}\sum_{\substack{f\in\mathcal{F} \\ c(f)\leq \mathcal{Q}}} V(L_f(1/2))^k \sim g_k\frac{a(k)}{\Gamma(1+B(k))}(\log \mathcal{Q}^A)^{B(k)}\quad\text{ as }\mathcal{Q}\rightarrow+\infty, \label{eq:conjecture_conrey_farmer}
\end{align}
where 
\begin{itemize}
    \item[-] the family $\mathcal{F}$ is partially ordered by the conductor $c(f)$ of each $L$-function,
    \item[-] the sum ranges over the $\mathcal{Q}^*$ elements with $c(f)\leq\mathcal{Q}$, which is 
    $$\mathcal{Q}^*=\frac{1}{4\pi}\int_\R\tanh(\pi t)th_0(t)\,dt$$
    in our case,\footnote{At this point, we have not yet separated $\mathcal{F}$ into odd and even $f$; see \cite[p.886]{CF}.}
    \item[-] $V(z)$ is a symmetry-dependent function, e.g. $V(z)=z$ for orthogonal symmetry,
    \item[-] $g_k$ is symmetry-dependent, e.g. $g_3=8$ for orthogonal symmetry, 
    \item[-] $a(k)$ is family-dependent, and is given by the Euler product $A_k(\pmb{0})$ in \cite{CFKRS}, which is $(1-p^{-1})^3$ in our case,
    \item[-] $B(k)$ is symmetry-dependent, e.g. $B(3)=3$ for orthogonal symmetry, so $\Gamma(1+B(3))=6$,
    \item[-] $A$ is the degree to which the parameter $\mathcal{Q}$ occurs in the functional equation, and it is $1$ in our case.
\end{itemize}

\subsection{Setting up the conjectured asymptotic for the cubic moment}
A heuristic recipe to obtain the conjectured asymptotic \eqref{eq:conjecture_conrey_farmer} (with shifts) along with a full main term and strong error term is provided in \cite{CFKRS}. We execute the recipe step by step for our particular case.

First, we determine the relevant parameters appearing in the conjecture. Defining
\begin{align*}
    \mathcal{F}^\pm(p^a,\chi)\coloneqq \bigg(\bigsqcup_{t\in\R\cup i[-\frac{1}{2},\frac{1}{2}]}\bigsqcup_{\ell m=p^a}\mathcal{H}^\pm_{it}(m,\overline{\chi}^2)\bigg) \bigsqcup \bigg(\bigcup_{t\in\R}\bigsqcup_{\ell m=p^a}\mathcal{H}^\pm_{it,\text{Eis}}(m,\overline{\chi}^2)\bigg),
\end{align*}
write $\mathcal{F}^\pm(p^a,\chi)\otimes\chi$ for the family under consideration. Here 
$$\mathcal{H}_{it}^+(\cdot),\mathcal{H}_{it}^-(\cdot)\subset\mathcal{H}_{it}(\cdot) \quad \text{ and } \quad \mathcal{H}_{it,\text{Eis}}^+(\cdot),\mathcal{H}_{it,\text{Eis}}^-(\cdot)\subset\mathcal{H}_{it,\text{Eis}}(\cdot)$$
denote the subsets of even and odd forms. Let $u\in\mathcal{F}^\pm(p^a,\chi)$ be arbitrary. The root number $\epsilon(u\otimes\chi)=\lambda_u(-1)$ and the function
\begin{align*}
    \mathcal{X}(s,u\otimes\chi) \coloneqq
        p^{a(1-2s)}\frac{L_\infty(1-s,u\otimes\chi)}{L_\infty(s,u\otimes\chi)}
\end{align*}
appear in the asymmetric functional equation 
$$L(s,u\otimes\chi)=\epsilon(u\otimes\chi)\mathcal{X}(s,u\otimes\chi)\overline{L}(1-s,u\otimes\chi).$$
Also define the $\log$-conductor 
\begin{align*}
    c(u\otimes\chi) &\coloneqq \bigg\lvert\bigg[\epsilon(u\otimes\chi)\frac{\partial\mathcal{X}(s,u\otimes\chi)}{\partial s}\bigg]_{s=\frac{1}{2}}\bigg\rvert \\
    &= 2\log(p^a/\pi) + 2\re\frac{\Gamma'}{\Gamma}\bigg(\frac{1/2+\delta_{u\otimes\chi}+it}{2}\bigg)\in\R,
\end{align*}
which scales as the $\log$ of the ``usual'' conductor of $u\otimes\chi$. As we are dealing with an infinite family, the recipe requires $p^a$, $\delta_{u\otimes\chi}$, $t$ to be monotonic functions of $c(u\otimes\chi)$ (see \cite[Section 3.1--2]{CFKRS}). In practice, this means that these parameters should each be either constant or tend to $+\infty$ with $c(u\otimes\chi)$. This can only be the case if $\delta_{u\otimes\chi}$ is kept constant; that is, one obtains separate asymptotic formulas for the case of even $u$ and odd $u$.

For a function $G:\mathcal{F}^\pm(p^a,\chi)\rightarrow\C$, we consider the expected value
\begin{align*}
    \langle G(u)\rangle \coloneqq \lim_{a\rightarrow\infty}\bigg(\quad\sideset{}{^K}\sum_{u\in\mathcal{F}^\pm(p^a,\chi)}1\bigg)^{-1}\quad\sideset{}{^K}\sum_{u\in\mathcal{F}^\pm(p^a,\chi)}G(u),
\end{align*}
where the weighted sum
\begin{align*}
    \,\sideset{}{^K}\sum_{u\in\mathcal{F}^\pm(p^a,\chi)}G(u)
    &\coloneqq \sum_{t}h_0(t)\sum_{\ell m=p^a}\sum_{f\in\mathcal{H}^\pm_{it}(m,\overline{\chi}^2)}w_{f,\ell}G(f) + \frac{1}{4\pi}\int_{\R}h_0(t)\sum_{\ell m=p^a}\sum_{E\in\mathcal{H}^\pm_{it,\text{Eis}}(m,\overline{\chi}^2)}w_{E,\ell}G(E)\,dt
\end{align*}
includes the test function and harmonic weights appearing in the Kuznetsov trace formula \eqref{eq:kuznetsov_rhs}.

We now proceed with the recipe in \cite[Section 4.1]{CFKRS}:
\begin{enumerate}
    \item[(i)] We start with a product of three shifted $L$-functions
    \begin{align*}
        \mathcal{L}_{u}(s,\pmb{\alpha}) \coloneqq \prod_{1\leq j\leq 3}L(s+\alpha_j,u\otimes\chi)
    \end{align*}
    for all $u\in\mathcal{F}^\pm(p^a,\chi)$, where $\pmb{\alpha}=(\alpha_1,\alpha_2,\alpha_3)\in\C^3$. 

    \item[(ii)] For each $u\in\mathcal{F}^\pm(p^a,\chi)$, apply to each factor $L(s+\alpha_j,u\otimes\chi)$ its approximate functional equation, which is roughly of the form
    \begin{align}
        \sum_{n_j}\frac{\lambda_u(n_j)\chi(n_j)}{n_j^{s+\alpha_j}}+\epsilon(u\otimes\chi)\mathcal{X}(s+\alpha_j,u\otimes\chi)\sum_{n_j}\frac{\overline{\lambda_u}(n_j)\overline{\chi}(n_j)}{n_j^{1-s-\alpha_j}}. \label{eq:AFE_conjectured_moment}
    \end{align}
    Recall that $\lambda_u(n)\chi(u)\in\R$ for all $n$. Multiplying out \eqref{eq:AFE_conjectured_moment} for $1\leq j\leq 3$ gives us $8=2^3$ terms in total, each of which we write as 
    \begin{align}
        &\text{(product of $\epsilon(u\otimes\chi)$-factors)}\cdot\text{(product of $\mathcal{X}(s+\alpha_j,u\otimes\chi)$-factors)} \label{eq:crude_terms} \\
        &\cdot\sum_{n_1,n_2,n_3}\text{(summand)}. 
    \end{align}

    \item[(iii)] Replace each product of $\epsilon(u\otimes\chi)$-factors in \eqref{eq:crude_terms} by its expected value 
    \begin{align*}
        \langle\epsilon(\cdot\otimes\chi)^k\rangle = (\pm1)^{k}
    \end{align*}
    for the relevant $0\leq k\leq 3$; see \eqref{eq:ktf_one_eigenvalue} below. So we will have $8=2^{3}$ terms in our final answer.

    \item[(iv)] Replace each summand in \eqref{eq:crude_terms} by its expected value, which is
    \begin{align*}
        \frac{\langle\prod_{1\leq j\leq 3}\lambda_u(n_j)\chi(n_j)\rangle}{\prod_{1\leq j\leq k}n_j^{s+\alpha_j}\prod_{k+1\leq j\leq 3}n_j^{1-s-\alpha_j}}
    \end{align*}
    for some $0\leq k\leq 3$. Since $\prod_{j=1}^3\lambda_u(n_j)=\sum_{m\in\Z}b_{m}\lambda_u(m)$ for some $b_m\in\C$ by Hecke multiplicativity, it suffices to compute 
    \begin{itemize}
        \item[-] $\langle\lambda_u(m)\rangle$ for $m\in\Z$ with $(m,p)=1$, and
        \item[-] $b_m$ whenever $\langle\lambda_u(m)\rangle\neq0$ for such $m$.
    \end{itemize}
    By the trace formula \eqref{eq:kuznetsov_rhs} with $n_1=m$, $n_2=1$ and test function $h(t,\cdot)\coloneqq h_0(t)$, followed by the bound \eqref{eq:weil_bound} and Lemma \ref{lemma:bounds_for_bessel_transforms}, we obtain for $m\in\Z$ with $(m,p)=1$ that
    \begin{align}
        \,\sideset{}{^K}\sum_{u\in\mathcal{F}^\pm(p^a,\chi)}\lambda_u(m) &= \,\,\,\sideset{}{^K}\sum_{u\in\mathcal{F}(p^a,\chi)}\frac{1\pm\lambda_u(-1)}{2}\lambda_u(m) \\
        &= \frac{\mathbbm{1}_{\{m=1\}}g_0(\cdot)}{2} + O_\varepsilon(p^{a(-1+\varepsilon)}\abs{m}^{1/2}T^{1+\varepsilon}). \label{eq:ktf_one_eigenvalue}
    \end{align}
    So $\langle\lambda_u(m)\rangle = \mathbbm{1}_{\{m=1\}}$, and one verifies that
    \begin{align*}
        b_1 = 
        \begin{cases}
            \overline{\chi}(n_1n_2n_3) &\text{ if } \frac{n_1n_2}{d^2}=n_3\text{ for some }d\mid(n_1,n_2) \\
            0 &\text{ otherwise}
        \end{cases},
    \end{align*}
    from which we conclude that
    \begin{align}
        \langle\prod_{1\leq j\leq 3}\lambda_u(n_j)\chi(n_j)\rangle = \mathbbm{1}_{\{n_1n_2/d^2=n_3\text{ for some }d\mid(n_1,n_2)\}}. \label{eq:average_factor_conj}
    \end{align}
    In particular, we have the multiplicativity relation
    \begin{align}
        \langle\prod_{1\leq j\leq 3}\lambda_u(m_jn_j)\chi(m_jn_j)\rangle = \langle\prod_{1\leq j\leq 3}\lambda_u(m_j)\chi(m_j)\rangle\cdot\langle\prod_{1\leq j\leq 3}\lambda_u(n_j)\chi(n_j)\rangle \label{eq:multiplicativity_conj}
    \end{align}
    for $(m_1m_2m_3,n_1n_2n_3)=1$.

    \item[(v)] Complete the resulting sums in \eqref{eq:crude_terms} (recalling that they appear ``truncated'' in the approximate functional equation), and denote the result by $M_u(s,\pmb{\alpha})$. Note that this quantity depends on $u$ only through the products of $\mathcal{X}(s+\alpha_j,u\otimes\chi)$-factors. The sums over $n_1,n_2,n_3$ now become factorizable Euler products
    \begin{align}
        &\sum_{n_1,n_2,n_3\geq1}\frac{\langle\prod_{1\leq j\leq 3}\lambda_u(n_j)\chi(n_j)\rangle}{\prod_{1\leq j\leq k}n_j^{s+\alpha_j}\prod_{k+1\leq j\leq 3}n_j^{1-s-\alpha_j}} \\
        &= \prod_{\widetilde{p}\neq p}\sum_{\substack{e_1,e_2,e_3\geq0 \\ 0\leq d\leq\min\{e_1,e_2\} \\ e_1+e_2-2d=e_3}}\frac{1}{\prod_{1\leq j\leq k}\widetilde{p}^{\,e_j(s+\alpha_j)}\prod_{k+1\leq j\leq 3}\widetilde{p}^{\,e_j(1-s-\alpha_j)}} \label{eq:euler_product}
    \end{align}
    for some $0\leq k\leq 3$, due to \eqref{eq:average_factor_conj} and \eqref{eq:multiplicativity_conj}. We replace the exponents $1-s-\alpha_j$ in the $3-k$ factors by $s-\alpha_j$ (as we are evaluating at $s=1/2$ in the end anyway), so that the Euler product converges for $\re s$ large enough. Since $\mathcal{X}(s,u\otimes\chi)=\mathcal{X}(1-s,u\otimes\chi)^{-1}$, we also replace the $\mathcal{X}(s+\alpha_j,u\otimes\chi)$-factors in \eqref{eq:AFE_conjectured_moment} by $\mathcal{X}(s-\alpha_j,u\otimes\chi)^{-1}$.

    We now identify the leading-order poles of \eqref{eq:euler_product}. Evaluating the geometric series defining each Euler product factor gives, for example for $k=3$,
    \begin{align}
        R(s,\pmb{\alpha}) &\coloneqq \prod_{\widetilde{p}\neq p}\sum_{\substack{e_1,e_2,e_3\geq0 \\ 0\leq d\leq\min\{e_1,e_2\} \\ e_1+e_2-2d=e_3}}\frac{1}{\prod_{1\leq j\leq 3}\widetilde{p}^{\,e_j(s+\alpha_j)}} \\
        &\,= \zeta^{(p)}(2s+\alpha_1+\alpha_2)\zeta^{(p)}(2s+\alpha_1+\alpha_3)\zeta^{(p)}(2s+\alpha_2+\alpha_3),
    \end{align}
    which only has simple poles at $s=\frac{1}{2}(1-\alpha_j-\alpha_k)$ for $1\leq j<k\leq 3$, and is holomorphic everywhere else. 

    We end up with
    \begin{align*}
        M_u(s,\pmb{\alpha}) &= R(s,\pmb{\alpha}) \\
        &\quad\, \pm \mathcal{X}(s-\alpha_1,u\otimes\chi)^{-1}R(s,-\alpha_1,\alpha_2,\alpha_3) \\
        &\quad\, \pm \mathcal{X}(s-\alpha_2,u\otimes\chi)^{-1}R(s,\alpha_1,-\alpha_2,\alpha_3) \\
        &\quad\, \pm \mathcal{X}(s-\alpha_3,u\otimes\chi)^{-1}R(s,\alpha_1,\alpha_2,-\alpha_3) \\
        &\quad\, + (\mathcal{X}(s-\alpha_1,u\otimes\chi)\mathcal{X}(s-\alpha_2,u\otimes\chi))^{-1}R(s,-\alpha_1,-\alpha_2,\alpha_3) \\
        &\quad\, + (\mathcal{X}(s-\alpha_1,u\otimes\chi)\mathcal{X}(s-\alpha_3,u\otimes\chi))^{-1}R(s,-\alpha_1,\alpha_2,-\alpha_3) \\
        &\quad\, + (\mathcal{X}(s-\alpha_2,u\otimes\chi)\mathcal{X}(s-\alpha_3,u\otimes\chi))^{-1}R(s,\alpha_1,-\alpha_2,-\alpha_3) \\
        &\quad\, \pm (\mathcal{X}(s-\alpha_1,u\otimes\chi)\mathcal{X}(s-\alpha_2,u\otimes\chi)\mathcal{X}(s-\alpha_3,u\otimes\chi))^{-1}R(s,-\alpha_1,-\alpha_2,-\alpha_3).
    \end{align*}
    As expected, we observe one $0$-swap term, three $1$-swap terms, three $2$-swap terms, and one $3$-swap term.
\end{enumerate}

The conjecture of \cite{CFKRS} now states the following.
\begin{conjecture}\label{conj:cfkrs}
    One has
    \begin{align}
        \mathcal{M}_{nh}^\pm(\chi,\pmb{\alpha}) \coloneqq \,\,\,\sideset{}{^K}\sum_{u\in\mathcal{F}^\pm(p^a,\chi)}\mathcal{L}_u(1/2,\pmb{\alpha}) = \,\,\,\sideset{}{^K}\sum_{u\in\mathcal{F}^\pm(p^a,\chi)}M_{u}(1/2,\pmb{\alpha})(1+O_\varepsilon(e^{(-1/2+\varepsilon)c(u\otimes\chi)})). \label{eq:final_boss_conjecture}
    \end{align}
\end{conjecture}

In the setting of Theorem \ref{thm:main}, we have $e^{(-1/2+\varepsilon)c(u\otimes\chi)}= p^{a(-1+\varepsilon)}$. Note however that in general an error term of this strength may be overly optimistic; for example, Diaconu and Whitehead \cite{DW} showed existence of a secondary main term for a cubic moment of quadratic Dirichlet $L$-functions.

We would like the expression for the main term 
\begin{align}
    \,\,\,\sideset{}{^K}\sum_{u\in\mathcal{F}^\pm(p^a,\chi)}M_u(1/2,\pmb{\alpha}) \label{eq:conj_main_term}
\end{align}
in \eqref{eq:final_boss_conjecture} to match our diagonal computations in Section \ref{sec:diag}. To show this, we remark that each
\begin{align*}
    \mathcal{X}(1/2-\alpha_j,u\otimes\chi)^{-1} = p^{-2a\alpha_j}\gamma_{\rat}(\delta_{u\otimes\chi},t,-\alpha_j,\alpha_j)
\end{align*}
depends on $u$ only through the parity $\delta_{u\otimes\chi}$ and the spectral parameter $t$. So we obtain by \eqref{eq:kuznetsov_rhs} with $n_1=1$ and $n_2\in\{\pm1\}$ that
\begin{align}
    &\,\,\,\sideset{}{^K}\sum_{u\in\mathcal{F}^\pm(p^a,\chi)}M_u(1/2,\pmb{\alpha}) = \,\,\,\sideset{}{^K}\sum_{u\in\mathcal{F}(p^a,\chi)}\frac{1\pm\lambda_u(-1)}{2}M_u(1/2,\pmb{\alpha}) \\
    &= \frac{1}{2p^{a\alpha}}\sum_{\pmb{\sigma}\in\{\pm1\}^3}(\pm1)^{\frac{1-\sigma}{2}}\bigg(\frac{1}{4\pi}\int_{\R}\tanh(\pi t)th_0(t)f_{\mt}(\tfrac{1\mp\chi(-1)}{2},t,\pmb{\alpha},\pmb{\sigma})\,dt \\
    &\quad\quad\quad\quad\quad\quad\quad\quad\quad\quad\quad\, + \sum_{\substack{c\geq1 \\ p^a\mid c}}\frac{S_{\overline{\chi}^2}(1,1;c)}{c}\frac{i}{2}\int_\R\frac{J_{2it}(4\pi/c)}{\cosh(\pi t)}th_0(t)f_{\mt}(\tfrac{1\mp\chi(-1)}{2},t,\pmb{\alpha},\pmb{\sigma})\,dt \\
    &\quad\quad\quad\quad\quad\quad\quad\quad\quad\quad\quad\, \pm\sum_{\substack{c\geq1 \\ p^a\mid c}}\frac{S_{\overline{\chi}^2}(1,-1;c)}{c}\frac{1}{\pi}\int_\R K_{2it}(x)\sinh(\pi t)th_0(t)f_{\mt}(\tfrac{1\mp\chi(-1)}{2},t,\pmb{\alpha},\pmb{\sigma})\,dt\bigg). \label{eq:conjectured_asymptotic}
\end{align}
Similar to the computations in Section \ref{sec:asymptotic_for_diag}, one can show that
\begin{align*}
    \frac{1}{2p^{a\alpha}}\sum_{\pmb{\sigma}\in\{\pm1\}^3}(\pm1)^{\frac{1-\sigma}{2}}\sum_{\substack{c\geq1 \\ p^a\mid c}}(\dotsc) \ll_\varepsilon p^{a(-1+\varepsilon)},
\end{align*}
i.e. the off-diagonal contribution in \eqref{eq:conjectured_asymptotic} gets absorbed into the error term in \eqref{eq:final_boss_conjecture}. So we see that 
\begin{align*}
    \sideset{}{^K}\sum_{u\in\mathcal{F}^\pm(p^a,\chi)}M_u(1/2,\pmb{\alpha}) = \sum_{\pmb{\sigma}\in\{\pm1\}^3}(\pm1)^{\frac{1-\sigma}{2}}\text{MT}^\pm_{nh}(\chi,\pmb{\alpha},\pmb{\sigma}) + O_\varepsilon(p^{a(-1+\varepsilon)}),
\end{align*}
with $\text{MT}_{nh}^\pm(\cdot)$ as defined in \eqref{eq:conjectured_main_term_after_extraction}. In particular, this proves Lemma \ref{lemma:conjectured_main_term}.

\appendix
\section{Proof of the holomorphic case}\label{sec:proof_hol}
Let $q\in\Z_{\geq1}$, $\kappa\in2\Z_{\geq1}$, $\chi$ primitive modulo $q$. For $m\mid q$, write $\mathcal{H}_\kappa(m,\overline{\chi}^2)$ for the set of normalized Hecke newforms of weight $\kappa$, level $m$, and central character $\overline{\chi}^2$. Given $f\in\mathcal{H}_{\kappa}(m,\overline{\chi}^2)$, one has $f\otimes\chi\in\mathcal{H}_\kappa(q^2,1)$. For $\re s>1$, we define the $L$-series
\begin{align*}
    L(s,f\otimes\chi) \coloneqq \sum_{n\geq1}\frac{\lambda_f(n)\chi(n)}{n^s},
\end{align*}
as well as the local $L$-function at the infinite place
\begin{align*}
    L_\infty(s,f\otimes\chi) \coloneqq (2\pi)^{-s}\Gamma\left(s+\frac{\kappa-1}{2}\right),
\end{align*}
and the completed $L$-function $\Lambda(s,f\otimes\chi)\coloneqq q^sL_\infty(s,f\otimes\chi)L(s,f\otimes\chi)$. The functional equation $\Lambda(s,f\otimes\chi)=\epsilon(f\otimes\chi)\Lambda(1-s,f\otimes\chi)$ then holds, with root number $\epsilon(f\otimes\chi)=\chi(-1)i^{-\kappa}$.

Write $w_{f,\ell}$ for some harmonic weights facilitating a Petersson trace formula for Hecke newforms, as opposed to the ``standard'' Petersson formula applied in \cite[Section 2]{Pe}. The holomorphic analog of \eqref{eq:moment} is 
\begin{align*}
    \mathcal{M}_h(\chi,\pmb{\alpha}) \coloneqq \sum_{\ell m=p^a}\sum_{f\in\mathcal{F}_\kappa(m,\overline{\chi}^2)}w_{f,\ell}\prod_{1\leq j\leq 3}L(1/2+\alpha_j,f\otimes\chi).
\end{align*}

We claim the following result.
\begin{theorem}
    Let $p$ be an odd prime, $a,b\in\Z_{\geq1}$ with $b\leq a/3$, and $\re\alpha_j\ll \frac{1}{\log p^a}$ for all $1\leq j\leq3$. One has
    \begin{align}
        \E_{\phi\,(\mo p^b)}[\mathcal{M}_{h}(\chi\phi,\pmb{\alpha})] = \sum_{\pmb{\sigma}\in\{\pm1\}^3}\frac{1+\sigma}{2}\text{MT}_h(\pmb{\alpha},\pmb{\sigma})+O_\varepsilon(p^{-b+a\varepsilon}), \label{eq:claim_asymptotic_hol}
    \end{align}
    where
    \begin{align}
        \text{MT}_h(\pmb{\alpha},\pmb{\sigma}) &\coloneqq \frac{p^{a\sigma\alpha}}{p^{a\alpha}}\zeta^{(p)}(1+\sigma_1\alpha_1+\sigma_2\alpha_2)\zeta^{(p)}(1+\sigma_1\alpha_1+\sigma_3\alpha_3)\zeta^{(p)}(1+\sigma_2\alpha_2+\sigma_3\alpha_3) \\
        &\quad\,\, \cdot\prod_{1\leq j\leq 3}\frac{\Gamma_\C(\sigma_j\alpha_j+\kappa/2)}{\Gamma_\C(\alpha_j+\kappa/2)}.
    \end{align}
\end{theorem}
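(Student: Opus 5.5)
We will follow the proof of Theorem \ref{thm:main} step by step, replacing the Kuznetsov formula by a Petersson formula for newforms of weight $\kappa$, level dividing $p^a$ and central character $\overline{\chi}^2$. This formula has a diagonal term and an off-diagonal $\sum_{c\equiv0\,(\mo p^a)}c^{-1}S_{\overline{\chi}^2}(n_1,n_2n_3;c)\,2\pi i^{-\kappa}J_{\kappa-1}(4\pi\sqrt{n_1n_2n_3}/c)$.

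\textbf{Set-up.} We first apply the approximate functional equation \eqref{eq:AFE_IK} with $L_\infty(s)=(2\pi)^{-s}\Gamma(s+\frac{\kappa-1}{2})$ and root number $\chi(-1)i^{-\kappa}$. Then we use Hecke multiplicativity as in \eqref{eq:cube_after_afe}, now with the ratios $\Gamma_\C(1/2+\sigma_j\alpha_j+\frac{\kappa-1}{2})/\Gamma_\C(1/2+\alpha_j+\frac{\kappa-1}{2})$ in place of $\Gamma_{\rat}$. Averaging over $\phi\,(\mo p^b)$ makes the root number $\chi\phi(-1)i^{-\kappa}$ average to $0$ over the two parities of $\phi$. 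Hence the $\sigma=-1$ terms drop out, which produces the factor $\frac{1+\sigma}{2}$. There is no opposite-sign Kuznetsov term, so only the $(+_\lambda)$ analysis is needed.

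\textbf{Diagonal.} The diagonal is handled verbatim as in Proposition \ref{prop:diag_decomposition}. The $t$-integral disappears, and $L(\cdot)$ is replaced by its holomorphic analogue with the $\Gamma_\C$-ratios. Its residue at $(\sigma_1\alpha_1,\sigma_2\alpha_2,\sigma_3\alpha_3)$ gives $\text{MT}_h(\pmb{\alpha},\pmb{\sigma})$. The remaining residues are the false terms $N(\cdot)$ and the $L(0,0,0,\cdot)$-term, with an error $O_\varepsilon(p^{a(-1/3+\varepsilon)})$ coming from shifting contours to $(-5/12)$. This shift is legitimate because $\Gamma(s+\frac{\kappa-1}{2})$ has no poles for $\re s>-\kappa/2$, which is even easier than in the Maass case. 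The first type of cancellation, between different $\pmb{\sigma}$, is purely combinatorial and carries over unchanged.

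\textbf{Off-diagonal.} We use Proposition \ref{prop:sum_of_kloosterman_sums} as is, since it concerns only the exponential sums. So the terms with $a<\nu_p(c)<a+b$ vanish, and we keep the $p^{a+b}\mid c$ terms and the $p^a\pdiv c$ terms with the $\psi\,(\mo p^{a-b})$ sum. Poisson summation in $n_1,n_2,n_3$ and the Gauss-sum evaluations of Sections \ref{sec:evaluation_of_gauss_sums}--\ref{sec:evaluation_of_generating_functions} (Lemmas \ref{lemma:T0}, \ref{lemma:Zpm_final}, \ref{lemma:hybrid}, \ref{lemma:hybrid_b_case}) are independent of the archimedean weight. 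The terms with $m_1m_2m_3\neq0$ are bounded as in Section \ref{sec:completing_the_proof}. For this we use the stationary phase analysis for $J_{\kappa-1}$ from \cite{Pe} and \cite{PeYpetersson} in place of \cite[Section 11]{PeY}, which gives $O_\varepsilon(p^{-b+a\varepsilon})$ through the large sieve on the dual fourth moment modulo $p^{a-b}$.

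\textbf{Main obstacle.} The main obstacle is the analogue of Proposition \ref{prop:cancellation}. We must show that $\mathcal{T}_0^b$, the $p^a\pdiv c$ terms with $m_1m_2m_3=0$, cancels exactly the false main terms. We plan to use the Mellin--Barnes formula \eqref{eq:mellin_bessel_J} with $\nu=\kappa-1$, which is absolutely convergent here. The Gauss sums \eqref{eq:gauss_sum_all0}--\eqref{eq:gauss_sum_m2m3nonzero} are unchanged, and the Ramanujan-sum Dirichlet series together with the functional equation of $\zeta$ give $p^{a-b}\zeta^{(p)}(1-s+u)$, exactly as in Lemmas \ref{lemma:m1nonzero}--\ref{lemma:m3nonzero}. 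The key identity replacing the $\tanh$/$\cosh$ manipulations is
\[
\frac{\Gamma(\kappa/2-s)}{\Gamma(\kappa/2+s)}\prod\frac{\Gamma_\C(s+\kappa/2)}{\Gamma_\C(-s+\kappa/2)}\,,
\]
which combines with $(2\pi)^{2s}$ and $i^{-\kappa}$ to give the reflection factor. The sign then has to match the $\frac{1+\sigma}{2}$ weighting after the $\phi$-parity average. As in \cite[Section 3]{Pe}, we expect the $(0,0,0)$-term to produce the $L(0,0,0)$- and $L(\mu,-\mu,\mu)$-pieces, and the axis terms $A_i$ to produce the $M(\cdot)$-integrals. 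The coordinate planes $P_{ij}$ contribute $O_\varepsilon(p^{a(-1/3+\varepsilon)})$ by shifting contours. Since $b\le a/3$, all these errors are absorbed into $O_\varepsilon(p^{-b+a\varepsilon})$.
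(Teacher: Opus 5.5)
\textbf{There is a genuine gap in your set-up paragraph, and it breaks the cancellation step.}

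You average the root number $(\chi\phi(-1)i^{-\kappa})^{k}$, with $k=\#\{j:\sigma_j=-1\}$, on its own, and conclude that every term with $\sigma_1\sigma_2\sigma_3=-1$ drops out. That is legitimate only for the diagonal, which does not depend on $\phi$ in the holomorphic case.

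In the off-diagonal, $(\chi\phi)(-1)^k$ multiplies $(\chi\phi)(n_1)\overline{\chi\phi}(n_2n_3)S_{\overline{\chi\phi}^2}(n_1,n_2n_3;c)$. It is this product that must be averaged over $\phi$, and the paper does exactly that. Here there is neither a parity projector $\frac{1\pm_b\phi(-1)}{2}$ as in \eqref{eq:X_after_parity} nor an opposite-sign term. So in Lemmas \ref{lemma:cancellation_irrelevant}--\ref{lemma:cancellation_relevant} one has $\pm_\lambda=+$ and $\pm_\eta=(-1)^k$: the root number plays the role that the parity projection played in the Maass case. Consequently:
\begin{itemize}
\item the even-$k$ off-diagonal survives only for $p^{a+b}\mid c$;
\item the odd-$k$ off-diagonal survives only for $p^a\pdiv c$, where it becomes the $\psi\,(\mo p^{a-b})$-sum.
\end{itemize}
The $p^a\pdiv c$ terms you want to keep (your $\mathcal{T}_0^b$) therefore come exclusively from the odd-$k$ terms you have already discarded. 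With your set-up as written, Lemma \ref{lemma:cancellation_relevant} with $\pm_\eta\cdot\pm_\lambda=1$ makes them vanish. For the same reason, Proposition \ref{prop:sum_of_kloosterman_sums} cannot be quoted ``as is'': its sign $\pm_\eta$ came from the Maass parity projection, which has no counterpart here.

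As a result, nothing is left to cancel the false main terms ($N(\cdot)$ and the $L(0,0,0,\cdot)$-term) of the surviving even-$k$ diagonal. The type (i) cancellation pairs $\pmb{\sigma}$ with $(-\sigma_1,-\sigma_2,\sigma_3)$, which preserves $\sigma_1\sigma_2\sigma_3$, so it cannot remove the type (ii) pieces. Nor are those pieces $O_\varepsilon(p^{-b+a\varepsilon})$; the $M(\cdot)$-integrals, for example, do not decay in $p^a$.

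The correct bookkeeping is that the odd-$k$, $p^a\pdiv c$ off-diagonal cancels the even-$k$ false main terms. The matching flips one coordinate of $\pmb{\sigma}$ (compare the arguments $(\sigma_1,-\sigma_2,\sigma_3)$ in the proof of Lemma \ref{lemma:all0}). So the cancelling terms carry the weight $\frac{1-\sigma}{2}$, not $\frac{1+\sigma}{2}$ as you anticipate.

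A smaller point: your Gamma-factor identity is trivial here, since $\frac{\Gamma(\kappa/2-s)}{\Gamma(\kappa/2+s)}\cdot\frac{\Gamma_\C(\kappa/2+s)}{\Gamma_\C(\kappa/2-s)}=(2\pi)^{-2s}$ exactly. There is no reflection factor, so the pole at $s=0$ is not cancelled as it was in the $(+_\lambda)$ Maass computation, and its residue is what produces the $L(0,0,0,\cdot)$-term.
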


\begin{proof}
    We specify the changes needed to the proof of Theorem \ref{thm:main}.

    Write
    \begin{align*}
        V_{1/2+\alpha_j}(y,\kappa)\coloneqq\frac{1}{2\pi i}\int_{(\sigma)}y^{-s}\frac{L_\infty(1/2+s,f\otimes\chi)}{L_\infty(1/2+\alpha_j,f\otimes\chi)}\frac{G(s-\alpha_j)}{s-\alpha_j}\,ds
    \end{align*}
    for the weight function in the approximate functional equation, and let 
    \begin{align*}
        &h(\kappa,n_1,n_2,n_3,d,\pmb{\alpha},\pmb{\sigma}) \\
        &\coloneqq V_{1/2+\sigma_1\alpha_1}\bigg(\frac{n_1}{p^a},\kappa\bigg)V_{1/2+\sigma_2\alpha_2}\bigg(\frac{dn_2}{p^a},\kappa\bigg)V_{1/2+\sigma_3\alpha_3}\bigg(\frac{dn_3}{p^a},\kappa\bigg)\prod_{1\leq j\leq 3}\frac{L_\infty(1/2+\sigma_j\alpha_j,f\otimes\chi)}{L_\infty(1/2+\alpha_j,f\otimes\chi)}.
    \end{align*}
    Then one obtains for $f\in\mathcal{H}_\kappa(m,\overline{\chi}^2)$, $m\mid q$, that
    \begin{align}
        &\prod_{1\leq j\leq 3}L(1/2+\alpha_j,f\otimes\chi) \\
        &= q^{-\alpha}\sum_{\pmb{\sigma}\in\{\pm1\}^3}(\chi(-1)i^{-\kappa})^\sigma\sum_{\substack{d\geq1 \\ (d,q)=1}}\frac{1}{d}\sum_{n_1,n_2,n_3\geq1}\frac{\lambda_f(n_1)\overline{\lambda_f}(n_2n_3)\chi(n_1)\overline{\chi}(n_2n_3)}{\sqrt{n_1n_2n_3}}h(\cdot),
    \end{align}
    where $\alpha\coloneqq\alpha_1+\alpha_2+\alpha_3$, $\sigma\coloneqq\sum_{1\leq j\leq 3}(1-\sigma_j)/2$. Similar to Proposition \ref{prop:kuznetsov}, one can write down a newform Petersson trace formula
    \begin{align}
        \sum_{\ell m=p^a}\sum_{f\in\mathcal{H}_\kappa(m,\overline{\chi}^2)}w_{f,\ell}\lambda_f(n_1)\overline{\lambda_f}(n_2) = \mathbbm{1}_{\{n_1=n_2\}} + 2\pi i^{-\kappa}\sum_{\substack{c\geq1 \\ p^a\mid c}}\frac{S_{\overline{\chi}^2}(n_1,n_2;c)}{c}J_{\kappa-1}\bigg(\frac{4\pi\sqrt{n_1n_2}}{c}\bigg) \label{eq:petersson_newforms}
    \end{align}
    for $n_1,n_2\in\Z_{\geq1}$ with $(n_1n_2,p)=1$. Applying \eqref{eq:petersson_newforms} with indices $n_1$ and $n_2n_3$, we arrive at 
    \begin{align}
        \mathcal{M}_h(\chi,\pmb{\alpha}) = p^{-a\alpha}\sum_{\pmb{\sigma}\in\{\pm1\}^3}(\chi(-1)i^{-\kappa})^\sigma (\mathcal{D}(\pmb{\alpha},\pmb{\sigma})+2\pi i^{-\kappa}\mathcal{S}(\chi,\pmb{\alpha},\pmb{\sigma})), \label{eq:moment_after_ptf_hol}
    \end{align}
    where 
    \begin{align*}
        &\mathcal{D}(\pmb{\alpha},\pmb{\sigma}) \coloneqq \sum_{\substack{d\geq1 \\ (d,p)=1}}\frac{1}{d}\sum_{\substack{n_1,n_2,n_3\geq1 \\ n_1=n_2n_3}}\frac{\chi(n_1)\overline{\chi}(n_2n_3)}{\sqrt{n_1n_2n_3}}h(\cdot) = \sum_{\substack{d,n_2,n_3\geq1 \\ (dn_2n_3,p)=1}}\frac{h(\cdot,n_2n_3,n_2,n_3,d,\cdot)}{dn_2n_3}, \\
        &\mathcal{S}(\chi,\pmb{\alpha},\pmb{\sigma}) \coloneqq \sum_{\substack{d\geq1 \\ (d,p)=1}}\frac{1}{d}\sum_{n_1,n_2,n_3\geq1}\frac{\chi(n_1)\overline{\chi}(n_2n_3)}{\sqrt{n_1n_2n_3}}\sum_{\substack{c\geq1 \\ p^a\mid c}}\frac{S_{\overline{\chi}^2}(n_1,n_2n_3;c)}{c}J_{\kappa-1}\bigg(\frac{4\pi\sqrt{n_1n_2n_3}}{c}\bigg)h(\cdot).
    \end{align*}
    So, in order to evaluate $\E_{\phi\,(\mo p^b)}[\mathcal{M}_h(\chi\phi,\pmb{\alpha})]$, it suffices to compute
    \begin{align*}
        \frac{1}{\varphi(p^b)}\sum_{\phi\,(\mo p^b)}((\chi\phi)(-1))^\sigma = \begin{dcases}
            1 &\text{ if }\sigma\equiv0\,(\mo 2) \\
            0 &\text{ otherwise}
        \end{dcases}
    \end{align*}
    and
    \begin{align}
        &\frac{1}{\varphi(p^b)}\sum_{\phi\,(\mo p^b)}(\chi\phi)((-1)^\sigma n_1)(\overline{\chi\phi})(n_2n_3)S_{\overline{\chi\phi}^2}(n_1,n_2n_3;c) \\
        &= 
        \begin{dcases}
            \chi(n_1)\overline{\chi}(n_2n_3)S_{\overline{\chi}^2}(n_1,n_2n_3;c) &\text{ if } \sigma\equiv0\,(\mo 2),\, p^{a+b}\mid c \\
            \\
            \frac{1}{1-p^{-1}}\sum_{\psi\,(\mo p^{a-b})}\overline{\psi}(-n_1n_2n_3\overline{c'}^2)\frac{\tau(\chi\psi)}{\tau(\chi\overline{\psi})}S(n_1\overline{p^a},n_2n_3\overline{p^a};c') &\text{ if }\sigma\equiv1\,(\mo 2),\,p^a\pdiv c \\ 
            \\
            0 &\text{ otherwise}
        \end{dcases},
    \end{align}
    which follows from the computations in Section \ref{sec:averaging_kloosterman_sums}.

    To evaluate $\mathcal{D}(\pmb{\alpha},\pmb{\sigma})$, one repeats the computations of Section \ref{sec:diag} with 
    \begin{align*}
        &L(s_1,s_2,s_3,\kappa,\pmb{\alpha},\pmb{\sigma}) \\
        &\coloneqq p^{a(s_1+s_2+s_3)}(s_1+s_2)(s_1+s_3)(s_2+s_3)\zeta^{(p)}(1+s_1+s_2)\zeta^{(p)}(1+s_1+s_3)\zeta^{(p)}(1+s_2+s_3) \\
        &\quad\, \cdot\prod_{1\leq j\leq 3}\frac{\Gamma_{\C}(s_j+\frac{\kappa}{2})}{\Gamma_{\C}(\alpha_j+\frac{\kappa}{2})}G(s_j-\sigma_j\alpha_j),
    \end{align*}
    which is now holomorphic on $\{(s_1,s_2,s_3)\in\C^3\,:\,\re s_j>-\kappa/2\text{ for } 1\leq j\leq 3\}$. We end up with 
    \begin{align}
        \mathcal{D}(\pmb{\alpha},\pmb{\sigma}) &= \frac{L(\sigma_1\alpha_1,\sigma_2\alpha_2,\sigma_3\alpha_3,\kappa,\pmb{\alpha},\pmb{\sigma})}{(\sigma_1\alpha_1+\sigma_2\alpha_2)(\sigma_2\alpha_2+\sigma_3\alpha_3)(\sigma_3\alpha_3+\sigma_1\alpha_1)} \label{eq:D_final_expression_hol} \\
        &\quad\, + N(\sigma_1\alpha_1,\sigma_2\alpha_2,\sigma_3\alpha_3,\kappa,\pmb{\alpha},\pmb{\sigma}) \\
        &\quad\, + N(\sigma_1\alpha_1,\sigma_3\alpha_3,\sigma_2\alpha_2,\kappa,(\alpha_1,\alpha_3,\alpha_2),(\sigma_1,\sigma_3,\sigma_2)) \\ 
        &\quad\, + N(\sigma_2\alpha_2,\sigma_3\alpha_3,\sigma_1\alpha_1,\kappa,(\alpha_2,\alpha_3,\alpha_1),(\sigma_2,\sigma_3,\sigma_1)) \\
        &\quad\, + \frac{1}{2}\frac{L(0,0,0,\kappa,\pmb{\alpha},\pmb{\sigma})}{(-\sigma_1\alpha_1)(-\sigma_2\alpha_2)(-\sigma_3\alpha_3)} + O_\varepsilon(p^{a(-\frac{1}{3}+\varepsilon)}),
    \end{align}
    where 
    \begin{align*}
        &N(\iota,\lambda,\mu,\kappa,\pmb{\alpha},\pmb{\sigma}) \coloneqq M(\iota,\lambda,\mu,\kappa,\pmb{\alpha},\pmb{\sigma}) + \frac{L(-\lambda,\lambda,\mu,\kappa,\pmb{\alpha},\pmb{\sigma})}{(-\lambda-\iota)(\mu+\lambda)(\mu-\lambda)} + \frac{L(\mu,-\mu,\mu,\kappa,\pmb{\alpha},\pmb{\sigma})}{(\mu-\iota)(-\mu-\lambda)(2\mu)}, \\
        &M(\iota,\lambda,\mu,\kappa,\pmb{\alpha},\pmb{\sigma}) \coloneqq \frac{1}{2\pi i}\int_{(\frac{1}{6})}\frac{L(s,-s,\mu,\kappa,\pmb{\alpha},\pmb{\sigma})}{(s-\iota)(-s-\lambda)(\mu-s)(\mu+s)}ds.
    \end{align*}
    As in the non-holomorphic case, the first term in \eqref{eq:D_final_expression_hol} contributes to the conjectured main term in \eqref{eq:claim_asymptotic_hol}, while the other terms cancel out in a way similar to the results in Section \ref{sec:cancellation} (after Poisson summation). It then remains to bound the off-diagonal contribution of terms with $m_1m_2m_3\neq0$ resulting from Poisson summation, which can be done analogously to Section \ref{sec:completing_the_proof}, this time using the stationary phase results from \cite[Section 10.4]{PeYpetersson}.
\end{proof}

\section*{References}
\printbibliography

\end{document}